\providecommand{\noopsort}[1]{}
\definecolor{immi}{rgb}{0,.6,.1}
\newbox\removebox
\newcommand\remove[2][blue]{%
\setbox\removebox=\ifmmode\hbox{$#2$}\else\hbox{#2}\fi%
\leavevmode
\rlap{\textcolor{#1}{\vrule height0.8ex depth-0.5ex width\wd\removebox}}%
\box\removebox
}
\long\def\bigremove#1{%
\par\setbox\removebox=\vbox{#1}%
\vbox{%
\vbox to0pt{\hbox{\tikz\draw[color=blue,thick] (0,0) -- (\wd\removebox,-\ht\removebox)  (\wd\removebox,0) -- (0,-\ht\removebox);}}
\box\removebox
}
}
\newcommand{\cCM}{{\cC_M
^{\mathrm{exp}}}}
\newcommand{\cCoM}{{\cC_M}}
\newcommand{\cCeM}{{\cC_M^{\mathrm{e}}}}
\newcommand{\cQD}{{\cQ}}
\newcommand{\cQDo}{{\cQD{}^{0}}}
\newcommand{\cQDoloc}{{\cCoM}} 
\newcommand{\cCexp}{\cC^{\mathrm{exp}}}
\newcommand{\cCe}{\cC^{\mathrm{e}}}
\newcommand{\grad}{\operatorname{grad}}
\newcommand{\id}{\operatorname{id}}
\def\VF{\mathrm{VF}}
\def\VG{\mathrm{VG}}
\newcommand{\RF}{{\rm RF}}
\newcommand{\RR}{{\rm R}}
\newcommand{\enrich}{{\rm enrich}}
\def\ac{{
{\rm ac}}}
\def\cMmot{{\MM}}
\def\cross{{\overline{\rm cross}}}
\def\LPas{\cL_{\rm DP}}
\def\gTPas
\def\TPres
\def\Pres{\operatorname{Pres}}
\def\res{\operatorname{res}}
\def\11{{\mathbf 1}}
\def\CC{{\mathbb C}}
\def\FF{{\mathbb F}}
\def\LL{{\mathbb L}}
\def\MM{{\mathbb M}}
\def\NN{{\mathbb N}}
\def\QQ{{\mathbb Q}}
\def\ZZ{{\mathbb Z}}
\def\cC{{{\mathcal C}}}
\def\cD{{\mathcal D}}
\def\cF{{\mathcal F}}
\def\cK{{\mathcal K}}
\def\cL{{\mathcal L}}
\def\cM{{\mathcal M}}
\def\cO{{\mathcal O}}
\def\cQ{{\mathcal Q}}
\def\cS{{\mathcal S}}
\def\llp{\mathopen{(\!(}}
\def\rrp{\mathopen{)\!)}}
\newtheorem{thm}[subsubsection]{Theorem}
\newtheorem{lem}[subsubsection]{Lemma}
\newtheorem{cor}[subsubsection]{Corollary}
\newtheorem{prop}[subsubsection]{Proposition}
\newtheorem{addendum}[subsubsection]
{Addendum}
\Crefname{prop}{Proposition}{Propositions}
\Crefname{thm}{Theorem}{Theorems}
\Crefname{lem}{Lemma}{Lemmas}
\Crefname{cor}{Corollary}{Corollaries}
\Crefname{conj}{Conjecture}{Conjectures}
\Crefname{rem}{Remark}{Remarks}
\Crefname{addendum}{Addendum}{Addenda}
\theoremstyle{definition}
\newtheorem{defn}[subsubsection]{Definition}
\newtheorem{def-lem}[subsubsection]{Lemma-Definition}
\Crefname{defn}{Definition}{Definitions}
\Crefname{def-prop}{Proposition-Definition}{Proposition-Definitions}
\Crefname{def-lem}{Lemma-Definition}{Lemma-Definitions}
\newtheorem{def-prop}[subsubsection]{Proposition-Definition}
\newtheorem{def-thm}[subsubsection]{Theorem-Definition}
\theoremstyle{remark}
\newtheorem{remark}[subsubsection]{Remark}
\newtheorem{rem}[subsubsection]{Remark}
\theoremstyle{plain}
\numberwithin{equation}{subsection}
\newcommand{\ord}{\operatorname{ord}}
\newcommand{\ordalt}{\operatorname{\overline{ord}}}
\newcommand{\Jac}{\operatorname{Jac}}
\newcommand{\R}{{\rm R}}
\author[Cluckers]
{Raf Cluckers}
\address{Univ.~Lille,
CNRS, UMR 8524 - Laboratoire Paul Painlevé, F-59000 Lille, France, and
KU Leuven, Department of Mathematics, B-3001 Leu\-ven, Bel\-gium}
\email{Raf.Cluckers@univ-lille.fr}
\urladdr{http://rcluckers.perso.math.cnrs.fr/}
\author[Loeser]
{Fran\c {c}ois Loeser}
\address{Sorbonne Universit\'e, Institut de Math\'ematiques de Jussieu-Paris
Rive Gauche, CNRS, Campus Pierre et Marie Curie, case 247, 4 place Jussieu, 75252 Paris cedex 5, France.
}
\email{francois.loeser@imj-prg.fr}
\urladdr{https://webusers.imj-prg.fr/$\sim$francois.loeser/}
\author[Nguyen]
{Kien Huu Nguyen}
\address{KU Leuven, Department of Mathematics,
Celestijnenlaan 200B, B-3001 Leu\-ven, Bel\-gium}
\email{kien.nguyenhuu@kuleuven.be}
\author[Vermeulen]
{Floris Vermeulen}
\address{University of M\"unster, Mathematics M\"unster,
Einsteinstrasse 62, 48149 M\"unster, Germany}
\email{florisvermeulen.math@gmail.com}
\urladdr{https://sites.google.com/view/floris-vermeulen/homepage}
\subjclass[2020]{Primary 14E18, 03C98, 11S80; Secondary 14B05, 42B99, 03C60}
\keywords{Motivic integration, motivic Fourier transforms, Mellin transforms, motivic exponential functions, $p$-adic integration, non-archimedean geometry, Denef-Pas cell decomposition, quantifier elimination, uniformity in all local fields, transfer principles}
\title[Motivic Mellin transforms]{Motivic Mellin transforms}
\begin{document}
\begin{abstract}
This work brings Mellin transforms into the realm of motivic integration.
The new, larger class of motivic functions is stable under motivic Mellin and Fourier transforms, with general Fubini results and change of variables formulas. It specializes to $p$-adic integrals and $p$-adic Mellin transforms uniformly in $p$, with transfer principles between zero and positive characteristic local fields.
In particular, it generalizes previous set-ups of motivic integration with Fubini from among others~\cite{CLoes, CLexp,CLbounded, Kien:rational, CHallp} 
and simplifies some aspects on the way by using the ideas of \cite{CH-eval}.
\end{abstract}
\maketitle





\section{Introduction}\label{sec:intro}

Mellin transforms of functions over non-archimedean local fields are ubiquitous in Number Theory  and Representation Theory. They have a rich history going back to   Tate's thesis \cite{tate}
(where they were called local $\zeta$-functions) and   Igusa \cite{igusa-crelle1}.
If $g$ is a complex valued function over a non-archimedean local field $K$,
its Mellin transform is the function
\[\chi \longmapsto  \int_{K} \chi (x) g (x) \vert dx \vert\]
with $\chi$ running over the set of quasi-characters, that is, continuous  morphisms
$K^\times \to \mathbb{C}^\times$.
An important example is given by Igusa's local zeta function $Z_{f, \varphi} (\chi)$ of a polynomial $f \in K [x_1, \cdots x_n]$ which is the Mellin transform of the fiber integral
$I(t) = \int_{f^{-1} (t) } \varphi  \frac{\vert dx \vert }{\vert df \vert}$, with $\varphi$ some test function.

In the work \cite{CLoes}, the first two authors constructed a general
version of motivic integration in equicharacteristic zero for a class of functions called constructible motivic functions.
The class of constructible motivic functions is stable with respect to integration over parameters. The associated theory of motivic integration
satisfies the expected properties (Fubini, change of variables formula) and specializes to
$p$-adic integration for $p$ large enough. The construction uses in a crucial way
cell decomposition results  for definable sets in the Denef-Pas language.
In subsequent work \cite{CLexp} this theory was extended to include a larger class of functions, exponential motivic functions, that are
functions involving  additive characters. In this framework a motivic version of Fourier transformation satisfying Fourier inversion is developed.
In \cite{CLexp} a general transfer principle allows one to compare identities between functions defined by integrals over local fields with the same residue field.
It  was shown in \cite{CHL} that this  transfer principle applies in particular to  the integrals occuring in the Fundamental Lemma of Langlands-Shelstad.
The theory was then extended in
\cite{CLbounded}
to mixed characteristic discretely valued Henselian fields with
bounded ramification. This   required to work in an expansion of the Denef-Pas language
allowing to consider higher order angular components maps with values in residue rings, as opposed to the
angular components maps taking its values in the residue field.

The main goal of the present work is to provide a framework for  motivic integration which allows one to deal with multiplicative characters and
 which extends the constructions in
\cite{CLoes}  \cite{CLexp} \cite{CLbounded}.
More precisely, for $X$ belonging to a class of definable sets encompassing the ones considered in loc.\ cit., we define an algebra of motivic functions
$\cCM(X)$, containing the ones in loc.\ cit.\ and values of multiplicative characters of (generalized) residue rings.
As in loc.\ cit.\ we develop inductively  an integration theory for such functions
and we  prove that this class of functions is stable under integration over parameters, and satisfies Fubini and a change of variables formula.
We also construct a motivic Mellin transform and show that $\cCM$ is stable under Mellin and Fourier transform.
Though we prove that this motivic  Mellin transform is injective,  we are not able to describe its image. This is due to the fact that the space of multiplicative characters  is quite wild, unlike the space of additive characters. Note that a similar feature also occurs for
the $\ell$-adic Mellin transform of \cite{GL}.
We also prove that our constructions admit specializations to their $p$-adic counterparts and obtain a transfer principle.

Let us now describe in some more detail the structure of the paper.
In Section  \ref{sec:p-adic:mellin} we provide a basic example of our general theory by
recasting the classical theory of Mellin
over $p$-adic fields within our general framework. This is intended to provide the reader  intuition and insight  for the more abstract constructions that are developed in the rest of the paper.
The model theoretic framework we will work with is specified in Section \ref{sec3}.
Our language $\cL_D$ is an expansion of the Denef-Pas language with sorts for general residue rings indexed by some terms (basic terms) and symbols for higher angular components.
This will allow us to make the residue rings vary in a definable way. We also introduce the general notion of  $\cS$-definable set for $\cS$ a collection of $\mathcal{L}_D$-structures.
This generalizes  the notion of definable subassignments from \cite{CLoes} \cite{CLexp} and will allow us to use the notion of evaluation of motivic functions from \cite{CH-eval}.
In order to get a good grasp on such definable sets we prove  quantifier  elimination results in
section  \ref{sec:QE:Pres}  and we prove a cell decomposition theorem for $\cS$-definable sets in
Section \ref{sec:S}.
Section \ref{sec:motivic} is devoted to the construction of the ring $\cCM(X)$
for $X$ an $\cS$-definable set. 
The core of the paper lies in Section \ref{sec:iterated.int} and Section \ref{sec:int:rev}
in which we construct  motivic integrals for functions
in $\cCM(X)$ using cell decomposition from Section \ref{sec:S}.
Finally, in Section \ref{sec:p-adic-transf}, we prove
specialization (Theorem \ref{thm:special})
and transfer (Theorem \ref{thm:transfer})
in our setting.
The Appendix contains a few complements in connection with the notion
of Hensel minimality from \cite{CHR} and \cite{CHRV}.

\subsection*{Acknowledgements} The authors thank J. Gordon, T. Hales and I. Halupczok for valuable discussions during the preparation of the paper. R.C.\ was partially supported by the European Research Council under the European Community's Seventh Framework Programme (FP7/2007-2013) with ERC Grant Agreement nr. 615722 MOTMELSUM, by KU Leuven IF C16/23/010, and by the Labex CEMPI  (ANR-11-LABX-0007-01).
F.L.\ was partially supported by the Institut Universitaire de France.
K.H.N.\ was partially supported by the Fund for Scientific Research - Flanders (Belgium)
(F.W.O.) 12X3519N and 1270923N, by the Excellence Research Chair “FLCarPA: L-functions in positive characteristic and applications” financed by the Normandy Region. F.V.\ was partially supported by the Fund for Scientific Research - Flanders (Belgium) (F.W.O.) 11F1923N, and partially by the Humboldt Foundation.
Part of the paper was finalized during a stay of the four authors at the Bernoulli Center in Lausanne. The authors would like to thank the Bernoulli Center for providing such an ideal working environment.

\section{Mellin transforms on $p$-adic fields}\label{sec:p-adic:mellin}

We first develop the very concrete situation of Mellin transform on a fixed $p$-adic field. More precisely, we will introduce algebras of $\CC$-valued functions on definable subsets of $\QQ_p^n$ which are stable under integration, under taking Fourier transforms, and under Mellin transforms. It contains the ideas and constructions that will become more abstract from the next part on. The results in this section will follow from the more abstract results on motivic Mellin transform and will be proved after proving the motivic results.

\subsection{Notation and set-up}\label{subs:notQp}
Let $K$ be a finite field extension of $\QQ_p$ for some prime number $p$. Write $\cO_K$ for the valuation ring of $K$ with maximal ideal $\cM_K$ and residue field $k_K$ with $q_K$ elements. We write the valuation $\ord$ additively, with value group $\ZZ$, where we extend $\ord$  by $+\infty$ on $0$. Let $\pi_K$ be a uniformizer of $\cO_K$ (that is, an element of $\cM_K$ of valuation $1$). We will introduce a language $\cL_D$ on $K$ in several steps; it is a variant of the Denef-Pas language with more general `depths' of the angular component maps.

Consider the language $\LPas$ on the three sorted structure with sorts $K$, $\ZZ$ and $k_K$, a copy of the ring language on both $K$ and on $k_K$, the Presburger language (namely, with function symbols $+,-$ and $\max$, constant symbols $0,1$, the order relation  $\leq$, and a congruence relation $\equiv_n$ for each integer $n>1$)\footnote{Note that we added the binary maximum function to the usual Presburger language, for naturality of subsequent definitions.} 
on $\ZZ$, a function symbol $\ordalt$ for the valuation map $K^\times\to\ZZ$ extended by mapping $0$ to $0$, and a symbol $\ac$ for the map $K\to k_K$ sending $0$ tot $0$ and nonzero $x$ to the reduction of $x\pi_K^{-\ord x}$ modulo $\cM_K$.

Let $\Lambda$ be a countably infinite set of symbols (which are supposed to be different from all symbols of $\LPas$) and let $\LPas(\Lambda)$ be $\LPas$ together with a constant symbol $\lambda$ in the sort $\VG$ for each $\lambda\in\Lambda$.

Consider a term $t$ in $\LPas(\Lambda)$. Call $t$ a \emph{basic term} if $t$ contains no variables and it is $\VG$-valued. Thus, in a basic term $t$ there occur only constant and function symbols from $\LPas(\Lambda)$, and $t$ takes values in $\VG$. As an example of a basic term one can think of $t$ being $\ordalt(N)+\max\{\lambda_1, \lambda_2+2\}-\lambda_3$ for some integer $N>0$ and some $\lambda_1, \lambda_2,\lambda_3\in\Lambda$.

For $t$ a basic term, denote by $B_t(0)$ the open ball of radius $t$ around $0$. That is, $B_t(0) = \{x\in \cO_K\mid \ord(x) > t\}$.

\begin{defn}\label{defnLdacQp}
Let $\cL_D$ be the language $\LPas(\Lambda)$ together with, for each basic term $t$,
a residue ring sort $\RR_{t}$ which is a new sort when $t$ is not the term $0$, and which coincides with the residue field sort from $\LPas$ if $t$ is the term $0$,
and with the following symbols, for each basic terms $t$ and $t'$:
\begin{itemize}
\item the ring language on $\RR_{t}$, which coincides with the ring language on the residue field sort from $\LPas$ when $t$ is the term $0$,

\item a function symbol $$\ac_{t}:\VF\to \RR_{t}$$ for an angular component map modulo $B_{t}(0)$, which coincides with the symbol $\ac$ from $\LPas$ when $t$ is the term $0$,

\item a function symbol $\res_{t,t'}:\RR_t\to\RR_{t'}$.
\end{itemize}
\end{defn}

The $\cL_D$-structure on $K$ is fixed by fixing the constant symbols $\lambda$ from $\Lambda$ in $\ZZ$, by the following corresponding interpretations, where we note that any basic term $t$ has a specific value in $\ZZ$ if all the $\lambda$ from $\Lambda$ are fixed. For any basic term $t$, the universe of $\RR_t$ is $\{0\}$ when $t<0$, and when $t\ge 0$ it is the residue ring
$$
\cO_K/ B_t(0)
$$
where $B_t(0)$ is as above the open ball of (valuative) radius $t$ around $0$, which is the ideal $\cM_K^{t+1}$ of $\cO_K$; if $t<0$ then $\ac_{t}$ is the zero map; if $t\ge 0$ then $\ac_{t}$ is the map sending zero to zero and nonzero $x$ to the reduction of $x\pi_K^{-\ord x}$ modulo $B_t(0)$, and the function symbol $\res_{t,t'}:\RR_t\to\RR_{t'}$ is the natural projection $\RR_{t}\to \RR_{t'}$
when $t\ge t'$, and the constant map with value $0$ when $t<t'$.

We call a tuple $t = (t_1,\ldots,t_\ell)$ of basic terms a \emph{basic tuple}, and we write
\begin{equation}\label{rr(i)}
\RR_{t}\quad  \mbox{ for }\quad  \prod_{j=1}^\ell \RR_{t_j},
\end{equation}
which by convention denotes $\{0\}$ when $\ell=0$.

\subsection{Functions on a definable set}\label{subs:fixed-functions:Qp}

Consider an $\cL_D$-structure on $K$ which is a finite field extension of $\QQ_p$ as above, and fix furthermore an additive character
$$
\psi : K \to \CC^\times
$$
such that $\psi(\cM_K)=1$ and $\psi(\cO_K)\not=1$. Recall that an additive character is just a continuous group homomorphism from $(K,+)$ to $(\CC^\times,\cdot)$. Write $\cD_K$ for the collection of all such additive characters on $K$. Furthermore, for each basic term $t$, fix a multiplicative character $\chi_t$ on $\R_t$, that is, a group homomorphism from $(\R_t^\times,\cdot)$ to $\CC^\times$.\footnote{Recall that the unit group of the trivial ring $\{0\}$ is the trivial group $\{0\}$.}

Let $X$ be an $\cL_D$-definable set in the structure $K$, that is, $X$ is a subset of $K^n\times \ZZ^m\times R_t$ for some basic tuple $t$ and some integers $n\ge 0$, $m\ge 0$, given by an $\cL_D$-formula.
By an $\cL_D$-imaginary function $g$ from $X$ to $K/\cM_K$ we mean a function $g:X\to \VF_K/\cM_K$ such that $G$ is an $\cL_D$-definable set, where $G\subset X\times K$ is the preimage of the graph of $g$ under the projection $X\times K\to X\times K/\cM_K$. 

By a point on such $X$ we mean in this case simply an element $x$ of $X$. By an \emph{enriched point on $X$} we mean a point $x$ on $X$ together with a choice of $\psi$ and $\chi_t$ for all $t$ as above. We will denote such a point as $(x,\psi, (\chi_t)_t)$. Denote by $X^\enrich$ the set of enriched points on $X$.

We are now ready to define our algebras of functions on definable sets, which will be stable under integration, Mellin transform, and Fourier transform.
\begin{defn}\label{Qp-functions}
Let $X$ be an $\cL_D$-definable set in the structure $K$.
Define
$$
\cCexp_{M,(K,\cL_D)} (X^\enrich)
$$
as the 
$\ZZ$-algebra of functions on $X^\enrich$ taking values in $\CC(T_0,T_1,T_2,\ldots)$
generated by all functions of the form $F_1$ and $F_2$ just below:
\begin{itemize}
\item For any integer $a<0$ and any $b \in \NN^J$ for a finite set $J\subset \NN$, the function $F_1$ sending any enriched point on $X$ to the element
    $$
\frac{1}{1-q_K^a T^b}
    $$
 $\CC(T_0,T_1,\ldots)$, with the multi-index notation $T^b$ for $\prod_{j\in J}T_j^{b_j}$. We denote this function $F_1$ by $\frac{1}{1-q_K^a T^b}$.



\item 

For any $\cL_D$-definable set $Z\subset X\times \RR_{t}$ with $t$ a basic tuple,  
any $\cL_D$-imaginary function $ g:Z\to K/\cM_K$ and any
$\cL_D$-definable functions
$$
\alpha:Z\to\ZZ^n,\ \beta:Z\to \ZZ,\ \gamma:Z\to \ZZ^J,  \mbox{ and } h:Z\to \RR_i,
$$
with $n\ge 0$, 
a finite set $J\subset \NN$, and 
basic tuple $i=(i_1,\ldots,i_\ell)$, the function $F_2$ sending an enriched point $(x,\psi, (\chi_t)_t)$ on $X$ to the following finite sum over $\xi$ running in $Z_x := \{\xi\in \R_t\mid (x,\xi)\in Z\}$, and with $\xi'=(x,\xi)$,
$$
\sum_{\xi\in Z_x} (\prod_{i=1}^n \alpha_i(\xi') )q_K^{\beta(\xi')} T^{\gamma(\xi')}  \psi ( g (\xi') ) \big(\prod_{j=1}^\ell \chi_{i_j}( p_j\circ h(\xi'))\big),
$$
where $p_j$ is the projection $R_i\to R_{i_j}$ onto the $j$th coordinate of $R_i$, and where $T^{\gamma(\xi')}$ stands for multi-index notation, that is, for $\prod_{j\in J} T_j^{\gamma_j(\xi')}$. 

\end{itemize}
\end{defn}

In the next subsection we will build in the possibility to move $\psi$ as well as the $\lambda_i$ from $\Lambda$ and the characters $\chi_t$, so that in a way they all become additional variables that our functions take as input. In the motivic setting from \Cref{sec:motivic} on, we will also move $K$ and even allow it to be not locally compact. To that end, we will have to introduce some more abstract relations and the motivic functions will no longer be complex valued. 

\subsection{Functions on uniformly enriched definable sets}\label{subs:enriched-functions:Qp}

Let $K$ be a finite field extension of $\QQ_p$, and let $\cS_{D,K}$ be the collection of all the $\cL_D$-structures on $K$ as described above, that is, when the choice of uniformizer $\pi_K$ of $\cO_K$ (which is used for the angular component maps) and the $\lambda\in \Lambda$ vary over all possible choices.

By an \emph{$\cS_{D,K}$-definable set} we now mean a collection of sets $X = (X_{K'})_{K'\in \cS_{D,K}}$ such that there is an $\cL_D$-formula $\varphi$ such that the definable set given by $\varphi$ in the structure $K'$ is precisely $X_{K'}$.

By an \emph{$\cS_{D,K}$-definable function} we mean a collection of functions $f = (f_{K'}:X_{K'}\to Y_{K'})_{K'\in \cS_{D,K}}$ whose collection of graphs forms an $\cS_{D,K}$-definable set, and such that $(Y_{K'})_{K'\in \cS_{D,K}}$ is a definable set as well.

By a \emph{point} on an $\cS_{D,K}$-definable set $X$, we mean a pair $x=(x_0,K')$ where $x_0$ lies on $X_{K'}$ and $K'$ is in $\cS_{D,K}$. 

By an \emph{enriched point} on an $\cS_{D,K}$-definable set $X$, we mean a tuple $x=(x_0,K',\psi,(\chi_t)_{t},(\lambda_{K'})_{\lambda \in\Lambda})$ where $(x_0,K')$ is a point on $X$, where $\psi:K\to \CC^\times$ is an additive character which lies in $\cD_K$, where for each basic term $t$, $\chi_t$ is a multiplicative character $R_t^\times\to \CC$, 
and where $\lambda_{K'}$ is the interpretation of $\lambda$ in $\ZZ$ according to the $\cL_D$-structure $K'$.

Now we can define a generalization of the functions from \Cref{Qp-functions}, where now $\psi$ as well as the $\lambda$ and the $\chi_t$ can move. For this, we work with the set $X^\enrich$ of enriched points on an $\cS_{D,K}$-definable set $X$. 

\begin{defn}\label{Qp-functions:enrich}
Let $K$ be a finite field extension of $\QQ_p$, and let $X$ be an $\cS_{D,K}$-definable set.
Define
$$
\cCexp_{M,K} (X^\enrich)
$$
as the 
$\ZZ$-algebra of functions on $X^\enrich$ taking values in $\CC(T_0,T_1,T_2,\ldots)$
generated by all functions of the form as $F_1$ and $F_2$ just below:
\begin{itemize}
\item For any integer $a<0$ and any $b \in \NN^J$ for a finite set $J\subset \NN$, the function $F_1$ sending any enriched  point $x$ on $X$ to the element
    $$
    F_1(x) = \frac{1}{1-q_K^a T^b},
    $$
with the multi-index notation $T^b$ for $\prod_{j\in J}T_j^{b_j}$. Denote this function $F_1$ by $\frac{1}{1-q_K^a T^b}$.



\item 

For any $\cS_{D,K}$-definable set $Z\subset X\times \RR_{t}$ with $t$ a basic tuple,  
any $\cS_{D,K}$-imaginary function $ g:Z\to K/\cM_K$ and any
$\cS_{D,K}$-definable functions
$$
\alpha:Z\to\ZZ^n,\ \beta:Z\to \ZZ,\ \gamma:Z\to \ZZ^J,  \mbox{ and } h:Z\to \RR_i,
$$
with $n\ge 0$, 
a finite set $J\subset \NN$, and 
basic tuple $i=(i_1,\ldots,i_\ell)$, the function $F_2$ sending an enriched point
$$
x=(x_0,K',\psi,(\chi_t)_{t},(\lambda)_{\lambda_\in\Lambda})
$$
on $X$ to the following finite sum over $\xi$ running in $Z_x := \{\xi\in \R_t\mid (x_0,\xi)\in Z_{K'}\}$, and with $\xi'=(x,\xi)$,
$$
\sum_{\xi\in Z_x} (\prod_{i=1}^n \alpha_i(\xi') )q_K^{\beta(\xi')} T^{\gamma(\xi')}  \psi ( g (\xi') ) \big(\prod_{j=1}^\ell \chi_{i_j}( p_j\circ h(\xi'))\big),
$$
where $p_j$ is the projection $R_i\to R_{i_j}$ onto the $j$th coordinate of $R_i$, and where $T^{\gamma(\xi')}$ stands for multi-index notation, that is, for $\prod_{j\in J} T_j^{\gamma_j(\xi')}$, and where $\alpha_i(\xi')$ is shorthand for $\alpha_{i,K'}(\xi')$ and so on for the other $\cS_{D,K}$-definable functions.
\end{itemize}
\end{defn}

\subsection{Results on integration}\label{sec:res:Qp:int}

We have the following results on the rings defined above. These follow from the more general motivic results in Section~\ref{sec:motivic} and a specialization argument, as will be explained in more detail in Section~\ref{sec:p-adic-transf}. Note that we will define a more formal form of integrability in \Cref{sec:iterated.int} for motivic functions, which, when specialized to the $p$-adic case, implies $L^1$ in the measure theoretic sense, see the following lemma. Note that it may be interesting to investigate the main results on integrability of \cite{CGH} in our setting, namely whether formal integrability (for functions on $p$-adic fields) is equivalent to being $L^1$, and to study the corresponding loci of integrability and transfer results for integrability. We leave this for some future. In this section, we say integrable as in \Cref{sec:iterated.int} for the formal notion of integrability (that we don't specify here yet), and we say $L^1$ for the classical measure theoretic notion of integrability. In this section, it is sufficient to know the following implication between these notions of integrability.

\begin{lem}[Integrability notions]\label{defn:integrability:K}
Let $K$ be a finite field extension of $\QQ_p$, let $X$ be $\cS_{D,K}$-definable and let $F\in \cCexp_{M, \cS_{D,K}}(X^{\enrich})$. Suppose that $F$ is  integrable as in \Cref{sec:iterated.int}. Then, for all choices of $\lambda\in \ZZ$ for $\lambda\in \Lambda$, of a uniformizer $\pi_K$ of $\cO_K$, of an additive character $\psi: K\to \CC$ trivial on $\cM_K$ and non-trivial on $\cO_K$, a collection of multiplicative characters $\chi_t: \RR_t^\times \to \CC^\times$, and a collection of $\tau_i\in \CC^\times$ with $|\tau_i|\leq 1$ for $i\in \NN$, the function
\[
X_{(K,\cL_D)}\to \CC: x\mapsto F(x,(\lambda)_{\lambda\in \Lambda}, \psi, (\chi_t)_t) (\tau_0, \tau_1, \ldots)
\]
is $L^1$, where the structure $(K,\cL_D)$ comes from fixing $\pi_K$ and the $\lambda$ in $\ZZ$.  Here, for the $L^1$ measure we take the normalized Haar measure on $K$ and the counting measures on $\RR_t$ and on $\ZZ$, and product measures of these.
\end{lem}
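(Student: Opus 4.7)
The plan is to deduce the $L^1$ property by specializing the cell-decomposition presentation of formally integrable motivic functions that will be established in \Cref{sec:iterated.int}. In that section, formal integrability of $F\in\cCexp_{M,\cS_{D,K}}(X^{\enrich})$ will be characterized via a finite decomposition, after a cell decomposition of the domain, of $F$ as a sum of products of generators of types $F_1$ and $F_2$ from \Cref{Qp-functions:enrich}, equipped with explicit convergence hypotheses on the exponents that appear. The first step is to invoke this presentation to reduce the lemma to the case where $F$ is a single atomic summand supported on a single cell, on which the specialized integral can be computed as an iterated sum-and-integral.

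Next, the required pointwise absolute-value estimates become available. Since the multiplicative characters $\chi_{t}$ and the additive character $\psi$ are unitary, one has $|\chi_{i_j}(p_j\circ h(\xi'))|\le 1$ and $|\psi(g(\xi'))|=1$; since $a<0$ and $b\in\NN^J$, one has $|q_K^a T^b|\le q_K^a<1$ whenever $|\tau_j|\le 1$, so each generator of type $F_1$ specializes to a complex number of absolute value at most $1/(1-q_K^a)$. Consequently, the absolute value of the specialization of $F$ is bounded pointwise by its character-free counterpart, in which each monomial $T^{\gamma(\xi')}$ is replaced by $\prod_j |\tau_j|^{\gamma_j(\xi')}$ and each character is replaced by $1$. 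It then suffices to bound the iterated integral of this non-negative counterpart, with respect to the product of the normalized Haar measure on $K^n$ and the counting measures on the $\ZZ$- and $\RR_t$-components, by an application of Tonelli's theorem.

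The main obstacle is the uniformity of this bound over all admissible specializations $|\tau_j|\le 1$, and in particular the case where some $\gamma_j<0$ with $|\tau_j|$ arbitrarily close to $0$, where the factor $|\tau_j|^{\gamma_j(\xi')}$ can grow without bound. To handle this, the plan is to exploit the fact that formal integrability in \Cref{sec:iterated.int} will be defined through strict inequalities on the exponents of $q_K$ and of the $T_j$ appearing on each cell (in the spirit of the positivity and summability conditions of \cite{CLoes,CLexp,CLbounded}), chosen precisely so as to ensure absolute convergence of the associated geometric-series presentation uniformly for every specialization with $|\tau_j|\le 1$. Since these strict inequalities are preserved under the specialization map, they translate directly into finiteness of the corresponding classical iterated integral, from which $L^1$-integrability follows.
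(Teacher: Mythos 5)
Your proposal is correct and follows essentially the same route as the paper's (very terse) proof: specialize via the cell-decomposition presentation, bound the unitary characters and the type-$F_1$ generators by constants, and observe that the formal integrability conditions of \Cref{sec:iterated.int} (in particular $b<0$ and $c\in\NN^J$ in \Cref{lem:sum:NN}) force absolute convergence of the resulting geometric-type series for every specialization with $|\tau_j|\le 1$, so Tonelli yields $L^1$. The only remark is that the ``uniformity over all $\tau$'' you flag as the main obstacle is not actually demanded by the statement, which fixes the $\tau_i$; but your resolution via the sign conditions on the exponents is exactly the mechanism the paper relies on.
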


Let $X$ be an $\cS_{D,K}$-definable set. Then we say that an $\cS_{D,K}$-definable set $Y\subset X$ has \emph{measure zero} (in $X$) if for every choice of $\cL_D$-structure on $K$, the resulting $\cL_D$-definable set $Y$ has measure zero in the $\cL_D$-definable set $X$, with the product measures as in \Cref{defn:integrability:K}. By the notion \emph{almost all} (enriched) points on an $\cS_{D,K}$-definable set $X$ we mean all points outside a measure zero $\cS_{D,K}$-definable subset of $X$.

\begin{thm}[Stability under integration]
Let $X\subset Y\times Z$ be $\cS_{D,K}$-definable sets, and let $F\in \cCexp_{M,\cS_{D,K}}(X^{\enrich})$. Assume that for almost every $y\in Y^{\enrich}$ 
the function $F(y, \cdot)$ is integrable over $Z$ (as an $\cL_D(y)$-definable set, and as in \Cref{sec:iterated.int}). Then there exists a unique function $G\in \cCexp_{M, \cS_{D,K}}(Y^{\enrich})$ such that for almost every $y$ in $Y^{\enrich}$ we have that
\[
G(y) = \int_{z\in X_y} F(y,z)dz.
\]
\end{thm}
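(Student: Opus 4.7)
The plan is to deduce this from the motivic stability theorem together with specialization; both are developed later in the paper, so the proof naturally belongs in Section~\ref{sec:p-adic-transf}. I would treat existence, the $L^1$ interpretation, and uniqueness separately.

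First, the motivic existence step. Establish the motivic analogue: given $\cF\in\cCM(X)$ with $X\subset Y\times Z$ that is integrable over $Z$-fibers in the sense of \Cref{sec:iterated.int}, there is a unique $\cG\in\cCM(Y)$ whose $y$-fiber computes $\int_{X_y}\cF(y,\cdot)$. The strategy is cell decomposition from Section~\ref{sec:S} applied in the $Z$-variables, then integration one variable at a time. Over $\VG$-variables one uses Presburger summation, which the algebra $\cCM$ is designed to support; over residue-ring variables integration is a finite sum, already built into the generating family of \Cref{Qp-functions:enrich}; over $\VF$-variables each cell contributes a geometric series of the shape $\sum_{n} q^{-an}T^{bn}$, whose closed form $\frac{1}{1-q^{a}T^{b}}$ is precisely why that rational function is placed among the generators of $\cCM$. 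Change-of-variables and Jacobian bookkeeping then close the induction.

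Second, the specialization step. Given the motivic $\cG$, invoke \Cref{thm:special} to produce, for each $K$ and choice of enriching data, a function $G\in\cCexp_{M,\cS_{D,K}}(Y^{\enrich})$. Specialization commutes with integration by construction, so for almost every $y\in Y^{\enrich}$ one has $G(y)=\int_{X_y}F(y,z)\,dz$. \Cref{defn:integrability:K} guarantees that the formal integrability assumption forces honest $L^1$ integrability once the Mellin variables $T_i$ are evaluated at any $\tau_i\in\CC^\times$ with $|\tau_i|\le 1$, so the right-hand side is a genuine Lebesgue integral matching $G(y)$ pointwise a.e. For uniqueness, if $G_1,G_2\in\cCexp_{M,\cS_{D,K}}(Y^{\enrich})$ both satisfy the conclusion, then $G_1-G_2$ vanishes outside a measure-zero $\cS_{D,K}$-definable set; the uniqueness clause built into the motivic statement, transported through specialization, forces $G_1=G_2$.

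The main obstacle is the motivic existence step. The delicate point is that the rational generators $\frac{1}{1-q^{a}T^{b}}$ must absorb every geometric series produced by the cell decomposition, including those in which the exponent $a$ and the multi-index $b$ depend on $\VG$-parameters and interact with Presburger constraints inherited from parent cells. Tracking the Mellin variables $(T_j)_{j\in J}$ coherently through iterated fibration over cells is precisely why $\cCM$ is built with multi-index rational functions, and verifying this closure at the motivic level is the technical heart of Sections~\ref{sec:iterated.int} and~\ref{sec:int:rev}.
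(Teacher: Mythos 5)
Your proposal follows the paper's own route: the paper derives this statement by combining the motivic stability result (Def-Thm~\ref{thm:gen-rel}, proved via cell decomposition and variable-by-variable integration in \Cref{sec:iterated.int} and \Cref{sec:int:rev}) with the specialization results of \Cref{sec:p-adic-transf} (\Cref{thm:special}, \Cref{thm:spec:KS}, \Cref{defn:integrability:K}), exactly as you outline. Your identification of the technical heart (closure of the generators under the geometric series arising from cells) also matches where the paper invests its effort.
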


Let us explain the final expression $G(y) = \int_{z\in X_y} F(y,z)dz$ of this theorem more precisely. The point $y\in Y$ is an enriched point, and so comes with a choice of characters $\psi, (\chi_t)_t$ and a choice of $\cL_D$-structure on $K$, i.e.\ a choice of $\lambda\in \ZZ$ for $\lambda\in \Lambda$ and of uniformizer $\pi_K$. Then $G(y)$ is an element of $\CC((T_i)_{i\in \NN})$ and $X_y$ becomes an $\cL_D(y)$-definable set in the structure $K$. For $z\in X_y$, also $F(y,z)$ becomes an element of $\CC((T_i)_{i\in \NN})$. The equality $G(y) = \int_{z\in X_y} F(y,z)dz$ now means that for every choice of $\tau_i\in \CC^\times$ with $|\tau_i|\leq 1$ we have that
\[
G(y)(\tau_0, \tau_1, \ldots) = \int_{z\in X_y} F(y,z)(\tau_0, \tau_1, \ldots)  dz,
\]
where the integration is against the product measure described in \Cref{defn:integrability:K}.

The following result is very close to the ideas about how our formal notion of integrability is defined in \Cref{sec:iterated.int}.

\begin{thm}[Fubini-Tonelli]
Let $X\subset Y\times Z$ be $\cS_{D,K}$-definable sets, and let $F\in \cCexp_{M, \cS_{D,K}}(X^{\enrich})$. Then $F$ is integrable over $X$ (as in \Cref{sec:iterated.int}) if and only if there exists an $\cS_{D,K}$-definable measure zero set $Z_0\subset Z$ such that for every $\cS_{D,K}$-definable set $A\subset X$ the following holds.

There is $G_A$ in $\cCexp_{M, \cS_{D,K}}(Z^{\enrich})$ which is integrable (as in \Cref{sec:iterated.int}), and for each enriched point $z$ on $Z\setminus Z_0$ the function $(\11_A\cdot F)(\cdot, z)\in \cCexp_{M, (K, \cL_d(z))}(Y_z)$ is integrable (as in \Cref{sec:iterated.int}) with integral over $Y_z$ equal to $G_A(z)$.

If $F$ is integrable as in \Cref{sec:iterated.int}, then we furthermore have that
\[
\int_{X}\11_A\cdot F = \int_{Z} G_A.
\]
\end{thm}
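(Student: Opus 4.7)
The plan is to derive this theorem as a specialization of its motivic counterpart, which will be established in \Cref{sec:iterated.int} and \Cref{sec:int:rev}, by invoking the specialization theorem \Cref{thm:special}. The algebra $\cCexp_{M,\cS_{D,K}}(X^{\enrich})$ and the formal notion of integrability are designed precisely so that every object in the $p$-adic setting arises from a motivic one and every motivic integration statement specializes uniformly. \Cref{defn:integrability:K} then converts formal integrability after specialization into $L^1$-integrability with respect to the product measure on $K$, $\ZZ$ and the residue rings $\RR_t$, which is the measure used here.

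First, I would lift $F$ to a motivic function $\widetilde F \in \cCM(\widetilde X)$ over a motivic lift $\widetilde X$ of $X$ (and similarly for $Y$ and $Z$), using that each generator of type $F_1$ or $F_2$ in \Cref{Qp-functions:enrich} is manifestly the $p$-adic specialization of a motivic generator. The motivic Fubini--Tonelli theorem, which will be proved in \Cref{sec:iterated.int}, then furnishes, for every motivic definable $\widetilde A \subset \widetilde X$, a motivic function $\widetilde G_{\widetilde A} \in \cCM(\widetilde Z)$ witnessing iterated integrability, together with a motivic measure-zero set $\widetilde Z_0 \subset \widetilde Z$ and the equality $\int_{\widetilde X} \11_{\widetilde A} \cdot \widetilde F = \int_{\widetilde Z} \widetilde G_{\widetilde A}$. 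Specializing these data via \Cref{thm:special} yields the desired $\cS_{D,K}$-definable $Z_0$, the function $G_A$ associated to any given $A$, and the equality of integrals, since \Cref{thm:special} commutes with the integration operations built in \Cref{sec:int:rev}.

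The expected main obstacle is the reverse implication: to show that if, for every $\cS_{D,K}$-definable $A \subset X$, the iterated integrability condition holds with the stated equality of integrals, then $F$ is formally integrable in the sense of \Cref{sec:iterated.int}. The universal quantifier over $A$ is the essential ingredient, as it lets one test integrability on each cell arising from a cell decomposition of \Cref{sec:S}, which is precisely how formal integrability is constructed inductively. One must verify that this cell-by-cell $p$-adic check lifts back to the motivic cell decomposition whose integrability defines $\widetilde F$ as integrable; this is where the bulk of the technical work lies. Once this translation is made precise, the motivic Fubini--Tonelli theorem and \Cref{thm:special} together close the argument in both directions.
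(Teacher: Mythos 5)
Your proposal matches the paper's approach: the paper derives all of the results in Section~\ref{sec:res:Qp:int}, including this Fubini--Tonelli statement, from the motivic Fubini--Tonelli (\Cref{cor:Fubini-Ton}, which is itself essentially a restatement of the definition of integrability in \Cref{thm:gen-abs}) together with the specialization results of Section~\ref{sec:p-adic-transf}. The reverse implication you single out as the main obstacle is in fact close to definitional, since formal integrability is \emph{defined} by exactly this iterated-integral condition over varying $A$ once one works motivically over the collection $\cS_{D,K}$ itself.
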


If $f: K^n\to K^n$ is an $\cS_{D,K}$-definable function, then differentiability of $f$ and its Jacobian are defined in the usual way. If $f$ is differentiable at a point $x$, we denote by $(\Jac f)(x)$ the Jacobian of $f$ at $x$. Since we are working in a $p$-adic field, definable functions are automatically differentiable almost everywhere.

\begin{thm}[Change of variables]
Let $X,Y$ be $\cS_{D,K}$-definable sets in $K^n$ for some $n$, and let $f: X\to Y$ be an $\cS_{D,K}$-definable bijection. Let $F\in \cCexp_{M,\cS_{D,K}}(Y^{\enrich})$. Then
\[
\int_{X}f^*(F) q_K^{-\ord \Jac_f} = \int_Y F,
\]
where $f^*(F)$ is the composition $F\circ f$.
\end{thm}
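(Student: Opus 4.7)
The plan is to deduce this $p$-adic change of variables formula from its motivic counterpart (to be established in Section~\ref{sec:int:rev}) via the specialization theorem of Section~\ref{sec:p-adic-transf}. First one proves the analogous identity at the motivic level for $F$ in $\cCM$ and a motivic-definable bijection $f:X\to Y$ between $\cS$-definable subsets of valued-field affine space, and then one invokes Theorem~\ref{thm:special} to evaluate both sides on an arbitrary enriched point; the pullback $f^*$ and the Jacobian factor $q_K^{-\ord\Jac_f}$ are compatible with specialization by construction of $\cCM$ in Section~\ref{sec:motivic}.

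To prove the motivic version, I would proceed by induction on the number $n$ of valued-field variables, combined with the cell decomposition theorem of Section~\ref{sec:S} applied to (the graph of) $f$ together with $Y$. On each cell one arranges $f$ in triangular form: its $i$-th valued-field coordinate depends only on $x_1,\dots,x_i$ through a controlled expression coming from the cell data (a center, an angular component, and a valuation). The Fubini--Tonelli theorem already stated in this section then reduces the $n$-dimensional identity to $n$ successive one-dimensional changes of variables, and $\Jac f$ factors accordingly as the product of the diagonal partial derivatives of the triangular presentation.

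For the one-dimensional step one argues almost everywhere on the cell: at a point $x_0$ at which the definable map $f$ is differentiable (which holds outside a measure-zero definable set, as in the Hensel-minimal setting of the appendix), the restriction of $f$ to a sufficiently small ball $B$ around $x_0$ acts as the affine map $x\mapsto f(x_0)+f'(x_0)(x-x_0)$ up to an error in a strictly smaller ball, so that $f(B)$ is a ball of valuative radius $\ord B + \ord f'(x_0)$. This produces the desired factor $q_K^{-\ord f'(x_0)}$ in the push-forward measure. The remaining generators of $\cCexp_M$ from \Cref{Qp-functions:enrich}, namely the rational factors $1/(1-q_K^a T^b)$ and the finite sums involving $\psi\circ g$, $\chi_{i_j}\circ p_j\circ h$, $q_K^\beta$, $T^\gamma$ and $\alpha_i$, involve no integration in the variable being changed and simply transport through the substitution, so $f^*F$ lies again in $\cCexp_{M,\cS_{D,K}}(X^\enrich)$.

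The main obstacle is ensuring that the cell decomposition available in the expanded language $\cL_D$, with the new residue-ring sorts $\RR_t$ and higher-depth angular components $\ac_t$ of \Cref{defnLdacQp}, is strong enough to yield the required triangular form of $f$ while preserving compatibility with the formal integrability notion of \Cref{sec:iterated.int}. Once cell decomposition from Section~\ref{sec:S} and motivic Fubini from Section~\ref{sec:iterated.int} are in hand, the motivic change of variables formula and its $p$-adic specialization via Theorem~\ref{thm:special} are essentially bookkeeping on top of the classical one-variable $p$-adic change of variables.
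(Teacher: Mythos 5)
Your proposal follows the paper's route: the $p$-adic statement is obtained by specializing the motivic change of variables formula (Theorem~\ref{thm:cov}), which the paper proves exactly as you outline --- cell decomposition reduces $f$, piecewise and after auxiliary $\RR_t$-parametrization, to a composition of maps changing one valued-field variable at a time, and the one-variable case is handled by the Jacobian property. The only point to sharpen is that your one-dimensional step should invoke the uniform Jacobian property on the twisted boxes of the cell decomposition (Addendum~\ref{add.Jacprop}), rather than pointwise differentiability on ``sufficiently small'' balls around each $x_0$, since the balls on which the affine approximation holds must coincide with the fibers over which the integral is actually computed.
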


Since the characters $\psi$ and $\chi_t$ appear explicitly in the notion of enriched points, we can move these and consider them as variables. This allows us to define Mellin transforms on the motivic level as follows.

\begin{defn}[Mellin transform]
Let $F\in \cCexp_{M, \cS_{D,K}}(((K^\times)^n)^{\enrich})$ be integrable as in \Cref{sec:iterated.int}. Take $j_1, \ldots, j_n$ positive integers such that the $T_{j_i}$, resp.~the $\lambda_{j_i}$, do not appear in any of the generators, resp.~formulas, in any of the data defining $F$. We define the \emph{Mellin transform of $F$} as
\[
\MM(F) = \int_{x\in (K^\times)^n} F(x)\prod_{i=1}^n T_{j_i}^{\ord x_i} \chi_{\lambda_{j_i}}(\ac_{\lambda_{j_i}}(x_i))\in \cCexp_{M, \cS_{D,K}}(\{0\}^{\enrich}).
\]
\end{defn}

In this definition, the elements $\lambda_{j_i}\in \Lambda$ and the characters $\chi_{\lambda_{j_i}}$ should be thought of as variables for the function $\MM(F)$, for $i=1,\ldots,n$. In more detail, by varying the values of the $\lambda_{j_i}$ and the $\chi_{\lambda_{j_i}}$, the object $\MM(F)$ naturally becomes a function on the dual space of $(K^\times)^n$, which is nothing but the classical Mellin transform.

Note that using any choice of additive character $\psi$, one can also take the Fourier transform $\cF(f)$ of an integrable (as in \Cref{sec:iterated.int}) function $f$ in $\cCexp_{M, \cS_{D,K}}((K^n)^{\enrich})$, which  is given by
$$
\cF(f)(y) =  \int_{x\in K^n} f(x) \psi(x\cdot y ).
$$

A key point is that $\cF(f)$ and $\MM(F)$ live in our frameworks, more precisely they belong to $\cCexp_{M, \cS_{D,K}}((K^n)^{\enrich})$, resp.~to $\cCexp_{M, \cS_{D,K}}(\{0\}^{\enrich})$, by the stability under integration results stated above in \Cref{sec:res:Qp:int}.

For all of the results above in this section, one can also formulate family results. This situation will be dealt with in greater generality in Section~\ref{sec:motivic}.

\subsection{Link with classical Mellin transforms}\label{subs:classical:Mel}

Let us make a link with the classical Mellin transform on the multiplicative group of locally compact fields, which is injective on $L^1$ functions by the classical theory of Fourier transforms on locally compact abelian groups.
Let $f$ be a complex-valued $L^1$ function on $(K^\times)^n$ for some finite field extension $K$ of $\QQ_p$, against the measure which is induced from the Haar measure $|dx|$ on $(K^n,+)$ normalized so that $\cO_K^n$ has measure $1$.\footnote{Note that it would also be natural to use the Haar measure on $(K^\times)^n$, which is $|dx|/|\prod_{i=1}^nx_i|$, but this will just amount to a shift by $1$ in the variables $s_i$ later below, and also, one can always divide or multiply $f$ with $|\prod_{i=1}^nx_i|$ to go from one Haar measure to the other.} Then the classical Mellin transform $\cM(f)$ of $f$ is usually defined as the function in complex variables $s_i$ with real part at least $0$ and in multiplicative characters $\chi_i$ on $\cO_K^\times$, with finite image and factorizing through, say, $(\cO_K / \cM^{N_i})^\times$ for some integers $N_i>0$, by the integral
$$
\cM(f) = \int_{x\in (K^\times)^n} f(x) \prod_{i=1}^n |x_i|^{s_i} \chi_i(\ac_{N_i}(x))|dx|.
$$
Up to a shift of $s_i$ by plus or minus one, this is in fact the standard definition of Mellin transform on the multiplicative group of a local field $K$. Note that one may replace the expression $q_K^{-s_i}$ by another complex variable $T_i$ which runs correspondingly over nonzero complex numbers of modulus at most $1$. Then the restriction of $\cM(f)$ to the unit circle in the $T_i$-variables (and up to the shift of the $s_i$) corresponds to the usual Fourier transform on $(K^\times)^n$, since the collection of such tuples $(\chi_i)_i$ together with the $n$-th Cartesian power of the unit circle form the dual group of $(K^\times)^n$, see e.g.~\cite{Igusa:intro}.
The injectivity for Fourier transforms thus yields that $\cM(f)$ is zero if and only if $f$ is almost everywhere zero.

Note that if $f$ is furthermore like as in \Cref{subs:fixed-functions:Qp}, more precisely that $f$ belongs to $\cCexp_{M,(K,\cL_D)} ( (K^\times)^n )$, and if $f$ is the specialization of some motivic, integrable function, then $\MM(f)$ and $\cM(f)$ are essentially the same function, taking as input the variables $\chi_i$ and the $T_i$ for $i=1,\ldots,n$.

\subsection{Positive characteristic local fields}\label{subs:Fqt}
All of the definitions from this section make sense also when $K$ is $\FF_q\llp t\rrp$, where $q$ is some power of a prime $p$, and even more generally for any local field of positive characteristic. We can even maintain all the same notation when $K=\FF_q\llp t\rrp$ since $K$ is visually present in the objects, often as an index $K$. Note however that none of the results of the above part of \Cref{sec:p-adic:mellin} hold for such $K=\FF_q\llp t\rrp$, as already the study of the definable sets and functions in $K^n$ remains elusive up to date.

In the final section of this paper, in \Cref{sec:p-adic-transf}, we will show that given a finite amount of objects, like finitely many motivic functions, one can specialize them to be as in the above definitions with $K= \FF_q\llp t\rrp$ and, as soon as the characteristic of $K$ is large enough (compared to the given objects), some of the results will still hold for these specializations to $K$. We will also give partial transfer results to switch from mixed characteristic to positive equicharacteristic, when studying equalities of integrals and more generally of (specializations of) functions of class $\cCM$ as defined below.

\section{Valued fields and $\cS$-definable objects}\label{sec3}

We begin working towards our definitions of rings of motivic functions. In this section we introduce the relevant notation on valued fields, and we introduce the language we will work with.

\subsection{Valued fields} 
\label{sec:D-bounded}

We fix some notation for valued fields, which in this paper we always assume to be non-trivially valued fields. Recall first that a (non-trivially) valued field $K$ is by definition the field of fractions of a valuation ring $\cO\subsetneq K$, together with the information of $\cO$ as a proper subring of $K$.

Let $K$ be a valued field with valuation ring $\cO_K$ and let $\cM_K$ be the maximal ideal of $\cO_K$. The value group of $K$ is by definition $K^\times/\cO_K^\times$ in multiplicative notation, and comes with a natural map $K^\times\to K^\times/\cO_K^\times$ that we denote by $|\cdot|$; there is a natural ordering on the value group where one has $|x|< |y|$ for $x$ and $y$ in $K^\times$ if and only if $x/y$ lies in $\cM_K$.  We will more often use additive notation for the value group, and we denote the additively written value group of $K$ by $\VG_K$, with valuation map $\ord : K \to \VG_K\cup \{+\infty\}$.
For $i$ in $\VG_{K}$, write
$$
B_i(0) := \{ x\in \cO_K\mid \ord ( x )  >  i  \}
$$
for the open ball of (valuative) radius $i$ around $0$.
For $i$ in $\VG_{K}$ 
define
$$
\RR_{i,K}:=\cO_K / B_i(0),
$$
as the quotient ring of $\cO_K$ modulo the ideal $B_i(0)$ if $i\ge 0$, and as the trivial ring $\{0\}$ when $i<0$. The notation  $\RR$ stands for \emph{residue ring}.
We also write $\RF_K$ or $k_K$ instead of $\RR_{0,K}$; it is the residue field of $\cO_K$. Write $\VG_{\geq 0,K}$ for the set of $i\in \VG_K$ with $i\geq 0$.
For $j$ and $i$ in $\VG$, write
$$
\res_{j,i} : \RR_{j,K}\to \RR_{i,K}
$$
for the natural projection map when $j\ge i$, and for the constant map with value $0$ when $j<i$.
By an angular component map modulo $B_i(0)$ we mean a multiplicative map
$$
\ac_i : K \to \RR_{i,K}
$$
sending zero to zero and coinciding on the unit group $\cO_K^\times$ with the restriction to $\cO_K^\times$ of the projection map $\cO_K\to \RR_{i,K}$.

We call angular component maps $\ac_i$ and $\ac_j$ for $i<j$ in $\VG_{\geq 0,K}$ compatible when they make a commutative diagram with the map $\res_{j,i}$. If one has $\VG_K\cong\ZZ$, then pairwise compatible angular component maps modulo $B_i(0)$ exist for all $i\ge 0$, and they can be constructed by using a uniformizer.


For $i$ in $\VG_K$, we call $ \{ x\in \cO_K\mid \ord ( x- a  )  \ge i  \}$ a closed ball of valuative radius $i$ around $a$, and similarly $ \{ x\in \cO_K\mid \ord ( x- a  ) > i  \}$ the open ball of valuative radius $i$ around $a$.

\subsection{Language and structures}\label{subsec:def}

We use the first order logic notions of multi-sorted languages, structures, theories, models, formulas, 
and so on.
We introduce  a variant $\cL_D$ of the generalized Denef-Pas language from \cite{CHallp} in \Cref{defnLdac}.
We first recall the classical Denef-Pas language $\LPas$, from which we will build $\cL_D$, similarly as in \Cref{sec:p-adic:mellin}.

\begin{defn}[The Denef-Pas language]\label{def:LPas}
Let $\LPas$ be the language with sorts\\

\begin{itemize}
\item[(1)] the valued field sort $\VF$,
\item[(2)] the value group sort $\VG$, 
\item[(3)] the residue field sort $\RR_0$,
\end{itemize}
and with  the following symbols\\

\begin{itemize}
\item[(a)]  the ring language on the sort $\VF$ (namely, with symbols $+,-,\cdot,0,1$),
\item[(b)] the Presburger language on $\VG$ (namely, with binary function symbols $+,-$ and $\max$, constant symbols $0,1$, and the relation symbols $\leq$ and $\equiv_n$ for each integer $n>1$), 

\item[(c)] the ring language on $\RR_0$,

\item[(d)] a function symbol $$\ac:\VF\to \RR_0$$ for an angular component map modulo the maximal ideal $B_{0}(0)$,

\item[(e)] a function symbol
$$
\ordalt:\VF\to\VG
$$ for the valuation map 
on $\VF^\times$ extended by $0$ on $0$.
\end{itemize}


\end{defn}

Let $\Lambda$ be a countably infinite set of symbols (which are supposed to be different from all symbols of $\LPas$) and let $\LPas(\Lambda)$ be $\LPas$ together with a constant symbol $\lambda$ in the sort $\VG$ for each $\lambda\in\Lambda$.

Consider a term $t$ in $\LPas(\Lambda)$. Call $t$ a basic term $t$ if $t$ contains no variables and it is $\VG$-valued. Thus, in a basic term $t$ there occur only constant and function symbols from $\LPas(\Lambda)$, and $t$ takes values in $\VG$. As an example of a basic term one can think of $t$ being $\ordalt(N)+\max\{\lambda_1, \lambda_2+2\}-\lambda_3$ for some integer $N>0$ and some $\lambda_1, \lambda_2,\lambda_3\in\Lambda$.


\begin{defn}\label{defnLdac}
Let $\cL_D$ be the language $\LPas(\Lambda)$ together with, for each basic term $t$, \\  

\begin{itemize}
\item[(3$_t$)] a residue ring sort $\RR_{t}$ which is a new sort when $t$ is not the term $0$, and which coincides with the residue field sort from $\LPas$ if $t$ is the term $0$.
     \\
\end{itemize}
and with the following symbols, for each basic terms $t$ and $t'$,\\

\begin{itemize}
\item[(c$_t$)] the ring language on $\RR_{t}$, which coincides with the ring language on $\RR_0$ from $\LPas$ when $t$ is the term $0$,

\item[(d$_t$)] a function symbol $$\ac_{t}:\VF\to \RR_{t}$$ for an angular component map modulo $B_{t}(0)$, which coincides with the symbol $\ac$ from $\LPas$ when $t$ is the term $0$,

\item[(f)] a function symbol $$\res_{t,t'}:\RR_t\to\RR_{t'}.$$
\end{itemize}
\end{defn}


We will work with the following collection of $\cL_D$-structures. 


 \begin{defn}\label{defnSdac}
Let $\cS_D$ be the collection of all $\cL_D$-structures with as $\VF$-sort a henselian valued field $K$ of characteristic zero with value group $\VG_K\simeq\ZZ$ and with natural interpretations of all symbols and where furthermore the angular component maps are mutually compatible. In more detail, $\cK$ in $\cS_D$ consists of a (non-trivially) henselian valued field $K$ of characteristic zero as the universe for the $\VF$-sort, the ordered abelian group $\VG_K$ as universe for the sort $\VG$ with furthermore $\VG_K\simeq\ZZ$, for each basic term $t$ with interpretation $t_\cK$ in $\VG_K$ the residue ring  $\RR_{t_\cK,K}$ as universe for the sort $\RR_{t}$,
and, where the symbols from (a) up to (f) in $\cL_D$ have their natural meaning, and where the angular component maps from (d$_t$) are mutually compatible with each other.
\end{defn}

\begin{rem}
By the mentioned natural meaning for the symbols in \Cref{defnSdac} it is meant that, in an $\cL_D$-structure as in \Cref{defnSdac}, each $\lambda$ from $\Lambda$ is interpreted as some chosen element $\lambda_\cK$ of $\VG_K$,
one has the ring structure (for the ring language) on $K$ and also on each of the $R_t$, the valuation map $\ord:K^\times \to \VG_K$ extended by zero on zero as meaning for the symbol $\ordalt$, compatible angular component maps, the Presburger structure on $\VG_K$ (where $\equiv_n$ stands for the equivalence relation modulo $n$ for $x,y$ in $\ZZ$, and $\max(x,y)$ for the maximum of $x,y$ in $\ZZ$), and the maps $\res_{t_\cK,t'_\cK}$ as interpretations for the symbols $\res_{t,t'}$.
\end{rem}

\begin{rem}
Note that, in a valued field $K$, the only choices for the symbols of the language $\cL_D$ occur for the (mutually compatible) angular component maps and for the constant symbols $\lambda$ from $\Lambda$. If one would interpret all $\lambda$ from $\Lambda$ as $0$ in $\VG_K$ and if one would allow only basic terms of the form $\ord N$ for integers $N>0$, then one would recover the generalized Denef-Pas structure on $K$ from \cite{CHallp}.
\end{rem}

One has elimination of $\VF$ and of $\VG$-quantifiers in a slightly bigger language than $\cL_D$ which has the same definable sets as $\cL_D$; this is explained in \Cref{sec:QE:Pres} and is a variant of \cite[Theorem 5.1.2]{CHallp}.
Note that all characteristics are allowed for the residue field of $\VF_K$ for $K$ in $\cS_D$, and any finite ramification.

The index $D$ in $\cL_D$ and in $\cS_D$ stands for the freely varying \emph{depth} of the residue rings $\RR_t$. By a small abuse of notation, we often write $K$ for a structure in $\cS_D$ as well as for its valued field sort.

\subsection{Definable sets and points, in collections of structures}\label{sec:def:set:gen}\label{def:DLS}\label{sec:def:set}


In this \Cref{sec:def:set} we work temporarily with a more general set-up, namely, with a general first order language $\cL$, to recall some notions of \cite{CH-eval}.
Let $\cL$ be any language and let $\cS$ be any nonempty collection of $\cL$-structures. (A reader may at first choose to work with the case of $\cL$ being $\cL_D$ and  $\cS$ being $\cS_D$.) 
We recall the general notion of $\cS$-definable objects from \cite[section 2]{CH-eval}. 

\begin{defn}
By an
$\cS$-definable set
we mean a collection of sets
$$
X = (X_K)_{K\in \cS}
$$
such that there is an $\cL$-formula $\varphi$ with $\varphi(K)=X_K$ for all $K$ in $\cS$, with $\varphi(K)$ the solution set of the formula $\varphi$, namely the set of tuples $x$ in the structure $K$ such that $\phi(x)$ holds in $K$. By an $\cS$-definable function $f:X\to Y$ between $\cS$-definable sets $X$ and $Y$, we mean a collection of functions $ (f_K:X_K\to Y_K)_{K \in \cS}$ such that the collection of the graphs of the $f_K$ forms an $\cS$-definable set.
\end{defn}


\begin{defn}
For an $\cS$-definable set $X$, a pair $x=(x_0,K_0)$ with $x_0$ in $X_{K_0}$ and  $K_0$ in $\cS$ is called a \emph{point} on 
$X$. We also call $x$ an \emph{$\cS$-point}.
\end{defn}

\begin{defn}
Given a point $x=(x_0,K_0)$ on an $\cS$-definable set $X$, write $\cL(x)$ for the language $\cL$ expanded by constant symbols for the entries of the tuple $x_0$, and write $\cS(x)$ for the collection of $\cL(x)$-structures which are $\cL(x)$-expansions of $\cL$-structures in $\cS$ that  are elementarily equivalent to the $\cL(x)$-structure $K_0$. (In $K_0$, the tuple of the new constant symbols is interpreted by $x_0$.)
\end{defn}


Let $x=(x_0,K_0)$ be an $\cS$-point. By a small abuse of notation, we denote by $\{x\}$ the $\cS(x)$-definable set sending $K$ in $\cS(x)$ to  $\{x\}_K := \{x_{0,K}\}$, where $x_{0,K}$ is the interpretation in the $\cL(x)$-structure $K$ of the tuple of constant symbols introduced for $x_0$. With a similarly harmless abuse of notation, we also sometimes just write $x$ instead of $\{x\}$.
Likewise, if $g:X\to Z$ is an $\cS$-definable function and $x=(x_0,K_0)$ is a point on $X$, then we write $g(x)$ for the point $(z_0,K_0)$ on $Z$ with
$z_0=g_{K_0}(x_0)$. 

Consider an $\cS$-point $x$. Then any $\cS$-definable set $Z$
naturally determines an $\cS(x)$-definable set $Z_{\cS(x)}$, namely, $Z_{\cS(x)}$ associates to $K$ in $\cS(x)$ the set $Z_{K_{|\cL}}$, where $K_{|\cL}$ in $\cS$ is the $\cL$-reduct of $K$. Note that the notation $Z_{\cS(x)}$ is in close analogy to the usual notation for base change in algebraic geometry.
Similarly, any $\cS$-definable function $g:Z\to Y$ between $\cS$-definable sets determines an $\cS(x)$-definable function $g_{\cS(x)}:Z_{\cS(x)}\to Y_{\cS(x)}$ for any $\cS$-point $x$.
We use natural related notation, for example, for a point $y$ on $Y$, we write  $g^{-1}(y)$ for the $\cS(y)$-definable subset $(g_{\cS(y)})^{-1}(\{y\})$ of $Z_{\cS(y)}$, namely, for the pre-image of $\{y\}$ under $g_{\cS(y)}$.
When $p:W\subset X\times Y \to Y$ is the projection to the $Y$-coordinate with $X,Y,W$ some $\cS$-definable sets,  we sometimes write $W_y$ for 
$p^{-1}(y)$, which is an $\cS(y)$-definable subset of $W_{\cS(y)}$.
If furthermore $h:W\to V$ is an $\cS$-definable function, then we write $h(\cdot,y)$ for the restriction of $h_{\cS(y)}$ to $W_y$.

We employ the obvious terminology of set-theory, for Cartesian products, injections, surjection, bijections, fiber products. For example, we call an $\cS$-definable set $A$ nonempty if $A_K$ is nonempty for at least one $K$ in $\cS$.
We call a finite collection of nonempty $\cS$-definable sets $A_i$ a finite partition of an $\cS$-definable set $X$ if for each $K$ in $\cS$, the finite collection consisting of those sets $A_{i,K}$ which are nonempty forms a partition of $X_K$. 
We call an $\cS$-definable set $A$ a singleton if $A_K$ is a singleton for each $K$ in $\cS$.
An $\cS$-definable function $f:X\to Y$ is called a finite cover of $Y$ if $f$ is surjective and has finite fibers.

\begin{rem}\label{rem:class-set}
To avoid set-theoretical issues, one may either consider $\cS$ as a class (instead of as a set), and the operations on $\cS$ as class operations, or, one may fix a (suitably large) cardinal upper bound for the $\cL$-structures in $\cS$ and consider $\cS$ as a set.
\end{rem}

\begin{rem}\label{rem:assignment}
The terminology of $\cS$-definable set generalizes the terminology of definable subassignments from \cite{CLoes} \cite{CLexp}, and allows us to use the notion of evaluation of motivic functions as in \cite{CH-eval}.  
Descriptions of an $\cS$-definable set $X$ can be done in two simple ways: by giving an $\cL$-formula for $X$,  or, by specifying $X_K$ for each $K$ in $\cS$.
\end{rem}

\section{Motivic functions}\label{sec:motivic}

From now on, for the whole paper, let $\cL$ be a language containing $\cL_D$ from \Cref{defnLdac} and with the same sorts, and let $\cS$ be a nonempty collection of $\cL$-structures such that the $\cL_D$-reduct of any $K$ in $\cS$ belongs to $\cS_D$.
Furthermore, we suppose that $\cS$ is of one of the following forms: firstly $\cS$ is $\cS_D$, secondly, $\cS$ is $\cS_D(x)$ for some $\cS_D$-point $x$, and finally and most generally, $\cS$ is D-h-minimal as specified in \Cref{app:hensel-min}.
We speak about $\cS$-objects as in \Cref{def:DLS}.  


\subsection{Basic notation}\label{sec:basicnot}
We write $\VF$ for the $\cS$-definable set  $(\VF_K)_{K\in \cS}$. 
We write  $\cO$ for the $\cS$-definable set $(\cO_K)_{K\in \cS}$ with $\cO_K$ the valuation ring of $\VF_K$,  and similarly for $\cM$, $\VG$, $\VG_{\ge 0}$, $\RF$, etc. We sometimes write $K$ for $\VF_K$. For each basic term $t$ we write
\begin{equation}\label{rri}
\RR_t \quad  \mbox{ for }\quad   \big( \RR_{t_K,K}\big)_{K\in \cS}.
\end{equation}
We write $\{0\}$ for the $\cS$-definable set $R_{-1}$, and thus, one has $R_{-1,K}=\{0\}_K=\{0\}$ for each $K$ in $\cS$. 
More generally, for $\ell\ge 0$, we call a tuple $t = (t_1,\ldots,t_\ell)$ of basic terms a basic tuple, and we write
\begin{equation}
\RR_{t}\quad  \mbox{ for }\quad  \prod_{j=1}^\ell \RR_{t_j},
\end{equation}
which by convention denotes the $\cS$-definable set $\{0\}$ when $\ell=0$.

We sometimes write $\ZZ$ for $\VG$ and $\NN$ for $\VG_{\ge 0}$, when there is no confusion that they are considered as $\cS$-definable sets. 

\begin{defn}\label{def:im:DLS}
Let $X$ be an $\cS$-definable set. By an $\cS$-imaginary  function $g$ from $X$ to $\VF/\cM$ we mean a collection of functions $(g_K:X_K\to \VF_K/\cM_K)_{K\in \cS}$ such that $(G_K)_{K\in \cS}$ is an $\cS$-definable set, where $G_K\subset X_K\times \VF_K$ is the preimage of the graph of $g_K$ under the projection $X_K\times \VF_K\to X_K\times \VF_K/\cM_K$. 
\end{defn}

We already include one basic finiteness result for an $\cS(x)$-definable function $f: \RR_t \to \VG$ that we will need later in this section, namely for the definition of motivic functions in \Cref{defn:cCD}. Its proof will be given in \Cref{sec:weak.orth}.

\begin{prop}[Finiteness]\label{prop:finite0}
Let $t$ be a basic tuple and let $x$ be an $\cS$-point. Every $\cS(x)$-definable function $f: \RR_t \to \VG$ has finite image.
\end{prop}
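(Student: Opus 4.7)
The strategy rests on the observation that the language $\cL_D$ contains no function symbol from any residue ring sort $\RR_s$ to the value group sort $\VG$, so $\RR_t$ and $\VG$ are ``weakly orthogonal''. Consequently any $\cS(x)$-definable subset of $\RR_t \times \VG$ should decompose into a finite union of ``rectangles'', and combined with the fact that $f$ is single-valued, this will force the image of $f$ to be finite.

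First I would invoke the quantifier elimination results from \Cref{sec:QE:Pres}, a variant of \cite[Theorem 5.1.2]{CHallp}, eliminating $\VF$- and $\VG$-quantifiers in a mild expansion of $\cL_D$ having the same definable sets. The $\cL(x)$-formula defining the graph $G \subseteq \RR_t \times \VG$ of $f$ then becomes a Boolean combination of atomic subformulas and formulas whose remaining quantifiers range only over residue ring sorts. Because no symbol of $\cL_D$ produces a $\VG$-element from $\RR$-arguments, no such atomic subformula can involve both the free $\VG$-variable and a free residue-ring variable simultaneously. The upshot is a decomposition
\[
G = \bigcup_{i=1}^N A_i \times B_i,
\]
where each $A_i$ is an $\cS(x)$-definable subset of $\RR_t$ and each $B_i$ is an $\cS(x)$-definable subset of $\VG$. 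This is exactly the weak orthogonality of $\RR_t$ and $\VG$ that will be formalized in \Cref{sec:weak.orth}.

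Next I use that $f$ is a function: for any $i$ and any $\bar r \in A_i$ one has $B_i \subseteq \{f(\bar r)\}$, so either $A_i = \emptyset$ or $B_i$ is a singleton $\{b_i\}$. In both cases
\[
\mathrm{Im}(f) \subseteq \bigl\{ b_i : 1 \le i \le N,\ A_i \neq \emptyset \bigr\},
\]
which is a finite set of cardinality at most $N$.

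The main obstacle will be to secure the rectangular decomposition at the level of generality required. The parameters at the point $x$ may sit in the $\VF$-sort and feed into $\RR$- and $\VG$-parameters through $\ac_s$ and $\ordalt$, and $\cS$ can be D-h-minimal rather than $\cS_D$ itself. Granted the quantifier elimination statements of \Cref{sec:QE:Pres} and the complements on Hensel minimality in the appendix, which together should yield the rectangle form uniformly for all $\cS$ considered in the paper, the rest of the argument is the short combinatorial step carried out above.
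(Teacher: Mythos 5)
There is a genuine gap at the central step. Your rectangular decomposition $G=\bigcup_i A_i\times B_i$ is justified by the claim that, after quantifier elimination, ``no atomic subformula can involve both the free $\VG$-variable and a free residue-ring variable simultaneously'' because $\cL_D$ has no symbol from $\RR$-sorts to $\VG$. But the interaction you need to exclude goes the other way, and it is really there: the quantifier elimination of \Cref{sec:QE:Pres} only holds in the expansion $\cL'_D$, which contains the symbols $\cross_t:\VG_{\geq 0}\to\RR_t$ (and these are already $\cL_D$-definable, so you cannot avoid them by staying in $\cL_D$). By \Cref{QEform} the graph of $f$ is a finite disjunction of formulas $\Theta(z)\wedge\Psi\bigl(\xi,(\cross_{t_j}(s_j(z)))_j\bigr)$, and the atomic subformulas of $\Psi$ genuinely couple the $\VG$-variable $z$ to the residue rings through the terms $\cross_{t_j}(s_j(z))$. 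This is precisely why the paper only proves a \emph{weak} orthogonality (\Cref{prop:ortho}, which requires the auxiliary reparametrization $\sigma$); full orthogonality in the sense of your rectangles is not available as a syntactic consequence of the language.

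The rectangular form can still be reached, but only via the argument that is the actual content of the proposition: for each non-constant $\cL_{\Pres}(\Lambda,x)$-term $s_j$, the condition $s_j(z)\in[0,t_j]$ holds for only finitely many $z$ (here it matters that you work in $\cS(x)$, so each $t_j$ is a fixed integer), and $\cross_{t_j}$ vanishes outside that interval; hence outside a finite exceptional set of $z$ all the terms $\cross_{t_j}(s_j(z))$ are zero and each disjunct degenerates to $\Theta(z)\wedge\Psi(\xi,0)$, which by functionality of $f$ contributes at most one value of $z$ to the image. Your final combinatorial step (rectangles plus single-valuedness imply finite image) is fine, but as written the proposal assumes the hard part and supports it with an incorrect syntactic observation.
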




\subsection{Motivic functions on a point}\label{subsec:mot:point} 

The ring of motivic functions $\cCM(x)$ with domain an $\cS$-point $x$ is fixed in \Cref{defn:cQDloc}, using \Cref{defn:cQD}. The intuitive meaning is explained in \Cref{sec:informal-meaning,sec:informal-meaning-CMexp}, and is closely related to the set-up from \Cref{sec:p-adic:mellin}. It generalizes and enriches the notion of the rings $\cCexp(x)$ from \cite{CH-eval}. 
Let us first define the Grothendieck ring part of $\cCM(x)$, denoted by $\cQ(x)$; it is generated by $x$-triples which we define first.

\begin{defn}\label{defn:gentrip}
Let $x$ be an $\cS$-point. 
By an \emph{$x$-triple} we mean a triple of the form
$$
(Z,g,h),
$$
where $Z$ is an $\cS(x)$-definable subset of $\RR_{t}$ for some basic tuple $t$,
$$
g:Z\to \VF/\cM
$$
is an $\cS(x)$-imaginary function,
and
$$
h:Z\to \RR_i
$$
is an $\cS(x)$-definable function  
for some 
basic tuple $i$.
\end{defn}



\begin{defn}\label{defn:cQD}
Let $x$ be an $\cS$-point. 
Write
\begin{equation}\label{def:Q(x)}
\cQD(x)
\end{equation}
for the quotient of the free abelian
group generated by $x$-triples
$$
(Z,g,h),
$$
and divided out by the following relations (R1) up to (R4).

\begin{itemize}

\item[(R1)]\textbf{Isomorphisms}

We have the relation
\begin{equation*}\label{eq1}
(Z_1,g_1,h_1)  = (Z_2,g_2,h_2)
\end{equation*}
if there exists an $\cS(x)$-bijection $\zeta:Z_1\to Z_2$  such that $g_1 = g_2\circ \zeta$ and $h_1=h_2\circ \zeta$.\\

\item[(R2)]\textbf{Disjoint unions}

We have the relation
\begin{equation*}\label{eq2}
(Z_1 \cup Z_2, g,h)  
= (Z_1,g_{|Z_1},h_{|Z_1})  + (Z_2,g_{|Z_2},h_{|Z_2}) ,
\end{equation*}
where $Z_1$ and $Z_2$ are disjoint $\cS(x)$-definable subsets of  $\RR_{t}$ for some common basic tuple $t$.\\

\item[(R3)]\textbf{Trivial fibrations}

Consider an $x$-triple $(Z,g,h)$ with $Z\subset \RR_t$ for some basic tuple $t=(t_1,\ldots,t_\ell)$ and put $t'=(t_1+1,t_2,\ldots,t_\ell)$. Assume that $t_1\ge -1$ holds in $\cS(x)$.
Let $p:\RR_{t'}\to \RR_t$ be the map which applies $\res_{t_1+1,t_1}$ to the first coordinate and the identity map to the other coordinates of $\RR_{t'}$, put $Z':=p^{-1}(Z)$ and let $p'$ be the restriction of $p$ to $Z'$. 
Then we have the relation
\begin{equation*}\label{eq3}
 (Z',g\circ p',h\circ p' )  = (\RF \times Z, g\circ \pi, h\circ \pi ),
\end{equation*}
with $\pi$ the coordinate projection $\RF \times Z\to Z$. \\

\item[(R4)]\textbf{Additive character relation for $g$}

Consider an $x$-triple $(Z,g,h)$ with $Z\subset \RR_t$ for some basic tuple $t=(t_1,\ldots,t_\ell)$ and put $t'=(t_1+1,t_2,\ldots,t_\ell)$. Assume that $t_1\ge -1$ holds in $\cS(x)$. Let $p$, $Z'$ and $p'$ be as for relation (R3). Consider an $x$-triple $(Z',g',h')$ such that $h\circ p' =h'$, and such that for each $z$ in $Z$, the map $g'$ restricted to $p'{}^{-1}(z)$ is a bijection onto $( g(z)+\cO )/\cM$. Then we have the relation
\begin{equation*}\label{eq4}
(Z',g',h')  = 0.
\end{equation*}
\\

\item[(R5)]\textbf{Commutativity relation for $h$}

Let $(Z,g,h:Z\to \RR_i)$ be an $x$-triple for some basic tuple $i=(i_1,\ldots,i_\ell)$ and let
$$
\rho:\RR_i= \prod_{j=1}^\ell \RR_{i_j} \to \RR_{i'} = \prod_{j=1}^\ell \RR_{i'_j}
$$
be a coordinate permutation. Then there is the relation
\begin{equation*}\label{eq5}
(Z,g,h:Z\to \RR_i)  = (Z,g,\rho\circ h:Z\to R_{i'}).
\end{equation*}

\end{itemize}

For an $x$-triple $(Z,g,h)$, we write $[Z,g,h]$ for the corresponding element in $\cQD(x)$.
We denote the subgroup of $\cQD(x)$ generated only by generators of the form $(Z,0,h)$ (namely with $g=0$) by $\cQDo(x)$. Elements of $\cQDo(x)$ are without dependence on the motivic additive character.
\end{defn}

\begin{remark}\label{sec:informal-meaning}
Before proceeding, let us explain the intended $p$-adic meaning of the objects $[Z,g,h]$ and the naturality of the relations (R1) -- (R4). This is slightly more general than the set-up for the rings of motivic functions from \Cref{sec:p-adic:mellin}. Fix a prime $p$ and suppose that  $\QQ_p$ carries an $\cL$-structure so that $\QQ_p$ belongs to $\cS$. 
Choose an additive character $\psi:\QQ_p\to\CC^\times$ which is trivial on $p\ZZ_p$ and non-trivial on $\ZZ_p$. Choose a commutative $\CC$-algebra $G$ with unit $1_G$ (possibly just $G=\CC$), and
for each basic term $t$ choose a function $H_t: \RR_{t_{\QQ_p}}\to G$.
Let $x$ be an $\cS$-point  of the form $(x_0,\QQ_p)$ and let $[Z,g,h:Z\to \RR_i]$ be in $\cQD(x)$ for some 
basic tuple $i=(i_{j})_{j=1}^\ell$. 
Then $[Z,g,h]$ is 
a motivic object behind the concrete finite sum 
\begin{equation}\label{eq:sum-Qp}
\sum_{z\in Z_{\QQ_p}} \Big( \psi(g_{\QQ_p}(z)) 
H_{i_1}(h_{\QQ_p,1}(z)) \cdot \ldots \cdot H_{i_\ell}(h_{\QQ_p,\ell}(z))  \cdot 1_G \Big),
\end{equation}
calculated in $G$, and where $h_{\QQ_p,j}$ are the component functions of $h_{\QQ_p}$. 
This is a finite exponential sum inside $G$, since $Z_{\QQ_p}$ is a finite set.
The motivic object $[Z,g,h]$ expresses this sum abstractly, in particular without fixing $\psi$ and the $H_t$ and the $\cL$-structure on $\QQ_p$.  The $p$-adic meaning of relation (R4) boils down to the orthogonality relation $\sum_{x\in\FF_p}\psi(x)=0$ for non-trivial additive characters on $\FF_p$.    The other relations have a natural and easy meaning at this $p$-adic level, and the sums of the form \Cref{eq:sum-Qp} are clearly preserved under the relations (R1) -- (R4).
Note that the ring structure on $\cQD(x)$ from \Cref{ring} below is compatible with this intended $p$-adic meaning.
The $p$-adic meaning uses choices of $\psi$, the $H_t$, and $\cL$-structure, and the set-up works uniformly over all choices as above.
\end{remark}


We equip $\cQD(x)$ with a ring structure via the following lemma.

\begin{lem}\label{ring}
Let $x$ be an $\cS$-point. 
One gets a unique ring structure on  the group $\cQD(x)$ from \Cref{defn:cQD} by defining multiplication on generators by
$$
[Z,g,h]\cdot[Z',g',h'] := [Z\times Z', g+g',  (h,h') ],
$$
where $Z \times Z'$ is the Cartesian product of $Z$ with $Z'$ with coordinate projections $p_Z:Z \times Z'\to Z$ and $p_{Z'}:Z \times Z'\to Z'$, where we write $g+g'$ for $g\circ p_Z + g'\circ p_{Z'}$ and where
$(h,h')$ sends $(z,z')$ to $(h(z),h'(z'))$.
Furthermore, $\cQDo(x)$ becomes a sub-ring of $\cQD(x)$. The rings  $\cQDo(x)$ and $\cQD(x)$ are commutative
rings with unit.
Here, we identify $(h,h')$ with $h$ when $h'$ is a function to $\RR_i$ with $i$ the empty tuple, and likewise we identify $(h,h')$ with $h'$ when the codomain of $h$ is $\RR_i$ with $i$ the empty tuple.
\end{lem}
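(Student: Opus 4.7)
The plan is to first define the product on the free abelian group $F(x)$ on all $x$-triples by the displayed formula on generators, extend bilinearly, and then show that it descends to the quotient $\cQD(x)$. Descent reduces to showing that if one side of any of the relations (R1)--(R5) is multiplied, on the left or on the right, by an arbitrary generator $(W,g'',h'')$, then the resulting equality is again a consequence of (R1)--(R5).

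Compatibility with (R1), (R2) and (R5) is essentially formal: an $\cS(x)$-bijection $\zeta:Z_1\to Z_2$ becomes $\zeta\times\id_W$ on products; disjoint unions distribute over $\times W$; and a coordinate permutation $\rho$ of $\RR_i$ becomes the coordinate permutation $\rho\times\id$ of $\RR_i\times\RR_{i''}$. For (R3), note that $Z'\times W=(p\times\id_W)^{-1}(Z\times W)$ with $Z\times W\subset\RR_{(t,s)}$, and $p\times\id_W$ applies $\res_{t_1+1,t_1}$ in the coordinate corresponding to $t_1$; after an ambient coordinate permutation absorbed by (R1), this fits the hypothesis of (R3) and yields on both sides the trivial $\RF$-fibration over $(Z\times W,\,g\circ\pi_Z+g''\circ\pi_W,\,(h\circ\pi_Z,h''\circ\pi_W))$. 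The substantive case is (R4): on the fiber $(p')^{-1}(z)\times\{w\}$ of $(p\times\id_W):Z'\times W\to Z\times W$, the imaginary function $g'+g''$ equals $g'|_{(p')^{-1}(z)}$ shifted by the constant $g''(w)$, and therefore is a bijection from that fiber onto the coset $((g+g'')(z,w)+\cO)/\cM$, while the equality $(h',h'')=(h,h'')\circ(p\times\id_W)|_{Z'\times W}$ follows from $h'=h\circ p'$; hence (R4) applies to the product and it vanishes. This verification of (R4) is the main obstacle in the proof, being the only relation whose defining content is a nontrivial fiberwise bijectivity statement rather than a purely formal identity.

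For the ring axioms, commutativity of the product is witnessed by the swap $\cS(x)$-bijection $Z\times Z'\to Z'\times Z$ combined with (R5) applied to the codomain coordinate swap $\RR_i\times\RR_{i'}\to\RR_{i'}\times\RR_i$; associativity follows from the canonical associator of Cartesian products together with the canonical identification of concatenated basic tuples. The multiplicative unit is the class $[\{0\},0,0]$, where $\{0\}$ denotes $\RR_t$ for the empty basic tuple $t$, the imaginary function $g$ is zero, and $h$ is the unique map to $\RR_{t'}$ for the empty tuple $t'$; using the identification convention stated in the lemma one gets $[\{0\},0,0]\cdot[Z,g,h]=[\{0\}\times Z,g,h]=[Z,g,h]$ via (R1). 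Finally, since $g+g'=0$ whenever $g=g'=0$ and the unit lies in $\cQDo(x)$, the subgroup $\cQDo(x)$ is closed under multiplication and contains the unit, so it is a commutative sub-ring of the commutative unital ring $\cQD(x)$.
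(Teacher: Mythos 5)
Your proposal is correct and follows essentially the same route as the paper's proof, which simply asserts that the product descends because the subgroup generated by (R2)--(R5) is an ideal of the ring structure on the free group modulo (R1); you supply the detailed verification (including the key check that (R4)'s fiberwise bijectivity onto a coset is preserved under adding a constant shift $g''(w)$), which the paper leaves implicit. The identification of the unit $[\{0\},0,0]$ and the use of (R5) for commutativity also match the paper.
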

\begin{proof}
Clearly the multiplication operation on generators yields a 
ring structure on the
free abelian group on the symbols $(Z,g,h)$ 
divided out only by relations of the form (R1). 
The subgroup generated by
the relations (R2), (R3), (R4) and (R5) is an ideal of this
ring, hence, the quotient by this subgroup is a ring. That $\cQDo(x)$ is a subring is clear.

The rings $\cQDo(x)$ and $\cQD(x)$ have zero element $[\emptyset,\emptyset,\emptyset] = 0$ and unit $[\{0\},0,0]=1$, where we write $[\{0\},0,0]$ for   $[\{0\},0,h:\{0\}\to \{0\}]$, with the $\cS$-imaginary function taking constant value $0$ in $\VF/\cM$ denoted by $0$, and where the codomain of $h$ is $R_i$ with $i$ the empty tuple (note that $R_i=\{0\}$ by convention). The commutativity follows from (R5).
\end{proof}






Write $\LL$ for $[\RF,0,0]$ in $\cQD(x)$, that is, for $[\RF,0,h:\RF\to \{0\}]$,  with notation as in the previous proof. Informally speaking, $\LL$ is the class of the affine line over the residue field. 
As a more suggestive notation for generators of $\cQD(x)$, we sometimes write
$$
[Z,E(g)\cdot H(h)]
$$
instead of $[Z,g,h]$.
This notation
lends itself well to some natural abbreviations: We omit $E(g)$  when $g=0$ on $Z$; we omit $H(h)$ when $h:Z\to R_i$ with $i$ the empty tuple, and we omit $Z$ and the brackets in the case that $Z$ is just $\{x\}$.

\begin{rem}\label{rem:non-commutative}
One could also choose to omit the relations of the form (R5), and doing so the altered ring $\cCM(x)$ would be non-commutative. This change would allow to consider non-commutative algebra $G$ instead of the commutative ones from \Cref{sec:informal-meaning}.
\end{rem}


\bigskip

We now define the ring of motivic functions $\cCM(x)$ with domain an $\cS$-point $x$.

\begin{defn}\label{defn:cQDloc} Let $x$ be an $\cS$-point and write $\LL$ for $[\RF ,0,0]$ in $\cQD(x)$. 
Write
$$
\cCM(x) 
$$
for the localisation of the polynomial ring
$$
\cQD(x)[T_0, T_1,T_2,T_3,\ldots]
$$
with respect to $\LL$,
the $T_i$ for all integers $i\ge 0$, and the elements of the form
$$
1-\LL^a\prod_{j\in J}T_j^{b_j},
$$
for integers $a<0$, $b_j\ge 0$ and a finite set $J\subset \NN$.

Write
$$
\cQDoloc(x)
$$
for the localisation of $\cQDo(x)[T_0,T_1,T_2,\ldots,]$ by the very same elements $\LL$, $T_i$ for all $i>0$, and $1-\LL^a T^b$, with the multi-index notation $T^b$ for $\prod_{j\in J}T_j^{b_j}$, integers $a<0$, $b_j\ge 0$ and finite set $J\subset \NN$.

Denote the subgroup of $\cQD(x)$ generated only by generators $[Z,g,0]$ (namely with $h=0$) by $\cQD_0  (x)$, and its localisation by the elements $\LL$ and $1-\LL^a$ for integers $a<0$ by
$$
\cC^{\mathrm{exp}}(x).
$$

Finally, we denote  the subgroup of $\cQD(x)$ generated only by generators $[ Z,0,0]$ by $\cQD_0  ^0(x)$, and its localisation by elements $1-\LL^a$ for integers $a<0$  by
$$
\cC(x).
$$


\end{defn}
By a slight abuse of notation, we write $[Z,g,h]$ for elements of $\cQD(x)$ as well as $\cCM(x)$, and similarly for $\LL$, standing for $[\RF ,0,0]$ in both $\cQD(x)$ and $\cCM(x)$.

The rings $\cCM(x)$ for $x$ varying in an $\cS$-definable set $X$ are the value rings for the motivic functions on $X$, as defined in the next section, see \Cref{defn:cCD}.


\begin{remark}\label{sec:informal-meaning-CMexp}
Let us explain the intended $p$-adic meaning behind the motivic functions $f$ in $\cCM(x)$, building on \Cref{sec:informal-meaning}. Let $p$, $\QQ_p$, $x$, $\psi$ and the $H_t$ be as in \Cref{sec:informal-meaning}, and let $\tau_i$ for $i\ge 0$ be nonzero complex numbers of modulus at most
$1$. Write $\tau=(\tau_i)_i$.
Then, in the important case that $H$ takes values in $G=\CC$, any motivic function $f$ of $\cCM(x)$ determines a complex number $f(x,\psi,H,\tau)$, by \Cref{eq:sum-Qp} for generators of $\cQD(x)$ and by evaluating $T_i$ at $\tau_i$. Note that $\LL$ simply becomes $p$ under this $p$-adic specialisation. In fact, a motivic function $f$ in  $\cCM(x)$ is a place holder for all these complex values $f(x,\psi,H,\tau)$ obtained by choosing $\psi$, $H$ and $\tau$. In general, if $H$ takes values in $G$, then $f(x,\psi,H,\tau)$ is an element of $G$ obtained as in \Cref{eq:sum-Qp} and by evaluating $T_i$ at $\tau_i$. As already mentioned 
from \Cref{defn:cCD} on we will vary the choice of $x$ inside an $\cS$-definable set. By varying the $\cL_D$-structure on $\QQ_p$, also the $\lambda$ in $\Lambda$ become variables (running over $\ZZ$), as already explained in \Cref{sec:p-adic:mellin}. The $T_i$ and the $h_i$ are there for the multiplicative (quasi-)characters, and the function $g$ is there as argument for the additive character. In this remark and in \Cref{sec:informal-meaning}, the $H_t$ are general functions from $R_{t_{\QQ_p}}$ into $G$, but they could in particular be chosen as multiplicative characters from $R_{t_{\QQ_p}}^\times$ to $\CC^\times$, extended by zero outside the units in $R_{t_{\QQ_p}}$.
\end{remark}




\subsection{Motivic functions}\label{subsec:mot-fun} 
We now come to our main definition of motivic functions on general domains, by considering $\cCM(x)$ for $x$ varying in the domain. 

For $X$ an $\cS$-definable set, write
$$
\cF(X)
$$
for the $\ZZ$-algebra of all functions on the points of $X$ that send a point $x$ on $X$ to an element of $\cCM(x)$ from \Cref{defn:cQDloc}. 


\begin{defn}[Motivic functions on $X$]\label{defn:cCD}
Let $X$ be an $\cS$-definable set. 
Define
$$
  \cCM(X) 
$$
as the 
sub-algebra of $\cF(X)$
generated by all functions of the form $F_1$ from (G1) and $F_2$ from (G2):\\

\begin{itemize}

\item[(G1)]  For any integer $a<0$ and any $b \in \NN^J$ for a finite set $J\subset \NN$, the function $F_1$ sending a point $x$ on $X$ to the element
    $$
    F_1(x) = \frac{1}{1-\LL^a T^b}
    $$
    of $\cCM(x)$, with the multi-index notation $T^b$ for $\prod_{j\in J}T_j^{b_j}$. We denote this function $F_1$ by $\frac{1}{1-\LL^a T^b}$.
    \\



\item[(G2)] 
For any $\cS$-definable set $Z\subset X\times \RR_{t}$ with $t$ a basic tuple,  
any $\cS$-imaginary function $ g:Z\to \VF/\cM$ and any
$\cS$-definable functions
$$
\alpha:Z\to\ZZ^n,\ \beta:Z\to \ZZ,\ \gamma:Z\to \ZZ^J,  \mbox{ and } h:Z\to \RR_i,
$$
with $n\ge 0$, 
a finite set $J\subset \NN$, and 
basic tuple $i$, the function $F_2$ sending a point $x$ on $X$ to the
unique element $F_2(x)$ of $\cCM(x)$ with the following property. If one partitions $Z_x$ into finitely many parts $Z_m$ for $m=1,\ldots,M=M(x)$ such that on each part  $Z_m$ each of the functions $\alpha, \beta,\gamma$ is constant, say, with respective values $\alpha_m\in\ZZ^n, \beta_m\in\ZZ,\gamma_m \in\ZZ^J$, then
$$
F_2(x) = \sum_{m=1}^M     (\prod_{i=1}^n \alpha_{m,i})   \LL^{\beta_m}  (\prod_{j\in J} T_j^{\gamma_{m,j}})   [   Z_m, g_{|Z_m},  h_{|Z_m}].
$$
We denote this function $F_2$ by $[Z,\alpha,\beta,\gamma,g,h]$ or $[Z,f]$ with $f$ is the tuple $(\alpha,\beta,\gamma,g,h)$, or, more suggestively, by
 \begin{equation}\label{eq:gen}
[Z,(\prod_{i=1}^n\alpha_i) \LL^\beta T^\gamma E(g) H(h)].
\end{equation}
That such a finite partition $Z$ into the parts $Z_m$ exists follows from the finiteness property of \Cref{prop:finite0}. That the element $F_2(x)$ of $\cF(X)$ does not depend on the choice of partition of $Z$ follows from relation (R2) and refining any given two partitions.
\end{itemize}



One defines the subring
$$
\cCoM(X )
$$
of $\cCM(X )$ by restricting in (G2) to generators of the form $[Z,(\prod_{i} \alpha_{i})\LL^{\beta} T^\gamma H(h)]$,
that is, like in (\ref{eq:gen}) but with $g=0$. One defines the subring
 $$
 \cCeM(X )
 $$
 of $\cCM(X )$ by restricting in (G2) to only those generators as in (\ref{eq:gen}) for which there exists a basic term $t$ such that
$\ordalt (g) \ge t $ holds on $Z$.  
We define the subrings $\cCexp(X )$ and $\cC(X )$ in the corresponding ways, namely with $\gamma=h=0$, resp.~with $\gamma=g=h=0$.  Finally, $\cCe(X )$ is the intersection of $\cCexp(X )$ with $\cCeM(X)$.
\end{defn}

Note that any element $F$ of $\cCM(X)$ for $X$ an $\cS$-definable set can be evaluated at each $\cS$-point $x$ on $X$, since $\cCM(X)$ is a sub-algebra of $\cF(X)$; we denote this evaluation of $F$ at $x$ by $F(x)$.

\begin{lem}\label{lem:id:in}
Let $X$ be an $\cS$-definable set. There are the following injective ring homomorphisms coming from inclusions
\[
\begin{tikzcd}
\cC(X) \arrow[r] \arrow[d] & \cCe(X) \arrow[r] \arrow[d] & \cCexp(X) \arrow[d] \\
\cCoM(X) \arrow[r] & \cCeM(X) \arrow[r] & \cCM(X)
\end{tikzcd}
\]
\end{lem}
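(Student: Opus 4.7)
The plan is to observe that the lemma is essentially a bookkeeping statement: every one of the six rings in the diagram has been defined in \Cref{defn:cCD} as a sub-$\ZZ$-algebra of the common ambient algebra $\cF(X)$ of all functions $x \mapsto F(x) \in \cCM(x)$. In particular each arrow in the diagram will be, by construction, the restriction of the identity map on $\cF(X)$ to a subring. Injectivity is then automatic and the content of the lemma is only to verify that the relevant set-theoretic containments among these subrings actually hold. There is nothing to verify about the ring operations, since addition and multiplication in each of the six rings are computed point-wise in $\cF(X)$ using the $\CC$-algebra structure on each stalk $\cCM(x)$ from \Cref{defn:cQDloc}.

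For the two horizontal rows I would check containment of generators. The $(G_1)$-generators $\frac{1}{1-\LL^a T^b}$ only depend on $a$ and $b$ and are therefore common to all six rings. For the $(G_2)$-generators $[Z,(\prod_i \alpha_i)\LL^\beta T^\gamma E(g) H(h)]$ the successive restrictions are ordered by inclusion: requiring $g=0$ implies the existence of a basic term $t$ with $\ordalt(g)\ge t$ (any basic term with non-positive value works, since $g=0$ means every lift of $g$ lands in $\cM$), which in turn is a stronger condition than no restriction at all. This yields $\cCoM(X)\subset \cCeM(X)\subset \cCM(X)$. The top row $\cC(X)\subset \cCe(X)\subset \cCexp(X)$ is handled in the same way, with the extra restriction $\gamma=h=0$ imposed throughout, and using that $\cCe(X) = \cCexp(X)\cap\cCeM(X)$ by definition.

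For the three vertical maps I would note that each is a relaxation of the restriction $\gamma=h=0$. Thus every $(G_2)$-generator of $\cCexp(X)$ (with $\gamma=h=0$) is a special case of a $(G_2)$-generator of $\cCM(X)$, giving $\cCexp(X)\subset \cCM(X)$; likewise $\cC(X)\subset \cCoM(X)$. The map $\cCe(X) \hookrightarrow \cCeM(X)$ is immediate from $\cCe(X) = \cCexp(X) \cap \cCeM(X) \subseteq \cCeM(X)$. Commutativity of the diagram is automatic since all six arrows are restrictions of the single identity map of $\cF(X)$.

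The only step requiring a small verification is the claim that $g=0$ forces $\ordalt(g)\ge t$ for some basic term $t$, i.e.\ that the zero $\cS$-imaginary function $Z\to\VF/\cM$ qualifies as a generator of $\cCeM(X)$. This is not really an obstacle: unpacking \Cref{def:im:DLS}, the preimage graph of $g=0$ is $Z\times\cM$, so any definable lift of $g$ takes values in $\cM$, and on $\cM$ one has $\ord \ge 1 > t$ for any basic term $t$ interpreted by a non-positive integer (for example $t$ could be any constant symbol $\lambda$ from $\Lambda$ chosen to have such a value, or one may just take $t$ to be the term $\ordalt(1)+(-1)$ type expression). This settles the only non-trivial containment, and the lemma follows.
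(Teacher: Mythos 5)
Your proof is correct and follows the same route as the paper, whose entire proof is ``By the definitions of these rings''; you have simply spelled out the generator containments and the one small verification that $g=0$ satisfies the $\cCeM$-condition (e.g.\ with the basic term $0$ itself). One cosmetic slip: the stalks $\cCM(x)$ and the ambient algebra $\cF(X)$ are $\ZZ$-algebras, not $\CC$-algebras, but this does not affect the argument.
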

\begin{proof} 
By the definitions of these rings.
\end{proof}

When the domain is a single $\cS$-point $x$, there is in fact no difference between $\cCeM(x)$ and $\cCM(x)$, as given by the following lemma.

\begin{lem}\label{lem:idx}Let $x$ be an $\cS$-point and consider the $\cS(x)$-definable set $\{x\}$.
There are natural isomorphisms of rings $\cCM(x)\to \cCM(\{x\})\to \cCeM(\{x\})$ from \Cref{defn:cQDloc,defn:cCD}. Likewise, there are natural isomophisms of rings $\cCoM(x)\to \cCoM(\{x\})$, $\cCexp(x)\to \cCexp(\{x\})\to \cCe(\{x\}) $, and   $\cC(x)\to \cC(\{x\})$.
\end{lem}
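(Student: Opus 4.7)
The plan is to construct mutually inverse ring homomorphisms $\Phi \colon \cCM(x) \rightleftarrows \cCM(\{x\}) \colon \Psi$, then to identify $\cCeM(\{x\})$ with $\cCM(\{x\})$, and to treat the remaining isomorphisms by restricting these constructions.

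For the forward map $\Phi$, I would send a generator $[Z,g,h]$ of $\cQD(x)$ with $Z \subset \RR_t$ to the $F_2$-generator in (G2) of Definition \ref{defn:cCD} with data $(\{x\} \times Z, \alpha=1, \beta=0, \gamma=0, g', h')$, where $g'$ and $h'$ are pulled back along the projection to $Z$, and extend by sending $\LL$, the $T_i$, and each $1/(1-\LL^a T^b)$ to their counterparts in $\cCM(\{x\})$. Verifying that the relations (R1)--(R5) are respected and that the localization elements remain invertible is a direct unfolding of definitions, so $\Phi$ extends to a ring homomorphism. Conversely, since $\cCM(\{x\}) \subseteq \cF(\{x\})$, evaluation at the canonical $\cS(x)$-point $x$ defines $\Psi$. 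The composition $\Psi \circ \Phi$ is the identity on generators because the finite sum in (G2) that defines $F_2(x)$ reassembles via (R2) into the single $x$-triple $[Z,g,h]$; and $\Phi \circ \Psi$ is the identity because every $\cS(x)$-point of $\{x\}$ is $\cL(x)$-elementarily equivalent to $x$, so each $F \in \cCM(\{x\})$ is uniformly determined as a function by its value $F(x)$.

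For the identification $\cCM(\{x\}) = \cCeM(\{x\})$ (the inclusion being Lemma \ref{lem:id:in}), consider any $F_2$-generator with data $(Z, \alpha, \beta, \gamma, g, h)$ and $Z \subset \{x\} \times \RR_t$. The map $z \mapsto \min\{\ordalt(a) : (z,a) \in G\}$, where $G$ is the $\cS(x)$-definable preimage defining $g$, is an $\cS(x)$-definable function from $Z$ to $\VG$. Since $\{x\}$ is a singleton, $Z$ descends to an $\cS(x)$-definable subset of $\RR_t$, and after extension by zero Proposition \ref{prop:finite0} forces the function to have finite image. Its minimum $N$ is an $\cL(x)$-definable integer, hence the same across all $K \in \cS(x)$ by $\cL(x)$-elementary equivalence; since every integer is a closed $\LPas(\Lambda)$-term in $\VG$ (hence a basic term), the bound $\ordalt(g) \ge N$ holds on $Z$ for this basic term, placing $F_2$ in $\cCeM(\{x\})$.

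The remaining isomorphisms follow by restricting $\Phi$ and $\Psi$ to the appropriate classes of generators. For $\cCoM(x) \to \cCoM(\{x\})$ and $\cC(x) \to \cC(\{x\})$ no e-identification is needed, since the condition $g = 0$ gives $\ordalt(g) = 0$, trivially bounded by the basic term $0$. For $\cCexp(\{x\}) \to \cCe(\{x\})$ the bounding argument above applies verbatim. The main obstacle is the bounding step in the e-identification, which combines Proposition \ref{prop:finite0} with the fact that integers in $\VG$ are representable as closed Presburger terms, hence as basic terms of $\LPas(\Lambda)$.
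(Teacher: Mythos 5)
Your proposal is correct and follows essentially the same route as the paper, whose proof is just a terse sketch: it likewise observes that the relations (R1)--(R5), the localized elements, and the generators correspond across the elementarily equivalent structures of $\cS(x)$, and it uses \Cref{prop:finite0} in exactly the way you do to bound $\ordalt(g)$ below by a basic term and thereby identify $\cCM(\{x\})$ with $\cCeM(\{x\})$. Your write-up simply fills in the details (the explicit mutually inverse maps $\Phi$, $\Psi$ and the observation that every integer is a closed $\LPas(\Lambda)$-term) that the paper leaves implicit.
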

\begin{proof} 
By the definitions and \Cref{prop:finite0}. Indeed, the relations (R1) -- (R5)  hold for one structure in $\cS(x)$ if and only they hold for all structures in $\cS(x)$ (note that all these structures are elementarily equivalent), the elements by which to localise clearly correspond, and the generators $[Z,f]$ with their respective extra conditions 
also clearly correspond. By \Cref{prop:finite0}, the image of $\ordalt g$ is bounded below when the domain of $g$ is the $\cS(x)$-definable set $R_t$ with $t$ a basic tuple.
\end{proof}
By \Cref{lem:idx}, it 
remains harmless to denote $\{x\}$ sometimes by $x$. 

One may also introduce notation $\cCexp_T$ for when just $h=0$ for generators as in (G2), and correspondingly $\cCe_T$ and $\cC_T$, with similarly natural inclusions as in \Cref{lem:id:in} and identifications as in \Cref{lem:idx}.  


\subsection{Pull-backs of motivic functions}\label{sec:pull}

For any $\cS$-definable function $f:Y\to X$ between $\cS$-definable sets $X$ and $Y$, we can easily define the pull-back map
$$
f^*:\cCM(X ) \to \cCM(Y )
$$
by composition on the generators of $\cCM(X )$, as follows.
One defines $f^*(\frac{1}{1-\LL^a T^b})$ as  $\frac{1}{1-\LL^a T^b}$ seen inside $\cCM(Y )$. 
Secondly and finally, one defines
$$
f^*([Z,(\prod_{i=1}^n\alpha_i) \LL^\beta T^\gamma E(g)H(h)])
$$
by taking the fiber product of $Z$ and $X$ over $Y$, more precisely by
$$
[Z\times_Y X , (\prod_{i=1}^n\alpha'_i)  \LL^{\beta'} T^{\gamma'} E(g')H(h')]
$$
and where $g'=g\circ p_Z$ with $p_Z$ the projection $Z\times_Y X \to Z$, and similarly for $\alpha',\beta'$, $\gamma'$ and $h'$.
Clearly $f^*$ is a ring homomorphism which restricts to the subrings from \Cref{lem:id:in}
$$
\cCoM(X )\to \cCoM(Y ) \mbox{  and }\cCeM(X )\to \cCeM(Y ),\ \mathrm{etc.},
$$
which we also denote by $f^*$.

In the intended $p$-adic meaning, this just becomes the usual composition, see \Cref{sec:informal-meaning:pullback}. Even more generally, this is composition, see \Cref{lem:comp}.

\begin{defn}\label{defn:eval}
For $X$ an $\cS$-definable set, $F$ in $\cCM(X)$, and $x$ a point on $X$, define $F_{\cS(x)}$ in $\cCM(X_{\cS(x)})$ by writing $F$ as a sum of products of generators and replacing every $\cS$-definable object in these generators by the corresponding $\cS(x)$-definable object. That this is independent of the choices follows from the definitions.
\end{defn}

\begin{rem}\label{rem:eval}
Note that for $F$ in $\cCM(X)$ and $x$ a point on $X$, the element $F(x)$ of $\cCM(x)$ equals the pull-back of $F_{\cS(x)}$ under the inclusion map $\{x\}\to X_{\cS(x)}$ of $\cS(x)$-definable sets. 
Furthermore, evaluation of $F$ at $x$ preserves the subrings from \Cref{lem:id:in,lem:idx}: if $F$ lies in $\cC(X )$, then $F(x)$ lies in $\cC(x)$, etc.
\end{rem}

The following remark gives some informal explanation for the pull-back in the $p$-adic case corresponding to \Cref{sec:informal-meaning}.
\begin{remark}\label{sec:informal-meaning:pullback}
Consider an $\cS$-definable function $f:Y\to X$ between $\cS$-definable sets $X$ and $Y$  and let $F$ be in $\cCM(X )$.
Choose a prime $p$, and suppose that  $\QQ_p$ with some $\cL$-structure belongs to $\cS$. Choose an additive character $\psi$ on $\QQ_p$  which is trivial on $p\ZZ_p$ and non-trivial on $\ZZ_p$, and $H$ and $\tau=(\tau_j)_j$ as in \Cref{sec:informal-meaning-CMexp}.
Let $f_{\QQ_p}:Y_{\QQ_p} \to X_{\QQ_p}$, $F_{\QQ_p}$ and $(f^*(F))_{\QQ_p}$ be the $p$-adic specializations for these choices, with $F_{\QQ_p}$  and $(f^*(F))_{\QQ_p}$ as explained in \Cref{sec:informal-meaning-CMexp}. Then, one has
$$
(f^*(F))_{\QQ_p}(y)= F_{\QQ_p}\circ f_{\QQ_p}(y),
$$
for each $y$ in $Y_{\QQ_p}$. Thus, pull-back becomes composition. 
\end{remark}

At the more formal level than in \Cref{sec:informal-meaning:pullback}, pull-back corresponds to composition as well:

\begin{lem}\label{lem:comp}
Consider an $\cS$-definable function $f:Y\to X$ between $\cS$-definable sets $X$ and $Y$  and let $F$ be in $\cCM(X )$. Then, for each point $y$ on $Y$ one has
$$
(f^*(F))(y)= (F(f (y))_{\cS(y)}.
$$
\end{lem}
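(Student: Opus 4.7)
The plan is to reduce to the case where $F$ is a generator of $\cCM(X)$, using that both $f^{*}:\cCM(X)\to \cCM(Y)$ and evaluation at a point are ring homomorphisms, so that both sides of the asserted identity are $\ZZ$-linear and multiplicative in $F$.

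For a generator of type (G1), $F = \tfrac{1}{1-\LL^{a}T^{b}}$, both sides are literally the element $\tfrac{1}{1-\LL^{a}T^{b}} \in \cCM(y)$: the symbol is preserved by the definition of $f^{*}$, by evaluation, and by $\cS(y)$-base change. So nothing needs to be checked here.

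For a generator of type (G2), say $F = [Z,(\prod_{i}\alpha_{i})\LL^{\beta}T^{\gamma}E(g)H(h)]$ with $Z \subset X \times R_{t}$, the idea is to unwind both sides using the definitions in Section \ref{sec:pull} and \Cref{defn:cCD}. The left hand side is obtained by first forming
\[
f^{*}(F) = [Z \times_{X} Y,\; (\textstyle\prod_{i}\alpha'_{i})\LL^{\beta'}T^{\gamma'}E(g')H(h')],
\]
where the primed functions are the pullbacks along the projection $Z \times_{X} Y \to Z$, and then evaluating at $y$, which amounts to passing to the fiber $(Z \times_{X} Y)_{y} \subset R_{t}$ and restricting the functions. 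The canonical identification $(Z \times_{X} Y)_{y} = Z_{f(y)}$, valid after $\cS(y)$-base change, sends the restrictions of $\alpha',\beta',\gamma',g',h'$ over $y$ exactly to the $\cS(y)$-base changes of the restrictions of $\alpha,\beta,\gamma,g,h$ to $Z_{f(y)}$. The right hand side $(F(f(y)))_{\cS(y)}$ is, by \Cref{defn:eval} and \Cref{defn:cCD}, built from precisely this data, so the two sides agree as elements of $\cCM(y)$.

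The only slightly delicate point, which I view as the main bookkeeping obstacle, is that \Cref{defn:cCD}\,(G2) produces the element of $\cCM(\cdot)$ through a choice of finite partition on which $\alpha,\beta,\gamma$ are constant; one must check that this choice is compatible with both the fiber operation and the $\cS(y)$-base change. This is immediate: the partition is itself given by $\cL$-formulas, constancy persists under $\cS(y)$-base change by elementary equivalence of the structures in $\cS(y)$, and any two choices of partition yield the same class by relation~(R2) after common refinement. No new ideas beyond the compatibility of fiber products, restriction of $\cS$-definable functions, and $\cS(y)$-base change with one another are required.
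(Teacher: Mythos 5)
Your proposal is correct and follows the same route as the paper, whose proof consists only of the remark that the identity ``follows from the definitions'' (with a pointer to \cite{CH-eval}); you have simply written out the details. Your unwinding — reduction to generators via additivity and multiplicativity, the trivial (G1) case, the identification $(Z\times_X Y)_y \cong Z_{f(y)}$ after $\cS(y)$-base change for (G2), and the partition-independence via (R2) — is exactly the bookkeeping the paper leaves implicit.
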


\begin{proof}
This follows from the definitions, see also~\cite{CH-eval}.
\end{proof}







\section{Integrals and integrability via Fubini-Tonelli}\label{sec:iterated.int}

In this section we define and develop integration and integrability for our rings of motivic functions. We develop integration in three steps. First we treat integration over residue rings, then over the value group, and finally over the valued field. We prove a version of Fubini--Tonelli in this framework and define the motivic Mellin transform. We also compare our rings and integration with previous work.

This section contains mostly definitions and only sketches of proofs. Full proofs of all results will be given later, in \Cref{sec:int:rev}, once we have developed the necessary geometrical results on $\cS$-definable sets.

\subsection{Integration over residue rings}\label{sec:push}

We first define integration over residue rings, which is rather straightforward.

\begin{def-lem}[Relative integrals over residue rings]\label{lem:D-rel}
Let $Y$ and $X\subset \RR_t\times Y $ be $\cS$-definable sets, for some basic tuple $t$ and write $p:X\to Y$ for the projection.
Then there is a unique ring homomorphism
$$
p_! : \cCM(X ) \to\cCM(Y )
$$
such that 
for each function $F$ in $\cCM(X)$ of the form
\begin{equation}\label{eq:gen-group}
\left(\prod_{i=1}^n  \frac{1}{1-\LL^{a_i} T^{b_i}}\right) \cdot [Z,f],
\end{equation}
for some $n\ge 0$ and with notation for generators of the forms  (G1) and (G2) from \Cref{defn:cCD}, one has
$$
p_!(F) = \left(\prod_{i=1}^n  \frac{1}{1-\LL^{a_i} T^{b_i}}\right) \cdot [Z,f],
$$
thus, the same expression as for $F$, but now taken inside $\cCM(Y)$. 
\end{def-lem}

\begin{rem}\label{rem:D-rel}
In \Cref{lem:D-rel}, if $Y$ is a singleton, then for  $F$ in $\cCM(X)$ we also write $\int_X F$ or $\int_{x\in X}F(x)$ for $p_{!}(F)$, and we further sometimes write $\int_{\RR_t}F$ or $\int_{x\in \RR_t}F(x)$ for $\int_X F$, with a small abuse of notation and where we tacitly extended $F$ by zero outside $X$ inside $\RR_t$. This value $\int_X F$ is called the integral of $F$ over $X$, or also the integral of $F$ in the fibers of $p$.
\end{rem}

\begin{rem}\label{rem:int-D}
The main difference between the expression interpreted in $\cCM(X)$ versus in $\cCM(Y)$ in \Cref{lem:D-rel} is that the relations have different meanings. In particular, one considers $Z$ as an $\cS$-definable subset of $\RR_{t'}\times X$ for some $t'$ in the first case, and in the second case, as a subset of $\RR_{t'}\times \RR_t\times Y = \RR_{(t',t)}\times Y$.
\end{rem}

\begin{proof}[Proof of \Cref{lem:D-rel}]
The uniqueness and existence of the  map $p_!$ as a group homomorphism follows from the fact that functions $F$ of the form (\ref{eq:gen-group}) generate $\cCM(X)$ as a group, and that the relations defining $\cCM(X)$ are compatible with the relations defining $\cCM(Y)$. That $p_!$ is a ring homomorphism is clear.
\end{proof}

\subsection{Integration over one $\VG$-variable}\label{sec:int:VG}

Integration over $\VG$ is close to the usual summation of countably many terms, more precisely, to geometric series, and the derivatives of geometric series like $\sum_{x \ge 0} x q^{-x}$ with $q>1$.

The following lemma is classical.
\begin{lem}\label{lem:geom}
Consider integers $a\ge 0$, $b<0$, and $c\in\NN^J$ for some finite set $J\subset \NN$.
Then there exists a unique rational function $G$ in the localisation of $\ZZ[q,\tau]$ with respect to 
the elements $1-q^n\tau^m$ with integer $n<0$, $m \in \NN^J$ and where $\tau=(\tau_j)_{j\in J}$, such that
\begin{equation}\label{f:sum:int:prop.1.1.0}
G(q,\tau) = \sum_{x\in \NN} x^a \cdot  q^{bx} \cdot \tau^{x\cdot c}
\end{equation}
for all real $q>1$ and all nonzero complex numbers $\tau_j$ with $|\tau_j|\le 1$, and where $\tau^{x\cdot c}$ stands for $\prod_{j\in J}\tau_j^{xc_j}$ and $\tau^m$ for $\prod_{j\in J}\tau_j^{m_j}$.
\end{lem}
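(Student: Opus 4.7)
The plan is to prove existence and uniqueness separately.

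For uniqueness, I observe that the set $S$ of parameters $(q,\tau)$ with $q>1$ real and $\tau_j \in \CC^\times$, $|\tau_j|\le 1$, is Zariski-dense in $\CC\times \CC^J$ (it has nonempty interior in the real-analytic sense, in particular it is not contained in any proper algebraic subvariety). Hence any two rational functions in $\ZZ[q,\tau]$ localised as specified that agree on $S$ agree as elements of the fraction field, so the localised rational function $G$ is unique if it exists. One should only check that the allowed denominators $1-q^n\tau^m$ (with $n<0$, $m\in \NN^J$) are non-vanishing on at least one point of $S$, which is clear since $|q^n\tau^m|<1$ there.

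For existence, the plan is induction on $a$, with the base case $a=0$ being the geometric series. Setting $u=q^b\tau^c\in \CC$, the conditions $b<0$, $q>1$, $|\tau_j|\le 1$ force $|u|<1$, so
\begin{equation*}
\sum_{x\in\NN}q^{bx}\tau^{x\cdot c}=\sum_{x\in\NN}u^x=\frac{1}{1-q^b\tau^c},
\end{equation*}
and this rational function lies in the required localisation since $b<0$ and $c\in\NN^J$. For the inductive step $a\ge 1$, I apply the operator $q\,\partial_q$ term-by-term to the series for $a-1$, which is legal since the series converges uniformly on compact subsets of the open region where $|q^b\tau^c|<1$. This yields
\begin{equation*}
q\,\partial_q\Bigl(\sum_{x\in\NN}x^{a-1}q^{bx}\tau^{x\cdot c}\Bigr)=b\sum_{x\in\NN}x^{a}q^{bx}\tau^{x\cdot c},
\end{equation*}
and since $b<0$ is a nonzero integer we can divide by $b$. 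So if the rational function $G_{a-1}$ exists in the required localisation, then $G_a=\tfrac{1}{b}q\,\partial_q G_{a-1}$ does too: the operator $q\,\partial_q$ preserves the localisation at elements of the form $1-q^n\tau^m$ by the quotient rule, as $\partial_q(1-q^n\tau^m)=-nq^{n-1}\tau^m$ only introduces additional powers of $1-q^n\tau^m$ in the denominator and of $q$, which can be cleared since $q$ itself becomes invertible after localising at $1-q^n\tau^m$ for some $n<0$ (alternatively, one can localise further at $q$, which is already among the chosen elements via negative exponents).

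The main (minor) obstacle is bookkeeping: one must confirm that every denominator produced by iterated differentiation is indeed of the allowed form $1-q^n\tau^m$ with $n<0$, $m\in\NN^J$, and not some other polynomial; this is handled by noting that all denominators in $G_a$ are powers of $1-q^b\tau^c$ with $b<0$ and $c\in\NN^J$ fixed from the start, so the closed form is
\begin{equation*}
G(q,\tau)=\frac{P_a(q^b\tau^c)}{(1-q^b\tau^c)^{a+1}}
\end{equation*}
for a universal polynomial $P_a\in\ZZ[u]$ (the $a$-th Eulerian-type polynomial), which manifestly belongs to the specified localisation of $\ZZ[q,\tau]$.
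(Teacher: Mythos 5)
Your argument is correct, but it takes a different route from the paper: the paper disposes of this lemma by citing Lemma 4.4.3 of \cite{CLoes} (the general rationality statement for Presburger-type sums), whereas you give a self-contained proof — uniqueness via Zariski density of the real parameter region $\{q>1\}\times\{0<|\tau_j|\le 1\}$ together with non-vanishing of the allowed denominators there, and existence by applying the Euler operator $\tfrac1b\,q\,\partial_q=u\,\tfrac{d}{du}$ (with $u=q^b\tau^c$) repeatedly to the geometric series. This is exactly the computation underlying the cited lemma, so nothing is lost; what the citation buys the paper is uniformity over more general summation sets (bounded intervals, congruence conditions), which is used elsewhere (e.g.\ in the proof of \Cref{lem:VG-rel}), while your version is more transparent for the specific sum at hand.

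One parenthetical claim in your bookkeeping is false and worth fixing: $q$ does \emph{not} become invertible in the localisation of $\ZZ[q,\tau]$ at the elements $1-q^n\tau^m$ (for instance, every element of $\ZZ[q][\,q/(q-1)\,]$ is regular at $q=0$, so $q^{-1}$ does not lie in it), and $q$ is not itself among the chosen elements. Fortunately your closed form rescues the argument without this: since $P_a$ has degree at most $a$ (and $P_a(0)=0$ for $a\ge1$), writing $P_a(u)=\sum_{k=0}^{a}e_k(1-u)^k$ with $e_k\in\ZZ$ gives
\begin{equation*}
\frac{P_a(u)}{(1-u)^{a+1}}=\sum_{k=0}^{a}e_k\,(1-u)^{k-a-1},
\end{equation*}
a $\ZZ$-linear combination of strictly negative powers of $1-q^b\tau^c$, which manifestly lies in the stated localisation; no inverse of $q$ is ever needed. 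With that replacement for the justification of the final membership claim, the proof is complete.
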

\begin{proof}
This follows for example from Lemma 4.4.3 of \cite{CLoes}.
\end{proof}


\begin{lem}\label{lem:torsion:unique}
Let $y$ be an $\cS$-point.
Let $X$ be the $\cS(y)$-definable set $\VG_{\ge 0}$. Write $\id$ for the identity function $X\to X$.
Consider $F$ in $\cCM(X)$ of the form
\begin{equation}\label{f:sum:int:prop.1.0.0}
\sum_{(a,b,c)\in L}d_{a,b,c} \cdot  \id^{a} \cdot  \LL^{b\id} \cdot T^{\id\cdot c} 
\end{equation}
for some nonzero 
$d_{a,b,c}\in \cCM(\{y\})$, and finite sets $J\subset \NN$ and $L\subset \NN\times \ZZ\times \ZZ^{J}$. 
Suppose that $d_{a,b,c}$ is non-torsion for each $(a,b,c)\in L$ with $a>0$.
Then $F$ is zero if and only if $L$ is empty. In particular, the set $L$ of occurring exponents does not depend on the way of writing $F$ as in (\ref{f:sum:int:prop.1.0.0}).
\end{lem}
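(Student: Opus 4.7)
The plan is to combine exponential linear independence (via shift operators on sequences $\NN\to\cCM(y)$) with a Vandermonde-style polynomial-vanishing argument, using the non-torsion hypothesis together with a non-zero-divisor property for differences of Laurent monomials in $\LL$ and the $T_j$. First I would regroup by the Laurent-monomial exponent $(b,c)\in S:=\{(b,c):\exists\,a,\ (a,b,c)\in L\}$: set $v_{b,c}:=\LL^{b}T^{c}\in\cCM(y)^{\times}$ and $Q_{b,c}(x):=\sum_{a\,:\,(a,b,c)\in L} d_{a,b,c}\,x^a\in\cCM(\{y\})[x]$, so that for every $x\in\NN$ (in every $\cS(y)$-structure)
\[
F(x)\,=\,\sum_{(b,c)\in S} Q_{b,c}(x)\,v_{b,c}^{\,x}.
\]
The task reduces to showing that if this vanishes for every $x$, then every coefficient $d_{a,b,c}$ vanishes, contradicting nonzeroness of $d_{a,b,c}$ unless $L=\emptyset$.

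Next I isolate one exponent using the shift operator $(Eg)(x):=g(x+1)$. A direct computation gives, for any polynomial $P\in\cCM(y)[x]$ and any $(b,c)\ne(b',c')$,
\[
(E-v_{b,c})\bigl(P(x)\,v_{b',c'}^{\,x}\bigr)\,=\,v_{b',c'}^{\,x}\bigl(v_{b',c'}\,P(x+1)-v_{b,c}\,P(x)\bigr),
\]
again of the form polynomial-times-$v_{b',c'}^{\,x}$ with leading coefficient multiplied by $(v_{b',c'}-v_{b,c})$; whereas $(E-v_{b,c})(R(x)v_{b,c}^{\,x})=v_{b,c}^{\,x+1}(R(x+1)-R(x))$ strictly drops the degree of the polynomial factor. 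Fixing $(b_0,c_0)\in S$ and applying
\(
\prod_{(b,c)\in S\setminus\{(b_0,c_0)\}}(E-v_{b,c})^{\deg Q_{b,c}+1}
\)
to the identity $F(x)=0$ annihilates every summand except the one at $(b_0,c_0)$ and produces $\widetilde{Q}(x)\,v_{b_0,c_0}^{\,x}=0$ for every $x$, where $\widetilde{Q}\in\cCM(y)[x]$ has the same degree $D:=\deg Q_{b_0,c_0}$ as $Q_{b_0,c_0}$ and leading coefficient
\[
\mathrm{lead}(Q_{b_0,c_0})\cdot\prod_{(b,c)\ne(b_0,c_0)}(v_{b_0,c_0}-v_{b,c})^{\deg Q_{b,c}+1}.
\]
Since $v_{b_0,c_0}$ is a unit, this forces $\widetilde{Q}(x)=0$ in $\cCM(y)$ for every $x\in\NN$.

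Applying this at $x=0,1,\ldots,D$ and inverting the integer Vandermonde matrix yields a nonzero integer $N$ with $N\cdot\mathrm{lead}(\widetilde{Q})=0$ in $\cCM(y)$. To conclude $\mathrm{lead}(Q_{b_0,c_0})=0$ I need two inputs: the non-torsion hypothesis on $d_{a,b_0,c_0}$ for $a>0$, and the fact that each $v_{b_0,c_0}-v_{b,c}=v_{b,c}(\LL^{b_0-b}T^{c_0-c}-1)$ is a non-zero-divisor in $\cCM(y)$ for $(b,c)\ne(b_0,c_0)$. Granted this, when $D\ge 1$ the leading coefficient $d_{D,b_0,c_0}$ must vanish; stripping the top degree of $Q_{b_0,c_0}$ and iterating, all $d_{a,b_0,c_0}$ with $a>0$ vanish, and evaluating at $x=0$ in the original identity then kills the constant $d_{0,b_0,c_0}$. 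This contradicts $(a,b_0,c_0)\in L$, so $S=\emptyset$ and $L=\emptyset$.

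The main obstacle is the non-zero-divisor claim for $1-\LL^{\alpha}T^{\beta}$ with $(\alpha,\beta)\ne(0,0)$ in $\cCM(y)$. Because $\cQD(y)$ is built from generators and relations (R1)--(R5) and is not a priori an integral domain, one cannot simply appeal to polynomial algebra over a field. I would approach it either by showing directly from the construction that the $T_j$ are adjoined as genuine polynomial indeterminates over $\cQD(y)[\LL^{-1}]$ (so that Laurent monomials in $\LL$ and the $T_j$ remain distinct up to non-zero-divisor $\cQD(y)$-multiples), or, more robustly, by a specialization argument as in Remark \ref{sec:informal-meaning-CMexp}: for suitably generic $p$-adic data one has $1-p^{\alpha}\tau^{\beta}\ne 0$ in $\CC$, and combined with a faithfulness statement about such specializations this suffices to rule out zero-divisor behavior of $1-\LL^{\alpha}T^{\beta}$ in $\cCM(y)$.
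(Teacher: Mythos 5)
Your route is genuinely different from the paper's. The paper orders the exponent tuples $(a,b,c)$ reverse-lexicographically and inducts on the maximum, applying a single difference $F(n+1)-\LL^{b_{\max}}T^{c_{\max}}F(n)$ per step; it only ever extracts information from the coefficients at the \emph{top} Laurent monomial, where no factor $v_{b_{\max},c_{\max}}-v_{b,c}$ appears, and it pushes the contaminated lower-order terms back into the induction (restoring the non-torsion hypothesis via a substitution $n\mapsto \ell n$). You instead annihilate all but one exponential in a single pass with $\prod_{(b,c)\ne(b_0,c_0)}(E-v_{b,c})^{\deg Q_{b,c}+1}$ and finish with an integer Vandermonde system. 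Your architecture is cleaner and avoids the somewhat delicate rescaling step, but it makes the non-zero-divisor property of $v_{b_0,c_0}-v_{b,c}$ in $\cCM(y)$ load-bearing, and you rightly flag that as the unproven point.

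That point is a genuine gap as written, but it can be closed, and essentially only along your first suggested route. If $c_0=c$ and $b_0\ne b$, the difference equals $T^{c_0}\LL^{\min(b_0,b)}(\LL^{m}-1)$ with $m>0$, and $\LL^{m}-1=-\LL^{m}(1-\LL^{-m})$ is a \emph{unit}, since $1-\LL^{a}$ with $a<0$ is among the elements inverted in \Cref{defn:cQDloc}. If $c_0\ne c$, multiply by units ($\LL$ and the $T_j$ are invertible) to reduce to a binomial in $\cQD(y)[\LL^{-1}][T_0,T_1,\ldots]$ whose two monomials in the $T_j$ are distinct and whose coefficients are powers of $\LL$, hence units; McCoy's theorem (a zero-divisor of $R[T]$ is annihilated by a nonzero element of $R$) then shows this binomial is a non-zero-divisor in the polynomial ring, and non-zero-divisors survive the further localization. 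Note that $\cQD(y)$ is not a domain, so the unit-coefficient/McCoy step cannot be skipped. Your second suggested route (specialization to $\CC$) is not available here: the paper has a faithfulness-of-evaluation statement, via \cite{CH-eval}, only for the smaller ring $\cCexp$, not for $\cCM$ with its $T$-variables and multiplicative-character data. Two smaller remarks: evaluating the original identity at $x=0$ yields $\sum_{(b,c)}Q_{b,c}(0)=0$ and does \emph{not} isolate $d_{0,b_0,c_0}$; instead use your own machinery with $D=0$, where the Vandermonde matrix is $1\times 1$ and no torsion enters, to get $d_{0,b_0,c_0}\prod_{(b,c)\ne(b_0,c_0)}(v_{b_0,c_0}-v_{b,c})^{\deg Q_{b,c}+1}=0$ directly. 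And the stripping/iteration is unnecessary: since every $d_{a,b,c}$ with $(a,b,c)\in L$ is nonzero by hypothesis, the vanishing of the single leading coefficient $d_{D,b_0,c_0}$ already contradicts $L\ne\emptyset$.
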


\begin{proof}[Sketch of proof.]
The idea of the proof is to induct on the degree of $F$, by which we mean the maximum of the exponent tuples $(a,b,c)$ appearing in $L$ (for some lexicographic order). By considering the function $F(n+1)-F(n)$ and similar expressions, one reduces to lower degree. The full proof will be given in \Cref{sec:weak.orth}.
\end{proof}

\begin{remark}
Let us amend on \cite{CH-eval} and correct an unauthorized omission. In Proposition 4.1 of \cite{CH-eval}, one should add the condition that the $c_{a,b,i,j}$ are non-torsion whenever $a_1+\ldots+a_{r_i}>0$. This is needed for the ``furthermore'' part of the statement. The correction for the proof is like the proof of \Cref{lem:torsion:unique}, and the applications of Proposition 4.1 of \cite{CH-eval} are unaffected by this extra condition on non-torsion.
\end{remark}

We define the integral over $\VG$ in three steps, and first for a special case based on \Cref{lem:geom,lem:torsion:unique}.

\begin{def-lem}[Special integral over $\VG_{\ge 0}$]\label{lem:sum:NN}
Let $y$ be an $\cS$-point.
Let $X$ be the $\cS(y)$-definable set $\VG_{\ge 0}$. Write $\id$ for the identity function $X\to X$.
Consider $F$ in $\cCM(X)$ of the form
\begin{equation}\label{f:sum:int:prop.1.0}
d \cdot  \id^a \cdot  \LL^{b\id} \cdot T^{\id\cdot c} 
\end{equation}
for some 
nonzero $d\in \cCM(\{y\})$,  $a\in\NN,\ b\in \ZZ,\ c\in \ZZ^{J}$, and some finite set $J\subset \NN$.
If $a>0$, then suppose furthermore that $d$ is non-torsion. Then the way of writing $F$ as in (\ref{f:sum:int:prop.1.0}) is unique.
Furthermore, $F$ of this form is called \emph{integrable over $X$} if 
\begin{equation}\label{cond:L.1}
b<0 \mbox{ and } c \in \NN^J.
\end{equation}
If (\ref{cond:L.1}) holds, 
then the integral of $F$ over $X$, denoted by $\int_X F$, is defined as
\begin{equation}\label{f:sum:int:prop.1.1}
d\cdot  G(\LL,T),
\end{equation}
in $\cCM(y)$, with $G$ as given by \Cref{lem:geom} and with $T=(T_j)_{j\in J}$.
We also write $p_{!}(F)$ or $\int_{x\in X} F(x)$ or $\int_{x\in \VG} F(x)$ or $\int_{\VG} F$ for $\int_X F$, with $p:X\to\{y\}$ the $\cS(y)$-definable function from $X$ to  $\{y\}$, and where we have tacitly extended $F$ by zero outside $X$ in $\VG$.
\end{def-lem}
\begin{proof}
The only point is to show that, given nonzero $F$ as in (\ref{f:sum:int:prop.1.0}), the $a,b,c,d$ are unique. That the exponents $a,b,c$ are unique follows from \Cref{lem:torsion:unique}. That also $d$ is unique follows from evaluating $F$ at $1\in X$, and by noting that $\LL^{b} \cdot T^{c}$ is invertible in $\cCM(\{y\})$.
\end{proof}


\begin{def-lem}[Integration in one $\VG$-variable]\label{defn:int:VG}
Let $y$ be an $\cS$-point, $X\subset \VG$ be an $\cS(y)$-definable set and $F$ be in $\cCM(X)$. 
Write $F$ as a finite sum of some terms $F_j$.
Take finitely many $\cS(y)$-injections $f_{i}:\{y\}\to X$ for $i=1,\ldots,k'$, and $f_i:\VG_{\ge 0} \to X$, for $i=k'+1,\ldots, k$ and some $k\ge k'\ge 0$, 
such that the images of the $f_i$ form a partition of $X$, and such that $f_i^*(F_j)$ is either zero or of the form as in \Cref{f:sum:int:prop.1.0} for each $i>k'$ and each $j$. Such choices of $F_j$, $k',k$ and $f_i$ exist.

If these choices of writing can be made such that the integrability condition from \Cref{cond:L.1} is met for each nonzero $f_i^*(F_j)$ for each $i>k'$ and each $j$, then we call $F$ \emph{integrable over $X$} and we define $\int_X F$ in $\cCM(y)$ as the sum
\begin{equation}\label{f:sum:int:VG}
\sum_j\left( \sum_{i=1}^{k'} f_i^*(F_j)
+\sum_{i=k'+1}^{k}  \int_{\VG_{\ge 0}} f_i^*(F_j) \right),
\end{equation}
with the integral for any $i>k'$ as defined by (\ref{f:sum:int:prop.1.1}) when $f_i^*(F_j)$ is nonzero and by zero otherwise. The expression (\ref{f:sum:int:VG}) is independent of the choices made. We also write $p_!(F)$ or $\int_{x\in X} F(x)$ or $\int_{x\in \VG} F(x)$ or $\int_{\VG} F$ instead of $\int_X F$, with $p:X\to\{y\}$ the $\cS(y)$-definable function from $X$ to  $\{y\}$ and where we have again tacitly extended $F$ by zero outside $X$ inside $\VG$.
%
%
%
\end{def-lem}

The value $\int_X F$ from (\ref{f:sum:int:VG}) is called the integral of $F$ over $X$, or also the integral of $F$ in the fibers of $p$.
The proofs of \Cref{defn:int:VG,defn:int:VG} rely on a Presburger rectilinearization result for the value group, given below as \Cref{cor:rec.1} and its \Cref{cor:rec:CM.1}. As such, the proof of \Cref{defn:int:VG} will be given in \Cref{sec:int:rev}, since it relies on this Presburger rectilinearization result.



\begin{def-lem}[Relative integral in one $\VG$-variable]\label{lem:VG-rel}
Let $Y$ and $X\subset \VG\times Y $ be $\cS$-definable sets, $p:X\to Y$ be the projection, and $F$ be in $\cCM(X )$.  Suppose that for each point $y$ on $Y$, the function $F_{|p^{-1}(y)}$ is integrable over $X_y$ in the sense of \Cref{defn:int:VG}. Then we call $F$ \emph{integrable in the fibers of $p$} and in this case there exists a unique function $p_!(F)$ in $\cCM(Y )$ such that for each point $y$ on $Y$ one has that
\begin{equation}\label{f:sum:VG:rel}
p_!(F)(y)=\int_{X_y} F(\cdot,y).
\end{equation}
Here the right hand side is from \Cref{defn:int:VG} and where $F(\cdot,y)$ stands for $F_{|p^{-1}(y)}$ in $\cCM(X_y)$.
The motivic function $p_!(F)$ in $\cCM(Y )$ given by (\ref{f:sum:VG:rel}) is called the \emph{integral of $F$ in the fibers of $p$}.
\end{def-lem}

\begin{rem}\label{rem:VG-rel}
In \Cref{lem:VG-rel}, if $Y$ is a singleton, then we also write $\int_X F$ or $\int_{x\in X} F(x)$  or  $\int_\VG F$ or $\int_{x\in \VG} F(x)$  for $p_!(F)$ (where we tacitly extend $F$ by zero outside $X$).
\end{rem}



Note that the finite value of $k$ from \Cref{defn:int:VG} can be unbounded when $y$ moves in $Y$, but this is taken care of in a definable way in the proof of \Cref{lem:VG-rel}, which will be given in \Cref{sec:int:rev}.


\subsection{Integration over one $\VF$-variable}\label{sec:int:VF}

We now define the integral over a single $\VF$-variable by reducing it to integration over residue rings and over the value group.

\begin{def-lem}[Integration over one $\VF$-variable, the $\cCeM$ case]\label{lem:VF-cCeM}
Let $y$ be an $\cS$-point, $X\subset \VF$ be an $\cS(y)$-definable set, and let $F$ be in $\cCeM(X)$. 

Take an $\cS(y)$-injection $f : X'  \to X$ such that $f(X')$ equals $X$ minus a finite $\cS(y)$-definable set, and where moreover $X' \subset X\times \VG\times \RR_t$ for some basic tuple $t$ and $f$ comes from the projection $X\times \VG\times \RR_t \to X$. Write $Z$ for the image of $X'$ under the projection $p' : X' \to \VG\times \RR_t$. Suppose for each point $z$ on $Z$ that $p'^{-1}(z)\subset \VF$ is a closed ball, say, of valuative radius $\alpha(z)$ and, that there is a function $F'$ in $\cCeM(Z)$ such that $f^*(F)= p'^{*} (F')$. Such $f$ exists, and $\alpha: Z\to \VG$ is automatically an $\cS(y)$-definable function.
Consider the function $G = \LL^{-\alpha}\cdot F'$ in $\cCeM(Z)$ 
and write $p_Z: Z \to \RR_t$ for the projection.
If $G$ is integrable in the fibers of $p_Z$  (as in \Cref{lem:VG-rel}), then we call $F$ \emph{integrable over $X$}, and we define $\int_X F$ in $\cCeM(y)$ by the iterated integral
\begin{equation}\label{f:sum:int:VF:abs}
\int_{\xi \in \RR_t}  \int_{\VG} G (\cdot,\xi ),
\end{equation}
with integrals from  \Cref{lem:D-rel,lem:VG-rel}. 
This is independent of the choices made.  We also write $p_!(F)$ or $\int_{x\in X} F(x)$ or $\int_{x\in \VF} F(x)$ or $\int_{\VF} F$ instead of $\int_X F$, with $p:X\to\{y\}$ the $\cS(y)$-definable function from $X$ to $\{y\}$, where we tacitly extended $F$ by zero outside $X$, and which we call the \emph{integral of $F$ over $X$}, or the integral in the fibers of $p$.
\end{def-lem}

See below in \Cref{sec:int:rev}  for the fully detailed proof of \Cref{lem:VF-cCeM}; for now we just give a sketch.

\begin{proof}[Sketch of the proof of \Cref{lem:VF-cCeM}]
  That $f$ and $F'$ exist as desired follows from the cell decomposition \Cref{prop:cell.1}. It remains to show the independence of the choices. This follows from taking common refinements, similarly as in the proof of Lemma 9.1.3 of \cite{CLoes}.
\end{proof}


\begin{def-lem}[Integrability over one $\VF$-variable]
\label{lem:VF:cCM:int}
Let $y$ be an $\cS$-point, $X\subset \VF$ be an $\cS(y)$-definable set, $F$ be in $\cCM(X)$. 
Take a finite cover $\sigma : \widetilde X \to X$ for some $\cS$-definable set $\widetilde X \subset X\times \RR_t$ and some basic tuple $t$, and  where $\sigma$ comes from the projection $X\times \RR_t \to X$.
Suppose that there is $G$ in $\cCeM(\widetilde X)$ and an $\cS$-imaginary function $g:\widetilde X\to\VF/\cM$ such that $\sigma_! (G \cdot E(g)) = F$. Such a choice of $\widetilde X$ always exists.
Then we call $F$ \emph{integrable over $X$} if the above choices  of  $\widetilde X$ and  $G$ can be made such that $G(\cdot,\xi)$ is integrable over $\widetilde X_\xi$ for each point $\xi$ in $\RR_t$ as in \Cref{lem:VF-cCeM}.
\end{def-lem}

See below in \Cref{sec:int:rev} for the fully detailed proof of \Cref{lem:VF:cCM:int}.


We give an alternative criterion for integrability which is often easier to verify.

\begin{lem}[Criterion for Integrability over one $\VF$-variable]\label{lem:VF:int:crit}
Let $y$ be an $\cS$-point, $X\subset \VF$ be an $\cS(y)$-definable set, and let $F$ be in $\cCM(X)$. 
Then $F$ is integrable over $X$ if and only if $F$ can be written as a finite sum of terms $c_i \cdot [Z_i,f_i]$ with $c_i$ a finite product of generators of type (G1) and $[Z_i,f_i] = [Z_i,\alpha_i,\beta_i,\gamma_i,g_i,h_i]$ a generator of type (G2), such that moreover each function  $F_i := [Z_i,\alpha_i,\beta_i,\gamma_i,0,h_i]$ is integrable over $X$ in the sense of \Cref{lem:VF-cCeM}.
\end{lem}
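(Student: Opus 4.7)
The proof proceeds by unpacking the two integrability definitions: the definition of integrability in $\cCM(X)$ given in \Cref{lem:VF:cCM:int} (via a cover $\widetilde X \subset X\times \RR_t$, a $\cCeM$-function $G$, and an imaginary $g$) and the definition of integrability in $\cCeM(X)$ given in \Cref{lem:VF-cCeM} (via cell decomposition into closed ball families). Both directions are essentially a matter of translating between the ``single-generator with cover'' description and the ``finite sum of products of generators'' description, and verifying that the relevant integrability conditions are preserved.

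For the direction $(\Leftarrow)$: suppose $F=\sum_i c_i[Z_i,\alpha_i,\beta_i,\gamma_i,g_i,h_i]$ with each $F_i=[Z_i,\alpha_i,\beta_i,\gamma_i,0,h_i]\in\cCeM(X)$ integrable in the sense of \Cref{lem:VF-cCeM}. Each $Z_i$ lies in $X\times\RR_{t_i}$; by padding all the $t_i$ into a common basic tuple $t$ and using additional residue-field coordinates to index the value of $i$, embed the disjoint union $\widetilde X:=\bigsqcup_i Z_i$ as an $\cS$-definable subset of $X\times\RR_t$. Let $\sigma:\widetilde X\to X$ be the restriction of the projection. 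On the $i$-th piece define $G$ to be the $\cCeM$-function $z\mapsto c_i(\sigma(z))\cdot(\prod_k\alpha_{i,k}(z))\cdot\LL^{\beta_i(z)}\cdot T^{\gamma_i(z)}\cdot H(h_i(z))$, and let $g$ agree with $g_i$ on the $i$-th piece. The formula defining $\sigma_!$ in \Cref{lem:D-rel} then yields $\sigma_!(G\cdot E(g))=\sum_i c_i[Z_i,f_i]=F$. Finally, for each point $\xi\in\RR_t$ lying in the $i$-th layer, $\widetilde X_\xi$ is a fiber of $Z_i\to\RR_{t_i}$ sitting inside $X\subset\VF$, and $G(\cdot,\xi)$ is precisely the ``unsummed slice'' of $c_iF_i$ along that fiber; the integrability of this slice over $\widetilde X_\xi$ follows from the integrability of $F_i$ together with \Cref{lem:VF-cCeM}, invoking cell decomposition of $Z_i$ compatible with the $\RR_{t_i}$-projection (as provided by \Cref{sec:S}) and relative integration over residue rings via \Cref{lem:D-rel}.

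For the direction $(\Rightarrow)$: suppose $F$ is integrable in $\cCM(X)$, so there exist $(\widetilde X,G,g)$ as in \Cref{lem:VF:cCM:int} with $G\in\cCeM(\widetilde X)$ satisfying $G(\cdot,\xi)$ integrable over $\widetilde X_\xi$ for each $\xi$. Expand $G$ as a finite sum of products of $\cCeM$ generators, $G=\sum_k c_k[W_k,\tilde\alpha_k,\tilde\beta_k,\tilde\gamma_k,g'_k,\tilde h_k]$ with $\ord g'_k\ge t'_k$ for some basic terms $t'_k$; each $W_k$ lives in $\widetilde X\times\RR_{t'_k}\subset X\times\RR_{(t,t'_k)}$. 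Using the multiplicative rule of \Cref{ring}, the product $G\cdot E(g)$ equals $\sum_k c_k[W_k,\tilde\alpha_k,\tilde\beta_k,\tilde\gamma_k,g'_k+g\circ\pi_k,\tilde h_k]$, where $\pi_k:W_k\to\widetilde X$ is the natural projection. Applying $\sigma_!$ (which by \Cref{lem:D-rel} merely reinterprets the generator over the new base $X$) produces the decomposition of $F$ as a finite sum of products of (G1) and (G2) generators. Taking $F_k$ to be the same expression with $g'_k+g\circ\pi_k$ replaced by $0$ yields an element of $\cCeM(X)$, whose integrability over $X$ in the sense of \Cref{lem:VF-cCeM} follows from the integrability of the corresponding components of $G$ together with relative integration over residue rings.

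The main obstacle is verifying the integrability transfer in both directions, in particular that ``integrability of $F_i$ over $X$ in $\cCeM$'' translates correctly to ``integrability of the slices $G(\cdot,\xi)$ over $\widetilde X_\xi$'' and vice versa. This requires a careful Fubini-type argument combining cell decomposition from \Cref{sec:S}, relative integration over residue rings (\Cref{lem:D-rel}) and over $\VG$ (\Cref{lem:VG-rel}), and an application of Fubini--Tonelli at the $\cCeM$ level; the full details are deferred to \Cref{sec:int:rev}.
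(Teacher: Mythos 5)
Your overall strategy—translating in both directions between the ``finite sum of products of generators'' presentation and the ``finite cover plus $\cCeM$-function plus imaginary $g$'' presentation of \Cref{lem:VF:cCM:int}—is the same as the paper's, and your $(\Rightarrow)$ direction is essentially the paper's argument (expand $G$ into generators, push forward along $\sigma$, and set the additive-character argument to zero). The problem is in your $(\Leftarrow)$ direction, where you build the witness $(\widetilde X,G,g)$ differently from the paper, and the construction does not satisfy the hypotheses of \Cref{lem:VF:cCM:int}. First, your $\widetilde X=\bigsqcup_i Z_i$ is not a finite cover of $X$: its fiber over $x$ is $\bigsqcup_i Z_{i,x}$, a definable subset of $\RR_t$, and this is finite only when the residue rings are finite. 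For general $K$ in $\cS_D$ (e.g.\ residue characteristic zero, or pseudo-finite residue field) these fibers are infinite, whereas \Cref{lem:VF:cCM:int} explicitly requires $\sigma$ to be a finite cover. Second, even where the fibers happen to be finite, your slices $G(\cdot,\xi)$ are the individual unsummed terms of the generators, and the claim that their integrability over $\widetilde X_\xi$ ``follows from the integrability of $F_i$'' is not valid: $F_i(x)=\sum_{\xi\in Z_{i,x}}(\cdots)$, and formal integrability of this sum over $X$ does not pass to the individual $\xi$-slices, because cancellation across $\xi$ can occur. For instance, with $Z_{i,x}=\{0,1\}\subset\RF$, $\alpha_i(x,\xi)=1-2\xi$ and $\beta_i(x,\xi)=\ord x$ on $X=\cO\setminus\{0\}$, one gets $F_i=0$ (hence integrable), while each slice is $\pm\LL^{\ord x}$, which is not integrable over $\cO\setminus\{0\}$. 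So the Fubini-type transfer you defer to \Cref{sec:int:rev} is not merely a technicality: in the form you need it, it is false.

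The paper avoids both problems by reusing the construction from the proof of \Cref{lem:VF:cCM:int} itself: each imaginary $g_i$ is lifted to a definable function $d_i$ by the Lifting \Cref{lem:lift:of:g} (at the cost of a bounded-depth $\cCeM$ correction factor), and $\widetilde X$ is indexed not by the full residue parameter but by the values of the $d_i$, which form a uniformly finite set by \Cref{cor:finite}—this is exactly what makes $\sigma$ a finite cover. With that choice, $G(\cdot,\xi)$ is still a sum over the original residue parameters of $Z_i$, restricted only by the condition $d_i=\iota^{-1}(x,\xi)$, so it is (a restriction of) the $g$-free pullback of $c_iF_i$ rather than a single unsummed term, and its integrability is what the hypothesis on the $F_i$ is designed to supply. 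To repair your argument you would need to replace your $\widetilde X$ by the paper's (or otherwise coarsen the indexing to something of uniformly finite fiber size tied to the values of lifts of the $g_i$), and justify integrability of the resulting slices from that coarser decomposition rather than from the slice-by-slice claim.
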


See below in \Cref{sec:int:rev}  for the fully detailed proof of \Cref{lem:VF:int:crit};
for now let us just note that it is similar to the proof of \cite[Proposition 4.3 (c)]{CH-eval}.

\begin{def-lem}[Integration over one $\VF$-variable, the $\cCM$ case]\label{lem:VF:cCM}
Let $y$ be an $\cS$-point, $X\subset \VF$ be an $\cS(y)$-definable set, and let $F$ be in $\cCM(X)$. 
Suppose that $F$ is integrable over $X$, as in \Cref{lem:VF:cCM:int}. Take an $\cS(y)$-definable bijection $f : X'  \to X$
where moreover $X' \subset X\times \VG\times \RR_t$ for some basic tuple $t$, and $f$ comes from the projection $X\times \VG\times \RR_t \to X$. Write $Z$ for the image of $X'$ under the projection $p' : X' \to \VG\times \RR_t$. Suppose for each point $z$ on $Z$ that there is a function $F'_{z}$ in $\cCeM(z)$ such that $(f^*(F))_{|p'^{-1}(z)   } = p'^*_z (F'_{z})$, with $p'_z:p'^{-1}(z)\to \{z\}$ the projection. Such $f$ always exist.

Then there is a unique function $G$ in $\cCM(Z)$ such that $G(z) = \int_{p'^{-1}(z)}p'^*_z(F'_{z})$ for each point $z$ on $Z$, with integral as in \Cref{lem:VF-cCeM}. Then $G(\cdot,\xi)$ is integrable over $\VG$ for each each $\xi$ in $\RR_t$ and we define $\int_X F$ by the
iterated integral
\begin{equation}\label{f:sum:VF:abs:exp}
\int_{\xi \in \RR_t}  \int_{\VG} (G (\cdot,\xi)   ),
\end{equation}
with integrals from  \Cref{lem:D-rel,lem:VG-rel}. 
This is independent of the choices made.
\end{def-lem}

See below in \Cref{sec:int:rev}  for the fully detailed proof of \Cref{lem:VF:cCM}; for now we just give a sketch.

\begin{proof}[Sketch of proof of \Cref{lem:VF:cCM}]
We have to show the existence of such $f$, and of $p_!(F)$ in $\cCM(Y )$, as the uniqueness of the latter is clear. The existence of such $f$ follows from cell decomposition \Cref{prop:cell.1}. The main point to show the existence of $p_!(F)$ 
 is to preserve the boundedness as this is not immediately clear from the 
 choices of $F'_{z}$ for each $z$. This uses the Jacobian property and compatible decomposition from Addendum \ref{add.Jacprop} to the cell decomposition theorem.
\end{proof}

\begin{def-lem}[Relative integration over one $\VF$-variable]\label{lem:VF-rel}
Let $Y$ and $X\subset \VF\times Y $ be $\cS$-definable sets,  $p:X\to Y$ be the projection and $F$ be in $\cCM(X )$.
Suppose that for each point $y$ on $Y$, the function $F_{|p^{-1}(y)}$ is integrable over $X_y$ in the sense of \Cref{lem:VF:cCM:int}. Then we call $F$ \emph{integrable in the fibers of $p$} and in this case there exists a unique function $p_!(F)$ in $\cCM(Y )$ such that for each point $y$ on $Y$ one has
\begin{equation}\label{eq:int:VF:rel}
p_!(F)(y)=\int_{X_y}  F(\cdot,y),
\end{equation}
where the right hand side is as in \Cref{lem:VF:cCM} and where $F(\cdot,y)$ stands for $F_{|p^{-1}(y)}$ in $\cCM(X_y)$. 
We call $p_!(F)$ the \emph{integral of $F$ in the fibers of $p$}.
\end{def-lem}
\begin{rem}\label{rem:VF-rel}
In \Cref{lem:VF-rel}, 
if  $Y$ is a singleton, then we also write $\int_X F$ or $\int_{x\in X}F(x)$ or $\int_{\VF}F$ or $\int_{x\in \VF}F(x)$ for $p_!(F)$, where we tacitly extended $F$ by zero outside $X$ and where we call $\int_X F$ the integral of $F$ over $X$.
\end{rem}

See below in \Cref{sec:int:rev} for the fully detailed proof of \Cref{lem:VF-rel}.



%
%
%
%
%
%
%
%
%
%
%
%

\subsection{General integration}\label{sec:int:general}

General integration is defined by using a variant of Fubini-Tonelli, which is classically a stronger Fubini statement for non-negatively valued functions, and a combination of the integrals over $\VG$, $\RR_t$, and $\VF$ as already defined. By the presence of (abstract) characters, we are not able to work with non-negatively valued functions. Instead, we work with restrictions to subsets: the Tonelli-aspect of Fubini-Tonelli is captured by the characteristic functions $\11_A$ of varying $\cS$-definable sets $A$ in \Cref{thm:gen-rel}; in our setting, unlike for real-valued functions, there is no possible subdivision in non-negative and non-positive functions. 


\begin{defn}[Dimension and measure zero sets]\label{defn:measure:zero}
Consider nonnegative integers $n,m,d$ and a basic tuple $t$.
Let  $A\subset \VF^{n}\times \VG^{m}\times \RR_t$ be an  $\cS$-definable set and let $A'$ be the image of $A$ under the coordinate projection $\VF^{n}\times \VG^{m}\times \RR_t\to \VF^n$. Then $A$ is said to have \emph{dimension less than $d$} if there exists an $\cS$-definable function $\ell:A' \to\VF^d$ such that $\ell$ has finite fibers and such that $\ell(A')$ has empty interior in $\VF^d$.

If $A$ has dimension less than $n$ then we say that $A$ has \emph{measure zero}. 

\end{defn}

In line with the convention just above \Cref{rem:class-set}, the two conditions in \Cref{defn:measure:zero} for having dimension less than $d$ mean that  $\ell_K$ has finite fibers and $\ell_K(A'_K)$ has empty interior in $\VF_K^d$ for the valuation topology, for each $K$ in $\cS$. The properties of dimension in this context are well understood and of tame nature. In particular, the $\cS$-definable function $\ell$ can be taken to be simply a $\QQ$-linear map.

For $\cS$-definable sets $A\subset X$, write $\11_A$ for the characteristic function of $A\subset X$, thus sending a point $x$ on $X$ to $1$ if $x$ lies on $A$ and to $0$ if $x$ lies on $X\setminus A$.  This function $\11_A$ can be seen inside $\cCM(X)$, or its subrings from \Cref{lem:id:in}.

The following definition works via Fubini-Tonelli, where the Tonelli-aspect is captured by varying $A$.
\begin{def-lem}[Motivic integrals, absolute case]\label{thm:gen-abs}
Consider an $\cS$-point $y$, a basic tuple $t=(t_1,\ldots,t_r)$, and nonnegative integers $n,m,r$.
Let
$$
X\subset \VF^{n}\times \VG^{m}\times \RR_t
$$
be an $\cS(y)$-definable set and let $F$ be in $\cCM(X)$. 
We define the integrability condition on $F$ and under that condition we define $\int_X F$ in $\cCM(y)$, the \emph{(motivic) integral of $F$ over $X$}, by induction on $n+m+r$.

If $n+m+r=1$ then we are already done, by \Cref{sec:push,sec:int:VG,sec:int:VF}.

If 
$n+m +r>1$, then write
$$
X\subset X_1\times X_2
$$
with
$$
X_1 = \VF^{n_1}\times \VG^{m_1} \times \RR_{(t_1,\ldots,t_{r_1})} 
$$
and
$$
X_2 = \VF^{n_2}\times \VG^{m_2} \times \RR_{(t_{r_1+1},\ldots,t_r)} 
$$
for some choice of nonnegative integers $n_i$, $m_i,r_i$ for $i=1,2$ with
$n=n_1+n_2,\ m = m_1+m_2,r_1\le r$
and with $n_1+m_1+r_1>0$ and $n_2+m_2+r_2>0$. Call $F$ \emph{integrable over $X$} if there is a measure zero $\cS(y)$-definable set $X_0\subset X_2$ such that for each $\cS'$-definable subset $A\subset X_{\cS'}$, with $\cS' := \cS(y)(a)$ and any  $\cS(y)$-point $a$, the following holds.

There is $G_A$ in $\cCM(X_{2,\cS'})$ which is integrable over $X_{2,\cS'}$, and
for each point $x_2$ on $(X_{2}\setminus X_{0})_{\cS'}$,
the function $\big(\11_A\cdot F_{\cS'}  \big) (\cdot,x_2)$ in $\cCM(X_{1,\cS'(x_2)})$ is integrable over 
$X_{1,\cS'(x_2)}$ with integral $\int_{x_1\in X_{1,\cS'(x_2)}}\big(\11_A\cdot F_{\cS'}  \big) (x_1,x_2)$ equal to $G_{A}(x_2)$.  In this definition, we  tacitly extend some functions by zero outside their natural domains.

If $F$ is integrable over $X$, then we define $\int_X F$ in $\cCM(y)$ by 
\begin{equation}\label{eq:gen:int}
\int_{x_2\in X_2\setminus X_0}\int_{x_1\in X_{1,\cS(x_2)}} F(x_1,x_2).
\end{equation}

This is independent of the choices, in particular, of the ordering of the coordinates on $X$, and the choices of $X_1,X_2$ and $X_0$.
\end{def-lem}


\begin{remark}
\label{add:int}
With data and notation from \Cref{thm:gen-abs}, it may be interesting to investigate whether to check the integrability of $F$ over $X$ it would be enough to check the corresponding integrability conditions of  \Cref{thm:gen-abs} only with $a=y$ (instead for all choices of $a$).  Note that $\cS'$ would then simply by $\cS(y)$. 
\end{remark}

\begin{proof}[Sketch of proof for \Cref{thm:gen-abs}]
The proof is reduced to several basic case, like for the case that $X$ is a subset of $\RR_t\times \RR_u$, of $\RR_t\times \VG$, $\RR_t\times \VF$, $\VG\times \VF$, and $\VF\times \VF$.
The proof for $\VF\times \VF$ uses a computation on bi-cells (that is, cells in $\VF^2$) from \Cref{prop:bi-cells} as in \cite{CLoes,CLexp}.
\end{proof}

We now come to the general definition of motivic integrals in the fibers of some general $\cS$-definable function $p:X\to Y$.

\begin{def-thm}[Motivic integrals]\label{thm:gen-rel}
Let $Y$ and
$$
X\subset \VF^{n}\times \VG^{m}\times \RR_t\times Y
$$
be $\cS$-definable sets  and let $F$ be in $\cCM(X )$ for some nonnegative integers $n,m$ and some basic tuple $t$.

Suppose that for each point $y$ on $Y$, the function $F_{|p^{-1}(y)}$ is integrable over $X_y$ in the sense of \Cref{thm:gen-abs}. Then we call $F$ \emph{integrable in the fibers of the projection $p:X\to Y$} and in this case there exists a unique function $p_!(F)$ in $\cCM(Y )$ such that for each point $y$ on $Y$ one has
\begin{equation}\label{eq:int:general:rel}
p_!(F)(y)=\int_{X_y}  F(\cdot,y),
\end{equation}
where the right hand side is as in \Cref{thm:gen-abs}. We call $p_!(F)$ the \emph{(motivic) integral of $F$ in the fibers of $p$}.
\end{def-thm}


\begin{rem}\label{rem:int-gen}
In \Cref{thm:gen-rel}, 
if  $Y$ is a singleton, then we also write $\int_X F$ or $\int_{x\in X}F(x)$ or $\int_{\VF^{n}\times \VG^{m}\times \RR_t}F$ or $\int_{x\in \VF^{n}\times \VG^{m}\times \RR_t}F(x)$ for $p_!(F)$, where we tacitly extended $F$ by zero outside $X$; we call $\int_X F$ the (motivic) integral of $F$ over $X$.
\end{rem}

The proof for \Cref{thm:gen-rel} will simply be a family version of the proof of \Cref{thm:gen-abs}, and similarly for both following \Cref{cor:Fubini,cor:Fubini-Ton}.

%



The following two corollaries say that the data witnessing integrability in \Cref{thm:gen-abs} can be taken definably in families.

\begin{cor}[Fubini]\label{cor:Fubini}
Let $X_1$, $X_2, Y$ and $X\subset X_1 \times X_2 \times Y$ be $\cS$-definable sets, let $F$ be in $\cCM(X )$ and write $p:X\to Y$ for the projection. Suppose that $F$ is integrable over $X$ in the fibers of $p$. Then there exists an $\cS$-definable set $X_0\subset X_2\times Y$ such that for each point $(x_2,y)$ on $ ( X_2\times Y ) \setminus X_0$ the following hold

1) $X_{0,y}$ has measure zero, 

2) $F(\cdot,x_2,y)$ is integrable over $p_1^{-1}(x_2,y)$, 
say, with integral $G(x_2,y)$,

3) $G$ belongs to $\cCM(X_{2}\times Y )$, $G$ is integrable in the fibers of $p_2$, and  
\begin{equation}\label{eq:Fub0}
p_{2!}(G) = p_!(F),
\end{equation}
with 
$p_1:X\to X_2\times  Y$ and $p_2:X_2\times  Y \to Y$   the projections. Here we have extended $F$ and $G$ by zero outside where they were not already defined.
\end{cor}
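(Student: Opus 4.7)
The plan is to prove this as a family version of \Cref{thm:gen-abs} parametrized by $Y$, where the main task is to upgrade pointwise witnesses of integrability to $\cS$-definable ones. By hypothesis, $F$ is integrable in the fibers of $p$, so for each $\cS$-point $y$ on $Y$ the function $F(\cdot,y)$ is integrable over $X_y$ in the sense of \Cref{thm:gen-abs}. Writing $X \subset X_1 \times X_2 \times Y$ and applying \Cref{thm:gen-abs} inside each fiber (with $X_2$ playing the role of the second factor and $X_1$ the first), we obtain for each $y$ an (abstract) measure zero $\cS(y)$-definable set $X_0^y \subset X_{2,y}$ and, for each relevant $\cS(y)$-definable $A \subset X_y$, a function $G_A^y \in \cCM(X_{2,y})$ fulfilling the conditions spelled out there. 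The task is then to assemble such data into a single $\cS$-definable $X_0 \subset X_2\times Y$ and an $\cS$-definable $G \in \cCM(X_2 \times Y)$.

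First I would reduce to proving the existence of the $\cS$-definable $X_0$: once it is in hand, property (1) holds by construction of its fibers as measure zero sets, property (2) is immediate from the fiberwise application of \Cref{thm:gen-abs} with $A = X$, and property (3) follows by assembling $G$ via the relative integrals \Cref{lem:D-rel}, \Cref{lem:VG-rel}, \Cref{lem:VF-rel}. The equality $p_{2!}(G) = p_!(F)$ is then obtained by evaluating both sides at every $\cS$-point $y$ on $Y$, applying the iterated integral formula \eqref{eq:gen:int} of \Cref{thm:gen-abs}, and invoking the uniqueness clause of the relative integral in \Cref{thm:gen-rel}.

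The construction of $X_0$ proceeds by induction on $n_1 + m_1 + r_1$, where $X_1 \subset \VF^{n_1}\times \VG^{m_1}\times \RR_{t'}$ is the factor being integrated out. The base cases correspond to integration over a single residue-ring variable (no measure zero set is needed, by \Cref{lem:D-rel}), a single $\VG$-variable (where the measure zero set is empty and the rectilinearization \Cref{cor:rec.1}, \Cref{cor:rec:CM.1} supplies the definable partitions uniformly in $y$), and a single $\VF$-variable (where cell decomposition \Cref{prop:cell.1} together with its Jacobian-property addendum \Cref{add.Jacprop} supplies $\cS$-definable cells with parameter $y$, and the boundary of the cells becomes the candidate for $X_0$). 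The inductive step splits the integral along one coordinate at a time, incorporating the previously constructed $X_0$'s into the new parameter space, using the independence-of-ordering clause of \Cref{thm:gen-abs} together with \Cref{lem:VF:cCM:int}, \Cref{lem:VF:cCM} and their $\VG$, $\RR_t$ analogues.

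The main obstacle will be precisely this definable-in-families step: ensuring that the cell decompositions, rectilinearizations, and partitions chosen in the fiberwise proof of \Cref{thm:gen-abs} can all be effected $\cS$-definably in the parameter $y$. Without such uniformity one would only have, for each $y$, the abstract existence of $X_0^y$, not an $\cS$-definable object. The fact that \Cref{prop:cell.1} and the rectilinearization results are formulated $\cS$-definably, with parameters treated on equal footing, is what allows this uniform assembly; the remainder of the argument is then a bookkeeping exercise that mirrors the well-known Fubini arguments of \cite{CLoes,CLexp} transposed into the present framework.
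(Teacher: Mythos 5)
Your proposal is correct and takes essentially the same route as the paper: the paper's entire proof of this corollary is the single observation that the proof of \Cref{thm:gen-abs} ``clearly works well in definable families,'' and your argument is a faithful elaboration of exactly that, using the family-uniform formulations of cell decomposition, rectilinearization, and the relative integrals to upgrade the fiberwise witnesses to $\cS$-definable ones.
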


\begin{rem}\label{rem:fub:alt}
Note that \Cref{eq:Fub0} can be rewritten in the more familiar form as
$$
\int_{x_2\in X_{2,\cS(y)}} G(x_2,y) = \int_{(x_1,x_2)\in (X_1\times X_2)_{\cS(y)}}F(x_1,x_2,y),
$$ 
for each point $y$ on $Y$.
\end{rem}


\begin{cor}[Fubini-Tonelli]\label{cor:Fubini-Ton}
Let $X_1$, $X_2$ and $X\subset X_1 \times X_2 \times Y$ be $\cS$-definable sets, let $F$ be in $\cCM(X )$ and consider the projection $p:X\to Y$. Then $F$ is integrable over $X$ in the fibers of $p$ if and only if there exists an $\cS$-definable set $X_0\subset X_2\times  Y$ with measure zero fibers for $p_2$ such that for each $\cS$-point $a$ and each $\cS'$-definable subset $A\subset X_{\cS'}$, with $\cS' := \cS(a)$, the following holds.

There is $G_A$ in $\cCM((X_{2}\times  Y)_{\cS'})$ which is integrable over $X_{2,\cS'}$ in the fibers of the projection map to $  Y_{\cS'}$, and
for each point $(x_2, y)$ on $(X_{2}\times  Y \setminus X_{0})_{\cS'}$, 
the function $\big(\11_A\cdot F_{\cS'}  \big) (\cdot,x_2, y)$ in $\cCM( p_1^{-1}(x_2, y) )$ is integrable over 
$p_1^{-1}(x_2, y)$ with integral $\int_{x_1\in p_1^{-1}(x_2, y)}\big(\11_A\cdot F_{\cS'}  \big) (x_1,x_2, y)$ equal to $G_{A}(x_2, y)$.

If $F$ is integrable, then one has furthermore, for any such choice of $X_0$, $a$, $A$ and $G_A$,
\begin{equation}\label{eq:Fub}
p_{2,\cS'!}(G_A) = p_{\cS'!}(\11_A\cdot F_{\cS'} ),
\end{equation}
with 
$p_1:X\to X_2\times  Y$ and $p_2:X_2\times  Y \to  Y$   the projections. 
\end{cor}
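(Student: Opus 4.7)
The plan is to bootstrap from the absolute Fubini--Tonelli statement \Cref{thm:gen-abs} (applied in the structure $\cS(y)$ at each point $y$ on $Y$) to the relative form, using the parameter version of cell decomposition from \Cref{sec:S} (in particular \Cref{prop:cell.1} and \Cref{add.Jacprop}) to ensure that the witnessing data $X_0$ and $G_A$ can be chosen $\cS$-definably in the parameter $y$ and not merely fibrewise. The identity \eqref{eq:Fub} will then follow from the fibrewise characterization \eqref{eq:int:general:rel} of relative integrals combined with the Fubini identity \eqref{eq:gen:int} already built into the absolute case.

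For the easy direction, suppose such $X_0$ and $G_A$ exist. For any point $y$ on $Y$, specialize the hypothesis with $a = y$ and with $A$ of the form $A_0 \times \{y\}$ for $A_0 \subset X_y$, so that $\cS' = \cS(y)$. The restriction of $X_0$ to the fiber over $y$ has measure zero in $X_{2, \cS(y)}$, and the functions $G_A$ become, fibre-by-fibre over $y$, witnesses for the integrability of $F_{|X_y}$ in the sense of \Cref{thm:gen-abs} applied to the splitting $X_y \subset X_{1,\cS(y)} \times X_{2,\cS(y)}$. Hence $F_{|X_y}$ is integrable for every $y$, so by \Cref{thm:gen-rel} the function $F$ is integrable in the fibers of $p$.

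For the hard direction, assume $F$ is integrable in the fibers of $p$. I would induct on $n + m + \ell$ with $t = (t_1, \ldots, t_\ell)$, exactly as in the inductive definition of absolute integrability in \Cref{thm:gen-abs}, and reduce to the one-variable building blocks treated in \Cref{sec:push}, \Cref{sec:int:VG} and \Cref{sec:int:VF}. At each step I would apply the relative one-variable integration results \Cref{lem:D-rel}, \Cref{lem:VG-rel} and \Cref{lem:VF-rel}, now over the base $X_2 \times Y$ instead of just $Y$; these automatically produce $\cS$-definable witnesses because the data supplied by cell decomposition in \Cref{lem:VF-cCeM} and \Cref{lem:VF:cCM} (the cell map $f$, the centre, the radius $\alpha$, and the function $F'$) are $\cS$-definable in every parameter, by \Cref{prop:cell.1} and \Cref{add.Jacprop}. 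Concretely, the measure-zero exceptional set obtained in the one-variable integration depends $\cS$-definably on the parameters, so gluing these sets over the inductive factorization gives an $\cS$-definable $X_0 \subset X_2 \times Y$ with measure-zero fibers over $Y$, and likewise the fiber integrals assemble into an $\cS$-definable $G_X \in \cCM((X_2 \times Y))$; the general $G_A$ is then obtained by the same construction applied to $\11_A \cdot F$, which again lies in $\cCM(X)$ by \Cref{defn:cCD}, and stability of every construction under base change $\cS \leadsto \cS'$ handles the passage to the enriched structure $\cS' = \cS(a)$.

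The main obstacle, as in \Cref{thm:gen-abs}, is the $\VF \times \VF$ case: here one cannot simply iterate \Cref{lem:VF:cCM} in family, because the cell decomposition chosen for the inner integration depends on the outer variable. The fix, carried out as in \cite{CLoes,CLexp}, is to invoke the bi-cell computation \Cref{prop:bi-cells} together with the Jacobian property of \Cref{add.Jacprop}, which produce a single $\cS$-definable decomposition of $X \subset \VF \times \VF \times Y$ compatible with both iterated integration orders. Once $X_0$ and $G_A$ are produced, equation \eqref{eq:Fub} follows from \Cref{thm:gen-rel}: both $p_{2,\cS'!}(G_A)$ and $p_{\cS'!}(\11_A \cdot F_{\cS'})$ lie in $\cCM(Y_{\cS'})$, and checking their equality pointwise at each $y$ on $Y_{\cS'}$ reduces to the absolute Fubini identity \eqref{eq:gen:int} applied to $\11_A \cdot F_{\cS'(y)}$ with the factorization $X_{1,\cS'(y)} \times X_{2,\cS'(y)}$, which holds by construction.
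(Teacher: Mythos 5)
Your proposal is correct and follows essentially the same route as the paper: the paper's own proof of \Cref{cor:Fubini-Ton} (and of \Cref{thm:gen-rel} and \Cref{cor:Fubini}) is the one-line observation that the case-by-case proof of \Cref{thm:gen-abs} — reduction to the one-variable relative integrals, rectilinearization, and the bi-cell computation of \Cref{prop:bi-cells} with \Cref{add.Jacprop} for the $\VF\times\VF$ case — works uniformly in definable families, which is exactly what you spell out. The only small imprecision is in your easy direction, where to recover the absolute integrability of $F_{|X_y}$ one must allow test sets $A$ definable over further points $a$ beyond $y$ (handled by applying the hypothesis to the combined point $(y,a)$, not just to sets of the form $A_0\times\{y\}$), but this is a routine adjustment.
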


The proofs of \Cref{cor:Fubini} and of \Cref{cor:Fubini-Ton} will be given in \Cref{sec:int:rev}.



For change of variables, we need a notion of $C^1$ maps and the Jacobian.

\begin{def-lem}[Jacobian]\label{lem:Diff}
Let $X_1$ and $X_2$ be $\cS$-definable subsets of $\VF^n$ for some $n\ge 0$, and let $f:X_1\to X_2$ be an $\cS$-definable bijection. For each $K$ in $\cS$, let $\mathrm{Diff}_{f,K}$ be the subset of $\VF^n_K$ consisting of those points $x=(x_0,K)$ on $X$ such that $f_K$ is $C^1$ at $x$, meaning that $x$ lies in the interior of $X_K$ and each of the first partial derivatives of $f_K$ exists around $x$ and is continuous at $x$. Then $\mathrm{Diff}_{f} = (\mathrm{Diff}_{f,K})_{K\in\cS} $ is an $\cS$-definable set, and the complement of $\mathrm{Diff}_{f}$ in $X_1$ has measure zero (it has dimension less than $n$). Furthermore, define $\Jac_{f,K}(x)$ for $x=(x_0,K)$ on $X$ as the determinant of the Jacobian matrix of $f_K$ at $x$ if $x$ lies in $\mathrm{Diff}_{f,K}$, and as $0$ otherwise.  Then $\Jac_f = (\Jac_{f,K})_{K\in\cS}$ is an $\cS$-definable function from $X_1$ to $\VF$.
\end{def-lem}
\begin{proof}
This will follow from the results of \Cref{sec:S}. 
\end{proof}

The following is the key form of the change of variables formula, as relative variants can be derived from it rather directly.

\begin{thm}[Change of variables]\label{thm:cov}
Let $X_1$ and $X_2$ be $\cS$-definable subsets of $\VF^n$ for some $n\ge 0$, and let $f:X_1\to X_2$ be an $\cS$-definable bijection. Let $F$ be in $\cCM(X_2)$.
Then one has
$$
\int_{X_1} f^*(F)\cdot \LL^{-\ord \Jac_f} = \int_{X_2}F,
$$
using the  notation from \Cref{lem:Diff}  and with the convention that $\LL^{-\ord ( 0 ) } = 0$.
\end{thm}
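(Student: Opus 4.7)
The plan follows the now-standard blueprint for change of variables in motivic integration, as developed in \cite{CLoes,CLexp,CLbounded}, adapted to our enriched framework. First, by \Cref{lem:Diff} we may restrict both sides of the claimed identity to $\mathrm{Diff}_f \cap f^{-1}(\mathrm{Diff}_{f^{-1}})$, since its complement has measure zero and hence contributes nothing by \Cref{cor:Fubini-Ton}. By writing $F$ as a finite sum of products of generators of type (G1) and (G2), and noting that generators of type (G1) depend only on the base point (so pull through both the pullback $f^*$ and the integral), we reduce to $F = [Z, \alpha, \beta, \gamma, g, h]$ with data as in \Cref{defn:cCD}.

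Next, I would induct on $n$. For the base case $n=1$, apply the cell decomposition \Cref{prop:cell.1} together with the Jacobian property of \Cref{add.Jacprop} to obtain a finite $\cS$-definable partition of $X_1$ into cells on which $f$ is $C^1$, $\ord \Jac_f$ is given by an $\cS$-definable function that is constant along each ball of the cell, and $f$ sends each ball of valuative radius $r$ bijectively onto a ball of valuative radius $r - \ord \Jac_f$. With this structure, the presentation of the one-variable integral supplied by \Cref{lem:VF-cCeM,lem:VF:cCM} on $X_2$ transports, via $f$, to a presentation on $X_1$ differing only by a uniform shift of the radius parameter $\alpha$ by $\ord \Jac_f$; this shift produces exactly the factor $\LL^{-\ord \Jac_f}$ in the integrand, while the residue-ring and additive-character data are preserved by pullback. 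A refinement argument, of the same flavor as the independence-of-choices in the proof of \Cref{lem:VF-cCeM}, then gives the identity independently of the specific cell decomposition used.

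For the inductive step, apply Fubini (\Cref{cor:Fubini}) to split
\[
\int_{X_2} F = \int_{x' \in \VF^{n-1}} \int_{x_n \in \VF} F(x',x_n),
\]
and pull this decomposition back through $f$. Since $f$ is not in general coordinate-triangular, one has to further partition $X_1$ into $\cS$-definable pieces on which $f$ factors as a composition $f = f^{(n)}\circ\cdots\circ f^{(1)}$ of bijections, each of which changes only one coordinate; this is precisely what iterating the cell decomposition with Jacobian property of \Cref{add.Jacprop} provides, in the same spirit as the $\VF\times\VF$ bi-cell calculation \Cref{prop:bi-cells} used for \Cref{thm:gen-abs}. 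The chain rule gives $\Jac_f = \prod_i \Jac_{f^{(i)}}$, so the factors $\LL^{-\ord \Jac_{f^{(i)}}}$ compose correctly; applying the one-variable case to each $f^{(i)}$ in succession and invoking Fubini yields the conclusion.

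The main obstacle is the triangularization step: one must produce, on an $\cS$-definable partition of $X_1$ up to measure zero, a factorization of $f$ into one-coordinate-at-a-time bijections, and then verify that the successive applications of the one-variable formula preserve integrability (both formally, as in \Cref{thm:gen-abs}, and in terms of the classes $\cCM$ involved). Both points are consequences of the compatible cell decomposition and Jacobian property from \Cref{add.Jacprop}, but the bookkeeping — ensuring that the exceptional measure-zero loci combine into a single $\cS$-definable exceptional set, and that the relative integrals match pointwise in the base — is the technical heart of the argument and needs to be carried out carefully, as was done in the non-enriched setting in \cite{CLoes,CLexp}.
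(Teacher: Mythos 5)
Your proposal takes essentially the same route as the paper: the paper likewise settles the case $n=1$ via cell decomposition with the Jacobian property of Addendum \ref{add.Jacprop}, and then, after a parametrization with auxiliary $\RR_t$-coordinates and working piecewise, factorizes $f$ as a finite composition of maps constant in all but one $\VF$-variable, each handled by the family version of the one-variable case. The triangularization step you flag as the technical heart is exactly the step the paper also relies on (and treats tersely, deferring to \cite{CLoes}), so there is no gap relative to the paper's own argument.
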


As for the relative case, we have the following.

\begin{cor}[Change of variables, relative case]\label{cor:thm:cov}
Let $Y$ and
\begin{align*}
X &\subset \VF^{n}\times \VG^{m}\times \RR_t\times Y,\\
X' &\subset \VF^n\times \VG^{m'}\times \RR_{t'}\times Y
\end{align*}
be $\cS$-definable sets for some nonnegative integers $n,m,m'$ and basic tuples $t,t'$ and consider the projection  $p:X\to Y$. Let $F$ be in $\cCM(X )$.
Suppose that $f: X\to X'$ is an $\cS$-definable bijection which commutes with the projections to $Y$. 

Then one has for each point $y$ in $Y$
$$
\int_{X_y} f^*(F)(\cdot,y)\cdot \LL^{-\ord \Jac_{f(\cdot,y)}}    =\int_{X'_y}F(\cdot,y),
$$
where $\Jac_{f(\cdot,y)}$ stands for the function sending a point $(x,z,y)=(x_0,z_0,y_0,K)$ with $(x,z)=(x_0,z_0,K)$ in $X_y$ and $x_0$ in
$\VF_K^n$ to  $\Jac_{f(\cdot,z,y),K}(x)$ from \Cref{lem:Diff} and with $\LL^{-\ord ( 0 ) } = 0$.
\end{cor}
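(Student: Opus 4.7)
The plan is to reduce the relative statement to an absolute one for each fixed $y$ on $Y$, and then to reduce that absolute statement to \Cref{thm:cov} by fibering the bijection $f$ according to the ``discrete'' coordinates in $\VG^{m'}\times \RR_{t'}$. By the defining property of the relative integration operator $p_!$ stated in \Cref{thm:gen-rel}, it is enough to prove the pointwise equality
\[
\int_{X_y} f^*(F)(\cdot,y)\cdot \LL^{-\ord \Jac_{f(\cdot,y)}} \;=\; \int_{X'_y} F(\cdot,y)
\]
for each point $y$ on $Y$, where $F$ is taken to lie in $\cCM(X')$ so that $f^*(F)$ is defined on $X$. Fixing $y$, I work in $\cS(y)$ and write $W := \VG^m\times \RR_t$ and $W' := \VG^{m'}\times \RR_{t'}$, so that $X_y \subset \VF^n\times W$ and $X'_y \subset \VF^n\times W'$ and $f(\cdot,y): X_y\to X'_y$ is an $\cS(y)$-definable bijection.

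The key step is a partition of $X_y$ by the level sets of the definable map $\pi_{W'}\circ f(\cdot,y): X_y \to W'$, where $\pi_{W'}:\VF^n\times W'\to W'$ denotes the projection. For each $w'\in W'$ set
\[
X_y^{w'} := \{(x,w)\in X_y \mid \pi_{W'}(f(x,w,y))=w'\},
\]
so that the restriction of $f(\cdot,y)$ to $X_y^{w'}$ is a bijection onto $\{w'\}\times (X'_y)_{w'}$. Writing $\phi_{w'}(x,w) := \pi_{\VF}(f(x,w,y))$ for its $\VF^n$-component, for each fixed $(w,w')\in W\times W'$ the map $\phi_{w'}(\cdot,w)$ is an $\cS(y)(w,w')$-definable bijection from the $\VF^n$-slice $(X_y^{w'})_w \subset \VF^n$ onto a subset of $(X'_y)_{w'} \subset \VF^n$, and as $w$ varies in $W$ these subsets form a disjoint partition of $(X'_y)_{w'}$. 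By construction the Jacobian of $\phi_{w'}(\cdot,w)$ agrees with $\Jac_{f(\cdot,w,y)}$ as defined in the statement of the corollary, since the latter only sees the $\VF^n$-block of the full derivative of $f(\cdot,w,y)$.

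To each such pair $(w,w')$ I apply the absolute change of variables \Cref{thm:cov}, obtaining
\[
\int_{(X_y^{w'})_w} f^*(F)(x,w,y)\cdot \LL^{-\ord \Jac_{f(\cdot,w,y)}(x)}\, dx \;=\; \int_{\phi_{w'}(\cdot,w)((X_y^{w'})_w)} F(x',w',y)\, dx'.
\]
I then assemble these pointwise equalities via Fubini (\Cref{cor:Fubini}): integrating over $(w,w')\in W\times W'$, the right-hand side becomes $\int_{X'_y}F(\cdot,y)$ using the disjoint union $(X'_y)_{w'}=\bigsqcup_w \phi_{w'}(\cdot,w)((X_y^{w'})_w)$ followed by the decomposition of $X'_y$ by $W'$-fibers, while the left-hand side becomes $\int_{X_y} f^*(F)(\cdot,y)\cdot \LL^{-\ord \Jac_{f(\cdot,y)}}$ using $X_y=\bigsqcup_{w'} X_y^{w'}$ and the decomposition of $X_y$ by $W$-fibers. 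The main obstacle is verifying the integrability and definability conditions required to justify these applications of Fubini, in particular that the family $(w,w')\mapsto \Jac_{f(\cdot,w,y)}$ assembles into a bona fide motivic function on the parameter space so that the inner integrals produce elements of $\cCM$. Both should follow from cell decomposition (\Cref{prop:cell.1}) together with the Jacobian property (Addendum~\ref{add.Jacprop}), which provide a finite partition on which $f$ simultaneously has a rectangular structure compatible with the $W$ and $W'$ coordinates and the required $C^1$-regularity on its $\VF^n$-components, and the convention $\LL^{-\ord(0)}=0$ takes care of the measure-zero locus where the Jacobian vanishes.
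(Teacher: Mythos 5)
Your proof is correct in outline, but it takes a more modular route than the paper's. The paper proves \Cref{thm:cov} and \Cref{cor:thm:cov} simultaneously: it establishes the one-$\VF$-variable case from cell decomposition and the Jacobian property (Addendum~\ref{add.Jacprop}), factors a general bijection of subsets of $\VF^n$ into maps changing one $\VF$-coordinate at a time, and then simply asserts that the whole argument runs in definable families over $Y$ and the auxiliary coordinates. You instead treat \Cref{thm:cov} as a black box and derive the relative statement by slicing along the discrete coordinates $W=\VG^m\times\RR_t$ and $W'=\VG^{m'}\times\RR_{t'}$ and reassembling with Fubini; this is exactly the ``relative variants can be derived from it rather directly'' reduction the paper advertises but does not write out, and it has the advantage of isolating where the analytic content (the Jacobian) lives. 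Two points in your argument deserve to be made explicit. First, the identification of $\Jac_{\phi_{w'}(\cdot,w)}$ with the $\Jac_{f(\cdot,w,y)}$ of the statement is only an almost-everywhere statement: you need that the discrete-valued components $\pi_{W'}\circ f(\cdot,w,y)$ are locally constant on the $\VF^n$-slices outside a measure-zero set (which follows from cell decomposition, in the same way as \Cref{lem:Diff}), so that each $(X_y^{w'})_w$ is, up to measure zero, open in $(X_y)_w$ and the restricted Jacobian is the full one. Second, the ``sum over $w'$'' for fixed $w$ is in general an integral over the infinite set $W'$, not a finite sum, so both regroupings really do require the Fubini--Tonelli machinery of \Cref{thm:gen-abs}/\Cref{cor:Fubini} together with the definability in families of $(w,w')\mapsto \Jac$, as you note; with those caveats spelled out, your argument matches the paper's level of rigor.
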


The proofs of these change of variables formulas will be given in \Cref{sec:int:rev}.









\subsection{Motivic Mellin and Fourier transformation}\label{sec:mot-Mellin}

In \Cref{def-Mellin} we will define the motivic Mellin transform. We will later see that it specializes, in particular, to $p$-adic Mellin transforms of functions as in \Cref{sec:informal-meaning-CMexp} with $G=\CC$ and with the $H_t$ being multiplicative characters.
For an integrable motivic function $F$ in $\cCM((\VF^{\times})^n)$, we will write
$$
\cMmot(F)
$$
for its motivic Mellin transform, where $\VF^\times$ stands for the multiplicative units of the valued field, as usual.
More generally we will write
$$
\cMmot _{/Y}(F)
$$
when $F$ lies in $\cCM((\VF^{\times})^n\times Y )$ and is integrable in the fibers of the projection $p:(\VF^{\times})^n\times Y\to Y$, and call it the relative Mellin transform of $F$, over $Y$.

\begin{defn}[Motivic Mellin transforms]\label{def-Mellin} Let  $F$ be an integrable motivic function in $\cCM((\VF^{\times})^n)$. Let $0\le j_1< \ldots< j_n$ be large enough (given $F$), so that the $T_{j_i}$ and the $\lambda_{j_i}$ do not yet appear in $F$ in the sense that none of the $\lambda_{j_i}$ appear in any of the formulas defining the data in $F$ and none of the $T_{j_i}$ appear in any of the generators used in $F$.  
The \emph{motivic Mellin transform $\cMmot(F)$} of $F$ is defined as the function in $\cCM(\{0\})$ given by
$$
 \cMmot(F) := \int_{x\in (\VF^{\times})^n}  F(x) \prod_{i=1}^n T_{j_i} ^{\ord x_i}  H (\ac_{\lambda_{j_i}}(x_i)).
$$

More generally, let $F$ be in $\cCM((\VF^{\times})^n\times  Y )$ and assume that $F$ is integrable in the fibers of the projection $p_ Y:(\VF^{\times})^n\times  Y\to  Y$. Let $0\le j_1< \ldots< j_n$ be large enough so that the $T_{j_i}$ and $\lambda_{j_i}$ do not yet appear in $F$.
Then  $\cMmot _{/Y}(F)$ is the function in $\cCM(Y)$ given for each point $y$  in $ Y$ by
$$
\cMmot _{/Y}(F)( y) :=   \cMmot(F(\cdot,y)),
$$
where the right hand side is taken as above, with this choice of $T_{j_i}$ and $\lambda_{j_i}$.
\end{defn}


%
%
%

We note that the motivic Mellin transform remains within our framework, that is, the motivic Mellin transform is an element of $\cCM(\{0\})$. The $\lambda_{i_j}$ and $T_{i_j}$ in the definition of the Mellin transform should be thought of as variables. Our most important result around the Mellin transform states that taking Mellin transforms is injective.

\begin{thm}[Injectivity of the motivic Mellin transform]\label{thm:injM}
For $F_1$ and $F_2$ in $\cCM((\VF^{\times})^n)$, one has
$$
\cMmot(F_1)=\cMmot(F_2)
$$
if and only if $F_1=F_2$ almost everywhere, meaning that there is an $\cS$-definable set $X\subset (\VF^{\times})^n$ whose complement in $(\VF^{\times})^n$ has measure zero and such that $\11_{X}F_1=\11_{X}F_2$.
\end{thm}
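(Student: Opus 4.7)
The easy direction is that $F_1 = F_2$ a.e.\ implies $\cMmot(F_1) = \cMmot(F_2)$, since the integrals in \Cref{thm:gen-abs} only see the equivalence class modulo measure zero sets. Setting $G := F_1 - F_2$, it suffices by linearity to prove that $\cMmot(G) = 0$ forces $G = 0$ almost everywhere.

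First I would reduce to $n = 1$ by induction on $n$. Using \Cref{cor:Fubini}, the Mellin transform in $n$ variables factors as the relative Mellin transform $\cMmot_{/(\VF^\times)^{n-1}}$ (over the last $n-1$ coordinates) followed by the $(n-1)$-variable Mellin in the remaining coordinates. Since all $T_{j_i}$ and $\lambda_{j_i}$ are chosen fresh, one may iterate and transfer the injectivity for $n$ to that for $n-1$ combined with the relative $n=1$ case.

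For $n = 1$, I would use Fubini to decompose along the definable partitions $\VF^\times = \bigsqcup_{k \in \ZZ}\{\ord x = k\}$ and, for each basic term $t$, $\{\ord x = k\} = \bigsqcup_{\xi \in R_t^\times} \{\ord x = k,\ \ac_t(x) = \xi\}$. With $t = \lambda_{j_1}$, this rewrites
\[
\cMmot(G) = \sum_{k \in \ZZ} T_{j_1}^{\,k} \int_{\xi \in R_{\lambda_{j_1}}^\times} g_k(\xi)\, H(\xi),
\qquad g_k(\xi) := \int_{\{\ord x = k,\ \ac_{\lambda_{j_1}}(x) = \xi\}} G(x).
\]
Because $T_{j_1}$ is fresh, a uniqueness statement for Laurent expansions in $T_{j_1}$ (an analogue of \Cref{lem:torsion:unique} tailored to a fresh indeterminate and elements of $\cCoM(\{0\})$) forces
\[
\int_{\xi \in R_{\lambda_{j_1}}^\times} g_k(\xi)\, H(\xi) = 0 \quad \text{in } \cCM(\{0\}), \quad \text{for every } k \in \ZZ.
\]

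The crux is then to deduce $g_k = 0$ almost everywhere. Here I would exploit that $\lambda_{j_1}$ is itself fresh, so can be specialized to any basic term; letting the depth grow, the fibres $\{\ord x = k,\ \ac_t(x) = \xi\}$ become balls of arbitrarily small valuative radius. Using the relations (R3) and (R5) of \Cref{defn:cQD} and the freshness of the symbol $H$ (no relation in $\cCM$ constrains $H$ by a multiplicative-orthogonality rule), the vanishing of the $H$-weighted fibre integrals at every depth can be turned into the vanishing of the unweighted fibre integrals themselves, at every depth. A cell-decomposition argument (via \Cref{prop:cell.1} and its addenda) then translates ``integral over every sufficiently small ball vanishes'' into ``$G = 0$ almost everywhere on $\{\ord x = k\}$'', and summing over $k$ finishes the proof.

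The main obstacle is precisely the extraction step $\int_\xi g_k(\xi) H(\xi) = 0 \Rightarrow g_k = 0$ a.e. Unlike the additive character case where (R4) provides direct orthogonality, there is no analogous built-in relation for $H$ (consistently with the fact, noted in the introduction, that the image of the motivic Mellin transform cannot be described). So the argument must proceed via the ``arbitrary depth'' principle sketched above, leveraging both the freshness of $\lambda_{j_1}$ and the cell-decomposition control over the $\VF$-geometry of the fibres.
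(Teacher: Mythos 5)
You have found the paper's strategy: exploit the freshness of the $T_{j_i}$ and $\lambda_{j_i}$ to read off, from the Mellin transform, the integrals of $G=F_1-F_2$ over arbitrarily fine twisted boxes $\{\ord x_i=\delta_i,\ \ac_{\lambda_{j_i}}(x_i)=\xi_i\}$, and then conclude from local constancy of $G$ off a definable measure-zero set. The paper does not induct on $n$; it argues pointwise, choosing for each good point $y$ an interpretation of the $\lambda_{j_i}$ so that the twisted box through $y_0$ lies inside a box on which $F_1,F_2$ are constant, and recovering $F_1(y')$ as the coefficient of $T_{j_1}^{\delta_1}\cdots T_{j_n}^{\delta_n}$ of the restriction of $\cMmot(F_1)$ to the fibre over $(\xi_1,\dots,\xi_n)$. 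Your induction via \Cref{cor:Fubini} is a workable reorganization of the same idea.

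The genuine gap is exactly where you place it, and the tools you cite do not close it. From $\int_{\xi}g_k(\xi)H(\xi)=0$ you must get $g_k(\xi_0)=0$ for each fixed $\xi_0$. Relation (R3) concerns refining the domain $Z$ of a generator and plays no role here, and letting the depth $\lambda_{j_1}$ grow only refines the partition; it does not separate the summands of the $H$-weighted sum at a fixed depth. (In the $p$-adic picture one would vary the character and apply Fourier inversion on $(\cO/B_t(0))^\times$, but this has no motivic counterpart since $H$ is a single abstract symbol.) What does work, and what the paper uses, is the following. After integrating over $\RR_{\lambda_{j_1}}^\times$, the class $\int_\xi g_k(\xi)H(\xi)$ is represented by generators $[Z,g,(h,\mathrm{pr})]$ whose character argument contains the coordinate projection $\mathrm{pr}:Z\to\RR_{\lambda_{j_1}}$ recording $\xi$ itself. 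Since $\lambda_{j_1}$ is fresh, this sort occurs nowhere else in the data, and one checks that sending $[Z,g,(h,\mathrm{pr})]$ to the class of $Z\cap\mathrm{pr}^{-1}(\xi_0)$ with the restricted $g$ and $h$ respects (R1)--(R5): no relation mixes distinct fibres of $\mathrm{pr}$, and, crucially, there is no multiplicative analogue of the orthogonality relation (R4) in the $H$-slot. Applying this well-defined map to the zero class gives $g_k(\xi_0)=0$. This construction is the precise content of your informal ``freshness of $H$'' remark; without it (or the paper's equivalent fibre-restriction followed by $T^{\delta}$-coefficient extraction), the key step remains unproven. The rest of your outline, namely coefficient uniqueness in the fresh variable $T_{j_1}$ and shrinking the twisted boxes into boxes of local constancy, matches the paper.
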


Note that for this theorem to make sense, we are implicitly taking the same variables $\lambda_{i_j}$ and $T_{i_j}$ in defining both $\cMmot(F_1)$ and $\cMmot(F_2)$.

Our framework also includes motivic Fourier transforms, extending~\cite{CLexp}. If $F\in \cCM(\VF^n)$ is integrable, then we define the \emph{(motivic) Fourier transform} of $F$ as the function in $\cCM(\VF^n)$ which maps an $\cS$-point $y$ in $\VF^n$ to
\[
\cF(F)(y) = \int_{x\in \VF^n} F(x)E(x\cdot y).
\]
Similarly as the Mellin transform, the motivic Fourier transform is injective.
\begin{thm}[Injectivity of the motivic Fourier transform]\label{thm:injF}
Let $F_1, F_2\in \cCM(\VF^n)$ be integrable. Then $\cF(F_1) = \cF(F_2)$ if and only if $F_1 = F_2$ almost everywhere.
\end{thm}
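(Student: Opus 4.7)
The ``if'' direction is immediate: if $F_1-F_2$ vanishes outside a measure-zero $\cS$-definable set $X_0$, then $\cF(F_1-F_2)(y)=\int_{X_0}(F_1-F_2)(x)E(x\cdot y)\,dx=0$ for every $y$. For the ``only if'' direction, by $\ZZ$-linearity of $\cF$ it suffices to prove that $\cF(F)=0$ implies $F=0$ almost everywhere. The plan is to establish a motivic Fourier inversion formula: for every integrable $F\in\cCM(\VF^n)$, the function $\cF(F)$ is again integrable and
\[
\cF(\cF(F))(x) \;=\; F(-x)\qquad\text{almost everywhere.}
\]
Injectivity then follows by substituting $\cF(F)=0$ to get $F(-x)=0$ a.e.

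To prove inversion, I would proceed in two stages. Using the Fubini formulas of \Cref{cor:Fubini} and \Cref{cor:Fubini-Ton} together with the factorization $E(x\cdot y)=\prod_i E(x_iy_i)$ of the Fourier kernel, reduce to the one-dimensional case $n=1$. Then, by $\cCoM$-linearity in the parameters coming from residue-ring and value-group data (which the $\VF$-Fourier transform treats as constants), further reduce to inversion on the subring $\cCexp(\VF)\subset\cCM(\VF)$, where only the additive character $E$ is involved. For $\cCexp(\VF)$ the inversion formula is essentially established in \cite{CLexp}; I would transpose that argument to our extended language $\cL_D$ with higher-depth angular components. Using the cell decomposition \Cref{prop:cell.1}, every such generator reduces to a sum of terms of the form $\11_B\cdot E(g)$ with $B$ a ball in $\VF$ and $g$ affine on $B$. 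For such a term the Fourier transform is explicitly computable, the additive character relation (R4) of \Cref{defn:cQD} being exactly what makes the Fourier transform of the indicator of a ball collapse to the indicator of another ball. A second application of $\cF$ then recovers $F(-x)$ up to a measure-zero set.

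The main obstacle I anticipate is integrability bookkeeping: one must verify that $\cF(F)$ is itself integrable so that $\cF(\cF(F))$ is defined in the sense of \Cref{thm:gen-rel}, and this is where the integrability hypothesis on $F$ is consumed. An alternative route, bypassing inversion, would be to reduce to the Mellin injectivity \Cref{thm:injM} via a motivic Tate-type local functional equation: pairing $\int F(x)\,\psi(x\cdot y)\,\chi(\ac_\lambda(x))\,dx$ for appropriate $y$ and a finite-order multiplicative character $\chi$ yields a Gauss-sum multiple of a component of $\cMmot(F)$, so $\cF(F)=0$ would force $\cMmot(F)=0$ and hence $F=0$ a.e.\ by \Cref{thm:injM}. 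However, this route requires developing motivic Gauss sums in sufficient generality, so I would favor the inversion-based approach above.
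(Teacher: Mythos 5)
There is a genuine gap in your main route. Fourier inversion in the form $\cF(\cF(F))=F^\vee$ requires $\cF(\cF(F))$ to be \emph{defined}, i.e.\ it requires $\cF(F)$ to be integrable — and integrability of $F$ does \emph{not} imply integrability of $\cF(F)$ (just as $L^1$ is not stable under the classical Fourier transform). You flag this as "integrability bookkeeping" to be consumed from the hypothesis on $F$, but no amount of bookkeeping closes it: the paper states inversion (\Cref{prop:Fourier.inversion}) only under the \emph{additional} hypothesis that $\cF(F)$ is integrable, precisely because this is not automatic, whereas \Cref{thm:injF} makes no such assumption. So the deduction "$\cF(F)=0\Rightarrow\cF(\cF(F))=0\Rightarrow F^\vee=0$ a.e." is not available as stated. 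Your alternative Mellin/Gauss-sum route is left undeveloped, so it cannot carry the proof either.

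The paper's actual argument circumvents exactly this obstacle by truncating before applying $\cF$ a second time. With $\phi_\alpha$ the characteristic function of $\{x:\ord(x_i)\ge\alpha\}$, it first computes $\cF(\phi_\alpha)=\LL^{-\alpha n}\phi_{-\alpha+1}$ (\Cref{lem:Fourier.phi.alpha}, where relation (R4) plays the role you correctly identify) and the convolution identity $\cF(F\ast G)=\cF(F)\cF(G)$ (\Cref{lem:Fourier.conv}). It then shows that $\phi_\alpha\cF(F)$ \emph{is} integrable and that
\[
\cF\bigl(\phi_\alpha\,\cF(F)\bigr)=\LL^{-\alpha n}\,F^\vee\ast\phi_{-\alpha+1}
\]
(\Cref{lem:double.Fourier}), a statement that only needs integrability of $F$. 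If $\cF(F)=0$, this forces $F^\vee\ast\phi_{-\alpha+1}=0$ for every $\alpha$, and \Cref{lem:conv.zero} (using local constancy of $F$ off a definable measure-zero set) then yields $F=0$ almost everywhere. In effect the paper proves an "inversion against mollifiers" statement rather than full inversion; full inversion is derived afterwards, under the extra integrability hypothesis, by decomposing $F$ into Schwartz--Bruhat-type pieces $F_\alpha$. To repair your proof you would need to replace the appeal to $\cF(\cF(F))$ by some such truncation or mollification device.
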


We also have Fourier inversion for integrable motivic functions with integrable Fourier transform, see~\Cref{prop:Fourier.inversion}.

%






%
%
%
%
%
%


\subsection{Stability on some subrings and comparison}\label{sec:subrings}

Integration remains inside several natural subrings of $\cCM(X)$, as follows.

\begin{prop}[Stability on subrings]\label{thm:stab}

With notation of the general integral from \Cref{thm:gen-rel} for $X$, $Y$, $p$ and $F$, if $F$ furthermore lies in $\cCoM(X )$, resp.~in  $\cCeM(X )$, $\cCexp(X )$, $\cCe(X )$ or in $\cC(X )$,  then
$p_!(F)$ lies correspondingly in $\cCoM(Y )$, $\cCeM(Y )$, $\cCexp(Y )$, $\cCe(Y )$, or $\cC(Y )$.

If $F$ lies in $\cCeM(X)$, resp.~in $\cCoM(X)$, then $\cMmot _{/Y}(F)$ lies in $\cCeM(Y)$, resp.~in $\cCoM(Y)$.
\end{prop}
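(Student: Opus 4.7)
The plan is to proceed by tracing subring membership through the inductive construction of the integral given in \Cref{sec:push}–\Cref{sec:int:general}. For each of the five subrings, it suffices to verify preservation under the four building blocks: (a) integration over residue rings (\Cref{lem:D-rel}), (b) integration over one $\VG$-variable (\Cref{defn:int:VG,lem:VG-rel}), (c) integration over one $\VF$-variable (\Cref{lem:VF-cCeM,lem:VF:cCM,lem:VF-rel}), and (d) the Fubini-Tonelli assembly producing the general integral (\Cref{thm:gen-abs,thm:gen-rel}).

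Step (a) is immediate from \Cref{lem:D-rel}: $p_!$ is defined by reusing the same symbolic expression, so whichever restrictions on generators define the subring are automatically respected. Step (b) invokes \Cref{lem:geom} and \Cref{lem:sum:NN}: the output has the form $d\cdot G(\LL,T)$ where $d$ is a constant coefficient in $\cCM(y)$ and $G$ is a rational function in $\LL$ and in those $T_j$'s already present in $F$. Hence, if $F$ has $g=0$ (case $\cCoM$), no $E(\cdot)$ is introduced; if $F$ has $\gamma=0$ and trivial $H(h)$ (cases $\cC$, $\cCe$, $\cCexp$), then $G$ reduces to a rational function in $\LL$ alone. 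In all five cases the restriction on generators is preserved. Step (c) for $\cCeM$ is built into the statement of \Cref{lem:VF-cCeM}, whose output already lies in $\cCeM$. For the other subrings, apply cell decomposition to an element of the given subring: the reparametrization $f:X'\to X$ and the function $F'$ on the parameter space $Z\subset \VG\times \RR_t$ inherit the restriction on generators, and the only new factor introduced, $\LL^{-\alpha}$, involves neither $E(\cdot)$ nor $T^{\gamma}$ nor $H(\cdot)$; the remaining integrals over $\VG$ and $\RR_t$ are then handled by (a) and (b).

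Step (d) follows by iterating: the Fubini-Tonelli definition in \Cref{thm:gen-abs} reduces any integral to a finite composition of one-variable integrals, each of which preserves the subring by (a)–(c), and the relative version in \Cref{thm:gen-rel} is a pointwise family version of the same argument. For the Mellin transform, observe that the integrand
\[
F(x)\prod_{i=1}^n T_{j_i}^{\ord x_i}\,H(\ac_{\lambda_{j_i}}(x_i))
\]
is the product of $F$ with a generator of type (G2) whose only nontrivial factors are $T^\gamma$ and $H(h)$ (with $g=0$). Hence multiplication by this generator stays within $\cCeM$ when $F\in\cCeM$, and within $\cCoM$ when $F\in\cCoM$. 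Integrability in the fibers of $p_Y$ is the hypothesis, and the stability for the general integral above gives that $\cMmot_{/Y}(F)$ lies in the corresponding subring.

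The main obstacle will be in step (c): one must check that the $\cCeM$-boundedness condition $\ord(g)\ge t$ on $g$ survives the reparametrization provided by cell decomposition (Addendum \ref{add.Jacprop}). Concretely, after pulling back $g$ along the cell map and subtracting a center, one must exhibit a new basic term $t'$ — built from the original $t$ and the cell's radius — for which the bound $\ord(g')\ge t'$ is retained on the parameter space. This is the one place where the interplay between the additive-character generator $E(g)$ and the valuative data of the cell has to be handled explicitly; once this is done, the remaining verifications reduce to formal bookkeeping on generators.
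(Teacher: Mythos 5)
Your proposal is correct and follows essentially the same route as the paper, whose entire proof of \Cref{thm:stab} is ``Clear by construction'': you are simply tracing the construction of \Cref{sec:push}--\Cref{sec:int:general} through each subring, and the one subtlety you flag (uniform boundedness of $\ord(g)$ after cell reparametrization) is exactly what the paper's Lifting \Cref{lem:lift:of:g} already supplies via the basic term coming from the cell depth. No gap; your write-up is just a more explicit version of the paper's one-line argument.
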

\begin{proof}
Clear by construction.
\end{proof}


We compare our framework with the earlier work in~\cite{CLoes} and~\cite{CLexp}.

\begin{prop}[Comparison with \cite{CLoes}, \cite{CLexp}]\label{result:cCexp-widehatC}
Let $\cL$ be the language $\cL_D$ and let $\cS$ satisfy  (C1) and (C2) from \Cref{app:hensel-min}.

Suppose that each $K$ in $\cS$ has residue field of characteristic zero, and for each $K$ in $\cS$, say with residue field $k$, there is a structure $K'$ in $\cS$ with $\VF_{K'}= k((t))$ and with the obvious $\cL$-structure on $k((t))$ satisfying $\ac_i(t)=1$ for all $i\ge 0$. Furthermore, let $T$ be the common theory of all occurring such $k$, in the language of rings.
Then any definable $T$-subassignment $X$ as defined in \cite{CLoes} gives an $\cS$-definable set $X_\cS$, and with notation from \cite{CLoes} \cite{CLexp} for $\cCexp(X)$ and its elements, one has an
injective ring homomorphism
\begin{equation}\label{eq:im:CLoes}
\cCexp(X) \to \cCM(X_\cS)
\end{equation}
sending $[Z,g,\xi]$ to $[Z_\cS,g_\cS+\xi_\cS,0]$, with $Z_\cS$, $g_\cS$, and $\xi_\cS$ the $\cS$-objects associated to $Z,g,\xi$, and $g_\cS+\xi_\cS$ the corresponding $\cS$-imaginary function from $X_\cS$ to $\VF/\cM$. Furthermore, these ring homomorphisms commute with integration, namely, if $p:X\to\{0\}$ is the projection to the point and $F$ is in $\cCexp(X)$, then
$$
p_!(F) = (p_{\cS})_!(F_{\cS}),
$$
with left hand side as in \cite{CLexp}. Furthermore, the image of the morphism (\ref{eq:im:CLoes}) lies in $\cCexp(X_\cS)$, with notation from this paper.
\end{prop}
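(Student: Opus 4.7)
The plan is to construct the ring homomorphism on generators, then verify well-definedness, injectivity, containment in $\cCexp(X_\cS)$, and commutation with integration.

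For the construction, a definable $T$-subassignment $X$ in the sense of \cite{CLoes} is given by an $\LPas$-formula with residue-field parameters. Since $\LPas$ embeds into $\cL_D$ by taking $t=0$ (so that $\RR_0=\RF$ and $\ac_0=\ac$ coincide with the original Denef-Pas residue field and angular component), the same formula defines an $\cS$-definable set $X_\cS$; definable morphisms and functions translate in the evident way. On a generator $[Z,g,\xi]$ of $\cCexp(X)$ from \cite{CLexp} (with $g$ valued in $\VF/\cM$ and $\xi$ valued in the residue field, both feeding into an additive character), I send it to $[Z_\cS, g_\cS+\xi_\cS, 0]$, viewing $\xi_\cS$ as an $\cS$-imaginary function via the canonical inclusion $\RR_0=\cO/\cM\hookrightarrow \VF/\cM$; the localization elements $\LL$ and $1-\LL^a$ map to their namesakes. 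Well-definedness reduces to checking that the defining relations of $\cCexp(X)$ from \cite{CLexp} translate to relations (R1)--(R4) of \Cref{defn:cQD}; this is direct, and relation (R5) is vacuous on image elements since the third coordinate is $0$. Containment in $\cCexp(X_\cS)$ is then immediate from \Cref{defn:cCD}, since every image generator has $\gamma=0$ and $h=0$.

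The main obstacle will be injectivity. The strategy is to exploit the hypothesis that, for each residue field $k$ occurring in a model of $T$, the structure $K'=k((t))$ with $\ac_i(t)=1$ for all $i\ge 0$ lies in $\cS$; these are precisely the structures used in \cite{CLoes, CLexp}. Any relation collapsing two image elements must in particular hold in the motivic rings over all such $K'$. Since no image generator involves any $\RR_t$-sort with $t\ne 0$, any $T_j$-variable, or any $h$-function, only relations (R1)--(R4) applied within the Denef-Pas fragment of $\cQD(X_\cS)$ can identify two image elements, and these are already among the defining relations of $\cCexp(X)$ in \cite{CLexp}.

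For commutation with integration, I would proceed by induction on the number of variables, checking that each of the three base cases agrees with its counterpart in \cite{CLoes, CLexp}: integration over $\RR_0$ of \Cref{lem:D-rel} reduces to the finite-sum formula of \cite{CLoes}; integration over $\VG$ of \Cref{defn:int:VG}, with no $T_j$-dependence, matches Presburger summation via \Cref{lem:geom}; and integration over $\VF$ of \Cref{lem:VF:cCM} uses the cell decomposition of \Cref{prop:cell.1}, which in the equicharacteristic-zero Denef-Pas case coincides with the one used in \cite{CLoes}. General integration assembles from these building blocks via Fubini-Tonelli (\Cref{thm:gen-abs}), so $p_!$ commutes with the homomorphism, giving the final claim.
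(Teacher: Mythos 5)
Your construction of the homomorphism, the verification on generators, the containment in $\cCexp(X_\cS)$, and the inductive argument for commutation with integration are all fine and match what the paper does (the paper compresses all of this into ``follows from the constructions'').

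The genuine gap is in your injectivity argument. You assert that ``only relations (R1)--(R4) applied within the Denef-Pas fragment of $\cQD(X_\cS)$ can identify two image elements.'' This is exactly the point that needs proof and it is not true on its face: in a quotient of a free abelian group by a relation subgroup, two elements supported on a subfamily of generators can become equal via a chain of relators that passes through generators outside that subfamily. Indeed relations (R3) and (R4) by their very form relate a triple over $\RR_{t}$ to a triple over $\RR_{(t_1+1,t_2,\dots,t_\ell)}$, so even starting from purely residue-field data the relations immediately leave the ``Denef-Pas fragment''; one cannot conclude that the intersection of the full relation subgroup with the span of the old generators is generated by the old relations. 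Your appeal to the structures $K'=k((t))$ does not repair this, since the rings $\cCM(\cdot)$ over those structures still carry all the new relations. The paper avoids this issue entirely: it deduces injectivity from the first main theorem of \cite{CH-eval}, an evaluation-injectivity result asserting that an element of $\cCexp(X)$ in the sense of \cite{CLoes,CLexp} vanishes if and only if all its evaluations at points vanish; since evaluation factors through the new ring, injectivity of the comparison map follows. Some such external input (or a direct normal-form analysis of the relation subgroup) is needed; as written, your injectivity step does not go through.
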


One major difference between~\cite{CLoes, CLexp} and the current work is that we work with higher residue rings, while loc.\ cit.\ only works with the residue field. In particular, we only obtain injective ring morphisms, and not ring isomorphisms in \Cref{result:cCexp-widehatC}. Working with higher residue rings allows us in particular to include mixed characteristic, and to have a full Mellin transform.

One can also compare to the $p$-adic specializations of rational motivic functions of \cite{Kien:rational}
and even to the rational motivic functions of \cite{Kien:rational} but we leave this to the reader; the injectivity is less clear since the proof of the injectivity in \Cref{result:cCexp-widehatC} relies on \cite{CH-eval} which works with the frameworks of \cite{CLoes}, \cite{CLexp} and remains to be studied for the frameworks of \cite{Kien:rational}. Note that in \cite{CLoes}, further comparison results to the motivic integration frameworks from \cite{DLinvent} and \cite{DL} are given, and which thus also relate to the new class of $\cCM$ functions.

\section{Quantifier elimination}\label{sec:QE:Pres}

In the coming sections, we work towards a deeper understanding of the geometry of $\cS_D$-definable and $\cS$-definable sets and functions, which will form the backbone for the proofs of \Cref{sec:iterated.int}.

\subsection{Quantifier elimination}The point of this section is to obtain quantifier elimination of two kinds of quantifiers for $\cS_D$, namely, those running over the valued field, and those over the value group. This is different from the classical situation like in \cite{Pas} since we have a sort $\RR_t$ for each basic term $t$. However, it still resembles the more recent study of \cite[Section 5]{CHallp} with its treatment of arbitrary ramification.

Let us enlarge the language $\cL_D$ as follows:
\begin{enumerate}
\item For each basic term $t$ add  a symbol $A_t$ for the definable subset of $\RR_t$ consisting of elements of $\RR_t$ which have angular component one, namely, those $x$ in $\RR_t$ for which there exists a lift $\xi$ in $\cO$ for the projection $p_t : \cO \to \cO/B_t(0)$ with $\ac_t(\xi)=1$.
\item Add a symbol $\cross_t:\VG_{\geq 0}\to \RR_t$ for the function
sending  $i$ to $x$ in $A_t$ of valuation $i$ if such $x$ exists, and to $0$ otherwise. Note that the support of $\cross_t$ is bounded and contained in $[0, t]$.
\end{enumerate}

We denote the new language by $\cL'_{D}$. Note that the new symbols $A_t$ and $\cross_t$ are $\cL_D$-definable, and so every $K\in \cS_D$ is endowed naturally with an $\cL'_D$-structure, which we denote by $K'$.
Let $T'_D$ be the corresponding theory, namely of the $\cL'_{D}$-structures $K'$ for $K$ in $\cS_D$. 

\begin{thm}\label{QEZ}
The $\cL'_D$-theory $T'_D$ of the structures $K'$ with $K$ in $\cS_D$ eliminates quantifiers over the valued field and over the value group, in the language $\cL'_D$.
\end{thm}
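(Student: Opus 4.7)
The plan is to adapt the proof of \cite[Theorem 5.1.2]{CHallp}, which establishes an analogous quantifier elimination in the setting where basic terms are restricted to $\ord N$ for positive integers $N$. Our language $\cL'_D$ differs essentially in that basic terms are now arbitrary closed $\LPas(\Lambda)$-terms of sort $\VG$, and the auxiliary symbols $A_t$ and $\cross_t$ are precisely what is needed so that the back-and-forth machinery of loc.\ cit.\ still extends partial isomorphisms between common substructures. My strategy is to reduce to substructure completeness: for two sufficiently saturated models $K_1', K_2' \models T'_D$ sharing a common $\cL'_D$-substructure $A$, show that every $\cL'_D(A)$-formula without VF- or VG-quantifiers has the same truth value in both. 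By a standard back-and-forth, this reduces to extending $A$ one VF-element and one VG-element at a time while preserving all quantifier-free formulas.

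For the VF-extension step, given $a \in \VF_{K_1'}$ its type over $A$ is determined, via Hensel's lemma applied to $\VF_{K_1'}$ (henselian of characteristic zero, but arbitrary residue characteristic and arbitrary ramification), by the algebraic relations of $a$ over $A \cap \VF$ together with the data $\ord f(a)$ and $\ac_t f(a)$ for all $\VF$-polynomials $f$ with coefficients in $A \cap \VF$ and all basic terms $t$. The presence of the $\ac_t$ symbols for arbitrarily large $t$ compensates for ramification, exactly as in \cite[Section 5]{CHallp}, so one matches $a$ to a $b \in \VF_{K_2'}$ of the same type. The symbol $A_t$ enters here as the witness for which elements of $\RR_t$ are realized by units of angular component one, making this matching uniformly expressible.

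For the VG-extension step, after the VF-sort is dealt with the remaining formulas involve only $\VG$ and the $\RR_t$ sorts. Since $\VG \cong \ZZ$ is a $\ZZ$-group, it enjoys Presburger quantifier elimination in the language with $+, -, \max, \le, \equiv_n$. The only interaction of $\VG$ with the $\RR_t$ sorts through atomic $\cL'_D$-formulas is via $\cross_t$, whose support is contained in $[0,t]$ and hence finite. Accordingly, any VG-quantifier $\exists y\, \phi(y,\ldots)$ splits as the disjunction of the case $y \in [0,t]$ (replaced by a finite disjunction over the possible values of $\cross_t(y)$, with $t$ treated as a parameter) and the case $y \notin [0,t]$ (where $\cross_t(y)=0$), so one reduces to pure Presburger quantifier elimination in $\VG$ with parameters in the $\RR_t$-sorts. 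The main obstacle I foresee is carrying out this case analysis uniformly when the basic terms $t$ themselves involve unbounded $\Lambda$-constants; I expect to handle this by induction on the syntactic complexity of basic terms, treating the $\Lambda$-constants as free VG-parameters and invoking Presburger QE to analyse the resulting definable intervals in a uniform way.
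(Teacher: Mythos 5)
Your overall architecture (relative $\VF$-quantifier elimination via henselianity, then a separate treatment of $\VG$-quantifiers) matches the paper's, which handles the $\VF$ part by citing Basarab's resplendent elimination of $\VF$-quantifiers rather than running the back-and-forth by hand. The genuine gap is in your $\VG$-extension step. You propose to handle the bounded case $y\in[0,t]$ by ``a finite disjunction over the possible values of $\cross_t(y)$, with $t$ treated as a parameter.'' This is not a legitimate first-order move: the interval $[0,t]$ has $t+1$ elements, and $t$ is a basic term whose value is unbounded as the interpretations of the $\Lambda$-constants vary, so there is no single formula that is a finite disjunction over these values. Your fallback --- ``reduce to pure Presburger quantifier elimination in $\VG$ with parameters in the $\RR_t$-sorts'' --- does not work either, because the terms $\cross_{t_i}(s_i(y,z))$ genuinely couple the quantified variable $y$ to the residue-ring sorts, and Presburger QE says nothing about them. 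Your closing plan (induction on the syntactic complexity of basic terms) attacks the wrong difficulty: the problem is not the complexity of $t$ but the unbounded size of $[0,t]$.

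The missing idea is that a $\VG$-quantifier bounded in $[0,t]$ must be \emph{traded for a residue-ring quantifier}, which the theorem permits since only $\VF$- and $\VG$-quantifiers are to be eliminated. Concretely, $\cross_t$ restricted to $[0,t]$ is an injection into $A_t\subset\RR_t$, and the ordering on $[0,t]$ is recoverable inside $A_t$ via divisibility because $A_t$ is closed under multiplication; so $\exists y\in[0,t]$ becomes $\exists\eta\in A_t$. Before this trade one needs nontrivial arithmetic preprocessing, which your sketch omits: normalizing the coefficient of $y$ in $s_1(y,z)=ay+\alpha(z)$ to $1$ by passing to $a$-th powers and roots inside $A_{at_i}$, and then reconstructing $\cross_{t_i}(by+s_i'(z))$ from $\cross_t(y)$ and $\cross_{t_i+bt}(s_i'(z)+bt)$ via $\res_{t_i+bt,\,t_i}\bigl(\cross_t(y)^b\cdot\cross_{t_i+bt}(s_i'(z)+bt)\bigr)$, so that $y$ appears only through $\cross_t(y)$. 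Without these steps the reduction to a residue-ring quantifier cannot be carried out, so as written the proposal does not establish elimination of $\VG$-quantifiers.
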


\begin{proof}[Proof of \Cref{QEZ}]
We mimic the proof of quantifier elimination from \cite[Thm 4.1]{Pas} and \cite[Thm 5.1.2] {CHallp}.

We first note that the theory $T'_D$ has resplendent elimination of $\VF$-quantifiers, as follows from work by Basarab~\cite{Basarab} (or see ~\cite{Rid}). This implies that any $\cL'_D$-formula is equivalent to a formula of the form
\[
\varphi\left((\ord p_i(x))_i, (\ac_{t_i}(p_i(x)))_i, z, \xi\right),
\]
where $x$ is a tuple of $\VF$-variables, $z$ a tuple of $\VG$-variables, $\xi$ a tuple of variables in various residue rings $R_{t_i}$, the $p_i$ are in $\ZZ[x]$, the $t_i$ are basic terms, and $\varphi$ is a formula without $\VF$-quantifiers. Moreover, it suffices to show that we can eliminate one $\VG$-variable from such a formula. By looking at terms, it suffices to eliminate the quantifier $\exists y$ from a formula of the form
\[
\exists y\: \theta(y,z)\wedge \psi(\xi, \cross_{t_1}(s_1(y,z)), \ldots, \cross_{t_n}(s_n(y,z))),
\]
where $z$ is a tuple of $\VG$-variables, $\xi$ is a tuple of variables over some sorts $R_t$, the $t_i$ are basic terms, the $s_i$ are $\cL_{\Pres}(\Lambda)$-terms, $\theta$ is an $\cL_{\Pres}(\Lambda)$-formula, and $\psi$ is a formula without quantifiers over $\VF$ or $\VG$. We can assume that $y$ appears in each $s_i(y,z)$, since otherwise we can create an extra $\xi$-variable $\xi_{j(i)}$ and add a condition of the form
$\cross_{t_i}(s_i(z))=\xi_{j(i)}$ outside the quantifier $\exists y$.

Now, by refining $\theta(y,z)$ into finitely many cases we may assume that $\theta(y,z)$ implies, for each $i$, that $s_i(y,z)$ is in $[0, t_i]$. Indeed, if $s_i(y,z)$ is not in $[0, t_i]$, then we may simply replace $\cross_{t_i}(s_i(y,z))$ by $0$. If this eliminates all $s_i$, then we can use Presburger quantifier elimination to remove the quantifier $\exists y$ to conclude. Otherwise, letting $s_1(y,z) = ay + \alpha(z)$ for some non-zero integer $a$ and $\cL_{\Pres}(\Lambda)$-term $\alpha$, $\theta(y,z)$ implies that
\[
0\leq ay + \alpha(z) \leq t_1.
\]
We will further reduce to the case where $\theta(y,z)$ implies that $0\leq y\leq t$ for some basic term $t$. First note that we can always assume that $a\geq 1$. Next, we can reconstruct $\cross_{t_i}(s_i(y,z))$ from $\cross_{at_i}(as_i(y,z))$ in the following way. The element $\cross_{at_i}(as_i(y,z))$ is an $a$-th power in $A_{at_i}$, and the image under $\res_{at_i, t_i}$ of its $a$-th root in $A_{at_i}$ is exactly equal to $\cross_{t_i}(s_i(y,z))$. So we can modify $\psi$ to take $\cross_{at_i}(as_i(y,z))$ as input instead of $\cross_{t_i}(s_i(y,z))$. This procedure ensures that all $y$-coefficients in all $s_i$ become divisible by $a$. Hence, after replacing $ay$ by another variable $y'$, and adding the clause that $y'$ is divisible by $a$ into $\theta(y,z)$, we may assume that $a=1$. Finally, by replacing $y$ by $y-\alpha(z)$, we obtain that $\theta(y,z)$ implies that $0\leq y \leq t$ for some basic term $t$.

Fix an $i$ and write $s_i(y,z) = by + s_i'(z)$ for some integer $b\geq 1$ and an $\cL_{\Pres}(\Lambda)$-term $s_i'$. Since $\theta(y,z)$ implies that $0\leq y\leq t$, we also have that $-bt\leq s_i'(z)\leq t_i$. Now, if $\theta(y,z)$ holds, we can recover $\cross_{t_i}(s_i(y,z))$ from $\cross_t(y)$ and $\cross_{t_i+bt}(s_i'(z)+bt)$ purely via the residue rings. Indeed, for $k\geq k'$ the natural injection $A_{k'}\to A_k$ is definable as the inverse of $\res_{k,k'}$, and so we can map
\[
(\cross_{t}(y), \cross_{t_i+bt}(s_i'(z)+bt))\mapsto \res_{t_i+bt, t_i} \left(\cross_{t}(y)^b \cdot \cross_{t_i+bt}(s_i'(z)+bt)\right).
\]
So we can replace the formula $\psi$ by a formula of the form
\[
\psi'\left(\xi, \cross_t(y), (\cross_{t_i'}(s_i''(z)))_i\right),
\]
where $\psi'$ is only over residue ring sorts, $t$ and $t_i'$ are basic terms, and $s_i''$ are $\cL_{\Pres}(\Lambda)$-terms. By Presburger quantifier elimination, we may write $\theta(y,z)$ piecewise as
\[
\theta_0(z) \wedge \underbrace{\beta_1(z)\leq cy \leq \beta_2(z)\wedge y\equiv c \bmod \ell}_{(*)},
\]
where $\theta_0$ is some quantifier free $\cL_{\Pres}(\Lambda)$-formula, $\beta_1, \beta_2$ and $c$ are $\cL_{\Pres}(\Lambda)$-terms, and $\ell$ is a positive integer. Since $\theta(y,z)$ implies that $0\leq y\leq t$, it also implies that $0\leq \beta_j(z)\leq ct$, and we can recover the truth of $(*)$ after adding $\cross_{ct}(\beta_j(z))$ to $\psi'$. Indeed, the order of a bounded interval of $\VG$ can be recovered in a definable way via some map $\cross$ purely from the residue rings, since each $A_i$ is closed under multiplication. At this point, the variable $y$ only appears in $\cross_t(y)$ in our formula, and so we can replace it by a variable from $A_t$ and quantify over $R_t$ instead.
\end{proof}

\subsection{Description of formulas}Theorem \ref{QEZ} on quantifier elimination gives a precise description of formulas.

\begin{thm}\label{QEform}
Any $\cL_D'$-formula in variables $x,\xi,z$ (in the valued field, residue rings, and value group respectively) is $T_D'$-equivalent to a finite disjunction of formulas of the form
\[
\Theta(z, (\ord p_i(x))_i)\wedge \Phi(\xi, (\ac_{t_i}(p_i(x)))_i, (\cross_{t_j}(s_j((\ord p_i(x))_i, z)))_j),
\]
where $\Phi$ is a formula on the residue rings, $\Theta$ is a quantifier-free $\cL_{\Pres}(\Lambda)$-formula, the $t_i$ are basic terms, the $s_j$ are $\cL_{\Pres}(\Lambda)$-terms, and the $p_i$ are polynomials in $x$.
\end{thm}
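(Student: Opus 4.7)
The plan is to deduce Theorem \ref{QEform} from Theorem \ref{QEZ} by a sort-separation argument, proceeding by induction on the complexity of the quantifier-free (over $\VF$ and $\VG$) reduct of the formula.

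First I would apply Theorem \ref{QEZ} to replace the given $\cL_D'$-formula $\varphi(x,\xi,z)$ by an equivalent one with no quantifiers over $\VF$ or $\VG$; only quantifiers over residue-ring sorts remain. Such a formula is built from $\wedge,\vee,\neg$ and residue-ring quantifiers, applied to atoms of three kinds: $\VF$-atoms of the form $p(x)=0$ with $p\in\ZZ[x]$ (since the only relation on $\VF$ is equality and $\VF$-terms are polynomials in $x$), $\VG$-atoms (Presburger atoms $\leq$, $=$, $\equiv_n$ applied to $\cL_\Pres(\Lambda)$-terms in $z$ and $(\ord p_i(x))_i$), and residue-ring atoms (equations between terms built from ring operations, the variables $\xi$, the function symbols $\ac_t(p(x))$, $\cross_t(s)$ with $s$ a $\VG$-term, and $\res_{t,t'}$). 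Each $\VF$-atom $p(x)=0$ can be absorbed into a residue-ring atom via the equivalence $p(x)=0 \Leftrightarrow \ac_0(p(x))=0$; after unfolding, every $\VG$-term appearing inside some $\cross_t$ is of the required shape $s_j((\ord p_i(x))_i,z)$, so the residue-ring atoms match the required arguments $\ac_{t_i}(p_i(x))$ and $\cross_{t_j}(s_j((\ord p_i(x))_i,z))$ plus $\xi$.

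Next I would establish the desired disjunctive normal form by induction on formula complexity. The base case of atoms is immediate: take either $\Theta$ or $\Phi$ to be the true formula. For a conjunction, writing $\varphi_i\equiv\bigvee_k(\Theta_{i,k}\wedge\Phi_{i,k})$, distribution yields $\bigvee_{k,l}((\Theta_{1,k}\wedge\Theta_{2,l})\wedge(\Phi_{1,k}\wedge\Phi_{2,l}))$, and the two pieces remain of the required sorts. Disjunction is direct. For negation, $\neg\bigvee_k(\Theta_k\wedge\Phi_k)\equiv\bigwedge_k(\neg\Theta_k\vee\neg\Phi_k)$; here $\neg\Theta_k$ stays quantifier-free Presburger (Presburger arithmetic enjoys full quantifier elimination) and $\neg\Phi_k$ stays a formula on residue rings, so distributing returns a disjunction of the prescribed form. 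For a residue-ring quantifier $\exists\eta$, since $\Theta_k$ does not mention the residue-ring variable $\eta$, one can pull it outside: $\exists\eta\,\bigvee_k(\Theta_k\wedge\Phi_k(\eta))\equiv\bigvee_k(\Theta_k\wedge\exists\eta\,\Phi_k(\eta))$, and $\exists\eta\,\Phi_k(\eta)$ is still a formula on residue rings.

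The main work will be bookkeeping rather than any deep obstacle: one must track carefully that the $\VG$-terms arising as arguments of $\cross_{t_j}$ are genuinely $\cL_\Pres(\Lambda)$-terms in $z$ and $(\ord p_i(x))_i$, and that across the finitely many disjuncts one can consolidate a common finite list of polynomials $(p_i)$ and terms $(s_j)$ by padding with trivially true atoms. The sort-separation that drives the induction rests crucially on the fact that after Theorem \ref{QEZ} no $\VG$- or $\VF$-quantifier remains to interact with residue-ring quantifier scopes, so the $\VG$-atoms collected into $\Theta$ can be pulled outside all remaining quantifiers unambiguously.
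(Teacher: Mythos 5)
Your proposal is correct and follows the same route as the paper, whose entire proof is the one-line assertion that the theorem ``follows directly'' from Theorem \ref{QEZ}; your write-up simply supplies the routine sort-separation and disjunctive-normal-form bookkeeping that the paper leaves implicit. The key points you identify --- absorbing valued-field atoms via $\ac_0(p(x))=0$, noting that every $\VG$-term is an $\cL_{\Pres}(\Lambda)$-term in $z$ and the $\ord p_i(x)$, and pulling the Presburger conjuncts out of the remaining residue-ring quantifiers --- are exactly the steps needed.
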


\begin{proof}
This follows directly from the previous Theorem \ref{QEZ} on quantifier elimination.\end{proof}

\section{Geometrical properties of $\cS$-objects}\label{sec:S}


We spell out key geometric properties of $\cS$-definable sets and $\cS$-definable functions. These are variants for $\cS$ of the classical situation from \cite{Denef2}, \cite{Pas} and \cite{Pas2}. These variants use the reasonings from \cite{CHallp}, \cite{CHR} \cite{CHRV}, \cite{Rid} to generalize the more classical 
cell decomposition results from \cite{Denef2} \cite{Pas} and the Jacobian property from \cite[Theorem 7.5.3 (5)]{CLoes}, where only the residue field is used instead of the more general residue rings $\RR_t$ that we use.

The results of this section will be used later on to show several properties of motivic integrals, in particular that they are well-defined, similarly as in \cite{CLoes} \cite{CLexp} but with the simplified, point-wise approach from \cite{CH-eval}.


\subsection{Cell decomposition in one variable}\label{sec:cell}
Recall that $\cS$ is as specified at the start of \Cref{sec:motivic}.

\begin{defn}[Cells in dimension one]\label{defn.cell.1}

Consider $\cS$-definable sets $Y$ and  $X\subset Y\times \VF$.
Fix $j$ in $\{0,1\}$, an $\cS$-definable function $c:Y\to \VF$, a basic term $t$ with $t\ge 0$, 
and an $\cS$-definable set
$$
R \subset Y\times \RR_t \times (\ZZ\cup\{\infty\})
$$
where furthermore $R \subset Y\times \RR_t^\times \times \ZZ$ when $j=1$ and $R \subset Y\times \{0\} \times \{\infty\}$ when $j=0$, and where $\RR_t^\times$ stands for the units in $\RR_t$. 
Then $X$ is called a \emph{cell over $Y$ with center $c$, depth $t$, type $j$, and parameter set $R$} if it is of the form
$$
X = \{(y,x) \in Y \times \VF\mid  (y,
\ac_t (x-c(y)),\
\ord  (x-c(y))     ) 
   \in R  \}.
$$

If $j=1$, then, any nonempty set of the form
$$
\{x \in  \VF_K \mid  (y, \ac_t (x-c_{K}(y)  ),\  \ord (x-c_{K}(y) ))  = r  \},  
$$
for some point $r=(r_0,K)$ on $R$ and $y=(y_0,K)$ on $Y$
is called a \emph{ball above $y$ of the cell $X$ over $Y$ with center $c$, 
depth $t$, and parameter $r$}.

If $X$ is a cell over $Y$, then, for any model $K$ in $\cS$, the set $X_K$ is also called a cell over $Y_K$.

For an $\cS$-definable set $W\subset X \times \RR_i\times \VG^{k}$ for some basic tuple $i$ and some $k\ge 0$, with projection map $w:W\to Y$ and $X$ a cell over $Y$, call $W$ \emph{prepared on $X$ (over $Y$)} if for each $y=(y_0,K)$ in $Y$ and each ball $B$ above $y$, one has $W_{K,x}=W_{K,x'}$ for each $x,x'$ in $B\times \{y\}$.


\end{defn}


\begin{thm}[Cell decomposition]\label{prop:cell.1}
Let $Y$ and $X\subset Y\times \VF$ be $\cS$-definable sets. 
Then there exists a finite partition of $X$ into $\cS$-definable sets $X_i$ and for each $i$ a basic tuple $t_i$ and an $\cS$-definable set $\widetilde X_i\subset X_i\times \RR_{t_i}$ which is a cell over $Y\times \RR_{t_i}$ such that the projection
$$
p_i: \widetilde X_i \to X_i
$$
is a bijection. 
\end{thm}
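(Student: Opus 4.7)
The plan is to combine the quantifier elimination of Theorem \ref{QEform} with a preparation lemma for polynomials in one valued field variable, following the classical Denef--Pas strategy. First, by Theorem \ref{QEform} applied to the formula defining $X$ (with the $Y$-parameters distributed over their respective sorts), I may assume that $X$ is cut out by a Boolean combination of conditions of the form $\Theta\bigl(y_\VG, (\ord p_i(y,x))_i\bigr)$ on the value group and $\Phi\bigl(y_\RR, (\ac_{t_j}(p_i(y,x)))_{i,j}, \ldots\bigr)$ on the residue rings, where the $p_i(y,x)$ are polynomials in $x$ with $\cS$-definable coefficients. The task then reduces to controlling $\ord p_i$ and $\ac_{t_j} p_i$ as functions of $x$, uniformly in the parameters $y$.

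Next, I would apply a preparation result to this finite collection of polynomials. Since $\cS$ is one of the three kinds listed at the start of \Cref{sec:motivic}, and in particular is D-h-minimal as specified in \Cref{app:hensel-min}, the Hensel-minimality machinery of \cite{CHR,CHRV} provides a finite $\cS$-definable partition of $Y\times\VF$, together with, on each piece, a definable center $c(y)\in\VF$, a basic term $t$ large enough to dominate the $t_j$ occurring in $\Phi$, and factorisations $p_i(y,x) = u_i(y,x)\cdot (x-c(y))^{n_i}$ whose units $u_i$ have $\ord u_i$ and $\ac_t u_i$ depending on $x$ only through the pair $(\ord(x-c(y)),\ac_t(x-c(y)))$.

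With this preparation in hand, every atomic condition in the normal form from the first step depends on $x$ only through the pair $(\ord(x-c(y)),\ac_t(x-c(y)))$. Hence, on each piece of the partition I define
$$
\widetilde X_i := \{(y,x,\xi)\in X_i\times \RR_{t} \mid \xi = \ac_t(x-c(y))\},
$$
so that $\widetilde X_i\to X_i$ is automatically a bijection. An $\cS$-definable parameter set $R\subset Y\times \RR_t\times(\ZZ\cup\{\infty\})$ is read off directly from $\Theta$ and $\Phi$, and after one further splitting of each piece into a type $j=1$ part (where $x\neq c(y)$) and a type $j=0$ part (reduced to $x=c(y)$, if present), the set $\widetilde X_i$ has exactly the form demanded by \Cref{defn.cell.1} of a cell over $Y\times \RR_{t}$ with center $c$, depth $t$, and parameter set $R$.

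The main obstacle is the preparation lemma itself, namely producing a single center $c(y)$ and a common depth $t$ that simultaneously prepare all the polynomials $p_i$ appearing in the quantifier-elimination normal form. Classical Denef--Pas cell decomposition achieves this using only the residue field, but here the depth must be allowed to vary with the basic terms $t_j$ coming from $\Phi$, and arbitrary residual characteristic and ramification must be accommodated; this is precisely where the D-h-minimality hypothesis on $\cS$ is indispensable, since it packages the needed uniform preparation (with $\cS$-definable partitions and centers) into a ready-made tool. Ensuring $\cS$-definability of all data, rather than mere $\cL_D$-definability in each individual structure, is the other subtle point, and again follows from the uniformity built into the D-h-minimality framework.
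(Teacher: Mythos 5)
Your overall architecture (quantifier elimination via \Cref{QEform}, preparation of the finitely many polynomials $p_i(y,x)$ occurring in the resulting normal form, then assembly of the cells) matches the paper's. The paper, however, obtains the preparation by applying the classical cell decomposition theorem of \cite{Pas} to the polynomials $p_1,\dots,p_d$, following \cite[Theorem 7.2.1]{CLoes}, rather than by invoking the Hensel-minimality machinery of \cite{CHR,CHRV}; that latter route is essentially what the paper reserves for the most general D-h-minimal $\cS$ in \Cref{pop:CHR-to-D}.

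The genuine gap is in your treatment of the auxiliary residue-ring coordinates. You assume the preparation produces, on each piece of a definable partition of $Y\times\VF$, a \emph{single} center $c(y)$ depending only on $y$, and you then use the factor $\RR_t$ in $\widetilde X_i$ only to record $\xi=\ac_t(x-c(y))$ --- information already encoded in the parameter set $R$ of \Cref{defn.cell.1}, so that under your assumption $X_i$ would already be a cell over $Y$ and the lift would be superfluous. But no such single center exists in general: for $X=\{(y,x)\mid x^2=y\}$ the two square roots of $y$ cannot be separated by an $\cS$-definable partition, so no definable $c(y)$ can serve as a center on any piece meeting both branches. The preparation theorems (Pas's, or those of \cite{CHR}) produce centers as functions of $(y,\xi)$ with $\xi$ ranging over auxiliary residue-ring variables that select among finitely many branches, and the whole point of the lift $\widetilde X_i\subset X_i\times\RR_{t_i}$ is that for each $(y,x)\in X_i$ there is a \emph{unique} such $\xi$, which is what makes $p_i$ a bijection; this is exactly how the paper's proof proceeds, with centers $c_i\colon Y_i\subset\VF^m\times\RR_{\ord N}^{n}\to\VF$. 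Your argument is repairable by replacing the single center with such a parametrized family together with the uniqueness claim, but as written the key definability assertion fails. A secondary omission: the normal form of \Cref{QEform} also contains terms $\cross_{t_j}(s_j((\ord p_i(x))_i,z))$, and one must check that these too factor through $\ord(x-c)$ and the parameters; the paper handles them explicitly.
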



\begin{addendum}[Preparation]\label{add.prep.1}
With assumptions and notation from \Cref{prop:cell.1}, let moreover an $\cS$-definable set  $W\subset X \times \RR_t\times \VG^{k}$ be given for some basic tuple $t$ and some $k\ge 0$, with projection map $w:W\to Y$.
Then the $X_i$ and $\widetilde X_i$ of \Cref{prop:cell.1} can be chosen such that furthermore, for each $i$, the $\cS$-definable set
$$
\widetilde W_i :=  W \otimes_{X} \widetilde X_i
$$
is prepared on the cell $\widetilde X_i$ (over $Y\times \RR_{t_i}$).
\end{addendum}
\begin{proof}
The proof of Theorem \ref{prop:cell.1} is similar to that of \cite[Theorem 7.2.1]{CLoes}. First of all, we use Theorem \ref{QEform} to give a description of $X$ by an $\cL'_D$-formula $\Theta$ without valued field quantifiers and value group quantifiers. Suppose that $Y\subset \VF^m\times \RR_t\times \VG^r$ for a basic tuple $t$, and that $p_1(z,x),...,p_d(z,x)$ are the polynomials appearing in $\Theta$, where $z=(z_1,...,z_m)$ is the valued field coordinate function of $Y$ and $(z,x)$ is the valued field coordinate function of $X$. We apply the cell decomposition theorem in \cite{Pas} to the polynomials $p_1,...,p_d$ to obtain a partition of $\VF^{m+1}$ into finitely many cells $Z_i$ over $Y_i\subset \VF^m\times \RR_{\ord N}^n$ for some integer $N>0$, with parameters in $V_i\subset\VF^m\times \RR_{\ord N}^{n+1}\times \VG^e$ ($0\leq e\leq 1$) and an $\cS$-definable center $c_i:Y_i\to \VF$ for each $i\in I$.  

For $(y,x)\in X$ write $y=(z,\xi)$. Then for each $(y,x)\in X$ there exists a unique $i$ and a unique $\xi\in \RR_{\ord N}^n$ such that $(z,\xi)\in Y_i$, $\ordalt(p_j(z,x))$ depends only on $\ordalt h_{ijq}(z,\xi),\ordalt(x-c_i(z,\xi))$ and for each basic term $t'$ one has that $\ac_{t'}(p_j(z,x))$ depends only on $\ac_t(h_{ijq}(z,\xi)),\ac_{t'}(x-c_i(z,\xi)),\cross_{t'}(\ord(x-c_i(z,\xi))),\cross_{t'}(\ord(h_{ijq}(z,\xi)))$ for all $1\leq j\leq d, 1\leq q\leq \ell_j$, where $(h_{ijq})_{i,j,q}$ are $\cS$-definable functions from $Y_i$ to $\VF$. Let $\tilde{t}$ be the maximum taken over all basic terms appearing in $\Theta$.  By using $\Theta$, the functions $\res_{\tilde{t},t'}$ for each basic term $t'$ appearing in $\Theta$ and a similar argument with that of \cite[Theorem 7.2.1]{CLoes}, for each $i$,  it is easy to construct an $\cS$-cell $\widetilde {X}_i\subset X\times \RR_{\ord N}^{n}$ over $\widetilde {Y}_i= (Y_i\times \RR_t\times \VG^r)\cap (Y\times \RR_{\ord N}^{n})$ with parameters in $R_i\subset \widetilde {Y}_i\times \RR_{\tilde{t}}\times\VG^e$ such that  the projection $p_i$ from $\widetilde {X}_i$ to $X$ is injective for all $i\in I$ and $\{p_i(\widetilde {X}_i)|i\in I\}$ forms a partition of $X$.

In order to prove Addendum \ref{add.prep.1}, we also use Theorem \ref{QEform} to give a description of $W$ by a $\cL'_D$-formula $\Theta'$ without valued field quantifiers and value group quantifiers. Now we repeat the above process for the above polynomials $p_1(z,x),...,p_d(z,x)$ and the polynomials $q_1(z,x),...,q_{d'}(z,x)$ appearing in $\Theta'$  to forms  cells $\widetilde {X}_i$ for $i\in I$ such that the projection $p_i$ from $\widetilde {X}_i$ to $X$ is injective for all $i\in I$ and $\{p_i(\widetilde {X}_i)|i\in I\}$ is a partition of $X$ (here, $\tilde{t}$ is the maximum taken over all basic terms appearing in $\Theta$ and $\Theta'$). Now, by using our construction and the above notation, for each $i\in I$, the formula  $(y,x,\zeta,\eta)\in W\wedge(y,x)\in X_i=p_i(\widetilde {X}_i)$  is $\cS$-equivalent with a formula in $y,\zeta,\eta,\ord(x-c_i(z,\xi)), \ac_{\tilde{t}}(x-c_i(z,\xi))$, where $\zeta\in \RR_t$ and $\eta\in \VG^k$. This implies our claim.
\end{proof}


\subsection{Finiteness properties, weak orthogonality, and basic rectilinearisation}\label{sec:weak.orth}
Recall that $\cS$ is as specified at the start of \Cref{sec:motivic}.

The following two basic results are close to results of \cite{CHallp}, and so are their proofs.
The first one repeats \Cref{prop:finite0} from above.
\begin{prop}[Finiteness]\label{prop:finite}
Let $t$ be a basic tuple and let $x$ be an $\cS$-point. Every $\cS(x)$-definable function $f: \RR_t \to \VG$ has finite image.
\end{prop}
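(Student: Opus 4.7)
The plan is to invoke the quantifier elimination result of \Cref{QEform} together with the basic observation that each function $\cross_u$ has \emph{finite} image. After expanding $\cL'_D$ by constants for the coordinates of the $\cS$-point $x$, the graph of $f$ is a definable subset of $\RR_t\times\VG$ with no free $\VF$-variables, so \Cref{QEform} lets us write it as a finite disjunction of formulas of the form
\[
\Theta_k(z)\ \wedge\ \Phi_k\bigl(\xi,(\cross_{u_{k,j}}(s_{k,j}(z)))_j\bigr),
\]
where each $\Theta_k$ is a quantifier-free $\cL_{\Pres}(\Lambda)$-formula, each $\Phi_k$ is a formula over residue rings, each $u_{k,j}$ is a basic term and each $s_{k,j}$ is an $\cL_{\Pres}(\Lambda)$-term (now possibly involving the added constants).

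The key point, used next, is that for any basic term $u$, the support of $\cross_u$ is contained in $[0,u]$ and $\cross_u$ takes at most one value per valuation, so its image has at most $u+2$ elements. Hence for each disjunct $k$ the tuple $(\cross_{u_{k,j}}(s_{k,j}(z)))_j$ assumes only finitely many values as $z$ ranges over $\VG$, and this induces a finite $\cS(x)$-definable partition of $\VG$ into pieces $P_{k,1},\ldots,P_{k,N_k}$ on each of which this tuple equals a fixed tuple $c_{k,\ell}$.

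On each piece $P_{k,\ell}$ the $k$-th disjunct of the graph restricts to the product
\[
\bigl(\Theta_k\cap P_{k,\ell}\bigr)\times\bigl\{\xi\in\RR_t:\Phi_k(\xi,c_{k,\ell})\bigr\}.
\]
Now I use that $f$ is a function: for each $\xi$ there is at most one $z$ with $(\xi,z)$ in the graph. Consequently, whenever the residue-ring factor $\{\xi:\Phi_k(\xi,c_{k,\ell})\}$ is non-empty, the value-group factor $\Theta_k\cap P_{k,\ell}$ must contain at most one element; otherwise a single $\xi$ would be paired with two distinct $z$, a contradiction. Summing over the finitely many pieces and the finitely many disjuncts yields that the image of $f$ is finite.

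The only step that requires a little care is confirming that the form of \Cref{QEform} applies after adjoining constants for the parameters from $x$; but this is immediate, because adding constants preserves quantifier elimination and simply augments the available parameters in $\Theta_k$, $\Phi_k$ and the terms $s_{k,j}$. Everything else — the finiteness of the image of $\cross_u$, the definability of the partition, and the functional contradiction argument — is essentially formal.
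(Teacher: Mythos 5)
Your proof is correct and follows essentially the same route as the paper's: apply \Cref{QEform} to the graph, exploit that each $\cross_u$ has finite image so the tuple of $\cross$-values takes only finitely many values, and then use functionality of $f$ to see that each resulting product piece contributes at most one element to the image. The only (cosmetic) difference is that you partition $\VG$ by the value of the $\cross$-tuple, whereas the paper first reduces to non-constant terms $s_j$ and observes that all $\cross$-values vanish for all but finitely many $z$; both are the same finiteness-plus-functionality argument.
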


\begin{proof}
By \Cref{QEform}, the graph of $f$ is described by a formula
\[
\Theta(z) \wedge \Psi(\xi, (\cross_{t_j}(s_j(z)))_j),
\]
where $z\in \VG$, $\xi\in \RR_t$, $\Theta(z)$ is an $\cL_{\Pres}(\Lambda, x)$-formula, the $t_j$ are basic terms, the $s_j$ are $\cL_{\Pres}(\Lambda, x)$-terms, and $\Psi$ is a formula on the residue rings. We may assume that all of the $s_j(z)$ are non-constant terms in $z$. Since we are working in $\cS(x)$, there are then only finitely many $z\in \VG$ for which there exists a $j$ with $s_j(z)\in [0, t_j]$. Equivalently, for all but finitely many $z$ in $\VG$, all $\cross_{t_j}(s_j(z))$ are zero. But the image of $f$ can contain at most one element $z$ for which all $\cross_{t_j}(s_j(z))$ are zero, and we conclude that $f$ has finite image.
\end{proof}

The following result gives a certain independence between $\RR_t$ and $\VG$ which may be considered as a weak form of orthogonality; at the same time, it gives that the structure on $\VG$ is essentially the Presburger structure.  The weakness of these properties comes from needing the map  $\sigma$. 
This weak form of orthogonality helps to treat unbounded ramification, and is inspired by \cite{CHallp}.

\begin{prop}[Weak orthogonality between $\VG$ and $\RR_t$]\label{prop:ortho}
For each $\cS $-definable sets $Y$ and  $X\subset Y \times \VG^m$, there exist a basic tuple $t$ and a bijective $\cS $-definable function
$$
\sigma : X \to \widetilde  X \subset \RR_t \times X
$$
over $X$ such that the family $\widetilde  X_{z}\subset \VG^m$ is $\cL_{\Pres}(\Lambda)$-definable uniformly in $z$ running over $\RR_t\times Y$. More precisely, there exist an $\cS$-definable function $\gamma:\RR_t \times Y\to\VG^k$ for some $k\ge 0$ and an $\cL_{\Pres}(\Lambda)$-definable set $\Psi\subset \VG^{m+k}$ such that for every point $z$ on $\RR_t \times Y$, the set $\widetilde  X_{z}$ equals $\Psi_{\gamma(z)}$.
 \end{prop}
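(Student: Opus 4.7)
The plan is to apply quantifier elimination (\Cref{QEform}) to a defining formula of $X$ and then to let $\sigma$ record precisely the residue-ring values of the $\cross$-terms that appear there. Once those values are fixed, the defining condition collapses to purely Presburger conditions on the new value-group variable $z' \in \VG^m$, with parameters depending $\cS$-definably on $(\alpha, y) \in \RR_t \times Y$.

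Concretely, write $Y \subset \VF^n \times \RR_u \times \VG^r$, so that a point of $X$ has coordinates $(x, \xi_Y, z, z')$ with $z' \in \VG^m$ the new coordinates. By \Cref{QEform}, after an $\cS$-definable finite partition (which is absorbed at the end as an additional factor in $\RR_t$), I may assume that $X$ is defined by a single clause
$$
\Theta\bigl(z, z', (\ord p_i(x))_i\bigr) \wedge \Phi\bigl(\xi_Y, (\ac_{t_i}(p_i(x)))_i, (\cross_{t_j}(s_j((\ord p_i(x))_i, z, z')))_j\bigr),
$$
with $\Theta$ a quantifier-free $\cL_{\Pres}(\Lambda)$-formula, $\Phi$ a formula purely on the residue rings, the $s_j$ $\cL_{\Pres}(\Lambda)$-terms and the $t_j$ basic terms. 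Taking $t = (t_j)_j$ and
$$
\tau(x, \xi_Y, z, z') := \bigl(\cross_{t_j}(s_j((\ord p_i(x))_i, z, z'))\bigr)_j,
$$
the map $\sigma(y, z') := (\tau(y, z'), y, z')$ is an $\cS$-definable bijection of $X$ onto its graph $\widetilde X \subset \RR_t \times X$, and is over $X$ by construction.

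For the uniformity, I would use that the image of $\cross_{t_j}$ equals $A_{t_j} \cup \{0\}$; that for $\alpha_j \in A_{t_j}$ the preimage $\cross_{t_j}^{-1}(\alpha_j)$ is a singleton $\{j_0(\alpha_j)\} \subset [0, t_j]$ from which $j_0(\alpha_j)$ is extracted $\cS$-definably; and that $\cross_{t_j}^{-1}(0) = \VG \setminus [0, t_j]$ is Presburger in the parameter $t_j$. Hence, for each $(\alpha, y)$ on $\RR_t \times Y$, the fiber $\widetilde X_{(\alpha, y)}$ is either empty (when some $\alpha_j \notin A_{t_j} \cup \{0\}$ or when $\Phi(\xi_Y(y), \ldots, \alpha)$ fails) or equals the $\cL_{\Pres}(\Lambda)$-definable set cut out by $\Theta$ together with the conditions $s_j(\ldots, z', \ldots) \in \cross_{t_j}^{-1}(\alpha_j)$. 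Collecting $(\ord p_i(x(y)))_i$, $z(y)$, the values $t_j(y)$, the integers $j_0(\alpha_j)$ together with flags indicating whether $\alpha_j \in A_{t_j}$, and a flag for the truth of $\Phi$, into a single $\cS$-definable function $\gamma \colon \RR_t \times Y \to \VG^k$ then yields the required uniform description by one fixed $\cL_{\Pres}(\Lambda)$-definable $\Psi \subset \VG^{m+k}$.

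The main delicate point, I expect, is the bookkeeping needed to fuse the finitely many disjuncts coming from \Cref{QEform}---each producing its own basic tuple $t^{(k)}$ and map $\tau^{(k)}$---into a single $\sigma$ that is a bijection on the whole of $X$ and lands in one fixed $\RR_t$. Once a disjunct-index is absorbed as an additional residue-ring factor, the rest reduces to routine combinatorics on the preimages of $\cross_{t_j}$ and $\cS$-definable extraction of the parameters of $\gamma$.
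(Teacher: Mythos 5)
Your proposal is correct and follows essentially the same route as the paper's proof: apply Theorem \ref{QEform}, let $\sigma$ record the values of the $\cross_{t_j}$-terms in $\RR_t$, and then describe each fiber by a fixed Presburger formula whose parameters ($\gamma$) consist of the $\VG$-data of $y$, the preimages $\cross_{t_j}^{-1}(\alpha_j)$, the values of the basic terms $t_j$, and a flag encoding whether the residue-ring condition $\Phi$ holds. The bookkeeping for merging the finitely many disjuncts of \Cref{QEform} into a single $\RR_t$ that you flag as delicate is routine and is handled implicitly in the paper.
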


\begin{proof}
By \Cref{QEform}, we may assume that $X$ is defined by a formula of the form
\[
\Theta(z, g(y))\wedge \Phi(h(y), \cross_t(s(g(y), z)) ),
\]
where $z\in \VG^m$, $y\in Y$, $t = (t_1, \ldots, t_n)$ is a basic tuple, $s$ consists of $\cL_{\Pres}(\Lambda)$-terms, $g: Y\to \VG^n$ and $h: Y\to R_t$ are $\cS$-definable, $\Theta$ is an $\cL_{\Pres}(\Lambda)$-formula, and $\Phi$ is a formula over the residue rings.

For $(y,z)\in X$ define $\zeta(z,y) = \cross_t(s(g(y), z)) \in R_t$, and define
\[
\widetilde {X}_{\zeta, y} = \begin{cases} \{z\in \VG\mid \Theta(g(y), z)\wedge \cross_t(s(g(y), z)) = \zeta & \text{ if } \Phi(h(y), \zeta) \text{ holds},\\
\emptyset & \text{else}.
\end{cases}
\]
Then the map $\sigma: X\to \widetilde {X}$ which maps $(y,z)$ to $\zeta(z,y)$ is a bijection. We define $\gamma$ as
\[
\gamma: \begin{cases} R_t\times Y\to \VG^{2n+1} \\
 (\zeta, y)\mapsto \begin{cases} (g(y), \cross_t^{-1}(\zeta), t) & \text{ if } \Phi(h(y), \zeta) \text{ holds}, \\
(0, \ldots, 0, -1) & \text{else}. \end{cases}\end{cases}
\]
If $\cross_t^{-1}(\zeta)$ is not well-defined in the description of $\gamma$, then we simply take $\cross_t^{-1}(\zeta) = -1$. Finally, we define
\begin{multline*}
\Psi = \{(u_1, u_2, u_3, z)\in \VG^n\times \VG^n\times \VG\times \VG^n \mid  \\
\Theta(u_1, z)\wedge \cross_t(s(u_1,z)) = \cross_t(u_2)\wedge u_3 \neq -1\}.
\end{multline*}
Then we will indeed have that $\widetilde {X}_{\zeta, y} = \Psi_{\gamma(\zeta,y)}$. Also, $\Psi$ is $\cL_{\Pres}(\Lambda)$-definable, because the condition that $\cross_t(a) = \cross_t(a')$ can be expressed by using $t=u_3$.
\end{proof}



\begin{cor}[Further finiteness properties]\label{cor:finite}
Let $\ell>0$, $n\geq 0$ be integers, $t$ be a basic tuple, and $x$ be an $\cS$-point.
\begin{enumerate}
\item Every  $\cS(x)$-definable function $\VG^n\to \RR_t$ has finite image.
\item Every $\cS(x)$-definable function $\VG^n\times \RR_t\to \VF$ has finite image.
\item There does not exist an injective $\cS(x)$-definable function $\VG^\ell \to \VG^{\ell-1}\times \RR_t \times \VF^n$.
\end{enumerate}
\end{cor}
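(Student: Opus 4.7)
The plan is to deduce all three parts from the quantifier elimination of \Cref{QEform} and the cell decomposition of \Cref{prop:cell.1}, together with Presburger dimension theory on $\VG$; part (3) will be a consequence of (1) and (2).

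For (1), since the graph of $f:\VG^n\to\RR_t$ involves no $\VF$-variable, \Cref{QEform} rewrites it as a finite disjunction of formulas of the shape $\Theta(z)\wedge\Phi(\xi,(\cross_{t_j}(s_j(z)))_j)$. Thus $\xi=f(z)$ is determined by the tuple $(\cross_{t_j}(s_j(z)))_j$, which in each structure $K$ takes only finitely many values because each $\cross_{t_j}$ is supported on the finite interval $[0,t_j]\subset\ZZ$. This essentially repeats the argument of \Cref{prop:finite}.

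For (2), I would apply \Cref{prop:cell.1} to the graph of $f$, viewed as a subset of $(\VG^n\times\RR_t)\times\VF$ over $\VG^n\times\RR_t$. Since the graph has singleton fibres, no type-$1$ cell can appear in the decomposition, and so $f$ is piecewise of the form $(z,\xi)\mapsto c_i(z,\xi,\eta_i(z,\xi))$, with $c_i$ the center of the $i$-th cell and $\eta_i$ an $\cS$-definable selector into some $\RR_{t_i}$. The main obstacle is to show that each center $c_i:\VG^n\times\RR_t\times\RR_{t_i}\to\VF$ has finite image. Inspecting the proof of \Cref{prop:cell.1}, which applies Pas's cell decomposition to the polynomials in the QE description, the centers depend on the $\VG$-coordinates only through $\cross$-terms, so after a further finite partition one reduces to the base case of an $\cS(x)$-definable function from a pure residue-ring sort $\RR_{t'}$ to $\VF$. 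For this base case, \Cref{QEform} describes the graph via conditions on $\ord p_i(v)$ and $\ac_{t_i}(p_i(v))$ for finitely many polynomials $p_i$ in a single $\VF$-variable $v$, and for each value of the $\RR_{t'}$-parameter only finitely many $v\in\VF$ can simultaneously satisfy these conditions as a unique function value.

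For (3), suppose for contradiction that an injective $\cS(x)$-definable $f:\VG^\ell\to\VG^{\ell-1}\times\RR_t\times\VF^n$ exists. Composing with the projection to $\VG^{\ell-1}$ yields an $\cS(x)$-definable $g:\VG^\ell\to\VG^{\ell-1}$. Presburger dimension theory, applied after partitioning by the $R$-data furnished by \Cref{prop:ortho}, guarantees a point $z_0$ with $g^{-1}(z_0)$ infinite. Restricting $f$ to this fibre gives an injective $\cS(x,z_0)$-definable function from an infinite Presburger-definable subset of $\VG^\ell$ into $\RR_t\times\VF^n$; its projection to $\RR_t$ has finite image by (1) and its projection to each $\VF$-coordinate has finite image by (2), so the restriction of $f$ has finite image, contradicting injectivity on an infinite set.
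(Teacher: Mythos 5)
Your argument is correct in substance but takes a different route from the paper's in parts (1) and (2): where you work directly from the quantifier-elimination normal form of \Cref{QEform}, the paper reduces everything to the black-box statements \Cref{prop:finite} and \Cref{prop:cell.1}, using a Presburger well-order on $\ZZ^n$ to select minimal preimages (turning a function with infinite image into an \emph{injective} definable map into $\RR_t$, against \Cref{prop:finite}) and the fact that an infinite definable subset of $\VF$ contains a ball. Both routes work over $\cS_D$ and $\cS_D(x)$; the advantage of the paper's is that it only invokes the packaged geometric statements, which is what keeps the corollary available for the general D-h-minimal collections of \Cref{app:hensel-min}, where \Cref{QEform} is not part of the hypotheses. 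Your part (3) is essentially the paper's argument (weak orthogonality via \Cref{prop:ortho}, Presburger dimension theory, then (1) and (2)), merely run in the contrapositive direction, and is fine.

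Two steps in your part (2) need to be made explicit. First, the claim that the cell centres depend on the $\VG$-coordinates only through auxiliary residue-ring parameters is not part of the \emph{statement} of \Cref{prop:cell.1}; it does follow from the construction (the centres built there are functions on $Y_i\subset \VF^m\times \RR_{\ord N}^{n}$, and here the base $\VG^n\times\RR_t$ has $m=0$ valued-field coordinates), but you should either say this or, more economically, skip the cell decomposition of the graph entirely: by \Cref{QEform} the graph depends on the $\VF$-variable $v$ only through the tuple $\big(\ord p_i(v),\ac_{t_i}(p_i(v))\big)_i$ (the $\cross$-terms factor through the $\ord p_i(v)$), so functionality forces each value of $f$ to be the \emph{unique} point of its level set under this tuple. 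Second, your closing sentence for the base case does not yet contain the reason why only finitely many $v$ can arise: one needs that, by Pas cell decomposition over the point adapted to the constant-coefficient polynomials $p_i$, each such level set is a union of twisted boxes together with at most finitely many exceptional centres, and twisted boxes are infinite; hence the image of $f$ is contained in the finite set of centres. With those two points spelled out, the proof is complete.
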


\begin{proof}
For the first item, assume towards a contradiction that $f$ has infinite image. Put a Presburger definable well-order on $\ZZ^n$, for example by injecting $\ZZ^n$ into $\NN^{2n}$ with the lexicographical order. Consider the $\cS(x)$-definable subset $A$ of $\ZZ^n$ containing precisely the minimal elements in $f^{-1}(\xi)$ for each $\xi$ in the image of $f$. This way, we get an injective $\cS(x)$-definable function $A\to \RR_t$ with infinite image,  which contradicts \Cref{prop:finite}.

For the second item, let us suppose by contradiction that $f:\ZZ^n \times \RR_t \to \VF$ has infinite image.  Again using the definable well-order on $\ZZ^n$, consider the $\cS(x)$-definable subset $A$ of $\ZZ^n\times \RR_t$ consisting of tuples $(z,\xi)$ such that $z$ is minimal in $\ZZ^n$ with respect to the property that $f(z,\xi)=x$ for some $x$ in the image of $f$. This way, we get an $\cS(x)$-definable function $A\to \VF$ with infinite image which factors through the projection $A\to \RR_t$. Let $g: \RR_t\to \VF$ be the induced function. Then $g$ is an $\cS(x)$-definable function with infinite image. But any infinite $\cS(x)$-definable subset of $\VF$ contains a ball by \Cref{prop:cell.1}, and thus, by taking a point $b$ inside this ball in the image of $g$, we can construct an $\cS(x,b)$-definable function $g: \RR_t \to \ZZ$ with infinite image, a contradiction to \Cref{prop:finite}.

We now prove the third item. Let us suppose by contradiction that $f:\ZZ^\ell \to \ZZ^{\ell-1} \times \RR_t \times\VF^n$ is an injective $\cS(x)$-definable function.
For each $a$ in $\ZZ^{\ell-1}$, the set
$$X_a := f^{-1}(\{a\}\times \RR_t\times\VF^n)\subset \ZZ^\ell
$$
is finite by the previous items.  Define
$$
X := \{(z,a)\in \ZZ^\ell\times \ZZ^{\ell-1}\mid z\in X_a,\ a\in \ZZ^{\ell-1} \}
$$
and consider a bijective $\cS(x)$-definable function
$$
\sigma : X \to \widetilde  X \subset \RR_t \times X
$$ as given by \Cref{prop:ortho}. The $\cS(x)$-definable set $p(\widetilde  X)$ is finite, with $p$ the projection to $\RR_t$, by the previous items. Let $p(\widetilde  X)_K$ be $\{\xi_1,\ldots,\xi_m\}$ for some $K$ in $\cS(x)$. Then each of the $\cS(x,\xi)$-definable sets $Y_i := p^{-1}(\xi_i)$ is a Presburger set by the above use of \Cref{prop:ortho}. By construction we have finitely many Presburger sets $Y_i\subset \ZZ^\ell\times \ZZ^{\ell-1}$ such that the projection $p_2 : Y_i\to \ZZ^{\ell-1}$ (on the second factor) has finite fibers for each $i$. Also by construction, the union of the sets $p_1(Y_i)$ equals $\ZZ^\ell$, with $p_1$ the projection on the first factor. But this is impossible for Presburger sets, as it contradicts the dimension theory for Presburger sets from \cite{CPres}.
\end{proof}

\begin{cor}[Basic rectilinearization of sets in the value group]
 \label{cor:rec.1}
Let $Z$ and $X\subset \VG 
\times Z$ be $\cS $-definable sets. 
Let $f_1,\ldots,f_N:X \to\VG$ 
be finitely  many $\cS$-definable functions. 
Then, there exist basic tuples $t_i$, a finite partition of $X$ into $\cS $-definable sets $X_i$, and  
$\cS $-bijections
$$
\theta_i:  \widetilde  X_i \subset \VG_{\geq 0} \times \RR_{t_i}  \times Z \to X_i
$$
over $Z$, 
such that the following hold, with $A_i$ the image of $\widetilde  X_i$ under the projection $\widetilde  X_i\to \RR_{t_i}  \times Z$. For each point $\eta$ in $A_i$, the fiber $X_{i,\eta}$ is either $\VG_{\geq 0}$ or the interval $\{a\in\VG_{\geq 0} \mid a\le \alpha(\eta)\}$ for some $\cS$-definable function $\alpha: A_i\to\VG$.
Furthermore, there are integers $a_{i,j}$ and $\cS$-definable functions $\gamma_{i,j}:A_i\to\VG$ such that 
$$
f_j(\theta_i(x,\eta)) = \gamma_{i,j}(\eta) +   a_{i,j}  x
$$
for each point $(x,\eta)$ in $\widetilde  X_i$ with $x$ in $\VG_{\geq 0}$. 
\end{cor}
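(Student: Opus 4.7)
The plan is to reduce to classical one-variable Presburger rectilinearization, using the weak orthogonality result \Cref{prop:ortho} to absorb all dependence on residue-ring data into a single $\cS$-definable parameter valued in $\VG^k$.

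First, I would apply \Cref{prop:ortho} to the $\cS$-definable set encoding $X$ together with the $f_j$, namely to the set $X' := \{(z,a,b_1,\dots,b_N) \in Z \times \VG^{1+N} : (z,a)\in X,\ b_j = f_j(z,a)\}$, viewed as a subset of $Z \times \VG^{1+N}$. This yields a basic tuple $t$, an $\cS$-bijection $\sigma : X' \to \widetilde{X}' \subset \RR_t \times X'$ over $X'$, an $\cS$-definable function $\gamma : \RR_t \times Z \to \VG^k$, and an $\cL_{\Pres}(\Lambda)$-definable set $\Psi \subset \VG^{(1+N)+k}$ such that $\widetilde{X}'_\eta = \Psi_{\gamma(\eta)}$ for each $\eta = (\zeta,z) \in \RR_t \times Z$. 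Since $X'$ is the graph of a tuple of functions of $(z,a)$, the set $\Psi_{\gamma(\eta)}$ is itself a graph, and pulling $\sigma$ back to the $(z,a)$-level gives an $\cS$-bijection from $X$ onto a set whose $(\zeta,z)$-fiber is a Presburger subset $\Psi^0_{\gamma(\eta)} \subset \VG$, on which each $f_j$ is encoded by a Presburger function $\varphi_j(x,\gamma(\eta))$.

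Next, I would apply classical Presburger cell decomposition in one variable (as in \cite{Pas,CPres}, or in the form used in the proof of \cite[Lemma 4.4.3]{CLoes}) to $\Psi^0$ and the $\varphi_j$, treating the first coordinate as active and the remaining $k$ coordinates as parameters. This produces a finite Presburger partition of $\Psi^0$ whose parametric fibers are points, bounded intervals, or half-lines, each possibly cut out by a single congruence $x \equiv r \pmod{m}$, on which every $\varphi_j$ is affine in the active coordinate. Pulling back through $(\sigma,\gamma)$ gives a finite $\cS$-definable partition of $X$; the finite-fiber pieces contribute trivially. On each remaining piece, a linear change of variable $x = mx' + r$ (with $r$ an $\cS$-definable function of $\eta$) eliminates the congruence, and a translation by the lower endpoint brings the fiber to $\VG_{\geq 0}$ or $\{a \in \VG_{\geq 0} : a \leq \alpha(\eta)\}$ for an $\cS$-definable $\alpha$. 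Under these substitutions each $\varphi_j$ remains affine, with integer slope $a_{i,j}$ and intercept $\gamma_{i,j}(\eta)$ that is $\cS$-definable in $\eta$. Composing the rectilinearizing bijection with the inverse of the map induced by $\sigma$ yields the desired $\theta_i$.

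The main obstacle is the book-keeping of uniformity: Presburger cell decomposition is uniform only in $\VG^k$-parameters, while we also need the resulting moduli, slopes and intercepts to assemble into $\cS$-definable data on $\RR_t \times Z$. This is exactly what the weak orthogonality step is designed to accomplish: all dependence on the residue-ring coordinate $\zeta$ is absorbed into the single $\cS$-definable function $\gamma$, so once the rectilinearization is performed at the purely Presburger level, transporting the data back through $\gamma$ preserves $\cS$-definability automatically.
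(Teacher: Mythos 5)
Your proposal is correct and follows essentially the same route as the paper: the paper likewise applies \Cref{prop:ortho} to the graph of $(f_1,\dots,f_N)$ inside $Z\times\VG^{1+N}$ to reduce to a uniformly Presburger-definable family, and then invokes the parametric rectilinearization result of \cite[Theorem 3]{CPres} together with the observation that projection to the first $\VG$-coordinate is a bijection on the fibers. Your extra detail on eliminating congruences and translating intervals is just an unpacking of that cited Presburger result.
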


\begin{proof}By the method in the proof of \cite[Proposition 5.2.6]{CHallp}, we apply Proposition  \ref{prop:ortho} to the graph $G_F\subset Z\times \VG^{1+N}$ of the map $F:=(f_1,...,f_N):X\to \VG^N$. This yields a basic tuple $t$, a bijective $\cS$-definable function $\sigma: G_F\to V\subset G_F\times \RR_t$ over $G_F$,  a natural number $k$, an $\cS$-definable function $\gamma: Z\times \RR_t\to \VG^k$ and an $\cL_{\Pres}(\Lambda)$-definable set $W\subset \VG^{1+N+k}$ such that for each $y\in Z\times \RR_t$ one has $V_{y}=W_{\gamma(y)}$. Our claims follow by using the parametric rectilinearisation result from \cite[Theorem 3]{CPres} for $W\subset \VG\times \VG^{N+k}$ and the fact that the projection $\pi:\VG^{1+N}\to \VG$ induces a bijective $\cS(y)$-definable map between $V_y$ and  its image $\pi(V_y)$ for every $y\in Z\times \RR_t$.
\end{proof}


We now turn to the promised proof of \Cref{lem:torsion:unique}.

\begin{proof}[Proof of \Cref{lem:torsion:unique}]
Assume that $F = 0$. Since $\LL$ and $T$ are invertible, we may always suppose that $b\in \NN$ and $c\in \NN^J$ for each $(a,b,c)\in L$. We order the exponent tuples $(a,b,c)$ in reverse lexicographic order and induct on $\max L$. We will call $\max L$ the \emph{degree of $F$}. Consider first the case where $\max L = (a_k, 0, 0)$, so that $L\subset \NN\times \{0\}\times \{0\}^J$. If $a_k \le 1$ then the statement is clear: first take $F(0)$ to find that $a_0=0$ and then take $F(1)$ to find that $a_1=0$. If $a_k > 1$, we look at the following $\cS(y)$-definable function on $\VG_{\geq 0}$
\[
G(n) := F(n+1) - F(n) = \sum_{a=0}^k d_a (n+1)^a - \sum_{a=0}^k d_a n^a = 0.
\]
Note that $G$ has lower degree than $F$, and that by evaluating at $0$ the constant terms of $F$ and $G$ are both zero. There exists an integer $\ell>0$ such that all coefficients of $G(\ell n)$ are non-torsion. Hence by induction and since $G = 0$, all of its coefficients must be zero. The top degree coefficient of $G(\ell n)$ is $\ell^{a_k-1}(a_k-1)d_{a_k-1} = 0$. But since $d_{a_k-1}$ is non-torsion, we have that $d_{a_k-1} = 0$. The next coefficient of $G(\ell n)$ is $\ell^{a_k-1}d_{a_k-1}\binom{a_k}{2} + \ell^{a_k-2}d_{a_k-2}(a_k-2) = 0$, and so $\ell^{a_k-2}d_{a_k-2}(a_k-2) = 0$. But since $d_{a_k-2}$ is non-torsion we obtain that it is zero. Continuing in this fashion, we see that $d_i = 0$ for $i = 0, \ldots, a_k-1$. To see that then also $d_{a_k} = 0$, simply evaluate $F$ at $1$.

Now let us denote $\max L = (a_k, b_{\max}, c_{\max})$ for the degree of $F$ and write \[F = F_0 + \LL^{b_{\max} \cdot \id}T^{c_{\max} \cdot \id}F_1,\]
where $F_1$ is the top-degree coefficient given by
\[
F_1  = \sum_{a: (a,b_{\max},c_{\max})\in L} d_{a,b_{\max},c_{\max}}\id^a \LL^{b_{\max} \cdot \id}T^{c_{\max}\cdot \id}.
\]
We look at the function on $\VG_{\geq 0}$ given by
\begin{multline*}
  G(n) := F(n+1) - \LL^{b_{\max}} T^{c_{\max}} F(n) \\
= \LL^{b_{\max}(n+1)}T^{c_{\max} (n+1)} (F_1(n+1)-F_1(n)) + F_0(n+1) - \LL^{b_{\max}}T^{c_{\max}}F_0(n) = 0.
\end{multline*}
Note that $G$ has lower degree than $F$. Take again some integer $\ell > 0$ such that all coefficients of $G(\ell n)$ are non-torsion. Then all coefficients of $G(\ell n)$ are zero by induction. In particular, all coefficients of $F_1(\ell n+1) - F_1(\ell n)$ are zero. Then a similar argument as above, using that all of the $d_{a,b,c}$ are non-torsion, shows that $F_1 = 0$. Hence $F = F_0$ and we are done by induction.
\end{proof}

The following will be needed to show that integration over the value group is well-defined.

\begin{cor}
\label{cor:rec:CM.1}
Fix $\cS$-definable sets $Y$ and $X\subset \VG 
\times Y$. 
Let $F_1,\ldots,F_N$ be in $\cCM(X )$ for some integer $N\ge 0$.
Then, there exist a finite partition of $X$ into $\cS$-definable sets $X_i$ and 
bijective $\cS$-definable functions
$$
\theta_i:  \widetilde  X_i \to X_i
$$
 as in \Cref{cor:rec.1} and with its notation for $A_i$, 
such that furthermore, for each $i$ and each $j$,
$\theta_i^*(F_j)$
is of the form
\begin{equation}\label{f:sum:int:prop.1}
\sum_{(a,b,c) \in L_{ij}}  d_{a,b,c}\cdot  \id^a \cdot  \LL^{b\id} \cdot T^{c\cdot \id}
\end{equation}
with nonzero $d_{a,b,c}$ in $\cCM(A_i)$ and finite sets $J\subset \NN$ and
$$
L_{ij}\subset \NN \times \ZZ\times \ZZ^{J},
$$
where $T^{c \cdot \id}$ stands for $\prod_{j\in J} T_{j}^{
c_{j}\id }$ with $\id$ the identity function on $\NN$ and where $0^0=1$ by convention, and such that $d_{a,b,c}$ is non-torsion for each $(a,b,c)\in L$ with $a>0$.
Moreover, in the case that $X_{i,\eta}$ is  $\VG_{\geq 0}$ for each point $\eta$ in $A_i$ and for some $i$, then, for each $j$,
$$
F_j  =0 \mbox{ if and only if  }L_{ij}=\emptyset. 
$$
%
\end{cor}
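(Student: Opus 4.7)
The plan is to first reduce $F_j$ to a sum of single (G2) generators, absorbing all (G1) factors into constants; then to apply \Cref{cor:rec.1} in a way that simultaneously rectilinearizes the $\VG$-coordinate of $X$ and makes the exponents $\alpha_j, \beta_j, \gamma_j$ affine in the new $\VG_{\geq 0}$-variable; and finally to invoke \Cref{lem:torsion:unique} for the non-torsion refinement and the last equivalence.

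By the algebra structure of \Cref{defn:cCD}, each $F_j$ is a finite sum of terms $c_j \cdot G_j$ where $c_j$ is a product of (G1) generators $\tfrac{1}{1-\LL^a T^b}$ (independent of $X$) and $G_j = [Z_j, \alpha_j, \beta_j, \gamma_j, g_j, h_j]$ is a single (G2) generator with $Z_j \subset X \times \RR_{t_j}$; products of (G2) generators collapse via \Cref{ring}. The $c_j$ will be absorbed into the final coefficients $d_{a,b,c}$, so it suffices to treat each $G_j$. The core task is to choose $\theta_i$ so that each of $\alpha_j, \beta_j, \gamma_j$ becomes affine in the $\VG_{\geq 0}$-coordinate of $\widetilde X_i$. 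The subtlety is that these functions live on $Z_j$, not on $X$. To handle this, I would view the disjoint union of the $Z_j$ as a subset of $\VG \times (Y \times \RR_T)$ for a common basic tuple $T$, apply \Cref{cor:rec.1} on that ambient set with the functions $(\alpha_j, \beta_j, \gamma_j)_j$, and then use weak orthogonality \Cref{prop:ortho} between $\VG$ and the residue rings to descend the resulting partition compatibly to $X$. This produces the desired $\theta_i : \widetilde X_i \to X_i$ and, after suitably enlarging the parameter space $A_i$ by residue-ring coordinates, makes the pulled-back functions $\alpha_j \circ \theta_i$, etc., affine in $\id$.

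Pulling back $c_j G_j$ along $\theta_i$ then yields a (G2) generator whose $\VG_{\geq 0}$-variable $x$ enters only as a polynomial factor (coming from the affine form of $\alpha_j$) and through linear exponents of $\LL$ and $T$. Collecting like terms in the exponents gives the expansion $\theta_i^*(F_j) = \sum_{(a,b,c) \in L_{ij}} d_{a,b,c} \, \id^a \LL^{b\id} T^{c\id}$ with $d_{a,b,c} \in \cCM(A_i)$ built from the constant parts of the affine forms together with $g_j$ and $h_j$. For the non-torsion condition on $d_{a,b,c}$ with $a > 0$, and for the equivalence $F_j = 0 \iff L_{ij} = \emptyset$ on pieces with $X_{i,\eta} = \VG_{\geq 0}$, I would merge any torsion contribution carrying a nontrivial $\id^a$ factor into lower-degree terms (using the Presburger structure and the relations of $\cQD$), and then invoke \Cref{lem:torsion:unique} directly on $\theta_i^*(F_j)$.

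The main obstacle I foresee is the coordinated rectilinearization step: simultaneously handling all generators $G_j$ while cleanly separating the $\VG$-dependence from the residue-ring dependence via \Cref{prop:ortho}, and ensuring the resulting partition of $X$ and the parameter spaces $A_i$ are coherent across $j$. This is analogous in spirit to the proof of \Cref{cor:rec.1} itself, but the bookkeeping required to track the finite-sum structure of the (G2) generators uniformly is the delicate point.
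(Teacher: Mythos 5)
Your proposal follows essentially the same route as the paper's proof: reduce to (G2) generators, apply \Cref{cor:rec.1} to the $\VG$-valued exponent functions, use weak orthogonality (\Cref{prop:ortho}) to make the brackets factor through an enlarged $A_i$, and deduce the ``moreover'' part pointwise from \Cref{lem:torsion:unique}. The one step you leave implicit is the mechanism for the non-torsion refinement: in the paper this is done by splitting $\VG_{\ge 0}$ into congruence classes modulo the torsion order $m$ of $d_{a,b,c}$ and reparametrizing $\ell\mapsto(\ell-e)/m$, so that the new top-degree coefficient is $m^a d_{a,b,c}=0$ and the degree strictly drops, which is the precise form of your ``merge into lower-degree terms using the Presburger structure''.
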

%


\begin{proof}
The first part about existence follows from applying \Cref{cor:rec.1} to all the $\VG$-valued definable functions $f_\ell$ occurring in the $F_j$.
Indeed, by the weak orthogonality result \Cref{prop:ortho}, and up to making $n$ bigger, we may suppose that each of the brackets occurring in the $\theta_i^*(F_j)$ factors through the projection to $A_i$. On the other hand, if $d_{a,b,c}$ is torsion for some $(a,b,c)\in L_{ij}$ with $a>0$, namely $md_{a,b,c}=0$ for an integer $m>1$, then we can replace $\widetilde {X}_i$ by the images of the $\cS$-definable maps
\[
\widetilde {X}_{ie}:=\begin{cases} \{km+e\mid k\in \NN\}\times A_i\to \NN\times A_i \\ (\ell,z)\mapsto (\frac{\ell-e}{m},z) \end{cases}
\]
for $0\leq e\leq m-1$. By using this repeatedly,  we obtain (\ref{f:sum:int:prop.1}) after finitely many steps.
%
The moreover statement can be checked above each point $\eta$ of $A_i$ separately, and follows from the definition of $\cCM(\ZZ)$ and \Cref{lem:torsion:unique}.
\end{proof}


\subsection{Cell decomposition and Jacobian property}\label{sec:cell.finer}
Recall that $\cS$ is as specified at the start of \Cref{sec:motivic}. In this section we treat higher-dimensional geometry of $\cS$-definable objects. In particular we discuss the Jacobian property, which will be needed for change of variables.


\begin{defn}[Cells in higher dimensions]\label{defn.cell.n}
Consider an integer $n\geq 0$, $\cS$-definable sets $Y$ and $X\subset Y\times \VF^n$, and for $i=1,\ldots, n$, a value $j_i$ in $\{0,1\}$, a basic term $t_i$ with $t_i\ge 0$, and an $\cS$-definable function $c_i:Y\times \VF^{i-1}\to \VF$, and an $\cS$-definable set
$$
R \subset Y\times \RR_t \times (\ZZ \cup \{\infty\})^n
$$
where $x_{<i}=(y,x_1,\ldots,x_{i-1})$,
$t$ stands for $(t_i)_{i=1}^n$, and where furthermore the projection of $R$ to the factor $R_{t_i}$ is a subset of $R_{t_i}^\times$ when $j_i=1$, and of $\{0\}$ when $j_i=0$, and where the projection of $R$ to the $i$-th factor of $(\ZZ\cup \{\infty\})^n$ is a subset of $\ZZ$ when $j_i = 1$ and of $\{\infty\}$ when $j_i = 0$.
Then $X$ is called a \emph{cell over $Y$ with center $c=(c_i)_{i=1}^ n$, depth $t$, type $j= (j_i)_{i=1}^ n$ and parameter set $R$}, if it is of the form
$$
X = \{(y,x) \in Y\times \VF^n\mid  (y,
\big(\ac_{t_i} (x_i-c_i(x_{<i})),
\ord (x_i-c_i(x_{<i}))     \big)_{i=1}^n ) 
   \in R  \}.
$$

If $X$ is a cell over $Y$ with depth $t$, then, for any model $K$ in $\cS$, the set $X_K$ is correspondingly called a cell over $Y_K$, and for any point $r=(r_0,K)$ on $R$, any nonempty subset of the form
$$
\{x \in Y_K\times \VF_K^n\mid  (y, \big( \ac_{t_{i,K}} (x_i-c_{i,K}(x_{<i})  ), \ord (x_i-c_{i,K}(x_{<i}) )    \big)_{i=1}^n )  = r  \},  
$$
of $X_K$ is called a \emph{twisted box of the cell $X_K$ over $Y_K$ with center $c_K$, above $r$, and with depth $t_{K}$}.  By a twisted box of the cell $X$ we mean a twisted box of $X_K$ for some $K$ in $\cS$.
The $\cS$-definable function
$$
T_R: \begin{cases}X\to R \\(x,y) \mapsto (y,    \big( \ac_{t_i } (x_i-c_i(x_{<i}) ), \ord (x_i-c_i(x_{<i}))     \big)_{i=1}^n  )\end{cases}
$$
is called the \emph{twist map} of the cell $X$ over $Y$ with center $c$ and depth $t$. 

\end{defn}

The following is the higher dimensional variant of cell decomposition with some extra addenda.

\begin{thm}[Cell decomposition]\label{prop:cell}
Let $Y$ and $X\subset Y\times \VF^n$ be $\cS$-definable sets for some $n$. 
Then there exists a finite partition of $X$ into $\cS$-definable sets $X_i$ and for each part a basic tuple $t_i$ and an $\cS$-definable set $\widetilde X_i\subset X_i\times \RR_{t_i}$ which is a cell over $Y\times \RR_{t_i}$ such that the projection
$$
p_i: \widetilde X_i \to X_i
$$
is a bijection. 
\end{thm}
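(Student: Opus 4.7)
The plan is to proceed by induction on $n$, with the base case $n=1$ being precisely Theorem \ref{prop:cell.1}. For the inductive step, I first view $X \subset (Y \times \VF^{n-1}) \times \VF$ and apply Theorem \ref{prop:cell.1} with parameter set $Y \times \VF^{n-1}$. This yields a finite partition of $X$ into pieces $X_j^{(1)}$, and, for each $j$, a basic tuple $s_j$ together with an $\cS$-definable set $\widehat X_j \subset X_j^{(1)} \times \RR_{s_j}$ which is a cell in the $x_n$-coordinate over $Y \times \VF^{n-1} \times \RR_{s_j}$ (with a center $c_n^{(j)}: Y \times \VF^{n-1} \times \RR_{s_j} \to \VF$, depth $t_n^{(j)}$, type $j_n^{(j)}$, and parameter set $R_n^{(j)}$), and such that the projection $\widehat X_j \to X_j^{(1)}$ is a bijection.

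Next, for each $j$, I let $Z_j \subset Y \times \VF^{n-1} \times \RR_{s_j}$ denote the image of $\widehat X_j$ under the projection forgetting $x_n$. Viewing $Z_j$ as a subset of $(Y \times \RR_{s_j}) \times \VF^{n-1}$, the inductive hypothesis furnishes a finite partition of $Z_j$ into $\cS$-definable pieces $Z_{j,k}$, together with basic tuples $u_{j,k}$ and cells $\widetilde Z_{j,k} \subset Z_{j,k} \times \RR_{u_{j,k}}$ over $Y \times \RR_{(s_j, u_{j,k})}$ in the first $n-1$ valued field coordinates, with centers $c_i^{(j,k)}$ for $i=1,\dots,n-1$ and bijective projections $\widetilde Z_{j,k} \to Z_{j,k}$. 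I then pull back the partition $\{Z_{j,k}\}_k$ through the surjection $\widehat X_j \to Z_j$ to partition $\widehat X_j$ into pieces $\widehat X_{j,k}$, and form the fiber product
$$
\widetilde X_{j,k} \;:=\; \widehat X_{j,k} \times_{Z_{j,k}} \widetilde Z_{j,k} \;\subset\; X \times \RR_{(s_j, u_{j,k})}.
$$
With basic tuple $(u_{j,k}, t_n^{(j)})$, centers $c_i^{(j,k)}$ for $i<n$ inherited from the induction and $c_n^{(j)}$ for $i=n$ (with its dependence on $\RR_{s_j}$ now absorbed into the enlarged parameter space $Y \times \RR_{(s_j,u_{j,k})}$), types and depths from the respective pieces, and parameter set obtained as a natural fiber product of $R_n^{(j)}$ and the parameter set of $\widetilde Z_{j,k}$, the set $\widetilde X_{j,k}$ is a cell over $Y \times \RR_{(s_j, u_{j,k})}$ in the sense of Definition \ref{defn.cell.n}. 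Its projection to $X_j^{(1)}$ is bijective onto its image $X_{j,k}$, since both $\widehat X_j \to X_j^{(1)}$ and $\widetilde Z_{j,k} \to Z_{j,k}$ are bijections and the partition of $Z_j$ pulls back to a partition of $\widehat X_j$.

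The main obstacle is the bookkeeping needed to verify that the combined data satisfy Definition \ref{defn.cell.n}, namely that the combined parameter set assembles into the prescribed form
$R \subset (Y \times \RR_{(s_j,u_{j,k})}) \times \RR_{(t_1^{(j,k)},\ldots,t_{n-1}^{(j,k)},t_n^{(j)})} \times (\ZZ \cup \{\infty\})^n$
with the correct compatibility conditions between each type $j_i$ and the projections of $R$. This is routine but must be done carefully; once it is checked, the bijectivity of the final projection and the partition property follow immediately, and no new geometric input is required beyond Theorem \ref{prop:cell.1} and the inductive hypothesis.
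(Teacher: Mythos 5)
Your overall strategy coincides with the paper's: the theorem is proved by induction on $n$ with base case \Cref{prop:cell.1}, following the scheme of cell decomposition in one variable applied to the last coordinate and then the inductive hypothesis applied to the base. However, there is a genuine gap in your inductive step, located exactly at the point you dismiss as ``routine bookkeeping.'' The parameter set $R_n^{(j)}$ produced by \Cref{prop:cell.1} is an $\cS$-definable subset of $\bigl(Y\times \VF^{n-1}\times \RR_{s_j}\bigr)\times \RR_{t_n^{(j)}}\times(\ZZ\cup\{\infty\})$, so its fibers depend on the valued-field variables $x_{<n}$ in an arbitrary definable way. By contrast, \Cref{defn.cell.n} requires the final parameter set to be a subset of $Y'\times \RR_t\times(\ZZ\cup\{\infty\})^n$ with $Y'=Y\times\RR_{(s_j,u_{j,k})}$: membership of the $n$-th coordinate's $(\ac,\ord)$-data may depend on $x_{<n}$ only through the twist-map data $\bigl(\ac_{t_i}(x_i-c_i(x_{<i})),\ord(x_i-c_i(x_{<i}))\bigr)_{i<n}$ of the base cell. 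Your ``natural fiber product of $R_n^{(j)}$ and the parameter set of $\widetilde Z_{j,k}$'' therefore still carries valued-field coordinates and is not a parameter set in the sense of the definition.

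To close this you must prepare $R_n^{(j)}$, viewed as a set $W\subset Z_j\times \RR_k\times\VG^{k'}$ (splitting off the $\{\infty\}$ part according to type), on the twisted boxes of the base cells $\widetilde Z_{j,k}$, so that its fibers are constant on each twisted box and hence factor through the twist map. That is precisely the content of \Cref{add.prep} at dimension $n-1$, which means the induction hypothesis must be the conjunction of \Cref{prop:cell} and \Cref{add.prep} (with base case \Cref{prop:cell.1} together with \Cref{add.prep.1}), not the theorem alone. This is exactly how the paper organizes the argument, proving the theorem and the preparation addendum simultaneously by induction; with that strengthening, the rest of your construction (pulling back the base partition, taking fiber products, absorbing $\RR_{s_j}$ into the parameter space) goes through.
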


\begin{addendum}[Preparation]\label{add.prep}
With the data and assumptions of \Cref{prop:cell}, let moreover an $\cS$-definable set  $W\subset X \times \RR_k\times \VG^{k'}$ be given, for some basic tuple $k$.
Then the $X_i$ and $\widetilde X_i$ can be chosen in such a way that moreover $W$ has constant fibers above $p_i(B)$ for each $i$ and each twisted box $B$ of $\widetilde X_i$. Namely, for any point two points $x=(x_0,K)$ and $x'=(x'_0,K)$  in  $p_i(B)$ for any $i$ and any twisted box $B$ of $\widetilde X_i$, the fiber of $W$ above of $x$, for the projection $W\to X$, and seen as subset of $\RR_k\times \ZZ^{k'}$, equals  the fiber of $W$ above of $x'$.
\end{addendum}

\begin{addendum}[Lipschitz centers]\label{add.Lipschitz}
With the data and assumptions of \Cref{prop:cell}, if we allow for each piece $X_i$ a reordering of the $n$ coordinates on $\VF^n$, then we can furthermore ensure that the center $c$ of the cell $\widetilde X_i$ is Lipschitz continuous (in all but the variables  running over $Y\times \RR_{t_i}$)  with additively written Lipschitz constant $\ord q$ for some nonzero rational number $q$. That is, for each point $z$ on $Y \times \RR_{t_i}$, one has  $\ord ( c(z,x) - c(z,x')  ) \ge \ord (q\cdot (x-x'))$, for all $x$ and $x'$ such that  $(z,x)$ and $(z,x')$ belong to $\widetilde X_i$.
\end{addendum}

\begin{addendum}[Jacobian property and compatible decomposition]\label{add.Jacprop}
With the data and assumptions of \Cref{prop:cell}, suppose that $n=1$ and that an $\cS$-definable function $f:X\to \VF$  and a basic term $t$ are given. Then one can take $X_i$ and $\widetilde X_i$ as in \Cref{prop:cell} such that furthermore the following hold. For each $i$, the function $f$ is either constant or injective and $C^1$ on each twisted box of  $X_i$; if $f $ is injective on $\widetilde X_i$, then for each twisted box $B$ of the cell  $\widetilde X_i$ which is not a singleton (thus, $B$ is a ball in $\VF_K$ for some $K$ in $\cS$), one has for all $(y,x)$ and $(y,x')$ in $p_i(B)$
$$
\ord ( f(y,x) - f(y,x')) = \ord \left( \frac{\partial f(y,x)}{\partial x} \cdot  (x-x') \right),
$$
and
$$
\ac_{t}( f(y,x) - f(y,x')  )  = \ac_{t} \left( \frac{\partial f(y,x)}{\partial x}\cdot  (x-x')\right),
$$
and in particular, both $\ac_{t} ( \partial f(y,x)/\partial x )$  and $\ord ( \partial f(y,x)/\partial x ) $ are constant on $p_i(B)$.
If we let $\widetilde f_i : \widetilde X_i \mapsto \VF\times Y\times \RR_{t_i}$ be the $\cS$-definable function mapping $(a,y,\xi)$ to $(f(p_i(a,y,\xi)),y,\xi)$ with $(y,\xi)\in Y\times\RR_{t_i}$, then $\widetilde f_i (\widetilde X_i )$ is a $1$-cell  over $Y\times \RR_{t_i}$
and there is a correspondence of the twisted boxes of $\widetilde X_i$ with the ones of $\widetilde f_i (\widetilde X_i )$. 
\end{addendum}
\begin{proof}[Proof of \Cref{prop:cell} with \Cref{add.prep,add.Lipschitz,add.Jacprop}]

\Cref{prop:cell} and Addendum \ref{add.prep} are proved similarly by induction on $n$ as in \cite[Theorem 5.2.4 and Addendum 1]{CHR}. Note that the claims for the case $n=1$ are discussed in Theorem \ref{prop:cell.1}  and Addendum \ref{add.prep.1}.

We can adapt the argument in \cite[Sections 2.1, 2.2]{CCL-PW} based on Theorem \ref{QEform} to prove Addendum \ref{add.Lipschitz}. As mentioned in \cite[Theorem 2.2.3]{CCL-PW}, the Lipschitz constant can be bounded in terms of the depths appearing in the definition of the center. But as in the proof of Theorem \ref{prop:cell.1}, we can bound the depths appearing in the definition of the center by $\ord N$ for some integer $N>0$.

In order to prove Addendum \ref{add.Jacprop}, we use the argument in \cite[Theorem 3.14, Corollary 3.2.7]{CHR} and compactness to show that there is an $\cS$-definable subset $C$ of $Y\times \VF$ such that for each $K\in \cS$, each $y\in Y_K$, the subset $C_y$ of $K$  is finite of cardinality $N(y)\leq N$ for a positive integer $N$,  the map $f_y:X_y\setminus C_y\to K$ is a $C^1$-function and for each tuple $(\lambda,\xi):=(\lambda_a,\xi_a)_{a\in C_y}\in \ZZ^{N(y)}\times \RR_t^{N_y}$ for any basic term  $t_0$, for all $x_1\neq x_2$ in the intersection of $X_y$ and the ball
\[
B_y(\lambda,\xi):=\cap_{a\in C_y}\{x\in K | \ord(x-a)=\lambda_a, \ac_{t_0}(x-a)=\xi_a\},
\]
the following properties hold:
\begin{itemize}
\item[(i)]$\ord(\frac{\partial f_y}{\partial x})$ and $\ac_t(\frac{\partial f_y}{\partial x})$ are constant on $X_y\cap B_y(\lambda,\xi)$.
\item[(ii)]$\ord(f_y(x_1)-f_y(x_2))=\ord (\frac{\partial f_y}{\partial x}(x_1))+\ord(x_1-x_2)$ and $\ac_{t_0}(f_y(x_1)-f_y(x_2))=\ac_{t_0} (\frac{\partial f_y}{\partial x}(x_1))\ac_{t_0}(x_1-x_2)$
\item[(iii)] for any open ball $B'\subset X_y\cap B_y(\lambda,\xi)$, $f(B')$ is either a point or an open ball.
\end{itemize}
By Theorem \ref{prop:cell}, we have a cell decomposition of $C$ by cells $(\widetilde {C}_j)_{j\leq M}$ over $Y\times \RR_{t'}$ for a basic tuple $t'$. Since $C_y$ is a finite set for each $K\in\cS$ and each $y\in Y_K$, there are definable subsets $W_j$ of $Y\times \RR_{t'}$ and definable maps $d_j:W_j\to \VF$ such that for each $K\in\cS$, and each $y\in Y_K$ we have that $W_{jy}$ is a set of at most one element and $\cup_{j\leq M} d_j(W_j)=C_y$. By using \Cref{prop:cell} we can also find a cell decomposition of $X$ by cells $(\widetilde {X}_i)_{i\leq M'}$ over $Y\times \RR_{t}$ for a basic tuple $t$.  Let $1\leq i\leq M'$, there are a basic term $t_i$, a definable subset $Z_i$ of $Y\times \RR_t\times \RR_{t_i}\times \ZZ$ and a definable map $c_i:Y\times \RR_t\to \VF$  such that
$$
\widetilde {X}_i=\{(y,\xi,x)\in Y\times \RR_t\times \VF \mid (y,\xi,\ac_{t_i}(x-c_i(y,\xi)),\ord(x-c_i(y,\xi)))\in Z_i\}
$$
and the projection $p_i:\widetilde {X}_i\to X_i$ is a bijection. We set
$$
Z_i'=\pi^{-1}\left(Z_i\setminus \{(y,\xi,\ac_{t_i}(a-c_i(y,\xi)),\ord(a-c_i(y,\xi))\mid (y,\xi)\in V_i, a\in C_y\}\right),
$$
where $V_i$ is the image of $Z_i$ by the projection from $Y\times \RR_t\times \RR_{t_i}\times \ZZ$ to $Y\times \RR_t$ and $\pi:= (\textnormal{id},\textnormal{id},\res_{t+t_0,t},\textnormal{id}):Y\times \RR_t\times \RR_{t_i+t_0}\times \ZZ\to Y\times \RR_t\times \RR_{t_i}\times \ZZ$. Then
$$
\widetilde {X}_i'=\{(y,\xi,x)\in Y\times \RR_{t}\times \VF \mid (y,\xi,\ac_{t_i+t_0}(x-c_i(y,\xi)),\ord(x-c_i(y,\xi)))\in Z_i'\}
$$
is a cell over $Y\times \RR_{t}$. Moreover the projection $p_i':\widetilde {X}_i'\to Y\times \VF$ is injective and has image contained in $X$. For each $1\leq j\leq M$, we set
\begin{multline*}
\widetilde {X}_{ij}:=\{(y,\xi,x)\in Y\times \RR_t\times \VF\mid (y,\xi,\ac_{t_i}(x-c_i(y,\xi)),\ord(x-c_i(y,\xi)))\in H_{ij}\}\\
 \cap \{(y,\xi,x)\in Y\times \RR_t\times \VF\mid \ord(x-d_j(W_{jy}))  \\  =\max_{1\leq \ell\leq M}\ord(x-d_\ell(W_{\ell y}))>\max_{1\leq \ell<j}\ord(x-d_\ell(W_{\ell y}))\},
\end{multline*}
where
\begin{multline*}
H_{ij}=Z_i\cap \{(y,\xi,\ac_{t_i}(d_j(W_{jy})-c_i(y,\xi)),\\
\ord(d_j(W_{iy})-c_i(y,\xi))| (y,\xi)\in V_i, a\in C_y\}
\end{multline*}
and we use the convention that $\widetilde {X}_{ijy}=\emptyset$ if $d_j(W_{jy})=\emptyset$. Then we have $\widetilde {X}_i= (\sqcup_{1\leq j\leq M}\widetilde {X}_{ij})\sqcup \widetilde {X}_i'$. For each $1\leq j\leq M$,  we consider the definable subset $Z_{ij}$ of $Y\times \RR_t\times \RR_{t'}\times \RR_{t_i+t_0}\times \ZZ$ consisting of elements $(y,\xi,\xi',\zeta,\eta)$ such that $(y,\xi')\in W_{j}$ and there exists an $x\in \VF$ for which $(y,x,\xi)\in \widetilde {X}_{ij}$, $\zeta=\ac_{t_0+t_i}(x-d_j(y,\xi'))$ and $\eta=\ord(x-d_j(y,\xi'))$. Define $\tilde{d}_j:Y\times \RR_t\times \RR_{t'}\to \VF: (y,\xi,\xi')\mapsto d_j(y,\xi')$. We consider the cell $\widetilde {X}_{ij}'$ over $Y\times \RR_t\times \RR_{t'}$ with respect to the center $\tilde{d}_j$ and the parameter set $Z_{ij}$. Then it is clear that the projection from $\widetilde {X}_{ij}'$ to $\widetilde {X}_{ij}$ is a bijection.

In summary, $\{p_i'(\widetilde {X}_i'), p_{ij}'(\widetilde {X}_{ij}')\}_{1\leq i\leq M', 1\leq j\leq M}$ forms  a cell decomposition of $X$, where $p_i':\widetilde {X}_i'\to X$ and $p_{ij}':\widetilde {X}_{ij}'\to X$ are the projections. By our construction and the above properties (i),(ii),(iii), it is easy to show our claim excepting the last one related to $\tilde{f}_i$. But this last claim follows by a similar way as in \cite[Addendum 3]{CHR} combining with compactness.
\end{proof}

We have the following strengthening of Addendum~\ref{add.Lipschitz}, where the cell centres become $1$-Lipschitz, but only on the twisted boxes of the cell decomposition.

\begin{lem}[$1$-Lipschitz centres] \label{lem:1.Lipschitz.centres}
Let $X\subset \VF^n$ be $\cS$-definable. Then there exists a cell decomposition of $X$ such that for each twisted box $B$ there exists a reordering of the coordinates such that $B$ is of cell type $(1, \ldots, 1, 0, \ldots, 0)$ and has $1$-Lipschitz centres.
\end{lem}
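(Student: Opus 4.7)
Start from the cell decomposition supplied by \Cref{prop:cell} with Addendum~\ref{add.Lipschitz}, which, after a reordering of coordinates on each piece, yields cells whose centres $c=(c_1,\ldots,c_n)$ satisfy the Lipschitz inequality $\ord(c(z,x)-c(z,x'))\geq\ord q+\ord(x-x')$ for some nonzero $q\in\QQ$. For each cell separately one may further permute the coordinates so that the type-$1$ coordinates come first and the type-$0$ ones last; since the type-$0$ coordinates are singletons, only the centres $c_j$ of the type-$1$ coordinates need to be made $1$-Lipschitz. The content of the lemma is therefore to effectively reduce $\ord q$ to $0$ after a further refinement of the decomposition.

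Set $N:=\max(0,-\ord q)$. The idea is that within a sufficiently fine twisted box $B$, each centre $c_j$ varies by strictly more than the radius $\lambda_j+t_j$ that defines the $j$-th coordinate of the cell, so $c_j$ can be replaced by an $\cS$-definable function $\tilde c_j$ that is constant on $B$---hence $1$-Lipschitz there---without changing the cell. To arrange the required variation, I refine each cell by raising the depths $t_k$ for $k<j$ to new basic terms $t_k^{\mathrm{new}}$, using the symbols $\res_{t_k^{\mathrm{new}},t_k}$ and $\ac_{t_k^{\mathrm{new}}}$ together with Addendum~\ref{add.prep}. Since the exact size needed for $t_k^{\mathrm{new}}$ depends on the relative sizes of $\lambda_k+t_k$ and $\lambda_j+t_j$, I first partition each cell into finitely many sub-cells on which the ordering of these quantities is fixed, using the Presburger structure on $\VG$ together with the $\cross$-maps of \Cref{sec:QE:Pres}. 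On each resulting sub-cell the new depths become basic terms of $\cL_D$.

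On a refined twisted box $B$, I then pick an $\cS$-definable representative of the first $j-1$ coordinates of $B$ by lifting the twist parameters $\xi_k$ to elements of $\cO$ through $\cross_{t_k^{\mathrm{new}}}$, and define $\tilde c_j$ as $c_j$ evaluated at this representative. The Lipschitz estimate combined with the refined depths gives $\ord(c_j(x_{<j})-\tilde c_j(x_{<j}))>\lambda_j+t_j$ on $B$, so the ball in the $j$-th coordinate is unchanged under the replacement. The main obstacle is the uniform construction of $\tilde c_j$ across all twisted boxes of a single cell: the definable representative must depend $\cS$-definably on $x_{<j}$, which is ensured by the weak orthogonality of \Cref{prop:ortho} together with the quantifier-elimination result \Cref{QEZ}, allowing the twist data of each box to be recovered definably from $x_{<j}$.
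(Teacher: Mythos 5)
Your strategy for passing from $\ord q$-Lipschitz to $1$-Lipschitz centres---shrinking the twisted boxes in the first $j-1$ coordinates until $c_j$ can be replaced by a locally constant function---has a genuine gap: the amount of refinement required is not uniformly bounded over the twisted boxes of a cell, so it cannot be realized by a finite cell decomposition in this framework. Concretely, to replace $c_j$ by a constant on a twisted box whose $j$-th fibre is a ball of valuative radius $\lambda_j+t_j$, you need the variation of $c_j$ over the box to have order $>\lambda_j+t_j$; with the bound $\ord(c_j(x_{<j})-c_j(x'_{<j}))\ge \ord q+\ord(x_{<j}-x'_{<j})$ and a box of radius about $\lambda_k+t_k$ in the $k$-th coordinate, the new depth must satisfy roughly $t_k^{\mathrm{new}}\ge t_k+(\lambda_j+t_j)-(\lambda_k+t_k)-\ord q$. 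The quantity $\lambda_j-\lambda_k$ is a genuine $\VG$-valued parameter ranging over all of $\ZZ$ within a single cell, and partitioning according to the \emph{ordering} of $\lambda_k+t_k$ and $\lambda_j+t_j$ fixes only its sign, not its size. Hence $t_k^{\mathrm{new}}$ is not a basic term (basic terms contain no variables), there is no sort $\RR_{t_k^{\mathrm{new}}}$, and the refinement you describe does not exist. A minimal example: $n=2$, $X=\cO^2$, $c_2(x_1)=ax_1$ with $\ord a=-N<0$. No refinement of the depth in $x_1$ by a basic term makes $c_2$ locally constant on all twisted boxes, yet exchanging the two coordinates turns the centre into $a^{-1}x_2$, which is $1$-Lipschitz.

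That coordinate exchange is the missing idea, and it is how the paper argues: reduce by compactness to a single twisted box, arrange the centres to be $C^1$ with partial derivatives of constant norm, and pick the index $i$ maximizing $|\partial_i c_n|$; if this maximum exceeds $1$, swapping the roles of $x_n$ and $x_i$ (a computation as in \cite[Cor.\ 2.1.14]{CCL}) makes all partial derivatives of the new top centre of norm at most $1$. One then refines the base so that $c_n$ has the supremum Jacobian property, whence $|c_n(x)-c_n(y)|\le|\grad c_n(y)|\,|x-y|\le|x-y|$, and concludes by induction on $n$. Note that Addendum~\ref{add.Lipschitz} only yields a Lipschitz constant $\ord q$ with possibly $\ord q<0$, so this further reordering of coordinates is unavoidable; no refinement of depths substitutes for it.
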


\begin{proof}
We begin by taking any cell decomposition of $X$. By compactness we may focus on a single twisted box $B$, which after a coordinate reordering is of cell type $(1, \ldots, 1, 0, \ldots, 0)$ and of the form
\[
\Big( \ac_t(x_i - c_i(x_1, \ldots, x_{i-1}), \ord(x_i - c_i(x_1, \ldots, x_{i-1})\Big)_{i=1, \ldots, n} = r.
\]
We may assume that all $c_i$ are $C^1$, and that all partial derivatives of the $c_i$ have constant norm. We first ensure that all partial derivatives of $c_n$ have norm at most $1$. Consider $B$ as the graph of $c_n$ over some twisted box $B'\subset K^{n-1}$. Take $i\in \{1, \ldots, n-1\}$ such that
\[
|\partial_i c_n|
\]
is maximal among $|\partial_j c_n|$ for $j = 1, \ldots, n-1$. If $|\partial_i c_n|\leq 1$ already, then there is nothing to do. Otherwise, reversing the roles of $x_n$ and $x_i$, a similar computation as in~\cite[Cor.\,2.1.14]{CCL} shows that then all partial derivatives of $c_n$ (for the new order of coordinates) are bounded in norm by $1$.

We now refine the first $n-1$ coordinates further so that $c_n$ has the supremum Jacobian property as in~\cite[Def.\,3.3.7]{CHRV} on $B'$. That is, for $x,y\in B'$ we have that
\[
|c_n(x) - c_n(y) - (\grad c_n)(y)\cdot (x-y)| < |(\grad c_n)(y)||x-y|.
\]
The triangle inequality, and the fact that $|\grad c_n|\leq 1$ on $B'$ shows that
\[
|c_n(x) - c_n(y)|\leq |x-y|,
\]
so that $c_n$ is $1$-Lipschitz on $B'$. Using further refinements of the first $n-1$ coordinates, we are done by induction on $n$.
\end{proof}

Cell decomposition with Addendum 1 yields parameterizations by boxes, as follows. By a \emph{box} we mean a Cartesian product of balls. This should not be confused with the notion of twisted boxes, which are more general.

\begin{cor}[Parameterization by boxes]\label{prop:twisted:boxes}
Let $f_1, \ldots, f_N$ be in $\cCeM(X)$ for some $\cS$-definable sets $Z$ and  $X\subset \VF^n\times Z$. Then there exist $\cS$-definable subsets $Y_i$  of $\VF^i\times Z\times \RR_t\times \VG^i$ for $i=0,\ldots,n$, and some basic tuple $t$, and injective $\cS$-definable functions
$$
\varphi_i: Y_i\to X
$$
over $Z$ such that the following hold for each $i$, each $j$, and for each point $z$ on $Z_i:= p_i(Y_i)$ with $p_i:Y_i\to Z\times \RR_t\times \VG^i$ the projection map.


\begin{itemize}
 \item The sets $X_i := \varphi_i(Y_i)$ are disjoint and their union equals $X$,

\item the fiber $Y_{i,z}= p_i^{-1}(z)$ is a box in $\VF^i$, and hence, is of (valued field) dimension $i$,

\item for each point $z$ on $Z_i$, the map $\varphi_{i,z}$ (namely, the restriction of $\varphi_i$ to $Y_{i,z}$), is a $C^1$ map which is Lipschitz continuous with Lipschitz constant $1$, and thus, if $i=n$, then the order of the Jacobian of $\varphi_{i,z}$ equals $0$.  

\item for each point $z$ on $Z_i$, the restriction of  $\varphi_i^*(f_j)$ to $Y_{i,z}$ is constant. Namely, there is $F_{j,z}$ in $\cCM(z)$ such that, for any $x$ in  $Y_{i,z}$, one has $F_{j,z}(x) = \varphi_i^*(f_j)(z,x)$.
\end{itemize}
\end{cor}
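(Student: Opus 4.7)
The plan is to combine the cell decomposition theorem (\Cref{prop:cell}) with the preparation addendum (\Cref{add.prep}) and \Cref{lem:1.Lipschitz.centres} on $1$-Lipschitz centres, and then to straighten each twisted box into an honest product of balls by inverting the twist map. First, each $f_j\in\cCeM(X)$ is a finite sum of products of generators of types (G1) and (G2), and the (G2)-generators involve $\cS$-definable sets $Z'\subset X\times\RR_{t'}$ and $\cS$-definable functions $\alpha,\beta,\gamma,h$ on $Z'$, together with an $\cS$-imaginary function $g:Z'\to\VF/\cM$ with $\ord g\geq t_0$ on $Z'$ for some basic term $t_0$. I assemble all these finitely many pieces of data (for $j=1,\ldots,N$) into a single auxiliary $\cS$-definable set $W\subset X\times\RR_k\times\VG^{k'}$; for $g$, the $\cCeM$-assumption lets us encode the graph of $g$ as the graph of a definable function into a residue ring sort, which fits into $W$.

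Next I apply \Cref{prop:cell} with \Cref{add.prep} to $X$, relative to the parameter space $Z$, with this auxiliary $W$, obtaining a finite partition of $X$ into pieces $X_l$ together with cells $\widetilde X_l\subset X_l\times\RR_{t_l}$ over $Z\times\RR_{t_l}$ whose projection to $X_l$ is bijective and such that $W$ has constant fibre above every set $p_l(B)$, with $B$ a twisted box of $\widetilde X_l$. I further refine using \Cref{lem:1.Lipschitz.centres}, so that after reordering $\VF$-coordinates each cell has type $(1,\ldots,1,0,\ldots,0)$ with $C^1$ and $1$-Lipschitz centres on every twisted box. I group the resulting twisted boxes by the number $i$ of type-$1$ coordinates, and take $t$ to be a common basic tuple refining all $t_l$ appearing.

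For each $i$, a twisted box in such a cell, indexed by $(\xi,v)\in\RR_t^{\text{some}}\times\VG^i$, is biject\-ed onto the product of balls
\[
B(\xi,v)=\prod_{k=1}^i\{y_k\in\VF:\ac_t(y_k)=\xi_k,\ \ord(y_k)=v_k\}
\]
via the inverse twist $(y_1,\ldots,y_i)\mapsto(x_1,\ldots,x_n)$ defined recursively by $x_k:=y_k+c_k(x_{<k})$ for $k\leq i$ and $x_k:=c_k(x_{<k})$ for $k>i$. I assemble all such bijections into an $\cS$-definable set $Y_i\subset\VF^i\times Z\times\RR_t\times\VG^i$ and an injective $\cS$-definable map $\varphi_i:Y_i\to X$ over $Z$; disjointness of the $\varphi_i(Y_i)$ and the fact that their union is $X$ are immediate from the cell decomposition.

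Finally, I verify the four listed properties. The fibres $Y_{i,z}$ are boxes by construction. The map $\varphi_{i,z}$ is $C^1$ with Jacobian matrix lower-triangular in the $y$-variables and diagonal entries equal to $1$, so for $i=n$ its determinant has order $0$; Lipschitz continuity with constant $1$ follows from the $1$-Lipschitz property of the centres and a straightforward induction. Constancy of $\varphi_i^*(f_j)$ on each $Y_{i,z}$ reduces to checking constancy of each (G2)-generator along fibres, which in turn follows from \Cref{add.prep} applied to $W$: the fibre of $W$ being constant over $p_l(B)$ means the data $(Z',\alpha,\beta,\gamma,g,h)$ entering each generator is, up to a canonical bijection, the same over all points of $p_l(B)$. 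The main technical point is this last step, namely forcing the additive-character data $E(g)$ to be constant along twisted boxes; this is exactly where the $\cCeM$-hypothesis $\ord g\geq t_0$ is used, to fit $g$ into an object to which \Cref{add.prep} applies.
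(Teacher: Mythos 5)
Your proof is correct and follows essentially the same route as the paper: cell decomposition with the preparation addendum (using the $\cCeM$-boundedness of $g$ to prepare the additive-character data), refinement via \Cref{lem:1.Lipschitz.centres} to get cells of type $(1,\dots,1,0,\dots,0)$ with $C^1$, $1$-Lipschitz centres, and then untwisting each twisted box onto a product of balls by the recursive map $x_k = y_k + c_k(x_{<k})$. The only detail you gloss over is that this untwisting is a priori only finite-to-one across the finitely many cells of a given dimension $i$, so one must additionally tag each cell by embedding the finite index set into some $\RR_{t''}$ (as the paper does, citing the uniform embedding of finite sets into residue rings) in order to make $\varphi_i$ genuinely injective --- a harmless bookkeeping step that can be absorbed into your choice of the basic tuple $t$.
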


\begin{proof}
Since the $f_j$ are in $\cCeM(X)$, we may use \Cref{add.prep} to find a cell decomposition of $X$ over $Z\times \RR_{t'}$ such that on each twisted box of the cell decomposition all data of $f_j$ is constant. Note that this uses the boundedness of the functions $g: Z\to \VF/\cM$ in \Cref{defn:cCD}. By furthermore using \Cref{lem:1.Lipschitz.centres}, we can assume that all cells have type $(1, \ldots, 1, 0, \ldots, 0)$, and that all cell centres of the cell decomposition are $C^1$, and $1$-Lipschitz continuous on twisted boxes. For $i=0, \ldots, n$, let $X_i$ be the union of all $i$-dimensional twisted boxes of this cell decomposition.

Let $B$ be a cell of dimension $i$ of this cell decomposition, say of the form
\[
B = \{(z,x)\in Z\times \RR_{t'}\times \VF^n\mid (z,\big(\ac_{t_i} (x_i-c_i(x_{<i})), \ord (x_i-c_i(x_{<i}))     \big)_{i=1}^n )    \in R\}.
\]
Consider the $\cS$-definable map $\psi_i': X_i\to Z\times \RR_{t'}\times \VF^i$ sending an element $(z,x)\in B$ to
\[
(z,x_1-c_1, x_2-c_2(x_1), \ldots, x_i-c_i(x_{<i})).
\]
Note that this map restricts to a bijection on $B$, and maps every twisted box in $X_i$ over a fixed $z\in Z\times \RR_{t'}$ bijectively to a box in $\VF^i$. Since there are only finitely many cells, $\psi_i$ is finite-to-one, and since finite sets may be uniformly embedded in $\RR_{t''}$ for some basic tuple $t''$ as in~\cite[Lem.\,2.3.1(4)]{CHRV}, we obtain an $\cS$-definable injective map $\psi_i'': X_i\to Z\times \RR_{t'}\times \RR_{t''}\times \VF^i$ which maps every twisted box in $X_i$ over $z\in Z\times \RR_{t'}\times \RR_{t''}$ to a box in $\VF^i$. Now define $t = (t',t'', t_1, \ldots, t_i)$ and consider the $\cS$-definable map $\psi_i: X_i\to \VF^i\times Z\times \RR_t\times \VG^i$ given by
\[
(z,x)\mapsto (\psi''_i(z,x), \big(\ac_{t_j} (x_j-c_j(x_{<j})), \ord (x_j-c_j(x_{<j}))     \big)_{j=1}^i).
\]
This is an injective map, and we conclude by taking $Y_i = \psi_i(X_i)$ and $\varphi_i = \psi_i^{-1}$.
\end{proof}

One can use \Cref{prop:twisted:boxes} to describe motivic integrals, namely by pulling them back using the maps $\varphi_i$. We will not use such a description or \Cref{prop:twisted:boxes} in this paper.

The following is needed for proving Fubini. Recall that a box in $\VF^2$ is a Cartesian product of two open balls.

\begin{cor}[Coordinate switch for cells in $\VF^2$]\label{prop:bi-cells}
Let $X\subset \VF^2$ be a twisted box of type $(1,1)$ with centre $c=(c_1, c_2)$ of depth $t = (t_1,t_2)$, say
\begin{multline*}
X = \{(x_1,x_2)\in \VF^2\mid \ac_{t_1}(x_1-c_1) = \xi_1,\ \ord(x_1-c_1) = y_1, \\
	\ac_{t_2}(x_2-c_2(x_1)) = \xi_2,\ \ord(x_2-c_2(x_1)) = y_2\}.
\end{multline*}
Let $X_i$ be the projection of $X$ onto the $i$-th coordinate. Suppose that $c_2$ is $C^1$, and has the Jacobian property on $X_1$ with depth $t_2$, namely for all $x,x'\in X_1$ one has
\begin{align*}
\ord (c_2(x) - c_2(x')) &= \ord ( \partial c_2/\partial x(x)  \cdot  (x-x') ), \\
\ac_{t_2}( c_2(x) - c_2(x'))  &= \ac_{t_2} ( \partial c_2/\partial x(x) \cdot (x-x')).
\end{align*}
Then either $X$ is a box, or
\begin{multline*}
X = \{(x_1,x_2)\in \VF\times X_2\mid \ac_{t_2}(x_1-c_2^{-1}(x_2)) = -\ac_{t_2}(c_2')^{-1}\xi_2, \\  \ord(x_1-c_2^{-1}(x_2)) = y_2-\ord(c_2')\}.
\end{multline*}
\end{cor}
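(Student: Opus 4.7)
The strategy is to analyse the fibre of $X$ over each $x_1 \in X_1$, and compare these fibres as $x_1$ varies, using the Jacobian property (JP) of $c_2$. Set $\lambda := \ord(c_2')$, which is constant on $X_1$ by JP. Then $X_1$ is an open ball of valuative radius $y_1+t_1$, the shell
\[
B' := \{u \in \VF \mid \ord u = y_2,\ \ac_{t_2}(u) = \xi_2\}
\]
(which governs the fibre condition $x_2 - c_2(x_1) \in B'$) is an open ball of valuative radius $y_2+t_2$, and by JP, $c_2$ restricts to a bijection from $X_1$ onto $c_2(X_1)$, whose image is an open ball of valuative radius $y_1+t_1+\lambda$. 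The fibre of $X$ over $x_1$ is exactly the translate $c_2(x_1)+B'$.

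The proof splits into two cases according to whether $y_1+t_1+\lambda > y_2+t_2$ or $y_1+t_1+\lambda \le y_2+t_2$. In the first case, for any $x_1, x_1' \in X_1$ one has $\ord(x_1-x_1')>y_1+t_1$, hence by JP $\ord(c_2(x_1)-c_2(x_1'))>y_1+t_1+\lambda>y_2+t_2$, so the translate $c_2(x_1)+B'$ is independent of $x_1 \in X_1$. Therefore $X = X_1 \times (c_2(x_1^*)+B')$ for any fixed $x_1^* \in X_1$, i.e.\ $X$ is a box.

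In the remaining case, we invert $c_2$. The JP together with the constancy of $\ord(c_2')$ and $\ac_{t_2}(c_2')$ on $X_1$ gives that $c_2^{-1}: c_2(X_1) \to X_1$ also has the JP, with slope $1/c_2'$ and depth $t_2$; if $X_2 \not\subseteq c_2(X_1)$, we extend $c_2$ to a larger ball via its affine approximation (valid to depth $t_2$) so that $c_2^{-1}$ becomes defined on all of $X_2$. For $(x_1,x_2) \in X$, writing $x_2 = c_2(c_2^{-1}(x_2))$ and applying the JP of $c_2$ to the pair $(c_2^{-1}(x_2), x_1)$ yields
\[
\ord(x_2 - c_2(x_1)) = \lambda + \ord(c_2^{-1}(x_2) - x_1), \quad \ac_{t_2}(x_2 - c_2(x_1)) = \ac_{t_2}(c_2') \cdot \ac_{t_2}(c_2^{-1}(x_2) - x_1).
\]
Substituting $\ord(x_2-c_2(x_1))=y_2$ and $\ac_{t_2}(x_2-c_2(x_1))=\xi_2$, and using $\ord(-z)=\ord z$ and $\ac_{t_2}(-z) = -\ac_{t_2}(z)$, produces exactly the claimed conditions $\ord(x_1-c_2^{-1}(x_2))=y_2-\lambda$ and $\ac_{t_2}(x_1-c_2^{-1}(x_2))=-\ac_{t_2}(c_2')^{-1}\xi_2$. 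The reverse inclusion is obtained symmetrically from the JP of $c_2^{-1}$.

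The main obstacle is the subcase $y_2 \le y_1+t_1+\lambda$ within the second case, where $X_2$ is disjoint from $c_2(X_1)$ and $c_2^{-1}$ must be defined on $X_2$ by affine extrapolation; one then has to verify that the twisted ball of $x_1$'s imposed by the swapped conditions coincides exactly with $X_1$, so that no spurious elements are introduced. The inequality $y_2-\lambda+t_2 \ge y_1+t_1$ from the second case is precisely what ensures this coincidence, but tracing it carefully through the affine extension of $c_2$ is the technical heart of the argument.
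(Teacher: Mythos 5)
Your proposal follows essentially the same route as the paper's proof of \Cref{prop:bi-cells}: compare the (valuative) radius of the image ball $c_2(X_1)$ with that of the ball governing the fibre condition, conclude that $X$ is a box when the image ball is small enough, and otherwise invert $c_2$ via the Jacobian property to exchange the roles of the two coordinates. Your two main regimes are handled correctly, and your bookkeeping is in fact more careful than the paper's own sketch, which compares $\rho=y_1+\ord(c_2')$ with $y_2$; the correct comparison for the box case is $y_1+t_1+\ord(c_2')$ against $y_2+t_2$, exactly as you set it up. You have also correctly isolated the one genuinely delicate point: the sub-case $y_2\le y_1+t_1+\ord(c_2')< y_2+t_2$ (possible only when $t_2>0$), where $X$ is not a box and yet $X_2$ is disjoint from $c_2(X_1)$, so that $c_2^{-1}(x_2)$ is not literally defined and the stated conclusion only makes sense after extending $c_2$; the paper's dichotomy silently assigns this regime to the box branch. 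What keeps your write-up from being complete is that you flag, rather than execute, the verification for the affine extension $\tilde c_2$. It does go through: by the Jacobian property, for $x_1,x_1^*\in X_1$ one has $x_1-\tilde c_2^{-1}(x_2)=(c_2')^{-1}\bigl((c_2(x_1)-x_2)-\epsilon\bigr)$ with $\ord\epsilon\ge y_1+t_1+\ord(c_2')+t_2+2> y_2+t_2$ in this regime, so $\epsilon$ does not perturb $\ord$ or $\ac_{t_2}$ of $c_2(x_1)-x_2$, and the computation of your second case applies verbatim. Alternatively, and more in the spirit of how the corollary is used in the proof of Fubini, one can simply refine the first coordinate to depth $t_1+t_2$: every resulting twisted box then satisfies $y_1+t_1'+\ord(c_2')\ge y_2+t_2$ and falls into the genuine box case, so the problematic regime never needs to be confronted. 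Either completion should be stated explicitly.
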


\begin{proof}
This is similar to the proof of Lemma  5.4.12 of \cite{CHR}. Since $c_2$ has the Jacobian property, the image of $X_1$ under $c_2$ is an open ball of radius $\rho = y_1 + \ord(c_2')$. If $\rho \geq y_2$, then $X$ is a box, and we are done. Otherwise, if $\rho < y_2$, then $c_2^{-1}$ is defined on all of $X_2$, and we have
\[
\ac_{t_2}(x_2-c_2(x_1)) = \xi_2, \, \ord(x_2-c_2(x_1)) = y_2
\]
if and only if
\[
\ac_{t_2}(x_1-c_2^{-1}(x_2)) = -\ac_{t_2}(c_2')^{-1}\xi_2, \ord(x_1-c_2^{-1}(x_2)) = y_2-\ord(c_2'). \qedhere
\]
\end{proof}

\subsection{Rectilinearization in the value group}

The following follows from \Cref{prop:ortho}, and is a higher-dimensional rectilinearization result in the value group.

\begin{cor}[Rectilinearization of sets in the value group]
 \label{cor:rec}
Let $Z$ and $X\subset \VG^{r} 
\times Z$ be $\cS $-definable sets for some $r\ge 0$.  
Let $f_j:X \to\VG$ 
be finitely  many $\cS$-definable functions. 
Then, there exist a finite partition of $X$ into $\cS $-definable sets $X_i$ and 
$\cS $-bijections
$$
\theta_i:  \VG_{\geq 0}^{r_i}\times A_i \to X_i
$$
over $Z$ for some $r_i\le r$ and some $\cS $-definable set $A_i\subset \VG^{r-r_i}\times \RR_t\times Z$ and some basic tuple $t$, where for each $i$,  $A_{i,z}$ is a finite set for every point $z$ on $Z$, and
there are integers $a_{i,j,\ell}$ and $\cS$-definable functions $\gamma_{i,j}:\RR_t\times Z\to\VG$ such that 
$$
f_j(\theta_i(x,y)) = \gamma_{i,j}(y)+   a_{i,j,0} + \sum_{\ell=1}^r a_{i,j,\ell} \cdot  x_\ell
$$
for each point $(x,y)$ such that $x$ is on  $\VG^r$  
and $(x,y)$ on $\VG_{\geq 0}^{r_i}\times A_i$.
Moreover, given $X_i$, the number $r_i$ is independent on the choice of $\theta_i$, and 
$\max_i (r_i)$ 
is independent of the choice of the $X_i$.

\end{cor}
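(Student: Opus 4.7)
The plan is to reduce the statement to a parametric Presburger rectilinearization result, using weak orthogonality to remove all non-Presburger features of $X$ and the $f_j$. The argument parallels the proof of \Cref{cor:rec.1}, with an additional iteration (or direct appeal to higher-dimensional Presburger rectilinearization) to handle the $r$ value-group variables at once.

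First I would form the graph $G \subset \VG^r \times Z \times \VG^N$ of the tuple $F := (f_1, \ldots, f_N) : X \to \VG^N$, viewed as an $\cS$-definable set over $Z$, and apply \Cref{prop:ortho} to it. This produces a basic tuple $t$, an $\cS$-definable bijection
\[
\sigma : G \to V \subset G \times \RR_t
\]
over $G$, an $\cS$-definable parameter map $\gamma : Z \times \RR_t \to \VG^k$ for some $k \ge 0$, and an $\cL_{\Pres}(\Lambda)$-definable set $\Psi \subset \VG^{r+N+k}$ such that for every point $y$ on $Z \times \RR_t$, the fiber $V_y$ equals $\Psi_{\gamma(y)}$. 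Since the projection $G \to \VG^r \times Z$ (forgetting the $\VG^N$-factor) is a bijection onto $X$, we may view $\sigma$ as inducing, for each $y$ on $Z \times \RR_t$, an $\cL_{\Pres}(\Lambda)$-definable description of the fiber $X_y \subset \VG^r$ together with the restrictions of all $f_j$, purely in the Presburger language and uniformly in the parameters $\gamma(y)$.

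Next I would invoke the parametric higher-dimensional Presburger rectilinearization, which is the natural $r$-variable analogue of \cite[Theorem~3]{CPres}; this can either be cited directly or derived by an induction on $r$ that peels off one value-group variable at a time using \Cref{cor:rec.1}, keeping the remaining variables as parameters and using that the classes occurring in the recursive step are again $\cS$-definable (after absorbing residue-ring data into $A_i$). This yields, fiberwise over $\gamma(y)$, a finite partition of $\Psi_{\gamma(y)}$ into pieces in bijection with products $\VG_{\geq 0}^{r_i} \times F_i(\gamma(y))$, where $F_i(\gamma(y))$ is a finite set depending definably on the parameters, and on which each $f_j$ becomes an affine-linear function of the $\VG_{\geq 0}^{r_i}$-coordinates with integer coefficients independent of the parameters and with constant term of the form $\gamma_{i,j}(y) + a_{i,j,0}$. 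Pulling these decompositions back along $\sigma$ and collecting the $\RR_t$-parameter into $A_i$ produces the required partition $X_i$ and bijections $\theta_i : \VG_{\geq 0}^{r_i} \times A_i \to X_i$, with $A_i \subset \VG^{r-r_i} \times \RR_t \times Z$ having finite fibers over $Z$ by construction.

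For the moreover assertion I would argue as in \cite{CPres}: the integer $r_i$ is the Presburger-dimension of the fiber $X_{i,z}$ for any generic $z$, hence depends only on $X_i$; and $\max_i r_i$ is the Presburger-dimension of $X_z$ in generic fibers, so it is independent of the chosen partition. Formally, one shows that any injective $\cS$-definable map $\VG_{\geq 0}^{r_i} \times A_i \to \VG_{\geq 0}^{r'} \times A'$ with $A, A'$ having finite fibers over $Z$ forces $r_i \le r'$, using the non-existence of injective $\cS$-definable maps $\VG^\ell \to \VG^{\ell-1} \times \RR_t \times \VF^n$ from \Cref{cor:finite}(3) (applied with $n=0$, so purely at the Presburger level, where it is \cite[Thm.\,3]{CPres}).

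The main obstacle is the passage from a one-variable Presburger rectilinearization to the parametric $r$-variable version while tracking the residue-ring parameters uniformly; once weak orthogonality has reduced the problem to pure Presburger, this is handled either by invoking the higher-dimensional parametric version of \cite[Theorem~3]{CPres} or by a careful induction on $r$ using \Cref{cor:rec.1}, absorbing at each step the auxiliary finite sets and residue-ring data that appear into the parameter space $A_i$.
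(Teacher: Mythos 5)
Your proposal is correct and follows essentially the same route as the paper: apply the weak orthogonality result (\Cref{prop:ortho}) to the graph of $(f_1,\ldots,f_N)$ to reduce to a parametric Presburger problem, then invoke the parametric rectilinearization of \cite[Theorem~3]{CPres} (equivalently, iterate the one-variable case \Cref{cor:rec.1} in the manner of \cite[Proposition~5.2.6]{CHallp}), and derive the uniqueness of $r_i$ and $\max_i r_i$ from the finiteness properties of \Cref{prop:finite} and \Cref{cor:finite}. No substantive differences.
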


\begin{proof}
This follows from Proposition \ref{prop:ortho} and \Cref{cor:rec.1}  just as Proposition 5.2.6 of \cite{CHallp} is proved. The uniqueness statement follows from the finiteness properties from \Cref{prop:finite} and \Cref{cor:finite}.
\end{proof}


\begin{cor}
\label{cor:rec:CM}
 Fix $\cS$-definable sets $Z$ and $X\subset \VG^{r} 
\times Z$ for some $r\ge 0$.  
Let $F_j$ be in $\cCM(X )$ for $j$ in a finite set $J$.
Then, there exist a finite partition of $X$ into $\cS$-definable sets $X_i$ and 
bijective $\cS$-definable functions
$$
\theta_i:  \VG_{\geq 0}^{r_i}\times A_i \to X_i
$$
as in Corollary \ref{cor:rec} 
such that furthermore, for each $j$,
$\theta_i^*(F_j)$
is of the form
\begin{equation}\label{f:sum:int:prop}
\sum_{(a,b,c) \in L_{ij}}  d_{a,b,c}\cdot  \id^a \cdot  \LL^{b \cdot \id} \cdot T^{c\cdot \id}
\end{equation}
with nonzero $d_{a,b,c}$ in $\cCM(A_i)$ and finite sets $J_i\subset \NN$ and
$$
L_{ij}\subset \NN^{r_i} \times \ZZ^{r_i}\times \ZZ^{J_i\times r_i},
$$
where $\id^a(x)$ for $x$ an $\cS$-point in $\NN^{r_i}$ stands for $x_1^{a_1}\cdots x_{r_i}^{a_{r_i}}$,
$b \cdot \id(x)$ stands for $b_1\cdot x_1 + \ldots+ b_{r_i} \cdot x_{r_i}$ and  $T^{c \cdot \id(x)}$ stands for $\prod_{j\in J_i} T_{j}^{c_{j1}x_{1}+\ldots + c_{jr_i}x_{r_i}}$, with $\id$ the identity function on $\NN^{r_i}$, and such that $d_{a,b,c}$ is non-torsion for each $(a,b,c)\in L_{ij}$ with $\sum_{i=1}^{r_i}a_i>0$.
Moreover, for $j\in J$, one has
$$
f_j=0 \mbox{ if and only if  $L_{ij}=\emptyset$ 
for all  $i$}.
$$
%
\end{cor}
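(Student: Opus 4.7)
The plan is to combine the set-level rectilinearization from \Cref{cor:rec} with the one-variable motivic rectilinearization of \Cref{cor:rec:CM.1}, iterated on the rectilinear coordinates, and finally to deduce the uniqueness ``moreover'' statement from \Cref{lem:torsion:unique} combined with weak orthogonality.

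First, I would apply \Cref{cor:rec} to all the $\VG$-valued $\cS$-definable functions that appear in the finitely many generators used to express each $F_j$ as in \Cref{defn:cCD}. This produces a finite partition of $X$ into $\cS$-definable pieces $X_i$ together with $\cS$-definable bijections $\theta_i:\VG_{\geq 0}^{r_i}\times A_i\to X_i$ as required, on which every such $\VG$-valued function becomes affine in the free variables $x=(x_1,\dots,x_{r_i})$ with coefficients that depend only on the parameter in $A_i$. Pulling back the generators $[Z,\alpha,\beta,\gamma,g,h]$ of $\cCM(X)$ along $\theta_i$, using that $\LL$ and the $T_j$ are invertible, and grouping terms gives expressions of the form \eqref{f:sum:int:prop} for each $\theta_i^*(F_j)$, a priori allowing arbitrary coefficients $d_{a,b,c}\in \cCM(A_i)$.

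Next, I would ensure that each coefficient $d_{a,b,c}$ appearing with exponent $a\in\NN^{r_i}$ satisfying $\sum_\ell a_\ell>0$ is non-torsion. This is done exactly as in the proof of \Cref{cor:rec:CM.1}, but iteratively in each of the $r_i$ rectilinear coordinates: if $m d_{a,b,c}=0$ for some integer $m>1$, one partitions each factor $\NN$ further into the residue classes $\{km+e\mid k\in \NN\}$ for $e=0,\dots,m-1$, and applies the affine change of variables $\ell\mapsto (\ell-e)/m$ on the corresponding factor. After finitely many such refinements (finitely many generators, hence finitely many bad coefficients to treat), every remaining coefficient with a nonzero $a$-exponent is non-torsion, which yields the stated form \eqref{f:sum:int:prop} together with the non-torsion property. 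The weak orthogonality statement of \Cref{prop:ortho} may be used as in the proof of \Cref{cor:rec:CM.1} to make sure the brackets defining the $d_{a,b,c}$ factor through the projection to $A_i$.

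For the ``moreover'' part, it remains to show that $F_j=0$ is equivalent to $L_{ij}=\emptyset$ for all $i$. One direction is trivial. For the converse, evaluating at a point $z\in A_i$ reduces the claim to the statement that a single expression of the form \eqref{f:sum:int:prop} with non-torsion coefficients (for the exponents with $\sum a_\ell>0$) vanishes on $\NN^{r_i}$ only if all coefficients vanish. This is the higher-dimensional analog of \Cref{lem:torsion:unique}, which I would obtain by induction on $r_i$: fix $x_2,\dots,x_{r_i}$, apply \Cref{lem:torsion:unique} in the variable $x_1$ to conclude that every ``coefficient in $x_1$'' of the resulting one-variable function vanishes, and then iterate. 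The main subtlety to watch here is the same as the one corrected in the proof of \Cref{lem:torsion:unique}: after grouping in the variable $x_1$, one must verify that the ``coefficients'' inherited from multi-indices $(a,b,c)$ with $a_1>0$ remain non-torsion; this follows from the non-torsion hypothesis applied to the original multi-index and the fact that the remaining free variables evaluate into invertible elements ($\LL$ and the $T_j$). This inductive step, combined with the construction above, is the main technical point of the proof; everything else is a direct consequence of the results already established in this section.
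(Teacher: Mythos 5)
Your proposal follows essentially the same route as the paper: apply \Cref{cor:rec} to all $\VG$-valued functions occurring in the $F_j$, use weak orthogonality (\Cref{prop:ortho}) to make the brackets factor through $A_i$, kill torsion coefficients by the residue-class refinement from the proof of \Cref{cor:rec:CM.1}, and check the ``moreover'' statement pointwise over $A_i$ as in \Cref{lem:torsion:unique}. The one point to be slightly careful about is your justification in the final induction that the $x_1$-coefficients remain non-torsion after specializing $x_2,\dots,x_{r_i}$ — a sum of non-torsion elements times units can still be torsion — but this is resolved by running the difference-operator degree-reduction of \Cref{lem:torsion:unique} in each variable rather than naively specializing, which is exactly the level of detail the paper itself leaves implicit.
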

\begin{proof}
The first part about existence follows from applying \Cref{cor:rec} to all the $\VG$-valued definable functions $f_\ell$ occurring in the $F_j$.
Indeed, by the weak orthogonality result \Cref{prop:ortho}, and up to making $n$ bigger, we may suppose that each of the brackets occurring in the $\theta_i^*(F_j)$ factors through the projection to $\RR_{t_i}\times Z$.
%
The moreover statement can be checked above each point $P$ of $A_i$ separately, and follows as in the proof of \Cref{lem:torsion:unique}.
\end{proof}

\section{Integrability, integrals: proofs}\label{sec:int:rev}
We implement the remaining proofs of the above results from \Cref{sec:iterated.int}.
Recall that $\cS$ is as specified at the start of \Cref{sec:motivic}.

\subsection{Integration over one variable}\label{Proofs:int:one}

We begin with our proofs about integration over one variable, either over a residue ring, the value group, or the valued field.

\begin{proof}[Proof of \Cref{defn:int:VG}]
The existence of the $F_j$ and $f_i$ with the desired properties follows from the Presburger rectilinearization result \Cref{cor:rec.1} and its \Cref{cor:rec:CM.1}.
For the independence of the choices of $F_j$ and $f_j$, since one can always take common refinements it is enough to treat the case $k_1:=k = 1$ and  $k'_1:=k'=0$ (where the first set of data is indexed by $1$ and the refining data by $2$). Furthermore, we can assume that $X=\VG_{\ge 0}$ and that $F$ is already of the form (\ref{f:sum:int:prop.1.0.0}). But then a finite definable partition of $\NN$ must consist of finitely many singletons and finitely many equivalence classes modulo some integers, on which the expression (\ref{f:sum:int:VG}) clearly coincides by the corresponding additivity when partitioning the summation set in \Cref{lem:geom}.
\end{proof}

\begin{proof}[Proof of \Cref{lem:VG-rel}]
We have to show the existence of $p_!(F)$ in $\cCM(Y)$, as the uniqueness is clear, where it is key to find a global description of $p_!(F)$.
Use the parametric rectilinearization from \Cref{cor:rec.1} and its \Cref{cor:rec:CM.1} to find data as in these corollaries. In the case that the fiber $X_{i,\eta}$ is $\NN$ we are fine by summation over $\NN$ as above in \Cref{defn:int:VG} based on \Cref{lem:geom}. In the case that $X_{i,\eta}$ is the interval $\{a\in\NN \mid a\le \alpha(\eta)\}$ for some $\cS$-definable function $\alpha: A_i\to\VG$, we use Lemma 4.4.3 of \cite{CLoes} instead of \Cref{lem:geom} to find a global description.
\end{proof}

\begin{proof}[Proof of \Cref{lem:VF-cCeM}]
That $f$ and $F'$ exist as desired follows from the cell decomposition \Cref{prop:cell.1} with its addendum \Cref{add.prep.1} to prepare all data appearing in $F$. To show that the integrability of $G$ and the expression (\ref{f:sum:int:VF:abs}) is independent of the choice of $f=f_1$ and $F'=F'_1$, we suppose first that  $Z$ is a singleton $\{z\}$, that $p'^{-1}(z)=B$ is a closed ball of valuative radius $\alpha(z)$ of the form
\[
B=\{x\in\VF\mid \ac_{t_1}(x-c(z))=\xi_1,\ \ord (x-c(z)) = \alpha-t_1-1\}
\]
for some definable $c(z)$ and some $\xi$ in $\R_{t_1}^\times$ and some basic term $t_1$. 
Let a refinement be given, with some $f_2$ and $F_2'$ as in the statement of \Cref{lem:VF-cCeM}.

\textbf{Case 1} There is a basic term $t_2\ge t_1$ such that the ball $B$ is refined into the disjoint union of the closed balls of valuative radius $\alpha + t_2 - t_1$ each one of which is given by conditions
$$
B_{\xi_2} := \{x \mid \ac_{t_2}(x-c(z)) = \xi_2,\ \ord (x-c(z)) = \alpha -t_1 -1\}
$$
for some $\xi_2$ satisfying $\res_{t_2,t_1}(\xi_2)=\xi_1$. The partition then consists of varying $\xi_2$. The expression (\ref{f:sum:int:VF:abs}) for $f_2$ and $F'_2$ is compared to the one with $f_1$ and $F'_1$ via finitely many iterations of relation (R3), and the integrability of $G_1=G$ and $G_2$ correspond.

\textbf{Case 2}
There is $d(z)\in B$ and a basic term $t_2$ such that the ball $B$ is refined into the disjoint union of the closed balls each one of which is given by conditions
$$
B_{\xi_2,\beta} := \{x\mid \ac_{t_2}(x-d(z)) = \xi_2,\ \ord (x-d(z)) = \beta\}
$$
for some $\xi_2$ in $\RR_{t_2}$ and $\beta$ running over $\VG_{>\alpha}$ and $z_2$ over $Z_2$. Note that the volume of $B_{\xi_2,\beta}$ equals $\LL^{-\beta-t_2-1}$, so that the expression (\ref{f:sum:int:VF:abs}) for $f_2$ and $F'_2$ amounts to the calculation
$$
\int_{\xi_2\in \RR_{t_2}} \int_{\beta > \alpha}F'_2(z_2) \LL^{-\beta-t_2-1} = F\cdot[\RR_{t_2}^\times,0]\cdot\LL^{-\alpha-t_2-1}/(1-\LL^{-1})
$$
which equals $F.\LL^{-\alpha}$ because
$$
[\RR_{t_2}^\times,0] = (\LL-1)\LL^{t_2},
$$
which holds by (R3), and since $p_2'^*(F'_2)$ equals $F$ which is constant on $B$. The integrability of $G_1$ and $G_2$ clearly correspond.

\textbf{Case 3}
In the general case with general $f_1,F'_1, Z_1 =Z$ and $f_2,F'_2,$ and $Z_2$, after a further refinement, we can find a basic term $t_2\ge t_1$ and a definable function $d(z_2)$ with finite image such that for finitely many balls above $Z_1$ we are in Case 2 and for the other balls we are in Case 1. 
We create an intermediary refinement with $f_3,F'_3,$ and $Z_3$ where one is always in Case 1. 
Then the expression (\ref{f:sum:int:VF:abs}) for $f_2$ and $F'_2$ is compared to the intermediary expression (\ref{f:sum:int:VF:abs}) with $f_3$ and $F'_3$ by Case 2 which is further compared to the expression (\ref{f:sum:int:VF:abs}) with $f_1$ and $F'_1$ by Case 1 and by noting that the computations in these cases work in families.  Likewise for the integrability of $G_1$, $G_2$ and $G_3$.
\end{proof}

\begin{rem}\label{rem:Fub-Ton:VGVFRt}
We note a basic version of Fubini-Tonelli here: for a function $F$ in $\cCeM(X)$ and $X$ an $\cS$-definable subset of either $\VG\times\RR_t$ or $R_t\times\VF$ for some basic tuple $t$, \Cref{cor:Fubini-Ton} holds. This follows from the constructions so far.
\end{rem}

\begin{lem}[Lifting Lemma]\label{lem:lift:of:g}
Let $X\subset \VF \times \RR_t\times \VG$ be a definable set, for some basic tuple $t$. Consider an element $F$ in $\cCM(X)$ of the form
\[
[X, g, h, \alpha, \beta, \gamma]
\]
for some definable functions $g: X\to \VF/\cM$, $h: X\to \RR_{t'}, \alpha: X\to \VG^k, \beta: X\to \VG, \gamma: X\to \VG^J$. Then there exists a definable set $X'\subset X\times \RR_s$ and a definable function $d: X'\to \VF$ such that denoting by $p: X'\to X$ the projection we have that $p$ is finite-to-one, that
\begin{equation}\label{eq:tildeX:p*F}
p^*(F) = [X', d, h', \alpha', \beta', \gamma'][X', E(g'-d)]
\end{equation}
and such that $[X', E(g'-d)]$ is in $\cCeM(X')$ and where we have written $g'$ for $g\circ p$,  $h'$ for $h'\circ p$, and so on.
\end{lem}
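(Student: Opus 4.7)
The plan is to lift the imaginary function $g$ to its graph $G\subset X\times\VF$ and to apply cell decomposition, using the resulting cell centres as the lift $d$. Concretely, let $G\subset X\times\VF$ be the preimage of the graph of $g$ under the quotient map $X\times\VF\to X\times\VF/\cM$; by the definition of an $\cS$-imaginary function, $G$ is $\cS$-definable, and each fibre $G_y=g(y)+\cM$ is an open ball of valuative radius $0$. Applying \Cref{prop:cell.1}, together with the preparation \Cref{add.prep.1}, to $G\subset X\times\VF$ over $X$ yields a finite partition $G=\sqcup_i G_i$ and $\cS$-definable cells $\widetilde G_i\subset G_i\times\RR_{t_i}$ over $X\times\RR_{t_i}$ for basic tuples $t_i$, with bijective projections $\widetilde G_i\to G_i$ and definable centres $c_i\colon B_i\to\VF$ on the bases $B_i\subset X\times\RR_{t_i}$.

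I would then define $X'=\sqcup_i B_i$, embedded in $X\times\RR_s$ for a common basic tuple $s$ encoding both the $\RR_{t_i}$-coordinate and a discrete index for $i$, and set $d\colon X'\to\VF$ by $d|_{B_i}=c_i$. Writing $p\colon X'\to X$ for the projection, the factorisation $p^*(F)=[X',d,h',\alpha',\beta',\gamma'][X',E(g'-d)]$ is a pointwise identity in $\cCM(X')$: by the multiplication rule of \Cref{ring}, $[\{*\},d,h]\cdot[\{*\},g-d,0]=[\{*\},g,h]$, so both sides evaluate to $F(p(y,\xi))$ at each $(y,\xi)\in X'$. The $\cCeM$-membership of $[X',E(g'-d)]$ amounts to the bound $\ord(g'-d)\geq t$ for some basic term $t$: on the twisted box of $\widetilde G_i$ above $(y,\xi)$ the order $\ord(g(y)-c_i(y,\xi))$ equals the $\VG$-parameter $v$ of that twisted box, and containment in $G_y$ (a ball of valuative radius $0$) forces $v\geq -t_i$, a basic term; taking the maximum over the finitely many $i$ gives a uniform basic-term lower bound.

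The main obstacle is the finite-to-one requirement on $p$. A naive cell decomposition of $G$ may produce a cell $\widetilde G_i$ whose base $B_i$ projects onto $X$ with infinite fibres, because the ball $G_y$ can split into infinitely many twisted boxes of a single cell. To remedy this I would refine the decomposition so that the twisted boxes of each cell have valuative size matching the cosets $G_y$ — equivalently, taking cells of the appropriate depth $t_i$ and $\VG$-parameter $v$ so each twisted box is exactly one coset of $\cM$. Then each $G_y$ meets each cell $\widetilde G_i$ in at most one twisted box, the map $B_i\to X$ becomes injective, and $X'\to X$ is finite-to-one. Verifying the existence of such a refinement within the $\cS$-definable framework, and checking that it preserves the basic-term bound on $\ord(g'-d)$, is the delicate technical step of the argument.
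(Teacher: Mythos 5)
Your overall strategy coincides with the paper's: pull the graph of $g$ back to $G\subset X\times\VF$, cell-decompose $G$ over $X$, take the cell centres as the lift $d$ on a base $X'\subset X\times\RR_s$, verify the factorisation pointwise via the product rule of \Cref{ring}, and extract the $\cCeM$ bound from the fact that each twisted box is an open ball of valuative radius $\delta+s'$ contained in the coset $g(x)+\cM$ of radius $0$, forcing $\delta\ge -s'$. All of that is correct and is exactly the paper's argument.

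The one place you go astray is the finite-to-one issue, which you rightly flag as the delicate step but then misdiagnose. The scenario you fear --- a coset $G_y$ splitting into infinitely many twisted boxes of a single cell --- does \emph{not} threaten finiteness of the fibres of $B_i\to X$: in \Cref{defn.cell.1} the twisted boxes above a fixed base point $(y,\eta)$ are indexed by the fibre of the parameter set $R$ over $(y,\eta)$, which lives in $\RR_{t_i}^\times\times\ZZ$, so infinitely many twisted boxes (for instance one for each radius $\delta>0$ and each angular component, which is what happens whenever the centre lands inside $G_y$) can and do sit above one and the same base point. Consequently your remedy --- refining until each twisted box is exactly one coset of $\cM$ --- is aimed at the wrong target, and is moreover not achievable definably in general: if the centre $c(y)$ lies in $g(y)+\cM$, no choice of depth makes $G_y$ a single twisted box around $c(y)$, and there is no canonical definable way to translate the centre out of the coset. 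What actually yields finite-to-one-ness is a feature of the construction behind \Cref{prop:cell.1}: the auxiliary $\RR_{t_i}$-coordinates of the base arise from the Denef--Pas cell decomposition applied to the finitely many polynomials describing $G$, and above each $x\in X$ they range over a finite set (they index the finitely many candidate centres/branches), so each fibre $X'_x$ is finite. That is the fact to invoke here --- the paper's own proof also leaves it implicit --- and once it is in place the rest of your argument goes through unchanged.
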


The important part of this lemma is that the imaginary function $g: X\to \VF/\cM$ has been replaced by a definable function $d: X'\to \VF$, at the cost of introducing some $\RR_s$-parameters, and a correction factor in $\cCeM(X')$; note that such a correction factor can be considered small by the bounded depth property for the additive character that is built in the definition of $\cCeM$.

\begin{proof}[Proof of \Cref{lem:lift:of:g}]
Denote by $G \subset X\times \VF$ the pullback of the graph of $g: X\to \VF/\cM$. Take a cell decomposition of $G$, with cell centre of the form $d: X'\subset X\times \RR_s\to \VF$ and of depth $s'$. Then it is clear that $p^*(F)$ can be written in the way given above. Every twisted box of the cell decomposition for $G$ has a condition of the form
\[
\ac_{s'}(g(x)-d(x,z)) = \xi, \ \ord(g(x)-d(x,z)) = \delta,
\]
for some $\xi\in \RR_{s'}$ and some $\delta\in \VG$, for $(x,z)\in X'$. Now note that this condition describes an open ball which is contained in a translate of $\cM$, and so $\delta \geq -s'$. Hence $g-d$ is bounded on $X'$ and so $[X', g'-d, h', \alpha', \beta', \gamma']$ is indeed in $\cCeM(X')$.
\end{proof}


\begin{proof}[Proof of \Cref{lem:VF:cCM:int}]
We have to show the existence of $\widetilde X$, $g$ and $G$.

First write $F$ as a finite sum of products of generators, as follows
\begin{equation}\label{eq:proof:int:VF:F}
F = \sum_i F_i := \sum_i a_i [Z_i, (\prod_j \alpha_{ij}) E(g_i)H(h_i)\LL^{\beta_i}T^{\gamma_i}]
\end{equation}
for some $a_i$ which are products of generators of type (G1), where $F_i$ is the $i$-th term.
By an iterated use of the Lifting \Cref{lem:lift:of:g}, there is $X'\subset X\times R_s$ and for each $i$ a definable function $d_i: X'\to \VF$ such that denoting by $p: X'\to X$ the projection we have for each $i$ that
\[
p_i^*(F_i) = [X', \alpha_i, \beta_i, \gamma_i, d_i, h_i][X'_i, E(g_i-d_i)]
\]
and such that $[X', E(g_i-d_i)]$ is in $\cCeM(X')$, where we write $\alpha_i$ for $\alpha_i\circ p$ and so on, and where $X_i'$ is a subset of $X'$. Here, $d_i(\cdot,x)$ is an $\cS$-definable function on $X'_{x}\subset \RR_{s}$.

Note that by our finiteness results of \Cref{cor:finite} there is an integer $N>0$ such that the image $I_{i,x}$ of $d_i(\cdot,x)$ is finite and of size less than $N$ for each point $x$ on $X$.  Let $I_x$ be the union of the $I_{i,x}$ over all $i$ and consider a bijection $\iota_x: I_x \to Y_x\subset R_t$ for some basic tuple $t$. Such a bijection exists similarly as in~\cite[Lem.\,2.3.1(4)]{CHRV}.
Consider
$$
\widetilde X := \{(x',\xi)\mid x'\in X',\ \xi\in Y_x\},
$$
where $x'$ lies above $x$. We may suppose that  $\widetilde X$ is $\cS$-definable, as we can take $\iota_x$ to depend definably on $x$. Let $\sigma:\widetilde X\to X$ be the projection. For $G(x,\xi)$ we take the sum
\begin{equation}\label{eq:proof:int:VF}
\sum_{i} a_i [X'_{x,\xi}, (\prod_j \alpha_{ij}) \LL^{\beta_i}T^{\gamma_i}H(h_i)],
\end{equation}
with
$$
X'_{x,\xi} := \{ \zeta\in X'_x\mid  d_i(\zeta,x)= \iota^{-1}(x,\xi)\}.
$$
and for $E(g(x))$ we take $E(\iota^{-1}(x,\xi))$ so that we have $E(g)G\in \cCM(\widetilde X)$. That $\sigma_! (G \cdot E(g)) = F$ now follows from construction.
\end{proof}

\begin{proof}[Proof of \Cref{lem:VF:int:crit}]
First we treat the easy direction which is the implication from the criterion to the definition of integrability. So, assume that $F$ can be written as a finite sum of terms $c_i \cdot [Z_i,f_i]$ with $c_i$ a finite product of generators of type (G1)  and $[Z_i,f_i] = [Z_i,\alpha_i,\beta_i,\gamma_i,g_i,h_i]$ a generator of type (G2) such that moreover each function  $F_i := [Z_i,\alpha_i,\beta_i,\gamma_i,0,h_i]$ is integrable over $X$. Repeat the construction of the proof of \Cref{lem:VF:cCM:int} to construct $g$ and $G$ on $\widetilde X$. Each of the terms in (\ref{eq:proof:int:VF}) is clearly integrable, hence $G$ itself is integrable as a finite sum of integrable functions.

For the converse direction, let us take $\widetilde X$, $G$ and $g$ witnessing the integrability of $F$. Write $G$ as a finite sum of $G_i$, which are products of generators of types (G1) and (G2), as we did for $F$ in (\ref{eq:proof:int:VF:F}). Then each $\sigma_!(G_i)$ is integrable over $\VF$ by construction and since integration over $R_t$ cannot kill integrability (see Remark \ref{rem:Fub-Ton:VGVFRt}), they lie in $\cCeM(X)$. By construction we have found a way of writing $F$ as a sum of terms  $F_i$ as desired, as we can now write
$$
F = \sum_i \sigma_!(G_i E(g)),
$$
and, the $\sum_i \sigma_!(G_i)$ are integrable and of the form $$
\sum_i a_i [Z_i, (\prod_j \alpha_{ij}) E(0)\LL^{\beta_i}T^{\gamma_i}]
$$
as desired.
\end{proof}

\begin{proof}[Proof of \Cref{lem:VF:cCM}]
By the criterion from \Cref{lem:VF:int:crit}, we may suppose that $F$ is of the form
$$
F = a[Z,g,h,\alpha,\beta,\gamma]
$$
in $\cCM(X)$, for some $a$ which is a finite product of generators of the form (G1).
By the Lifting \Cref{lem:lift:of:g}, up to replacing $X$ by $X'$ and $\VF$ by $\VF\times \RR_s\times \VG$ for some basic tuple $s$ and up to multiplying $a$ with an element from $\cCeM(X')$ of a special form which does not affect integrability, we may suppose that
$g$ is the reduction modulo $\cM$ of an $\cS$-definable function $\tilde g:X'\to \VF$.

Furthermore, by cell decomposition \Cref{prop:cell} with its Addenda \ref{add.prep} and \ref{add.Jacprop}, we may suppose that the following properties hold on $X'$, where we write $p':X'\to \RR_s\times \VG$ for the projection and $X'_z$ for $p'^{-1}(z)$ for any point $z$ on $\RR_s\times \VG$.

First of all, $F$ restricted to $X'_z$ lies in $\cCeM(X'_z)$, the $h,\alpha,\beta,\gamma$ are constant on $X'_z$, $X'_z$ is a closed ball of valuative radius $\delta(z)$, and $X'_z$ is also a cell with center $c(z)$, $\tilde g$ is as in Addendum \ref{add.Jacprop} (for both the Jacobian property and the compatible preparation of domain and range), so that the image of the set $X'_z$ under $\tilde g$ is either a singleton or at the same time a closed ball of radius $\delta(z)$ and a cell with center $d(z)$ and depth $u$. Furthermore, all of this data depends definably on $z$.

Now we can compute $\int_{X'_z} F_{|X'_z}$ which equals $p_!(F_{|X'_z}$) when we write $p$ for the projection to $\{y\}$. On the part $X''$ of $X'$ where $\delta(z)>0$, we have that $F_{|X''}$ is of class $\cCeM$. Indeed, the order of $\tilde g-d(z)$ is at least $-u$ on that part and $d(z)$ can only take finitely many values when $z$ moves over $\RR_s\times \VG$, so that  in total the valuation of $\tilde g$ is uniformly bounded below on the part $X''$. On the part $X'''$ where  $\delta(z)\le 0$, one has by the orthogonality relation (R4), applied iteratively and for each $z$, that $p_!(F_{|X'''_z})=0$.

On $X''$, the integrability of $F_{|X''}$ is clear by the definitions of integrability, since we are restricting to a subset.

The independence of the choice of $X'$ and $f'$ follows from taking common refinements and relation (R4).
\end{proof}

\begin{proof}[Proof of \Cref{lem:VF-rel}]
The global existence of the object $p_!(F)$ follows from the proof of \Cref{lem:VF:cCM}, which works well in families, where one notes that also the criterium from \Cref{lem:VF:int:crit}, the Lifting \Cref{lem:lift:of:g}, and the proof of \Cref{lem:VF-cCeM} work well in families.
\end{proof}

\subsection{General integration}\label{Proofs:int:general:int}

We finish up the remaining proofs from \Cref{sec:int:general,sec:mot-Mellin}.

\begin{proof}[Proof of \Cref{thm:gen-abs}]

Since one can swap two adjacent coordinates, and a finite composition of those can become any permutation, it is enough to consider the following cases for proving \Cref{thm:gen-abs}.

\textbf{Case 1: switching with $\RR_t$.} Let $F\in \cCM(X)$ with $X\subset \VG\times \RR_t$. Then it is clear by construction that
\[
\int_{\VG} \int_{\RR_t} F = \int_{\RR_t}\int_{\VG} F,
\]
and similarly when $X\subset \VF\times \RR_t$ that
\[
\int_{\VF} \int_{\RR_t} F = \int_{\RR_t}\int_{\VF} F.
\]
Also the Tonelli-aspect for defining integrability is clear in this case, namely that the integrability condition is indepedent of the choice of coordinates. Indeed, integration over $\RR_t$ does not influence integrability.

\textbf{Case 2: switching $\VG$ with $\VG$.} Let $F\in \cCM(X)$ with $X\subset \VG\times \VG$. Since integration and integrability on $\VG\times \VG$ corresponds to summability of certain series of real numbers as in \Cref{lem:geom}, this case follows from the usual Fubini-Tonelli theorem on $\ZZ\times \ZZ$.

\textbf{Case 3: Switching $\VG$ with $\VF$.} Assume first that $F\in \cCeM(X)$ with $X\subset \VG\times \VF$. Without loss of generality we may suppose that $X=A$. By Presburger rectilinearization, we can find a definable set
\[
\widetilde{X}\subset \VG\times \VF\times \RR_t
\]
such that the projection $p: \widetilde{X}\to X$ is a bijection, up to removing a measure zero set from $X$ coming from a finite $\cS$-definable subset of $\VF$, such that for every $\xi\in \RR_t$ the fibre $\widetilde{X}_\xi$ is of the form
\[
\{(n,x)\in \VG\times \VF\times \mid \ac_t(x-c(\xi)) = \xi, (n,\ord(x-c(\xi)))\in L_\xi\},
\]
for some Presburger definable set $L_\xi\subset \VG^2$, and such that all data in $F$ is constant on the balls of the form
\[
\{(n,x)\in \VG\times \VF\mid \ac_t(x-c(\xi)) = \xi, n = n_1, \ord(x-c(\xi)) = n_2\},
\]
for some $\xi\in \RR_t$ and $(n_1,n_2)\in L_\xi$. By the previous cases, we may ignore the factor $\RR_t$, and simply assume that $X$ itself is of the form
\[
\{(n,x)\in \VG\times \VF\mid (n,\ord(x-c))\in L\}
\]
for some Presburger definable set $L\subset \VG^2$.

We now compute the integral of $F$ over $\VG\times \VF$ as
\[
\int_{n\in \VG}\int_{x\in\VF} F(n,x) = \int_{n\in \VG} \int_{m\in L(n, \cdot)} \LL^{m-t-1} F'(n,m).
\]
Computing the integral of $F$ over $\VF\times \VG$ instead gives
\[
\int_{x\in \VF}\int_{n\in \VG} F(n,x) = \int_{m\in \VG} \LL^{m-t-1} \int_{n\in L(\cdot, m)} F'(n,m),
\]
and so we conclude by the previous case. Indeed, also the integrability condition for $A=X$ is preserved after swapping the coordinates. Since we can do this for any $A$, we are done for the definition of integrability and for the definition of the integral for $F$ in $\cCeM(X)$.

Now assume that $F$ is in $\cCM(X)$, with still $X\subset \VG\times \VF$. Note that $\widetilde X$ and the corresponding set $L$ can be chosen compatibly with construction in the proof for \Cref{lem:VF:cCM}, where a reduction to the case $\cCeM(X)$ is made when integrating over $\VF$, and that this remains the same reduction when swapping the coordinates. This reduces to the case that $F$ lies in $\cCeM(X)$ and finishes Case 3.

\textbf{Case 4: Switching $\VF$ with $\VF$.} First consider $F$ in $\cCeM(X)$ for some $X\subset \VF^2$.
Take a cell decomposition adapted to all data appearing in $F$ and adapted to $X$, and parameterize with auxiliary variables in $\VG\times \RR_t$ for some $t$. The measure zero set $X_0\subset \VF$ we remove consists of the projections of all cells of type $(0,0)$ or $(0,1)$ of this cell decomposition. Take an $\cS$-definable set $A$ and refine the cell decomposition to now also partition $A$. We may suppose that $A=X\setminus X_0$ by shrinking $X$. By the previous cases, the situation now reduces to that of a bi-cell as in \Cref{prop:bi-cells}, namely a cell in $\VF^2$ for which we know explicitly how we can swap the coordinate order. That the volume of the bi-cell matches with the volume after swapping coordinates follows from \Cref{prop:bi-cells} and the Jacobian property of \Cref{add.Jacprop}.

For $F$ in $\cCM(X)$ for some $X\subset \VF^2$, parameterizing with auxiliary variables in $\VG\times \RR_t$ for some $t$, we can suppose that in each fiber we are of class $\cCeM$ instead of $\cCM$ and finish by the above treatment for $\cCeM$, together with using the previous cases to get rid of the parametrization.

That the definition of the integrability via the Tonelli-aspect is independent of the coordinate order follows similarly from the explicit bi-cell computation and the previous cases.

\end{proof}

\begin{proof}[Proof of \Cref{thm:gen-rel} and \Cref{cor:Fubini,cor:Fubini-Ton}]
These sta\-tements follow from the proof of \Cref{thm:gen-abs} since that clearly works well in definable families.
\end{proof}


\begin{proof}[Proof of \Cref{thm:cov} and \Cref{cor:thm:cov}]
This is similar to the proof of change of variables in \cite{CLoes}. In more detail, the case that $X_1$ and $X_2$ are $\cS$-definable subsets of $\VF$ is clear from the cell decomposition \Cref{prop:cell} with its Addendum \ref{add.Jacprop}.
Up to doing an parametrization with auxiliary coordinates $\RR_t$ and working piecewise, one can factorize $f$ as a finite composition of maps which are constant in all but one $\VF$-variable, which is treated by the family version of the first treated case with $n=1$.  All this works well in definable families, and thus \Cref{thm:cov} and \Cref{cor:thm:cov} are proved.
\end{proof}

\subsection{Mellin and Fourier transformations}

Now we come to our proofs on the Mellin transform.

\begin{proof}[Proof of \Cref{thm:injM}]
One implication is trivial, so let us assume for the other implication that $\cMmot(F_1)=\cMmot(F_2)$, where we recall that we use the same set of variables $\lambda_{i_j}, T_{i_j}$ not appearing in $F_1$ or $F_2$.
Clearly there is an $\cS$-definable subset $W\subset (\VF^\times)^n$ of measure zero such that $F_1$ and $F_2$ are locally constant outside $W$. Indeed, one can make all data appearing in $F_1$ and in $F_2$ locally constant away from such a measure zero $\cS$-definable subset $W$.

Choose an $\cS$-point $y$ in $(\VF^\times)^n\setminus W$. Write $y=(y_0, K)$ where $K$ is in $\cS$, and hence has $\cL$-structure. Let $B\subset (K^\times)^n$ be a box containing $y_0$ on which $F_1$ and $F_2$ are constant. Consider now the point $y' = (y_0, K)$ which is the same as $y$, except that the interpretation of $\lambda_{j_i}$ is chosen so that the set
\[
\{x\in K^n\mid \ord(x_i) = \ord(y_{0i}), \ac_{\lambda_{j_i}}(x_i) = \ac_{\lambda_{j_i}}(y_{0i})\}
\]
is completely contained in $B$. Since the $\lambda_{j_i}$ do not appear in $F_1$ or $F_2$, we have that if $F_1(y') = F_2(y')$ then necessarily also $F_1(y)=F_2(y)$.

We show how to recover $F_1(y')$ from $\cMmot(F_1)$. Define $\xi_i = \ac_{\lambda_{j_i}}(y_{0i}) \in \RR_{\lambda_{j_i}}$ and $\delta_i = \ord(y_i)$. Write $F_1$ as a combination of generators, and consider one generator of the form
\[
[Z, (\prod_{i}\alpha_i) \LL^\beta T^\gamma E(g) H(h)].
\]
After computing $\cMmot(F_1)$ the argument of the multiplicative character of this generator is of the form $(h', \id):Z'\subset \RR_s\times \RR_t\times \RR_{\lambda}\to \RR_i\times \RR_\lambda$, where $\lambda = (\lambda_{j_1}, \ldots, \lambda_{j_n})$. Restrict $\cMmot(F_1)$ to the fibre over $(\xi_1, \ldots, \xi_n)\in \RR_\lambda$, call this function $G\in \cCM(y')$. The function $G$ is a rational function in the variables $T_{j_1}, \ldots, T_{j_n}$, and by construction can be expanded as a power series with coefficients in $\cCM(y')$. Then the coefficient of $T_{j_1}^{\delta_1}\cdots T_{j_n}^{\delta_n}$ in $G$ is exactly the value of $F_1(y')$. Hence we may recover the value $F_1(y')$ from $G$, as desired.
\end{proof}

To prove that the Fourier transform is injective we follow~\cite{CLexp}. For $\alpha\in \ZZ$ let $\phi_\alpha\in \cCM(\VF^n)$ be the characteristic function of
\[
Z_\alpha = \{x\in \VF^n\mid \ord(x_i)\geq \alpha\}.
\]
\begin{lem}\label{lem:Fourier.phi.alpha}
We have that
\[
\cF(\phi_\alpha) = \LL^{-\alpha n}\phi_{-\alpha+1}.
\]
\end{lem}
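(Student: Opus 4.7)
The plan is to reduce the computation to a one-dimensional integral via Fubini (\Cref{cor:Fubini}), then compute the one-dimensional integral by a change of variables together with the motivic orthogonality relation (R4).

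First I would unfold the definition of the Fourier transform to write
\[
\cF(\phi_\alpha)(y) \;=\; \int_{x\in\VF^n} \phi_\alpha(x)\,E(x\cdot y) \;=\; \int_{x\in Z_\alpha} E\!\left(\sum_{i=1}^n x_i y_i\right),
\]
and apply Fubini (\Cref{cor:Fubini}) together with the multiplicativity $E(a+b)=E(a)E(b)$ to factor the integral as a product of one-variable integrals:
\[
\cF(\phi_\alpha)(y) \;=\; \prod_{i=1}^n \int_{\ord(x_i)\ge \alpha} E(x_i y_i).
\]
One subtlety here is that Fubini as stated requires an integrability hypothesis; this is routine since each one-variable factor is of class $\cCeM$ on a set of finite motivic volume $\LL^{-\alpha}$.

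Next I would treat one factor $I(y_i):=\int_{\ord(x_i)\ge \alpha}E(x_iy_i)$ in isolation. The set $\{x_i:\ord(x_i)\ge\alpha\}$ is a single closed ball of valuative radius $\alpha-1$, so by \Cref{lem:VF-cCeM} (or directly via the cell decomposition \Cref{prop:cell.1}) one has
\[
I(y_i) \;=\; \LL^{-\alpha}\cdot J(y_i), \qquad \text{where } J(y_i) = \int_{u\in\cO} E(uy_i')
\]
with $y_i'$ obtained from $y_i$ by a suitable change of variables (alternatively, one can directly recognize, via Addendum \ref{add.Jacprop} and change of variables \Cref{thm:cov}, that $I(y_i)$ depends on $y_i$ only through the condition $\ord(y_i)\ge -\alpha+1$). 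The core computation is then to show
\[
\int_{u\in\cO} E(uy_i) \;=\; \LL^{-1}\cdot\11_{\ord(y_i)\ge 1}.
\]
If $\ord(y_i)\ge 1$, then $uy_i\in\cM$ for $u\in\cO$, so $E(uy_i)=1$ and the integral equals the volume of $\cO$. If $\ord(y_i)\le 0$, I would partition $\cO$ into cosets on which $u\mapsto uy_i \bmod \cM$ is a bijection onto $\VF/\cM$; the vanishing of the integral then follows directly from relation (R4) in \Cref{defn:cQD}, which is precisely the motivic orthogonality for the additive character.

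Combining the two cases and collecting the factors yields
\[
\cF(\phi_\alpha)(y) \;=\; \prod_{i=1}^n \LL^{-\alpha}\cdot \11_{\ord(y_i)\ge -\alpha+1} \;=\; \LL^{-\alpha n}\,\phi_{-\alpha+1}(y).
\]
The main obstacle will be bookkeeping the change of variables cleanly so that the motivic constants (powers of $\LL$) come out correctly, and organizing the use of (R4) uniformly in $y_i$; both are handled by combining \Cref{thm:cov} with the cell decomposition of $\cO$ by $\ac_0$-fibers.
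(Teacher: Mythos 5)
Your proposal follows essentially the same route as the paper: reduce to $n=1$ via Fubini and the multiplicativity of $E$, then split into the case where the argument of $E$ lands in $\cM$ (giving the volume of the ball) and the case where relation (R4) forces the integral to vanish. One bookkeeping point: in the paper's normalization the closed ball $\{x:\ord x\ge\beta\}$ has volume $\LL^{-\beta}$, so $\vol(\cO)=1$ and the core computation should read $\int_{u\in\cO}E(uy)=\11_{\ord(y)\ge1}$, not $\LL^{-1}\cdot\11_{\ord(y)\ge1}$; with your stated constant the assembled answer would be $\LL^{-(\alpha+1)n}\phi_{-\alpha+1}$, and your final line is correct only because this extra factor was silently dropped.
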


\begin{proof}
By induction we can assume that $n=1$. We have that
\[
\cF(\phi_\alpha)(x) = \int_{y\in \VF} \phi_\alpha(y)E(xy) = \int_{\ord y \geq \alpha} E(xy).
\]
If $\ord(x) + \alpha \leq 0$ then this integral becomes zero by (R4). For $\ord(x) + \alpha > 0$ the value becomes $\LL^{-\alpha}$.
\end{proof}

If $F,G\in \cCM(\VF^n)$ are both integrable, then for each $x\in \VF^n$ the motivic function $y\mapsto F(y)G(x-y)$ is also integrable, and we define the \emph{convolution $F\ast G$} of $F$ and $G$ as the motivic function in $\cCM(\VF^n)$ given by
\[
(F \ast G)(x) = \int_{y\in \VF^n} F(y)G(x-y).
\]

\begin{lem}\label{lem:Fourier.conv}
Let $F,G\in \cCM(\VF^n)$ be integrable. Then
\[
\cF(F\ast G) = \cF(F)\cF(G).
\]
\end{lem}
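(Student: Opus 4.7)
The plan is to unfold the definitions and reduce the identity to an application of Fubini--Tonelli followed by a translation-invariance change of variables, together with the multiplicativity $E(a+b)=E(a)E(b)$ of the motivic additive character (which is built into the definition of generators of $\cCM$ via the $\cS$-imaginary function $g:Z\to \VF/\cM$, since the operation is written additively on $\VF/\cM$ and gets exponentiated formally).

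Concretely, fixing $x\in \VF^n$, I would write
\[
\cF(F\ast G)(x)=\int_{y\in\VF^n}\Bigl(\int_{z\in\VF^n}F(z)G(y-z)\,dz\Bigr)E(x\cdot y),
\]
and then apply Fubini (\Cref{cor:Fubini}) to swap the order of integration, getting
\[
\int_{z\in\VF^n}F(z)\int_{y\in\VF^n}G(y-z)\,E(x\cdot y)\,dy.
\]
Next I would perform the change of variables $u=y-z$ in the inner integral using \Cref{cor:thm:cov} (the Jacobian is $1$, so $\LL^{-\ord\Jac}=1$) to rewrite the inner integral as
\[
\int_{u\in\VF^n}G(u)\,E(x\cdot(u+z))\,du=E(x\cdot z)\int_{u\in\VF^n}G(u)E(x\cdot u)\,du=E(x\cdot z)\,\cF(G)(x),
\]
where the factoring of $E(x\cdot(u+z))$ is a consequence of the additivity of the imaginary argument in our $E$-generators. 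Substituting back yields $\cF(G)(x)\cdot\int_z F(z)E(x\cdot z)\,dz=\cF(F)(x)\cF(G)(x)$, as desired, and this identity holds as a genuine equality in $\cCM(\VF^n)$ rather than only pointwise, because every step is performed relatively in $x$ using the relative versions of Fubini and change of variables.

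The main technical obstacle is justifying the hypotheses for Fubini and for the change of variables---namely, checking that the motivic functions $(y,z)\mapsto F(z)G(y-z)E(x\cdot y)$ (viewed relatively over the $x$-variable) really are integrable in the fibres of the relevant projections, so that the iterated integrals \emph{exist} in our formal sense and can be exchanged. This reduces to the hypothesis that $F$ and $G$ are integrable: one first verifies, by pulling back through the $\cS$-definable bijection $(y,z)\mapsto (z,u)=(z,y-z)$ of $\VF^n\times\VF^n$ and invoking \Cref{cor:thm:cov}, that $(z,u)\mapsto F(z)G(u)E(x\cdot(u+z))$ is integrable in $(z,u)$ with parameter $x$; this is clear since $|E(\cdot)|\le 1$ in spirit and, more formally, since removing the $E$-factor leaves $F(z)G(u)$, which is integrable by separate integrability of $F$ and $G$, and the extra factor involves only an $\cS$-imaginary function into $\VF/\cM$, which does not affect the integrability criterion of \Cref{lem:VF:int:crit} applied coordinate by coordinate. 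Once integrability is in hand, the formal manipulation above goes through verbatim.
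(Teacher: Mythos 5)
Your proof is correct and follows essentially the same route as the paper's: unfold the definitions into a double integral, swap the order of integration, and apply the translation change of variables $u=y-z$ (unit Jacobian) together with the multiplicativity of $E$ to factor the result as $\cF(F)\cF(G)$. The paper's own proof is in fact terser than yours — it compresses the Fubini step and the integrability checks into the single phrase ``perform the coordinate transformation'' — so your added care about the hypotheses of Fubini--Tonelli and of the change-of-variables formula is a welcome elaboration rather than a deviation.
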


\begin{proof}
We compute for an $\cS$-point $x$ that
\[
\cF(F\ast G)(x) = \int_{y\in \VF^n}(F\ast G)(y)E(xy) = \int_{y\in \VF^n}\int_{z\in \VF^n} F(z)G(y-z)E(xy).
\]
Now perform the coordinate transformation $u = y+z, z=z$ to see that this integral is equal to $\cF(F)(x)\cF(G)(x)$.
\end{proof}

For $F\in \cCM(\VF^n)$ denote by $F^\vee\in \cCM(\VF^n)$ the motivic function given by $x\mapsto F(-x)$.

\begin{lem}\label{lem:double.Fourier}
Let $F\in \cCM(\VF^n)$ be integrable. Then for every $\alpha\in \ZZ$ the function $\phi_\alpha \cF(F)\in \cCM(\VF^n)$ is integrable and
\[
\cF(\phi_\alpha \cF(F)) = \LL^{-\alpha n}F^\vee \ast \phi_{-\alpha+1}.
\]
\end{lem}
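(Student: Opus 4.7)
The plan is to unwind the definitions of both Fourier transforms and the convolution on the right-hand side, and then produce the equality by a Fubini swap together with \Cref{lem:Fourier.phi.alpha}. Concretely, for an $\cS$-point $x\in\VF^n$ I would write
\[
\cF(\phi_\alpha\,\cF(F))(x)=\int_{y\in\VF^n}\phi_\alpha(y)\Big(\int_{z\in\VF^n}F(z)E(zy)\Big)E(xy),
\]
and want to pull $F(z)$ outside and perform the $y$-integration first. After swapping and using the additivity of the character built in through relation (R4), the $y$-integral is
\[
\int_{y\in\VF^n}\phi_\alpha(y)E((x+z)y)=\cF(\phi_\alpha)(x+z)=\LL^{-\alpha n}\phi_{-\alpha+1}(x+z),
\]
by \Cref{lem:Fourier.phi.alpha}. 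Finally, the change of variables $z\mapsto -z$ (with Jacobian of order $0$, cf.~\Cref{thm:cov}) turns $\int_z F(z)\phi_{-\alpha+1}(x+z)$ into $(F^\vee\ast\phi_{-\alpha+1})(x)$, giving the desired identity.

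Before doing any of this I first need to establish that $\phi_\alpha\cF(F)$ is itself integrable, so that its Fourier transform makes sense, and that the convolution on the right is a well-defined motivic function. For the first point, note that multiplication by $\phi_\alpha$ restricts the $y$-integration to the bounded set $Z_\alpha$ of finite motivic volume $\LL^{-\alpha n}$. Using the criterion of \Cref{lem:VF:int:crit} applied fibrewise, and the integrability of $F$, I would verify the Fubini--Tonelli condition of \Cref{cor:Fubini-Ton} directly on the double integral
\[
\int_{(y,z)\in Z_\alpha\times\VF^n}F(z)E((x+z)y),
\]
treating $\cS$-definable cut-off sets $A\subset Z_\alpha\times\VF^n$ one at a time. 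The presence of $\phi_\alpha$ makes the $y$-direction a bounded-measure integral, and the $z$-direction inherits integrability from $F$, so the Tonelli-style hypothesis can be checked by splitting into the two iterated orders and using the previously established integration on $\VF$.

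The main technical obstacle is precisely this justification of Fubini, since the integrand involves the additive character $E((x+z)y)$ whose $y$-integral over $Z_\alpha$ vanishes outside the set where $x+z\in Z_{-\alpha+1}$ by relation (R4). One must be careful to verify that, definably in the cut-off $A$, the $y$-integration against the character produces a motivic function of $(x,z)$ in $\cCM$ that is integrable in $z$; this is exactly the content of the stability result \Cref{thm:stab} combined with \Cref{lem:Fourier.phi.alpha}. Once this is in place, the swap produces the explicit evaluation $\LL^{-\alpha n}\phi_{-\alpha+1}(x+z)$ which is bounded, so the remaining $z$-integration reduces to integrating $F$ against a characteristic function of a translate of $Z_{-\alpha+1}$, which is integrable by assumption on $F$. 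The change of variables step is then a direct application of \Cref{thm:cov}, completing the identification with $\LL^{-\alpha n}F^\vee\ast\phi_{-\alpha+1}$.
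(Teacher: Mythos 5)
Your proposal is correct and follows essentially the same route as the paper's proof: expand both Fourier transforms, swap the order of integration, evaluate the inner $y$-integral via \Cref{lem:Fourier.phi.alpha}, and conclude with the change of variables $z\mapsto -z$. The extra care you take in justifying the integrability of $\phi_\alpha\cF(F)$ and the Fubini swap is sound and fills in details the paper leaves implicit, but it does not change the argument.
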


\begin{proof}
For $x$ an $\cS$-point we compute
\begin{align*}
\cF(\phi_\alpha \cF(F))(x) &= \int_{y\in \VF^n} \phi_\alpha(y) \cF(F)(y)E(xy) \\
&= \int_{y\in \VF^n}\int_{z\in \VF^n} \phi_\alpha(y)F(z)E(y(x+z)) \\
&= \int_{z\in\VF^n} F(z) \cF(\phi_\alpha)(x+z) \\
&= \int_{z\in \VF^n} F(z)\LL^{-\alpha n} \phi_{-\alpha+1}(x+z) \\
&= \LL^{-\alpha n} (F^\vee\ast \phi_{-\alpha+1})(x),
\end{align*}
where we have used \Cref{lem:Fourier.phi.alpha}, and the coordinate transform $z\mapsto -z$.
\end{proof}

\begin{lem}\label{lem:conv.zero}
Let $F\in \cCM(\VF^n)$ and assume that for every $\alpha\in \ZZ$ we have that $F\ast \phi_\alpha = 0$. Then $F = 0$ almost everywhere.
\end{lem}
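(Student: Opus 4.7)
My plan is to show that $F$ vanishes at every $\cS$-point of $\VF^n$ outside a fixed $\cS$-definable measure zero set, and then conclude using that $\cCM(\VF^n)$ is a subring of $\cF(\VF^n)$.

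First I would fix a presentation of $F$ as a finite sum of generators of types (G1) and (G2), and apply the cell decomposition \Cref{prop:cell} together with the preparation \Cref{add.prep}, adapted to all the $\cS$-definable data (the residue-ring parameter sets $Z'$, the definable functions $\alpha,\beta,\gamma,h$, and the graph of the imaginary function $g$) appearing in these generators. Let $Z\subset\VF^n$ be the union of all cells of type different from $(1,\dots,1)$; it has dimension strictly less than $n$ and hence measure zero. On each twisted box $B$ of type $(1,\dots,1)$ of the resulting decomposition all of the above data is constant, so that the restriction of $F$ to $B$ is a constant motivic function.

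Next, fix an $\cS$-point $x\in\VF^n\setminus Z$ and let $B$ be the twisted box containing it. Since the cell centres are continuous, $B$ is open in the valuation topology, so there exists $\alpha\in\ZZ$ such that the polydisc
\[
P(x,\alpha):=\{y\in\VF^n\mid \ord(x_i-y_i)\geq \alpha \text{ for all } i\}
\]
is contained in $B$. Since $F$ is constant equal to $F(x)$ on $P(x,\alpha)$ and $\vol(P(x,\alpha))=\LL^{-\alpha n}$, unfolding the convolution at $x$ gives
\[
0 = (F\ast\phi_\alpha)(x) = \int_{P(x,\alpha)} F(y)\,dy = \LL^{-\alpha n}\cdot F(x).
\]
Since $\LL$ is invertible in $\cCM(x)$, this forces $F(x)=0$. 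Consequently $\mathbf{1}_{\VF^n\setminus Z}\cdot F$ vanishes at every $\cS$-point of $\VF^n$, and the inclusion $\cCM(\VF^n)\hookrightarrow\cF(\VF^n)$ yields $\mathbf{1}_{\VF^n\setminus Z}\cdot F=0$, i.e., $F=0$ almost everywhere.

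The main obstacle is the formal verification that $F$ really is constant on a small polydisc around each generic point. This amounts to applying \Cref{add.prep} simultaneously to the finitely many $\cS$-definable sets underlying the generators in our chosen presentation of $F$, combined with the observation that a twisted box of type $(1,\dots,1)$, being cut out by conditions of the form $\ac_{t_i}(y_i-c_i(y_{<i}))=\xi_i$ and $\ord(y_i-c_i(y_{<i}))=n_i$ with continuous centres $c_i$, contains a polydisc around each of its points. Once that is in place, the identity $\int_{P(x,\alpha)} F(y)\,dy = \LL^{-\alpha n}\cdot F(x)$ is immediate from the definition of integration over valued-field variables for a constant motivic function.
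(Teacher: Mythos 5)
Your proof is correct and takes essentially the same route as the paper: the paper's (two-line) argument likewise observes that $F$ is locally constant outside an $\cS$-definable measure zero set $W$ and then evaluates $(F\ast\phi_\alpha)(x)=\LL^{-\alpha n}F(x)=0$ on a small closed box around a point $x\notin W$. The extra detail you supply — cell decomposition with preparation of all the data of $F$, openness of the type-$(1,\dots,1)$ twisted boxes via (Lipschitz-)continuous centres — is exactly the content the paper leaves implicit in the phrase ``locally constant outside $W$.''
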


\begin{proof}
There exists an $\cS$-definable set $W\subset \VF^n$ of measure zero such that $F$ is locally constant outside $W$. If $x\in K^n\setminus W$, and $F$ is constant on the closed box $B$ of radius $\alpha$ around $x$, then
\[
(F\ast \phi_\alpha)(x) = \LL^{-\alpha n} F(x) = 0,
\]
and hence $F(x) = 0$.
\end{proof}

We have all the ingredients to prove that the Fourier transform is injective.

\begin{proof}[Proof of \Cref{thm:injF}]
One direction is trivial, so assume that $\cF(F) = 0$. We want to conclude that $F=0$ almost everywhere. By \Cref{lem:double.Fourier} we obtain that $F^\vee \ast \phi_{-\alpha+1} = 0$ for every $\alpha\in \ZZ$. Hence we conclude by \Cref{lem:conv.zero}.
\end{proof}

Let us also prove Fourier inversion for integrable motivic functions with integrable Fourier transform.

\begin{prop}\label{prop:Fourier.inversion}
Let $F\in \cCM(\VF^n)$ be integrable and assume that $\cF(F)$ is also integrable. Then
\[
\cF(\cF(F)) = \LL^{-n} F^\vee
\]
almost everywhere.
\end{prop}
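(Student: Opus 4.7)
My plan is to apply \Cref{lem:double.Fourier} combined with a radial shell decomposition of $\VF^n$. First, by cell decomposition (\Cref{prop:cell} with \Cref{add.prep}) applied to the data defining $F$, there is an $\cS$-definable measure zero set $W\subset\VF^n$ and an $\cS$-definable function $r:\VF^n\setminus W\to\VG$ such that $F^\vee$ is constant on the ball $x+Z_{r(x)}$ for each $x\notin W$. By \Cref{lem:double.Fourier}, for any $\alpha\in\ZZ$ and any $\cS$-point $x$,
\[
\cF(\phi_\alpha\cF(F))(x) = \LL^{-\alpha n}(F^\vee\ast\phi_{-\alpha+1})(x).
\]
For $x\notin W$ and $\alpha\le -r(x)+1$, the integrand of $(F^\vee\ast\phi_{-\alpha+1})(x)$ vanishes outside $x+Z_{-\alpha+1}\subset x+Z_{r(x)}$, on which $F^\vee$ is identically $F^\vee(x)$. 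Computing the volume of this ball gives $(F^\vee\ast\phi_{-\alpha+1})(x)=F^\vee(x)\LL^{(\alpha-1)n}$, whence $\cF(\phi_\alpha\cF(F))(x)=\LL^{-n}F^\vee(x)$, independent of $\alpha$ in this range.

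Next, I exploit this stability: for $\alpha'\le\alpha\le-r(x)+1$, subtraction yields
\[
\int_{Z_{\alpha'}\setminus Z_\alpha}\cF(F)(y)E(xy)\,dy = 0.
\]
Specializing to $\alpha'=\beta$ and $\alpha=\beta+1$ with $\beta\le-r(x)$ shows that the shell integrals
\[
I(\beta) := \int_{Z_\beta\setminus Z_{\beta+1}}\cF(F)(y)E(xy)\,dy
\]
vanish for every $\beta\le-r(x)$.

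Finally, modulo the measure zero set $\{0\}$, $\VF^n$ admits the radial decomposition $\VF^n = Z_\alpha\sqcup\bigsqcup_{\beta<\alpha}(Z_\beta\setminus Z_{\beta+1})$, where each $y\in\VF^n\setminus\{0\}$ lies in the unique shell indexed by $\beta=\min_i\ord(y_i)$. Since $\cF(F)$ is integrable, Fubini-Tonelli (\Cref{cor:Fubini-Ton}) applied to $\cF(F)(y)E(xy)$ on $\VF^n$ gives
\[
\cF(\cF(F))(x) = \cF(\phi_\alpha\cF(F))(x) + \int_{\beta<\alpha}I(\beta)\,d\beta.
\]
For $\alpha\le-r(x)+1$, the first term equals $\LL^{-n}F^\vee(x)$ by the first paragraph, while $I(\beta)=0$ for every $\beta<\alpha$ by the shell vanishing. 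Hence $\cF(\cF(F))(x)=\LL^{-n}F^\vee(x)$ for every $x\notin W$, establishing the theorem almost everywhere.

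The main obstacle is showing that the tail $\int_{\VF^n\setminus Z_\alpha}\cF(F)(y)E(xy)\,dy$ vanishes for $\alpha$ sufficiently negative. This is handled indirectly: the constancy in $\alpha$ of the right-hand side of \Cref{lem:double.Fourier} forces each individual shell integral to vanish, and Fubini-Tonelli (whose application requires the integrability of $\cF(F)$) then propagates this vanishing from individual shells to the entire tail.
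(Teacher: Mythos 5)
Your proof is correct, but it takes a genuinely different route from the paper's. The paper decomposes the \emph{function} $F$ into Schwartz--Bruhat pieces $F_\alpha$ (restrictions of $F$ to the definable sets where $F$ is constant on balls $x+Z_{-\alpha+1}$), notes that each piece satisfies the self-convolution identity $F_\alpha\ast\phi_{-\alpha+1}=\LL^{(\alpha-1)n}F_\alpha$, and then applies $\cF\circ\cF$ to each piece using \Cref{lem:Fourier.conv} and \Cref{lem:double.Fourier} before reassembling via Fubini. You instead fix a point $x$, decompose the \emph{domain} of the outer Fourier integral into the ball $Z_\alpha$ and radial shells, evaluate the right-hand side of \Cref{lem:double.Fourier} explicitly for $\alpha\ll_x 0$ using local constancy of $F^\vee$, and deduce from the resulting $\alpha$-independence that every shell integral of $\cF(F)(y)E(xy)$ vanishes, so that the truncation $\phi_\alpha$ can be removed. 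Both arguments hinge on \Cref{lem:double.Fourier}, on local constancy of $F$ off a definable measure-zero set, and on the integrability of $\cF(F)$ at the step where the truncated transform is compared with the full one; but you avoid \Cref{lem:Fourier.conv} entirely, at the cost of having to justify the exchange of the shell sum with the $y$-integral (your Fubini step, which is legitimate since $\cF(F)(y)E(xy)$ inherits integrability from $\cF(F)$ by \Cref{lem:VF:int:crit}, and the shell decomposition is parametrized definably by $\beta=\min_i\ord(y_i)$) and a definable choice of the local-constancy radius $r(x)$ (available from cell decomposition with \Cref{add.prep}, exactly as in the paper's construction of the $W_\alpha$). The paper's version has the advantage of isolating the motivic Schwartz--Bruhat functions as objects of independent interest; yours is more pointwise and arguably more elementary.
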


\begin{proof}
For $\alpha\in \ZZ$, we denote by $W_\alpha$ the definable set consisting of all $x\in Z_\alpha$ such that $F$ is constant on $x+Z_{-\alpha+1}$. Then $(W_\alpha)_\alpha$ is a definable family, and the definable set $K^n\setminus \cup_{\alpha} W_\alpha$ has measure zero. Let $V_\alpha = W_\alpha \setminus W_{\alpha-1}$, which is still a definable family over $\ZZ$, and $K^n\setminus \cup_{\alpha} V_\alpha$ is still of measure zero. For $\alpha\in \ZZ$ denote by $F_\alpha$ the restriction of $F$ to $V_\alpha$. Then
\[
F = \int_{\alpha\in \VG} F_\alpha
\]
almost everywhere. Therefore, Fubini shows that it is enough to prove that for every $\alpha \in \ZZ$ we have
\[
\cF(\cF(F_\alpha)) = \LL^{-n} F_\alpha^{\vee}.
\]

By construction, $F_\alpha$ is integrable and has the property that
\[
F_\alpha\ast \phi_{-\alpha+1} = \LL^{(\alpha-1)n}F_\alpha.
\]
We now simply compute, using \Cref{lem:Fourier.phi.alpha} and \Cref{lem:Fourier.conv}
\begin{align*}
\cF(\cF(F_\alpha)) &= \cF(\cF(\LL^{(-\alpha+1)n} F_\alpha\ast \phi_{-\alpha+1})) \\
&= \cF(\phi_{\alpha}\cF(F_\alpha)) = \LL^{-\alpha n} F_\alpha^\vee \ast \phi_{-\alpha+1} = \LL^{-n}F_\alpha^\vee.\qedhere
\end{align*}
\end{proof}

\begin{rem}
The motivic functions $F_\alpha$ constructed in the above proof are the motivic analogue of Schwartz--Bruhat functions in the classical context, see~\cite[Sec.\,7.5]{CLexp}. The proof may be summarized as first proving Fourier inversion for Schwartz--Bruhat functions, and then approximating an arbitrary function by Schwartz--Bruhat functions almost everywhere.
\end{rem}

Finally, we prove compatibility with earlier treatments of motivic integration.

\begin{proof}[Proof of \Cref{result:cCexp-widehatC}]
That the maps are ring homomorphisms which are compatible with motivic integration follows from the constructions. The injectivity follows from the first main theorem of \cite{CH-eval}.
\end{proof}

\section{$p$-adic specialization and transfer}\label{sec:p-adic-transf}

\subsection{Specialization}\label{sec:special}


We give several specialization results of our motivic functions, either to $p$-adic fields, or to local fields of large positive characteristic.

In \Cref{sec:informal-meaning}, it is indicated that one may take values inside a commutative $\CC$-algebra $G$, and that one may take the functions $H_t$ arbitrary valued inside $G$.  In this section, we focus on the most important case that $G=\CC$, and that the $H_t$ are multiplicative characters on the $p$-adic residue rings. As a general function $H_t$ on a finite abelian group is a finite $\CC$-linear combination of characters on that group, this is merely a small change of perspective. A reader can have the more general choice of an arbitrary map $H_t$ instead of $\chi_t$ in mind, as in \Cref{sec:informal-meaning}.

We discuss three levels of specialization, each one becoming more concrete.

Fix $\cS = \cS_D$ with notation from \Cref{subsec:def}. Let $X$ be an $\cS$-definable set. If $K$ is a finite extension of $\QQ_p$, then we may base change $X$ to the $\cS_{D,K}$-definable set $X_{\cS_{D,K}}$, where $\cS_{D,K}\subset \cS_D$ consists only of the field $K$ with any possible $\cL_D$-structure. If $F\in \cCM(X)$ then all of the data defining $F$ are $\cS_D$-definable, and so by base change we obtain $F_{\cS_{D,K}}\in \cCM(X_{\cS_{D,K}})$.   
In this way we obtain a \emph{specialization map}
\[
\cCexp_{M 
}(X)\to \cCexp_{M 
}(X_{\cS_{D,K}}): F \mapsto F_{\cS_{D,K}}.
\]
If $K$ is a local field of positive characteristic, then this procedure mapping $F\in \cCM(X)$ to $F_{\cS_{D,K}}$ makes sense whenever $K$ is of characteristic $>N$, for some positive integer $N$ depending on $F$.

\begin{thm}[Specializing integration]\label{thm:special}
Let $X$ be $\cS_D$-definable and let $F\in \cCM(X)$ be integrable. Then there exists an integer $N$ such that for every local field $K$ of characteristic zero, or of characteristic $> N$ we have that
\[
\left(\int_X F\right)_{\cS_{D,K}} = \int_{X_{\cS_{D,K}}} F_{\cS_{D,K}}.
\]
In particular, specialization preserves the Mellin transform.
\end{thm}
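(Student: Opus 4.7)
The plan is to prove the specialization theorem by induction on the structure of $X$, following the very definition of the motivic integral given in Section~\ref{sec:iterated.int}. At each step of the construction (residue rings, value group, valued field, and then iterated Fubini-Tonelli), I will exhibit an integer $N$, depending on the formulas and generators used to write $F$, such that the construction of $\int F$ on the motivic side goes through verbatim on the $\cS_{D,K}$ side whenever $K$ is a local field of residue characteristic zero or $> N$. The final $N$ is then the maximum of the finitely many $N$'s produced along the inductive tree.

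First I would reduce to the case in which $F$ is expressed as a finite sum of products of generators of type (G1) and (G2) as in \Cref{defn:cCD}, using the integrability criterion \Cref{lem:VF:int:crit} and (after applying the Lifting \Cref{lem:lift:of:g}) working with witnesses of integrability in the $\cCeM$-class. For integration over $\RR_t$ (\Cref{lem:D-rel}), specialization is immediate: the $p$-adic sum \eqref{eq:sum-Qp} is finite and the defining relations (R1)--(R5) for $\cQD$ map to literal equalities of finite character sums on the $p$-adic side. For integration over $\VG$ (\Cref{defn:int:VG,lem:VG-rel}), one applies the Presburger rectilinearization \Cref{cor:rec:CM.1} — which is a purely model-theoretic fact about $\LPas(\Lambda)$ and thus survives base change to any $\cL_D$-structure — and then observes that the closed-form \eqref{f:sum:int:prop.1.1} of \Cref{lem:geom} is an identity of rational functions specializing term-by-term. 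For integration over $\VF$ (\Cref{lem:VF-cCeM,lem:VF:cCM}), one invokes the cell decomposition \Cref{prop:cell.1} with its Addenda; the same cells and centers define cells on $\VF_K$ for large enough residue characteristic, and the volume computation $\LL^{-\alpha}$ per ball becomes $q_K^{-\alpha}$ per ball under specialization.

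The main issue is making the inductive step uniform in $K$. The cell decomposition \Cref{prop:cell.1} is proved via quantifier elimination (Theorems \ref{QEZ} and \ref{QEform}), both of which are theorems about the theory $T_D'$; by compactness, the finitely many formulas involved in a description of $F$ only involve finitely many polynomials and basic terms, so the cell decomposition of the relevant $\cS_D$-definable sets is itself described by a fixed $\cL_D'$-formula. Hence the very same cell decomposition, read in $K$, is valid whenever the residue characteristic is large enough to avoid the finitely many primes at which Hensel's lemma could fail for the centers or where the Jacobian property of \Cref{add.Jacprop} could degenerate. This is where the integer $N$ arises. The analogous transfer of QE from $\cS_D$ to local fields of large positive characteristic is established in \cite{CHallp} for the generalized Denef--Pas language, and only notational changes are needed to accommodate our basic-term-indexed residue sorts.

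Finally, the passage from one-variable integration to the general integral of \Cref{thm:gen-abs,thm:gen-rel} is given by Fubini-Tonelli, which reduces the integral over $X\subset \VF^n\times \VG^m\times \RR_t$ to an iterated integral; since each factor specializes and the Fubini assertions of \Cref{cor:Fubini,cor:Fubini-Ton} hold both motivically and classically (the classical version being the usual measure-theoretic Fubini on $K^n\times \ZZ^m\times \RR_{t,K}$), the equality of iterated integrals on the two sides carries through. The hard part, as indicated, will be bookkeeping the uniformity: verifying that a single $N$ works across all recursive calls and across all pieces of the cell decomposition, for which the key technical tool is the fact that the data witnessing integrability in \Cref{thm:gen-abs} can be chosen definably in families (\Cref{cor:Fubini-Ton}), so that only finitely many formulas enter the construction. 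The "in particular" statement about the Mellin transform is then immediate, since $\cMmot$ is defined via integration against the $\cS_D$-definable data $\LL^{\ord x_i}$ and $H(\ac_{\lambda_{j_i}}(x_i))$, which all specialize directly.
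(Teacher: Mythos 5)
Your proposal is correct and follows essentially the same route as the paper: the paper's (very terse) proof likewise observes that $\int_X F$ is computed via an $\cS_D$-definable cell decomposition adapted to $F$, which involves only finitely many formulas and hence restricts to an $\cS_{D,K}$-definable cell decomposition once the residue characteristic exceeds some $N$. Your write-up simply makes explicit the inductive bookkeeping through the residue-ring, value-group, valued-field and Fubini stages that the paper leaves implicit.
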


\begin{proof}
The integral $\int_X F$ is computed by taking an $\cS_D$-definable cell decomposition of $X$ adapted to $F$. Such a cell decomposition automatically restricts to an $\cS_{D,K}$-definable cell decomposition, at least when the characteristic of $K$ is large enough, from which the result follows.
\end{proof}

Elements in the rings $\cCM(X)$ and $\cCM(X_{\cS_{D,K}})$ are built for abstract (motivic) additive and multiplicative characters, and so it is possible to specialize further to $\cCexp_{M, \cS_{D,K}}(X^{\enrich})$ from \Cref{sec:p-adic:mellin}.

\begin{thm}\label{thm:spec:KS}
Let $K$ be a local field of characteristic zero, and let $X$ be $\cS_{D,K}$-definable. Then there exists a natural morphism
\[
\Psi: \cCM(X)\to \cCexp_{M, \cS_{D,K}}(X^{\enrich}).
\]
If $F\in \cCM(X)$ is integrable (in the sense of \Cref{sec:iterated.int}), then $\Psi(F)$ is $L^1$ (for any choice of the $\lambda$ and the $\pi_K,\psi,\chi_t, \tau_i$ as in \Cref{defn:integrability:K}), and in that case we have
\[
\Psi\left( \int_X F\right)(\lambda,\pi_K,\psi,\chi_t,\tau_i) = \int_{x\in X_{(K,\cL_D)}} (\Psi(F)(x,\lambda,\pi_K,\psi,\chi_t,\tau_i)).
\]
where the $(\lambda,\pi_K,\psi,\chi_t,\tau_i)$ play their natural role, and where the $\cL_D$-structure on $K$ in the right hand side index of $X$ comes from the choice of $\lambda,\pi_K$.
\end{thm}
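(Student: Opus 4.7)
The plan is to construct $\Psi$ on generators, verify the defining relations (R1)--(R5) of \Cref{defn:cQD} plus the additional localisations in \Cref{defn:cQDloc} are all preserved, then check $L^1$-ness and compatibility with integration by descending through the three layers of integration (over $\RR_t$, over $\VG$, over $\VF$) from \Cref{sec:iterated.int}. More precisely, on a generator of the form $\frac{1}{1-\LL^a T^b}$ of type (G1), $\Psi$ simply sends it to the element $\frac{1}{1-q_K^a T^b}$ of $\cCexp_{M,\cS_{D,K}}(X^{\enrich})$ from \Cref{Qp-functions:enrich}. On a generator $F=[Z,(\prod_i\alpha_i)\LL^\beta T^\gamma E(g)H(h)]$ of type (G2), $\Psi(F)$ is the function sending an enriched point $(x,K',\psi,(\chi_t)_t,(\lambda))$ to the finite sum
\[
\sum_{\xi\in Z_{x,K'}}\Bigl(\prod_i\alpha_i(\xi')\Bigr)\,q_K^{\beta(\xi')}\,T^{\gamma(\xi')}\,\psi(g(\xi'))\,\prod_{j=1}^{\ell}\chi_{i_j}(p_j\circ h(\xi'))
\]
with $\xi'=(x,\xi)$, which is exactly the shape of a generator of $\cCexp_{M,\cS_{D,K}}(X^{\enrich})$. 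The finiteness of the sum follows from \Cref{prop:finite0}. Well-definedness amounts to checking the relations: (R1) is immediate, (R2) follows from additivity of finite sums, (R3) from $|\RF_K|=q_K$ and the fact that the projection $\res_{t_1+1,t_1}$ has fibres of size $q_K$, and (R5) from commutativity of multiplication in $\CC$. Relation (R4) is the classical orthogonality $\sum_{u\in\cM_K/\cM_K^2}\psi(a+u)=0$ for any $a$, which holds precisely because $\psi$ is trivial on $\cM_K$ but non-trivial on $\cO_K$. The localisations are mapped into localisations: $\LL$ hits $q_K>1$ (invertible), $T_j$ hits the formal variable $T_j$, and $1-\LL^a T^b$ with $a<0$ goes to $1-q_K^a T^b$, which is a unit in $\CC(T_0,T_1,\ldots)$ since $|q_K^a|<1$. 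This gives a well-defined ring homomorphism $\Psi$.

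Next I need the compatibility with integration. First I would verify specialization on each of the three building blocks of the iterated integrals. For integration over $\RR_t$ as in \Cref{lem:D-rel}, this is immediate because a finite motivic sum becomes the same finite classical sum on the $p$-adic side (the counting measure on $\RR_t$ matches). For integration over $\VG$, I would use the rectilinearisation \Cref{cor:rec:CM.1}: after partitioning and reparametrising, $\Psi(F)$ becomes a finite sum of expressions $d\cdot n^a\cdot q_K^{bn}\cdot \tau^{cn}$ summed over $n\in\NN$, and by \Cref{lem:geom} the formal rational function $G(\LL,T)$ giving $\int_{\VG}F$ specializes under $\LL\mapsto q_K$, $T_j\mapsto \tau_j$ to the actual numerical sum, provided $|q_K^{b}\tau^c|<1$ --- which is exactly the integrability condition (\ref{cond:L.1}). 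For integration over $\VF$, by \Cref{lem:VF:cCM} and its proof via cell decomposition, an integral reduces to an iterated integral over $\VG\times\RR_t$ of $\LL^{-\alpha}\cdot F'$, and specialization commutes with this because the volume $q_K^{-\alpha-t-1}$ of a closed ball of valuative radius $\alpha$ and depth $t$ is precisely the image of $\LL^{-\alpha-t-1}$.

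For the general statement, I would induct on the number of coordinates using the Fubini--Tonelli decomposition from \Cref{thm:gen-abs}, noting that the measure zero sets $X_0$ appearing there specialize to measure zero sets on the $p$-adic side (since the notion of dimension used in \Cref{defn:measure:zero} is geometric and transfers). The absolute convergence of the $p$-adic iterated integral --- i.e.\ the $L^1$ property --- will follow from the same bounds $|q_K^b\tau^c|<1$ used above, applied at each summation step; in particular the formal integrability condition (which ultimately encodes $b<0$ and $c\in\NN^J$ after rectilinearisation) is exactly what makes each geometric series absolutely convergent.

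The main obstacle I anticipate is keeping careful track of the $\tau_i$ evaluation: $\Psi(F)$ takes values in $\CC(T_0,T_1,\ldots)$, and one must verify that when $F$ is integrable in the formal sense, the denominators $1-\LL^a T^b$ appearing after integration only produce factors $1-q_K^a\tau^b$ with $|q_K^a\tau^b|<1$, so that the evaluation at $(\tau_i)$ is well-defined. This is exactly the content of the formal integrability condition (\ref{cond:L.1}), and it propagates through the cell decomposition because the new denominators introduced when integrating over $\VG$ or $\VF$ all have the form required. Once this is checked at each elementary step, the rest is bookkeeping to assemble the three layers of integration and invoke Fubini \Cref{cor:Fubini} on both sides.
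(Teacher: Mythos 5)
Your proposal is correct and follows essentially the same route as the paper's proof, which is just a terser version of the same argument: define $\Psi$ by $\LL\mapsto q_K$ on type (G1) generators and by the explicit character sum on type (G2) generators, observe that (R1)--(R5) hold over $K$, and then note that formal integrability forces summability of the geometric series arising from the same cell decomposition on the $p$-adic side. One small slip: in your verification of (R4) the orthogonality sum should run over $\cO_K/\cM_K$ (a coset of the residue field, as in the relation's bijection onto $(g(z)+\cO)/\cM$), not over $\cM_K/\cM_K^2$, where the sum would in fact be $q_K\psi(a)\neq 0$ since $\psi$ is trivial on $\cM_K$; your stated justification (triviality on $\cM_K$, non-triviality on $\cO_K$) is the correct one.
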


Note that this result gives that integrability in our abstract sense of \Cref{sec:iterated.int} implies integrability in the classical $L^1$ sense, as spelled out in \Cref{defn:integrability:K}.

\begin{proof}[Proof of \Cref{thm:spec:KS} and \Cref{defn:integrability:K}]
We define $\Psi$ in the following way. Firstly, we put $\Psi(\LL) = q_K$ which fixes the image of $\Psi$ on the generators of type (G1). Secondly, let $F = [Z,\left(\prod_i \alpha_i\right) \LL^\beta T^\gamma E(g)H(h)]$ be a generator of $\cCM(X_{\cS_{D,K}})$ of type (G2). Let $(x,\psi,(\chi_t)_t)$ be an enriched point on $X$; note that this in particular also fixes $\lambda,\pi_K$.  Then we define
\[
\Psi(F)(x, \psi, (\chi_t)_t) = \sum_{\substack{\xi\in Z_x \\ \xi' = (x,\xi')}} \left(\prod_i \alpha_i(\xi')\right) q_K^{\beta(\xi')} T^{\gamma(\xi')} \psi(g(\xi')) \left(\prod_i \chi_{t_i}(h_i(\xi'))\right).
\]
The map $\Psi$ respects the relations (R1)-(R5) since these relations hold over $K$. Therefore $\Psi$ indeed gives a well-defined morphism as stated.

The conditions on integrability of $F$ imply corresponding results on the summability of certain series over $\ZZ$ appearing when computing the integral of $\Psi(F)$ via a cell decomposition. In particular, if $F$ is integrable, then these abstract conditions imply that $\Psi(F)$ is integrable, and the integral may be computed via the same cell decomposition.
\end{proof}

Finally, we mention one further specialization. Fix a $p$-adic field $K$ and $\cL_D$-structure on $K$, and consider $\{(K, \cL_D)\}$ as a subset of $\cS_{D,K}$. If $X$ is $\cS_{D,K}$-definable, then we obtain a further specialization morphism
\[
\cCexp_{M, \cS_{D,K}}(X^{\enrich})\to \cCexp_{M, (K, \cL_D)}((X_{(K, \cL_D)})^{\enrich}),
\]
which commutes with integration. Essentially, in $(K, \cL_D)$ we have fixed the values of $\pi_K$ in $\cM_K$ and of $\lambda$ in $\ZZ$ for all $\lambda\in \Lambda$, but one can still vary the characters $\psi$ and $\chi_t$ for all basic terms $t$.

\subsection{Transfer principles}\label{sec:transfer}

We have the following transfer principle between characteristic zero and positive characteristic local fields, extending the one from \cite{CLexp}. In our setting, we need quite a strong condition on an isomorphism of some higher residue rings $R_t$ (in \cite{CLexp} this was needed only for the residue fields). We also need a lemma about characters on local fields, generalizing~\cite[Lem.\,9.2.3]{CLexp}.

\begin{lem}\label{lem:9.2.3}
Let $K$ be any local field. Let $r<s<0$ be integers, fix  a nontrivial additive character $\psi_0$ on $B_{s}(0)$ such that $\psi_0(\cO_K)\not=0$ and $\psi_0(\cM_K)=0$ and write $\cD_{K,s}$ for the collection of additive characters on $K$ such that $\psi(x) = \psi_0(x)$ for all $x$ in $B_{s}(0)$. Suppose that $x_1,\ldots x_n$ are in $B_r(0)$ satisfying $\ord(x_i-x_j)<s$ for each $i\not=j$, and let $c_1,\ldots,c_n$ be in $\CC$.
Put
\begin{equation}\label{eq:ortho:0}
S_\psi := \sum_{i=1}^n c_i\psi(x_i)
\end{equation}
and suppose that $S_\psi = 0$ for all $\psi$ in $\cD_{K,s}$.
Then one must have $c_i=0$ for each $i=1,\ldots,n$.
\end{lem}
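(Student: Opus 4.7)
The plan is to reduce the statement to character orthogonality on the finite abelian quotient $B_r(0)/B_s(0)$. Note that $\cD_{K,s}$ is an affine space over the group $\widehat{K/B_s(0)}$ of continuous characters of $K$ that are trivial on $B_s(0)$: once we fix any particular $\tilde\psi_0\in\cD_{K,s}$ (such an extension of $\psi_0$ from $B_s(0)$ to $K$ exists by Pontryagin duality for the locally compact abelian group $K$, or by hand since $B_s(0)$ is an open subgroup of $K$), every $\psi\in\cD_{K,s}$ can be written uniquely as $\psi=\tilde\psi_0\cdot\chi$ for some $\chi\in\widehat{K/B_s(0)}$.

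Setting $d_i:=c_i\tilde\psi_0(x_i)\in\CC$, and noting that $\tilde\psi_0$ takes values in $\CC^\times$, we have $c_i=0$ if and only if $d_i=0$. The hypothesis $S_\psi=0$ for all $\psi\in\cD_{K,s}$ then rewrites as
\[
\sum_{i=1}^n d_i\,\chi(x_i)=0 \quad\text{for every }\chi\in\widehat{K/B_s(0)}.
\]
The hypothesis $\ord(x_i-x_j)<s$ for $i\neq j$ says that the classes $\bar x_1,\ldots,\bar x_n$ in $B_r(0)/B_s(0)\subset K/B_s(0)$ are pairwise distinct, and they all lie in the finite subgroup $\Gamma:=B_r(0)/B_s(0)$ (finite because $K/B_s(0)$ is discrete and $B_r(0)$ is compact modulo $B_s(0)$, being an open compact subgroup when $B_s(0)$ is viewed as its open compact subgroup).

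By Pontryagin duality applied to the closed (here: finite) subgroup inclusion $\Gamma\hookrightarrow K/B_s(0)$, the restriction map $\widehat{K/B_s(0)}\to\widehat\Gamma$ is surjective. Therefore the identity above gives
\[
\sum_{i=1}^n d_i\,\eta(\bar x_i)=0 \quad\text{for every }\eta\in\widehat\Gamma.
\]
By the standard orthogonality of characters on the finite abelian group $\Gamma$ (or equivalently by Fourier inversion: the function $\Gamma\to\CC$ taking $\bar x_i$ to $d_i$ and $0$ elsewhere has vanishing Fourier transform, hence is identically zero), we obtain $d_i=0$ for all $i$, and hence $c_i=0$ for all $i$.

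The only delicate point is the surjectivity of the restriction $\widehat{K/B_s(0)}\to\widehat\Gamma$, which is the extension property of characters from a closed subgroup; this is a standard consequence of Pontryagin duality and, in the present setting, can also be seen concretely by fixing a self-dual additive character of $K$ and using the explicit bijection $a\mapsto(x\mapsto\psi_K(ax))$ between $K$ and $\widehat K$. Everything else is routine; the assumptions $\psi_0(\cO_K)\neq 1$ and $\psi_0(\cM_K)=1$ only serve to pin down a concrete normalization and play no further role in the argument.
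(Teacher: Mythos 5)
Your proof is correct and follows essentially the same route as the paper's: both reduce the statement to Fourier inversion (character orthogonality) on the finite quotient $B_r(0)/B_s(0)$, using that the characters $\psi/\psi_0$ for $\psi\in\cD_{K,s}$ exhaust the dual of that quotient. Your write-up is slightly more careful about the two points the paper leaves implicit — the existence of an extension of $\psi_0$ to all of $K$ and the surjectivity of the restriction map to $\widehat{\Gamma}$ — but the argument is the same.
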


\begin{proof}
The quotient $G = B_r(0)/B_s(0)$ is a finite group under addition. Denote by $\widehat{G}$ the dual group of $G$, and consider the function $S: \widehat{G}\to \CC$ which sends a character $\chi$ on $G$ to the sum
\[
\sum_i c_i \psi_0(x_i) \chi(x_i).
\]
If $\psi$ is as described in the statement of the lemma, then $\chi(x) = \psi(x)/\psi_0(x)$ is a character on $G$, and all characters on $G$ arise in this way. For such a character we have that
\[
S(\chi) = S(\psi / \psi_0) = \sum_i c_i \psi(x_i) = 0,
\]
so that $S$ is the zero function. But since the $x_i$ are distinct elements in $G$, $S$ is the Fourier transform of the function $G\to \CC$ which sends $y$ to $c_i \psi_0(x_i)$ if $y \equiv x_i\bmod B_s(0)$ and to $0$ else. Hence also this function must be zero, and we conclude that $c_i = 0$ for all $i$.
\end{proof}

\begin{thm}\label{thm:transfer}
Let $X$ be an $\cS_D$-definable set and take $F_1, F_2\in \cCM(X)$. Then there exists an integer $N > 0$ such that for every integer $A\ge 0$ there exists an integer $A'\ge 0$ with the following property. Let $K_1, K_2$ be local fields of residue characteristic $\geq N$, and with $\RR_{K_1, A'}$ isomorphic to $\RR_{K_2, A'}$. Assume that for every enriched point $x=(x_0, \psi, (\chi_t)_t, (\lambda)_{\lambda\in \Lambda})$ on $X_{\cS_{D,K_1}}$ with $\lambda \leq A$ for all $\lambda\in \Lambda$ it holds that
\[
F_{1, \cS_{D,K_1}}(x) = F_{2, \cS_{D,K_1}}(x).
\]
Then we also have that for every enriched point $x'=(x'_0, \psi, (\chi_t)_t, (\lambda)_{\lambda\in \Lambda})$ on $X_{\cS_{D,K_2}}$ with $\lambda\leq A$ for all $\lambda\in \Lambda$ that
\[
F_{1, \cS_{D, K_2}}(x') = F_{2, \cS_{D, K_2}}(x').
\]
\end{thm}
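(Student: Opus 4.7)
The plan is first to reduce, by setting $F := F_1 - F_2$, to the statement that vanishing of $F$ at every enriched point of $X_{\cS_{D,K_1}}$ with all $\lambda\leq A$ implies vanishing of $F$ at every enriched point of $X_{\cS_{D,K_2}}$ with all $\lambda\leq A$. Next, applying the cell decomposition \Cref{prop:cell} together with Addenda \ref{add.prep} and \ref{add.Lipschitz}, and the Lifting Lemma \ref{lem:lift:of:g} (to replace every imaginary $g$-datum in the generators of $F$ by an actual $\cS_D$-definable $\VF$-valued function), we put $F$ into a normal form whose value at a point $x$ is a finite sum
\[
F(x) \;=\; \sum_{(\xi,\eta)\in P_x} c(x,\xi,\eta)\,\psi\bigl(d(x,\xi,\eta)\bigr),
\]
where $P_x\subset \RR_{s}\times \VG^{m}$ for some fixed basic tuple $s$ and integer $m$, the function $d$ is $\cS_D$-definable with image contained in a fixed fractional ideal $\cM^{-N_0}$, and each coefficient $c(x,\xi,\eta)$ is built from integer constants, powers of $q$ and the $T_i$, and values $\chi_{t_j}(h_j(x,\xi,\eta))$ with $h_j$ taking values in $\RR_{t_j}$ for certain basic terms $t_j$.

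Choose $N$ large enough so that this normal form is available uniformly for all local fields of residue characteristic $\geq N$. Given $A$, take $A'$ to be a common upper bound, across all interpretations of the constants in $\Lambda$ with $|\lambda|\leq A$, for $N_0+1$ and for all evaluations of the basic terms $s$ and $t_j$. Then for any local field $K$ of residue characteristic $\geq N$ and any enriched point on $X_{\cS_{D,K}}$ with $\lambda\leq A$, the data $P_x$, the coefficients $c(x,\xi,\eta)$, the values $h_j(x,\xi,\eta)$, and the class of $d(x,\xi,\eta)$ modulo $\cM_K$ are all computable from $\RR_{K,A'}$ together with the Presburger data in $\VG_K$ and the chosen $\chi_t$- and $\lambda$-values.

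The main obstacle is the additive character $\psi$, since an additive character on $K_1$ cannot be directly transported to one on $K_2$ when the characteristics differ. We overcome this via \Cref{lem:9.2.3}: the hypothesis states that $\sum_i c_i\,\psi_1(d_i)=0$ for every admissible $\psi_1$ on $K_1$, every enriched point $x$ on $X_{\cS_{D,K_1}}$ with $\lambda\leq A$, and every choice of characters $\chi_t$. Grouping the indices by the class of $d_i$ modulo $\cM_{K_1}$, \Cref{lem:9.2.3} forces the partial sum $\sum c_i$ in each group to vanish. These are algebraic identities in $\RR_{K_1,A'}$, in Presburger data in $\ZZ$, and in the $\chi_t$-values. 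Via the ring isomorphism $\RR_{K_1,A'}\cong \RR_{K_2,A'}$ (which is compatible with the residue and angular-component maps up to depth $A'$), these identities transfer verbatim to $\RR_{K_2,A'}$, after matching $\chi'_t$ on $K_2$ with $\chi_t$ on $K_1$ through the isomorphism. Reassembling the corresponding sum $\sum_i c'_i\,\psi_2(d'_i)$ for any enriched point $x'$ on $X_{\cS_{D,K_2}}$ with $\lambda\leq A$ then shows that it vanishes, giving $F_{\cS_{D,K_2}}(x')=0$ and completing the transfer.
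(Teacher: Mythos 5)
Your overall strategy is the same as the paper's: set $F=F_1-F_2$, use cell decomposition and the Lifting Lemma to make the arguments of the additive character take only finitely many values inside a fixed bounded region $\cM^{-N_0}$, group the terms by the class of those arguments, kill the character via \Cref{lem:9.2.3}, and transfer the resulting residue-ring identities through the isomorphism $\RR_{K_1,A'}\cong\RR_{K_2,A'}$. However, there is a genuine gap at the grouping step. You group the indices by the class of $d_i$ modulo $\cM_{K_1}$ and claim that \Cref{lem:9.2.3} forces each partial sum $\sum c_i$ to vanish. Two representatives that are distinct modulo $\cM_{K_1}$ may differ by a unit, i.e.\ satisfy $\ord(e_k-e_l)=0$, whereas \Cref{lem:9.2.3} requires $\ord(x_i-x_j)<s$ for a \emph{strictly negative} $s$; and the conclusion genuinely fails at this granularity. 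For instance, in $\QQ_p$ take representatives $e_k=k$ for $k=0,\dots,p-1$ and coefficients $c_k=1$: then $\sum_k c_k\,\psi(e_k)=\sum_{\bar k\in\FF_p}\chi(\bar k)=0$ for every admissible $\psi$ (whose restriction to $\cO/\cM$ is a nontrivial character $\chi$ of $\FF_p$), yet no coefficient vanishes. The freedom in the multiplicative characters does not repair this, since the $h_j$ may be trivial.

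The fix is the coarser grouping the paper uses: group modulo $B_{-s}(0)=\{x\mid \ord(x)>-s\}$ for a suitable positive basic term $s$ bounding the depth produced by the Lifting Lemma, so that distinct representatives satisfy $\ord(g_i-g_j)\le -s<0$ and \Cref{lem:9.2.3} applies. The price is that the inner sums then still contain factors $\psi(g(\xi)-g_i)$ with $g(\xi)-g_i\in B_{-s}(0)$; these are no longer character-free identities, but they only depend on $\psi$ through its restriction to the finite quotient $B_{-s}(0)/\cM$, which is exactly the data carried by $\RR_{K,A'}$ once $A'\ge s$. So the transfer step must move not only the residue-ring and Presburger data but also this restricted character datum through the isomorphism $\RR_{K_1,A'}\cong\RR_{K_2,A'}$, matching an arbitrary $\psi'$ on $K_2$ with some $\psi_0$ on $K_1$ on that finite quotient. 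With this modification your argument goes through and coincides with the paper's proof.
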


\begin{proof}
Put $F = F_1-F_2$, and assume first that $F\in \cCeM(X)$. Then there exists a basic tuple $t = (t_1, \ldots, t_n)$, an $\cS_D$-definable map $f: X\to \RR_t\times \VG^m$, and a function $G\in \cCeM(\RR_t\times \VG^n)$ such that $F = f^*G$. By the results on specialization, there exists a positive integer $N > 0$ such that this equality $F = f^* G$ holds in all local fields of characteristic $> N$. Take $A'$ such that if $\lambda\leq A$ for all $\lambda\in \Lambda$, then all $t_i$ and all basic terms appearing in all formulas defining the data of $F$ are bounded by $A'$, and such that the valuations of the arguments of the additive character are bounded below by $-A'$ (which is possible since $F$ is in $\cCeM(X)$ as opposed to $\cCM(X)$). Let $z$ be an enriched point on $\RR_t\times \VG^m$ on $K_1$ with $\lambda\leq A$ for all $\lambda\in \Lambda$. Then $G_{\cS_{D, K_1}}(z) = 0$. By the assumption that $\RR_{K_1, A'}$ is isomorphic to $\RR_{K_2, A'}$, $z$ also makes sense as an enriched point on $K_2$ (where the additive character plays a role only on the bounded region  $B_{-A'}(0) / \cM$ of $\VF/\cM$).  But then also $G_{\cS_{D, K_2}}(z) = 0$.

Now assume that $F\in \cCM(X)$, and write $F$ as a sum of generators. Consider one such generator of the form
\[
[Z,\left( \prod_i \alpha_i\right) \LL^{\beta}T^\gamma E(g) H(h)],
\]
where $Z\subset X\times \RR_t$ and $g: Z\to \VF / \cM$. We claim that there exists a basic term $s$ such that for each $x\in X$, the map $Z_x\to \VF/B_{-s}(0): \xi\mapsto g(x,\xi)$ has finite image. Let $G\subset Z\times \VF$ be the definable set above the graph of $g$ in $Z\times \VF/\cM$. By taking a cell decomposition of $G$, we find a basic term $s$ and for every $x\in X$ a finite $\cS_D(x)$-definable set $D(x)\subset \VF$ such that for every $\xi\in Z_x$ the set $G_{x,\xi}\subset \VF$ is a union of twisted boxes of depth $s$ and centre in $D(x)$. Since $G_{x, \xi}$ is a translate of $\cM$, we see that $G_{x,\xi}$ is contained in the union of balls $B_{-s}(d)$ for $d\in D(x)$. Hence we indeed obtain that $g(x, \cdot): Z_x\to \VF / B_{-s}(0)$ has finite image for every $x\in X$.

We now furthermore assume that $A'\geq s$, so that also $\RR_{K_1, s}$ and $\RR_{K_2,s}$ are isomorphic. Let $x=(x, \psi, (\chi_t)_t, (\lambda)_{\lambda\in \Lambda})$ be an enriched point on $X_{\cS_{D,K_1}}$. Take $g_1, \ldots, g_n\in K_1$ such that $\ord(g_i-g_j) > s$ and such that the image of $g$ modulo $B_{-s}(0)$ is exactly $\{g_1, \ldots, g_n\}$ in $K_1$. For each $i$, let $Z_{x,i}$ be the set of $\xi\in Z_x$ for which $g(\xi) \equiv g_i$ modulo $B_{-s}(0)$. Then
\begin{multline*}
F_{\cS_{D,K_1}}(x) = \sum_{\xi\in Z_x} \left(\prod_j \alpha_j(\xi)\right) q^{\beta(\xi)} T^{\gamma(\xi)}\psi(g(\xi)) \left(\prod_j \chi_{t_j}(h_j(\xi))\right) \\
= \sum_i \left(\sum_{\xi\in Z_{x,i}} \left(\prod_j \alpha_j(\xi)\right) q^{\beta(\xi)} T^{\gamma(\xi)}\psi(g(\xi)-g_i) \left(\prod_j \chi_{t_j}(h_j(\xi))\right)\right) \psi(g_i)\\
 = 0.
\end{multline*}

Since this holds for every choice of $\psi$, the previous lemma implies that
\[
\sum_{\xi\in Z_{x,i}} \left(\prod_j \alpha_j(\xi)\right) q^{\beta(\xi)} T^{\gamma(\xi)}\psi(g(\xi)-g_i) \left(\prod_j \chi_{t_j}(h_j(\xi))\right) = 0.
\]
By assumption, the rings $B_{-s, K_1}(0) / \cM_{K_1}$ and $B_{-s, K_2}(0) / \cM_{K_2}$ are isomorphic, and the images of the $g_i\bmod \cM$ under this isomorphism map to elements which may be lifted to $g_i'\in K_2$ with the same properties as the $g_i$. But then exactly the same computation shows that for every enriched point $x'=(x', \psi', (\chi_t)_t, (\lambda)_{\lambda\in \Lambda})$ on $X_{\cS_{D,K_2}}$ with $\lambda\leq M'$ for all $\lambda\in \Lambda$ we have that
\[
F_{\cS_{D,K_2}}(x') = 0. \qedhere
\]
\end{proof}

\subsection{Some perspectives}\label{sec:perspectives}

Various questions and challenges remain open in this new enriched framework, compared to the $\cCexp$-class functions from \cite{CLexp} in the motivic case and from \cite{CHallp} in the uniform $p$-adic case, which are already more studied.

We already mentioned in \Cref{sec:res:Qp:int} that the main results on integrability of \cite{CGH} are interesting to study also for our more general functions. By extension, it may be interesting to perform a consequent study of various loci (of integrability, of local constancy, and so on) as in \cite{CGH5}, and the corresponding transfer principles for integrability, local constancy, and so on, also as in \cite{CGH5}.
Such a study of loci would enable one to generalize the theory of uniform $p$-adic distributions and wave front set from \cite{CHLR}, \cite{AizC} to our class of uniform $p$-adic $\cCM$-functions, and possibly the motivic ones from \cite{Raib-motivic} and distributions in the style of \cite{HK} to our class of $\cCM$-functions.

A big challenge seems to be to investigate whether our transfer principle from \Cref{thm:transfer} also holds for local fields $K_1$, $K_2$ with just isomorphic residue fields $k_{K_i}$, instead of the stronger assumption of isomorphic residue rings $\RR_{K_i, A'}$ (that is, to investigate whether one can take $A'=0$).   Note that our transfer principle already generalizes the one from \cite{CLexp}, and we do not know whether a further generalization is possible.

Possible generalizations of motivic Poisson summation as in \cite{HKPoiss} to new situations seem an interesting challenge to us.

Finally, when one studies the Harish-Chandra local character expansion around the unit, it is plausible that the coefficients are of motivic nature in the sense of \cite{CLexp}, but around other semisimple elements than the unit it is expected that multiplicative characters come into play (if one tries to write the coefficients in a field independent or motivic way) and the framework of this paper may be useful to show that the coefficients are still of a motivic nature, see also \cite{GordonSpice}.

\appendix

\section{Some complements on more general $\cS$}\label{app:hensel-min}

\subsection{}
\label{subs:coars}

In this appendix, we give some possible generalizations, namely by considering more general $\cS$. Recall that we imposed already the following conditions on $\cS$, from the beginning of \Cref{sec:motivic} on.
\\

\begin{itemize}
\item[(C0)]  $\cL$ contains $\cL_D$ and has precisely the same 
sorts. 
\\

\item[(C1)] $\cS$ is a nonempty collection of $\cL$-structures each of which expands a structure from $\cS_D$.
\end{itemize}

\noindent
Item (C1) says that for each structure $K$ in $\cS$, the $\cL_D$-reduct of $K$ 
belongs to $\cS_D$. We now give some more conditions on $\cS$, related to the notion of Hensel minimality from \cite{CHR} and \cite{CHRV}, and to the results from sections \ref{sec:weak.orth} and \ref{sec:cell.finer}. 

\begin{defn}\label{defn:hen}
Let 
$\cS$ satisfy conditions (C0) and (C1).
Say that $\cS$ is  \emph{D-h-minimal} if moreover the following properties hold. \\

\begin{itemize}
\item[(C2)] For each integer $N>0$, the collection of the residue rings $\cO_K/N\cdot \cM_K$ over all $K$ in $\cS$, with their induced structure, forms an elementary class.
\\

\item[(C3)] The finiteness property from Proposition \ref{prop:finite}.
\\

\item[(C4)] The weak orthogonality between $\VG$ and $\RR_t$ from Proposition \ref{prop:ortho}.
\\

\item[(C5)]  For each $K$ from $\cS$, for any valued field $K'$ with $\cL$-structure which is elementary equivalent to the valued field $\VF_K$ with its $\cL$-structure and any (nontrivial) coarsening $\cO'$ of the valuation $\cO_{K'}$ on $K'$ so that each of the basic terms becomes a unit of the coarsened valuation ring, the valued field $(K',\cO')$ with its induced $\cL$-structure is $1$-h-minimal in the sense of \cite{CHR}.





\end{itemize}
If we replace $1$-h-minimal by $\omega$-h-minimal in (C5) while preserving the conditions (C0) -- (C4), then we say that $\cS$ is  \emph{D-$\omega$-h-minimal}.
\end{defn}

\begin{prop}\label{pop:CHR-to-D}
Suppose that $\cS$ satisfies (C0) up to (C4).  Then $\cS$ satisfies (C5) if and only if all of the properties of \Cref{sec:S} hold for the $\cS$-objects.
\end{prop}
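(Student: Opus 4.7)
The plan is to prove the two directions by bridging the $\cS$-framework (with its basic terms and higher residue ring sorts) and the classical h-minimal framework of \cite{CHR, CHRV} via the coarsening construction. The key observation is that although basic terms take values in $\VG$ and can be arbitrarily large, there are always only finitely many of them appearing in any particular formula, so given any definable object one can coarsen the valuation just enough to make the relevant basic terms into units, at which point the residue rings $\RR_t$ of the original valuation become pieces of the residue field of the coarsened valuation and the entire situation becomes one to which 1-h-minimality applies directly.

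For the forward implication, assume (C5). Given an $\cS$-definable set $X\subset Y\times \VF$, apply the quantifier elimination \Cref{QEform} to describe $X$ by a formula involving only finitely many polynomials $p_i$ and finitely many basic terms $t_j$. Compactness and an appropriate coarsening chosen so that all these $t_j$ lie in the unit group of the coarsened ring put us inside the framework of \cite[Theorem 5.2.4]{CHR}, yielding a cell decomposition of $X$ in the coarsened valuation. Refining each coarsened cell by the data of the $\ac_{t_j}$ and $\ord$-values with respect to the original valuation (using \Cref{prop:ortho} to keep everything definable and compatible with the $\RR_t$-structure) upgrades this to a cell decomposition in the sense of \Cref{defn.cell.1}. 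The addenda (preparation, Lipschitz centers, Jacobian property) transfer in the same way from their 1-h-minimal counterparts in \cite{CHR, CHRV, CCL-PW}: in each case one proves the statement first in the coarsened structure and then refines the partition by the finer information carried by the basic terms. The higher-dimensional versions \Cref{prop:cell} follow inductively, and the rectilinearization results \Cref{cor:rec.1,cor:rec} are then derived from cell decomposition together with (C3), (C4) and classical Presburger rectilinearization, exactly as in the proofs written out in Sections \ref{sec:weak.orth} and \ref{sec:cell.finer}.

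For the converse, assume that all properties of \Cref{sec:S} hold for $\cS$-objects, and fix a coarsening $(K',\cO')$ as in (C5). We must verify 1-h-minimality: for any definable set $W\subset K'$ and any finite set of parameters, there exists a finite set of centers that prepares $W$ in the coarsened valuation. But any such $W$ is $\cL$-definable in the finer valuation, so our cell decomposition \Cref{prop:cell.1} together with \Cref{add.prep.1} gives a finite decomposition into cells of the fine valuation; since the basic terms are units for $\cO'$, each such cell collapses to a coarsened cell (the $\ac_t$-condition becomes trivial modulo $\cM'$), providing the preparing centers required by 1-h-minimality. The Jacobian property in the coarsened setting similarly descends from \Cref{add.Jacprop}. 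The finiteness and orthogonality axioms built into the definition of h-minimality follow from (C3) and (C4).

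The main obstacle will be keeping the bookkeeping between the two valuations under control in uniform families, in particular ensuring that the choice of coarsening can be made uniformly over the parameter space of a definable family. This is handled by the weak orthogonality \Cref{prop:ortho}, which guarantees that the $\VG$-valued data (including the values of the basic terms) factors through finitely many types in an $\RR_t$-parametrized way, so that a single coarsening, depending only on the formula and not on the parameters, suffices for each piece of a finite partition. A secondary subtlety is verifying that refinement of a coarsened cell decomposition by $\ac_t$ and $\ord$-data genuinely produces cells in the sense of \Cref{defn.cell.n}, rather than just a finer partition; this is where the explicit structure of the language $\cL_D'$ from \Cref{sec:QE:Pres} and the symbols $\cross_t$ are used to rewrite all conditions purely in terms of residue ring data attached to the centers.
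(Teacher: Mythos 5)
Your proposal is correct and follows essentially the same route as the paper's (much terser) proof: coarsen the valuation so the finitely many relevant basic terms become units, import the $1$-h-minimality results of \cite{CHR}, and descend to the fine valuation by compactness and $\RR_t$-parametrization as in \cite{CHRV}; conversely, observe that the cell decomposition with the Jacobian property realizes the geometric criterion for $1$-h-minimality in any coarsening. The paper leaves almost all of this implicit with a reference to \cite{CHR,CHRV}, so your elaboration of the uniformity-over-parameters issue and of how coarsened cells refine to cells in the sense of \Cref{defn.cell.n} is a faithful expansion rather than a different argument.
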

\begin{proof}
See \cite{CHR} and \cite{CHRV}. Indeed, by the results of \cite{CHR}, one has all the results of \Cref{sec:S} in the coarsened structure, and by compactness (as in \cite{CHRV}), there then exist a basic tuple such that parametrization with $\RR_t$ makes the desired results hold in $K$ as well, for any data. For the other implication, note that the geometric criterion for $1$-h-minimality from \cite{CHR} and \cite{CHRV} is satisfied by the results in \Cref{sec:S}, more precisely by the cell decomposition \Cref{prop:cell} and the Jacobian property and compatible decomposition from Addendum \ref{add.Jacprop}.
\end{proof}

Finally we recall a result which follows almost directly from \cite{CHR,CHallp}.

\begin{prop}[\cite{CHR,CHallp}]\label{result:base-language*i}
The collection $\cS_D$ is D-h-minimal, and even D-$\omega$-h-minimal.
If $\cS$ is D-h-minimal and
if $x$ is an $\cS$-point, then $\cS(x)$ is also D-h-minimal, and similarly for D-$\omega$-h-minimality.
 \end{prop}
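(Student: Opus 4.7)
The strategy is to verify each of the conditions (C0)--(C5) of \Cref{defn:hen} for $\cS_D$, and then to argue that all of them are preserved under the passage $\cS \leadsto \cS(x)$.

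For $\cS_D$ itself, (C0) and (C1) are immediate from the construction of $\cL_D$ and $\cS_D$ in \Cref{defnLdac,defnSdac}. Condition (C3) is exactly \Cref{prop:finite} (= \Cref{prop:finite0}) and (C4) is exactly \Cref{prop:ortho}, both of which were proved in \Cref{sec:weak.orth} using the quantifier elimination of \Cref{QEform}; hence they are available. For (C2), the plan is to observe that for any fixed integer $N>0$, the rings $\cO_K/N\cdot \cM_K$ for $K\in\cS_D$ are all of the form $\cO_K/\cM_K^{e+1}$ for some ramification index $e$ bounded only in terms of $N$ (since $\VG_K\cong\ZZ$ and $\ord N$ can take any non-negative integer value), and the $\cL_D$-structure on these rings is a reduct of the ring structure together with the projections $\res_{t,t'}$ and the constants coming from $\Lambda$. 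The key point is that such finite residue rings are finitely axiomatised by their isomorphism type, and so the class is first-order (equivalently, it is closed under ultraproducts, using \L o\'s' theorem).

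Condition (C5) is the only non-trivial one, and it is exactly the payoff of the reasoning imported from \cite{CHR,CHallp}. The plan is as follows: take any $K\in \cS_D$ and any elementary extension $K'$ of its $\cL_D$-structure, together with a coarsening $\cO'$ of $\cO_{K'}$ for which every basic term becomes a unit. Then the residue ring of the coarsening is still a henselian valued field of mixed/equal characteristic zero (in the generalised Denef--Pas setting), and the coarsened structure falls into the framework of \cite[Theorem 5.1.2 and its consequences]{CHallp}. The quantifier elimination in \Cref{QEZ,QEform} together with the cell decomposition and Jacobian property results of \Cref{prop:cell,add.Jacprop} give exactly the geometric criterion for $1$-h-minimality from \cite{CHR,CHRV}; hence (C5) holds. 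The upgrade to $\omega$-h-minimality is obtained by observing that the quantifier elimination and cell decomposition are uniform in parameters coming from any family of residue-ring sorts, which is precisely what the $\omega$-version of Hensel minimality demands (see \cite{CHRV}).

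Finally, for the stability statement, fix an $\cS$-point $x=(x_0,K_0)$ and consider $\cS(x)$. Conditions (C0)--(C2) are immediate: adding constant symbols for the entries of $x_0$ does not change the sorts (C0), it produces a nonempty collection of $\cL(x)$-expansions of models from $\cS$ (C1), and the residue rings of these expansions coincide with those of the underlying structures with just finitely many extra constants named, so they remain an elementary class (C2). Conditions (C3) and (C4) are stated for arbitrary $\cS$-points already (the finiteness property quantifies over ``any $\cS$-point'' and the weak orthogonality is a definability statement that is closed under naming constants). For (C5), note that naming finitely many constants in the valued field sort (and in the value group and residue rings) is a harmless expansion: if $(K',\cO')$ is $1$-h-minimal in $\cL$ then its expansion by those constants is $1$-h-minimal in $\cL(x)$, since $1$-h-minimality is preserved under adding constants by \cite[Sec.~4]{CHR}. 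The same argument gives the $\omega$-h-minimality version. The only potential obstacle is checking that the coarsening condition in (C5) is still met, but since the basic terms in $\cL_D(x)$ are the same as in $\cL_D$ (the new constants live in $\VG$ and are already available via $\Lambda$), the set of coarsenings to consider is unchanged. This completes the plan.
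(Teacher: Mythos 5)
Your overall architecture is the intended one: the paper gives no argument beyond the citation of \cite{CHR,CHallp}, and your plan of checking (C0)--(C5) of \Cref{defn:hen} one by one, getting (C3) and (C4) from \Cref{prop:finite} and \Cref{prop:ortho}, getting (C5) from the geometric criterion exactly as in \Cref{pop:CHR-to-D}, and handling the passage to $\cS(x)$ by the resplendency/adding-constants results of \cite{CHR} (noting that the basic terms, hence the relevant coarsenings, do not change) is the right way to unpack that citation.

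There is, however, one step that fails as written: your verification of (C2) for $\cS_D$. The rings $\cO_K/N\cM_K$ are neither finite nor of boundedly many isomorphism types. For equicharacteristic-zero $K$ the quotient is the residue field, which may be infinite; for $K$ of mixed characteristic $(0,p)$ with $p\mid N$ one has $N\cM_K=\cM_K^{\,\ord(N)+1}$ with $\ord(N)=e\cdot v_p(N)$, and the ramification index $e$ is unbounded over $\cS_D$, so the length of these artinian local rings is unbounded. Hence the class cannot be ``finitely axiomatised by isomorphism type,'' and closure under ultraproducts alone does not yield elementarity (note also that $\cS_D$ itself is not closed under ultrapowers, since $\VG_K\cong\ZZ$ is not an elementary condition, so a Łoś-theorem argument does not even get off the ground). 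The correct route to (C2) is the relative quantifier elimination and stable embeddedness of the residue-ring sorts coming from \Cref{QEform} and \cite[Section 5]{CHallp}, which identify the induced structure on $\RR_t$ and axiomatize which rings occur as such quotients of henselian valuation rings. A smaller instance of the same gap appears in your (C2) for $\cS(x)$: since $\cS(x)$ consists of models of a complete theory, you must also show that every model of the induced theory on the residue rings is realized as $\cO_K/N\cM_K$ for some $K\in\cS(x)$; this is a lifting statement requiring stable embeddedness together with an ultrapower argument, not merely the remark that only finitely many constants have been named.
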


Note that axiom (C2) is only used in this paper for \Cref{result:cCexp-widehatC}, when we use the main result from \cite{CH-eval}. Axiom (C2) may be used similarly to give a more global meaning to our motivic functions; note that in this paper we chose a point-wise approach, in the spirit of \cite{CH-eval}, where in \cite{CLoes} and \cite{CLexp} the objects were defined more globally.


\subsection{Examples}\label{sec:examples}

We give a few examples of $\cS$ that are D-h-minimal.

\begin{itemize}

\item If we are given a collection $\cS$ which is D-h-minimal, then any sub-collection  $\cS'$ of $\cS$ in which (C2) is still satisfied is again D-h-minimal. This follows directly from the axioms. As an example, one could take a single $p$-adic field and consider on it all possible $\cL_D$-structures. As a final example of this kind, one could consider a single $p$-adic field and with a fixed $\cL_D$-structure. Note that the collections used for our specialization are of this form, starting from $\cS_D$. Note that one could also choose $\cS'$ to consist of only the $p$-adic fields in $\cS_D$, that is, the finite field extensions of $\QQ_p$ equipped with all possible $\cL_D$-structures, but this needs to be enriched with equicharacteristic zero fields in $\cS_D$ with pseudo-finite residue field in order to satisfy (C2).

\item Let us be given a collection $\cS$ which is D-h-minimal. Then, one can add extra structure on the residue rings $\RR_t$, for basic terms $t$, and an extra theory about these expanded residue rings $\RR_t$. If one includes sufficiently many such expansions in a new collection $\cS'$ so that (C2) holds for $\cS'$, then $\cS'$ is also D-h-minimal. Likewise one may add extra constant symbols in any of the sorts and an extra theory about it.

\item One may add to each $K$ in $\cS_D$ some analytic structure, say, coming from a Weierstrass system over $\ZZ$ or over $\ZZ\llp t\rrp$, as in \cite{vdDAx}, \cite{CLip} or \cite{CLips}. Again, if one includes sufficiently many such expansions in a new collection $\cS'$ so that (C2) holds for $\cS'$, then $\cS'$ is also D-h-minimal.

\end{itemize}



%
%


\bibliographystyle{amsplain}
\bibliography{anbib}
\end{document}